\newtheorem{theorem}{Theorem}[section]
\newtheorem{lemma}[theorem]{Lemma}
\newtheorem{proposition}[theorem]{Proposition}
\newtheorem{corollary}[theorem]{Corollary}
\theoremstyle{definition}
\newtheorem{definition}[theorem]{Definition}
\newtheorem{remark}[theorem]{Remark}
\numberwithin{equation}{section}
\renewcommand{\phi}{\varphi}
\newcommand{\ep}{\varepsilon}
\newcommand{\Ad}{\operatorname{Ad}}
\newcommand{\Aff}{\operatorname{Aff}}
\newcommand{\aInn}{\operatorname{\overline{Inn}}}
\newcommand{\Aut}{\operatorname{Aut}}
\newcommand{\Bott}{\operatorname{Bott}}
\newcommand{\Ext}{\operatorname{Ext}}
\newcommand{\id}{\operatorname{id}}
\newcommand{\Hom}{\operatorname{Hom}}
\newcommand{\Ima}{\operatorname{Im}}
\newcommand{\Ker}{\operatorname{Ker}}
\newcommand{\Lip}{\operatorname{Lip}}
\newcommand{\OrderExt}{\operatorname{OrderExt}}
\newcommand{\N}{\mathbb{N}}
\newcommand{\Z}{\mathbb{Z}}
\newcommand{\Q}{\mathbb{Q}}
\newcommand{\R}{\mathbb{R}}
\newcommand{\C}{\mathbb{C}}
\newcommand{\T}{\mathbb{T}}
\title{$\mathcal{Z}$-stability of crossed products 
by strongly outer actions II}
\author{Hiroki Matui \\
Graduate School of Science \\
Chiba University \\
Inage-ku, Chiba 263-8522, Japan 
\and
Yasuhiko Sato \\
Graduate School of Science \\
Kyoto University \\
Sakyo-ku, Kyoto 606-8502, Japan}
\date{}
\begin{document}
\maketitle

\begin{abstract}
We consider a crossed product of 
a unital simple separable nuclear stably finite 
$\mathcal{Z}$-stable $C^*$-algebra $A$ 
by a strongly outer cocycle action of 
a discrete countable amenable group $\Gamma$. 
Under the assumption that $A$ has finitely many extremal tracial states 
and $\Gamma$ is elementary amenable, we show that 
the twisted crossed product $C^*$-algebra is $\mathcal{Z}$-stable. 
As an application, we also prove that 
all strongly outer cocycle actions of the Klein bottle group on $\mathcal{Z}$ 
are cocycle conjugate to each other. 
This is the first classification result 
for actions of non-abelian infinite groups on stably finite $C^*$-algebras. 
\end{abstract}

\section{Introduction}

This is a continuation of our previous paper \cite{MS} 
studying $\mathcal{Z}$-stability of crossed product $C^*$-algebras 
and classification of group actions on $\mathcal{Z}$. 
Here $\mathcal{Z}$ denotes the Jiang-Su algebra 
introduced by X. Jiang and H. Su in \cite{JS}, 
and a unital $C^*$-algebra $A$ is said to be $\mathcal{Z}$-stable 
if $A$ is isomorphic to $A\otimes\mathcal{Z}$. 
In Elliott's program to classify nuclear $C^*$-algebras 
via $K$-theoretic invariants 
(see \cite{Rtext} for an introduction to this subject), 
the Jiang-Su algebra $\mathcal{Z}$ has recently become to play a central role. 
In fact, all unital simple separable nuclear $C^*$-algebras classified so far 
by their Elliott invariants are $\mathcal{Z}$-stable. 
Also, $\mathcal{Z}$-stability is known to be closely related to 
other regularity properties, 
such as finite nuclear dimension, strict comparison of positive elements 
and the property (SI) (see \cite{R04IJM,W12Invent,MS2}). 
We refer the reader to \cite{RW10crelle,W11JNG} 
for several characterizations of $\mathcal{Z}$. 

In the first half of the paper, 
we study $\mathcal{Z}$-stability of crossed products 
arising from strongly outer cocycle actions of discrete countable groups. 
Let $A$ be a unital, simple, separable, nuclear, stably finite, 
$\mathcal{Z}$-stable $C^*$-algebra with finitely many extremal tracial states. 
Let $(\alpha,u):\Gamma\curvearrowright A$ be 
a cocycle action of a discrete group $\Gamma$ (see Definition \ref{Defofca}). 
We say that $(\alpha,u)$ is strongly outer 
when its extension to the weak closure in any tracial representation is outer 
(see Definition \ref{so&wR} for the precise definition). 
In this setting, we prove that 
if $\Gamma$ is elementary amenable and $(\alpha,u)$ is strongly outer, 
then the twisted crossed product $A\rtimes_{(\alpha,u)}\Gamma$ 
is $\mathcal{Z}$-stable (Corollary \ref{so>Z}). 
In our previous paper \cite{MS}, we had to make extra hypotheses 
on the $C^*$-algebra $A$ and on the group $\Gamma$ 
in order to deduce the weak Rohlin property (see Definition \ref{so&wR}) 
from strong outerness. 
The hypothesis on $A$ was necessary 
to replace central sequences in the sense of von Neumann algebras 
with central sequences in the sense of $C^*$-algebras. 
In this paper, 
we have removed it by using amenability of nuclear $C^*$-algebras 
proved by U. Haagerup \cite{H83Invent} (see Proposition \ref{Haagerup2}). 
As for the group $\Gamma$, 
we dealt with only $\Z^N$ and finite groups in the previous paper. 
Now we extend our arguments to all elementary amenable groups 
by showing that they admit outer actions on the hyperfinite II$_1$-factor 
with a `nice' Rohlin property 
(see Definition \ref{DefofQ} and Proposition \ref{EG>Q}). 
In order to prove $\mathcal{Z}$-stability of crossed products, 
we need one more ingredient, namely the property (SI). 
In \cite{MS2}, we have already shown that 
$\mathcal{Z}$-stability of $A$ implies the property (SI) 
(Theorem \ref{Z=SI}). 
Furthermore, 
by using the weak Rohlin property of $(\alpha,u)$, we will prove that 
$A$ satisfies an $\alpha$-equivariant version of the property (SI) 
(Proposition \ref{equivSI}). 
Together with an exact sequence of central sequence algebras 
(Theorem \ref{exact}), this allows us to conclude that 
$A\rtimes_{(\alpha,u)}\Gamma$ is $\mathcal{Z}$-stable. 

In Section 5, we will also show that 
if a unital separable nuclear $C^*$-algebra $A$ is not of type I, then 
the central sequence algebra has a subquotient which is isomorphic to 
a II$_1$-factor (Theorem \ref{Glimm}). 
This may be thought of as an analogue of Glimm's theorem. 

In the latter half of the paper, 
we study cocycle actions of the Klein bottle group 
on UHF algebras and the Jiang-Su algebra. 
The Klein bottle group is the group 
generated by two elements $a,b$ satisfying $bab^{-1}=a^{-1}$, 
and is isomorphic to the fundamental group of the Klein bottle. 
Classification of group actions is 
one of the most fundamental subjects in the theory of operator algebras. 
We briefly review classification results of automorphisms or group actions 
on simple stably finite $C^*$-algebras known so far. 
For AF and AT algebras, 
A. Kishimoto \cite{K95crelle,K98JOT,K98JFA} showed 
the Rohlin property for a certain class of automorphisms and 
obtained a cocycle conjugacy result. 
The first-named author \cite{M10CMP} extended this result to 
unital simple AH algebras with real rank zero and slow dimension growth. 
The second-named author \cite{S10JFA} proved that 
strongly outer $\Z$-actions on $\mathcal{Z}$ are 
unique up to strong cocycle conjugacy. 
T. Katsura and the first-named author \cite{KM} gave 
a complete classification of strongly outer $\Z^2$-actions on UHF algebras 
by using the Rohlin property, 
and then this result was extended to a certain class of 
strongly outer $\Z^2$-actions on unital simple AF algebras \cite{M10CMP}. 
The uniqueness of strongly outer $\Z^N$-actions 
on UHF algebras of infinite type was obtained in \cite{M11crelle}. 
In our previous paper \cite{MS}, 
we proved that strongly outer $\Z^2$-actions on $\mathcal{Z}$ are 
unique up to strong cocycle conjugacy. 

In the present paper, we first show that 
strongly outer cocycle actions of the Klein bottle group 
on UHF algebras are unique up to cocycle conjugacy (Theorem \ref{KleinUHF}). 
The proof is along the same lines as the proof of \cite[Theorem 6.6]{M10CMP}, 
but we need to look at the $\OrderExt$ invariant carefully and 
to check that it always vanishes, unlike the case of $\Z^2$-actions. 
Then we establish a cohomology vanishing type result 
for cocycle actions of the Klein bottle group 
on the Jiang-Su algebra $\mathcal{Z}$ (Proposition \ref{CVonZ}). 
The $\mathcal{Z}$-stability theorem proved in the first half of the paper 
plays an essential role. 
Finally, by using this cohomology vanishing, 
we obtain the uniqueness of strongly outer cocycle actions 
of the Klein bottle group on $\mathcal{Z}$ (Theorem \ref{KleinZ}). 
The uniqueness up to strong cocycle conjugacy of strongly outer actions 
is also given (Theorem \ref{KleinZ2}). 
These are the first classification results of (cocycle) actions 
of non-abelian infinite groups on $C^*$-algebras.

\section{Preliminaries}

\subsection{Notations}

Let $A$ be a $C^*$-algebra. 
For $a,b\in A$, we mean by $[a,b]$ the commutator $ab-ba$. 
The set of tracial states on $A$ is denoted by $T(A)$. 
For $a\in A$, we define 
\[
\lVert a\rVert_2=\sup_{\tau\in T(A)}\tau(a^*a)^{1/2}
\]
If $A$ is simple and $T(A)$ is non-empty, 
then $\lVert\cdot\rVert_2$ is a norm. 
For $\tau\in T(A)$, 
we let $(\pi_\tau,H_\tau)$ denote 
the GNS representation of $A$ associated with $\tau$. 
For $\tau\in T(A)$, 
the dimension function $d_\tau$ associated with $\tau$ is given by 
\[
d_\tau(a)=\lim_{n\to\infty}\tau(a^{1/n}), 
\]
for a positive element $a\in A$. 

When $A$ is unital, we mean by $U(A)$ the set of all unitaries in $A$. 
For $u\in U(A)$, 
the inner automorphism induced by $u$ is written $\Ad u$. 
An automorphism $\alpha\in\Aut(A)$ is called outer, 
when it is not inner. 
For $\alpha\in\Aut(A)$, 
the fixed point subalgebra $\{a\in A\mid\alpha(a)=a\}$ is written $A^\alpha$. 
We let $T(A)^\alpha=\{\tau\in T(A)\mid\tau\circ\alpha=\tau\}$. 
A single automorphism is often identified with a $\Z$-action generated by it. 

Let $A$ be a separable $C^*$-algebra and 
let $\omega\in\beta\N\setminus\N$ be a free ultrafilter. 
Set 
\[
c_0(A)=\{(a_n)\in\ell^\infty(\N,A)\mid
\lim_{n\to\infty}\lVert a_n\rVert=0\},\quad 
A^\infty=\ell^\infty(\N,A)/c_0(A), 
\]
\[
c_\omega(A)=\{(a_n)\in\ell^\infty(\N,A)\mid
\lim_{n\to\omega}\lVert a_n\rVert=0\},\quad 
A^\omega=\ell^\infty(\N,A)/c_\omega(A). 
\]
We identify $A$ with the $C^*$-subalgebra of $A^\infty$ (resp. $A^\omega$)  
consisting of equivalence classes of constant sequences. 
We let 
\[
A_\infty=A^\infty\cap A',\quad A_\omega=A^\omega\cap A'
\]
and call them the central sequence algebras of $A$. 
A sequence $(x_n)_n\in\ell^\infty(\N,A)$ is called a central sequence 
if $\lVert[a,x_n]\rVert\to0$ as $n\to\infty$ for all $a\in A$. 
A central sequence is a representative of an element in $A_\infty$. 
An $\omega$-central sequence is defined in a similar way. 
For $\tau\in T(A)$, we define $\tau_\omega\in T(A^\omega)$ by 
\[
\tau_\omega(x)=\lim_{n\to\omega}\tau(x_n), 
\]
where $(x_n)_n$ is a representative sequence of $x\in A^\omega$. 
When $\alpha$ is an automorphism of $A$, 
we can consider its natural extension 
on $A^\infty$, $A^\omega$, $A_\infty$ and $A_\omega$. 
We denote it by the same symbol $\alpha$. 

We let $\mathcal{Z}$ denote the Jiang-Su algebra 
introduced by X. Jiang and H. Su \cite{JS}. 
They proved that 
any unital endomorphisms of $\mathcal{Z}$ are approximately inner and that 
$\mathcal{Z}$ is isomorphic to the infinite tensor product of its replicas. 
In particular, 
$\mathcal{Z}$ is strongly self-absorbing, cf. \cite{TW07TAMS}. 
When a $C^*$-algebra $A$ satisfies $A\cong A\otimes\mathcal{Z}$, 
we say that $A$ absorbs $\mathcal{Z}$ tensorially, 
or $A$ is $\mathcal{Z}$-stable.

\subsection{Group actions and cocycle conjugacy}

We set up some terminologies for group actions. 
For a discrete group $\Gamma$, 
the neutral element is denoted by $1\in\Gamma$. 

\begin{definition}\label{Defofca}
Let $A$ be a unital $C^*$-algebra and let $\Gamma$ be a discrete group. 
\begin{enumerate}
\item A pair $(\alpha,u)$ of 
a map $\alpha:\Gamma\to\Aut(A)$ and a map $u:\Gamma\times\Gamma\to U(A)$ 
is called a cocycle action of $\Gamma$ on $A$ 
if 
\[
\alpha_g\circ\alpha_h=\Ad u(g,h)\circ\alpha_{gh}
\]
and 
\[
u(g,h)u(gh,k)=\alpha_g(u(h,k))u(g,hk)
\]
hold for any $g,h,k\in\Gamma$. 
We always assume $\alpha_1=\id$, $u(g,1)=u(1,g)=1$ for all $g\in\Gamma$. 
We denote the cocycle action by $(\alpha,u):\Gamma\curvearrowright A$. 
Notice that $\alpha$ gives rise to 
a (genuine) action of $\Gamma$ on $A_\infty$ and $A_\omega$. 
\item A cocycle action $(\alpha,u)$ is said to be outer 
if $\alpha_g$ is outer for every $g\in\Gamma$ except for the neutral element. 
\item Two cocycle actions $(\alpha,u):\Gamma\curvearrowright A$ and 
$(\beta,v):\Gamma\curvearrowright B$ are said to be cocycle conjugate 
if there exist a family of unitaries $(w_g)_{g\in\Gamma}$ in $B$ and 
an isomorphism $\theta:A\to B$ such that 
\[
\theta\circ\alpha_g\circ\theta^{-1}=\Ad w_g\circ\beta_g
\]
and 
\[
\theta(u(g,h))=w_g\beta_g(w_h)v(g,h)w_{gh}^*
\]
for every $g,h\in\Gamma$. 
Furthermore, when there exists a sequence $(x_n)_n$ of unitaries in $B$ 
such that $x_n\beta_g(x_n^*)\to w_g$ as $n\to\infty$ for every $g\in\Gamma$, 
we say that $(\alpha,u)$ and $(\beta,v)$ are strongly cocycle conjugate. 
\item Let $\alpha:\Gamma\curvearrowright A$ be an action. 
The fixed point subalgebra 
$\{a\in A\mid\alpha_g(a)=a\quad\forall g\in\Gamma\}$ is written $A^\alpha$. 
\item Let $\alpha:\Gamma\curvearrowright A$ be an action. 
A family of unitaries $(x_g)_{g\in\Gamma}$ in $A$ is called 
an $\alpha$-cocycle 
if one has $x_g\alpha_g(x_h)=x_{gh}$ for all $g,h\in\Gamma$. 
\end{enumerate}
\end{definition}

\begin{definition}[{\cite[Definition 2.4]{PR}}]\label{Defoftc}
Let $(\alpha,u):\Gamma\curvearrowright A$ be a cocycle action 
of a discrete group $\Gamma$ on a unital $C^*$-algebra $A$. 
The twisted crossed product $A\rtimes_{(\alpha,u)}\Gamma$ is 
the universal $C^*$-algebra generated by 
$A$ and a family of unitaries $(\lambda^\alpha_g)_{g\in\Gamma}$ satisfying 
\[
\lambda^\alpha_g\lambda^\alpha_h=u(g,h)\lambda^\alpha_{gh}
\quad\text{and}\quad 
\lambda^\alpha_ga\lambda^{\alpha*}_g=\alpha_g(a)
\]
for all $g,h\in\Gamma$ and $a\in A$. 
\end{definition}

If two cocycle actions $(\alpha,u):\Gamma\curvearrowright A$ and 
$(\beta,v):\Gamma\curvearrowright B$ are cocycle conjugate, then 
$A\rtimes_{(\alpha,u)}\Gamma$ and $B\rtimes_{(\beta,v)}\Gamma$ 
are canonically isomorphic. 
Conversely, if there exists an isomorphism 
$\theta:A\rtimes_{(\alpha,u)}\Gamma\to B\rtimes_{(\beta,v)}\Gamma$ satisfying 
\[
\theta(A)=B\quad\text{and}\quad 
\theta(\lambda^\alpha_g)(\lambda^\beta_g)^*\in B\quad\forall g\in\Gamma, 
\]
then it is easy to check that 
$(\alpha,u)$ and $(\beta,v)$ are cocycle conjugate. 
When $\Gamma$ is amenable, 
$A\rtimes_{(\alpha,u)}\Gamma$ is naturally isomorphic to 
the reduced twisted crossed product (\cite[Theorem 3.11]{PR}).

\subsection{The weak Rohlin property}

We recall the definitions of amenable groups and 
of elementary amenable groups (in the sense of M. M. Day \cite{Day}). 
The cardinality of a set $F$ is written $\lvert F\rvert$. 

\begin{definition}
Let $\Gamma$ be a countable discrete group. 
\begin{enumerate}
\item For a finite subset $F\subset\Gamma$ and $\ep>0$, 
we say that a finite subset $K\subset\Gamma$ is $(F,\ep)$-invariant 
if $\lvert K\cap\bigcap_{g\in F}g^{-1}K\rvert\geq(1{-}\ep)\lvert K\rvert$. 
\item The group $\Gamma$ is said to be amenable 
if for any finite subset $F\subset\Gamma$ and $\ep>0$ 
there exists an $(F,\ep)$-invariant finite subset $K\subset\Gamma$. 
\item The class of elementary amenable groups is defined as 
the smallest family of groups containing all finite groups and 
all abelian groups, and closed under the processes of 
taking subgroups, quotients, group extensions and increasing unions. 
\end{enumerate}
\end{definition}

For countable discrete amenable groups, 
we introduce the property (Q) as follows. 

\begin{definition}\label{DefofQ}
Let $\Gamma$ be a countable discrete amenable group and 
let $\alpha:\Gamma\curvearrowright R$ be an outer action 
on the AFD II$_1$-factor $R$. 
We say that $\Gamma$ has the property (Q) if the following holds: 
For any finite subset $F\subset\Gamma$ and $\ep>0$, 
there exists an $(F,\ep)$-invariant finite subset $K\subset\Gamma$ 
and a sequence of projections $(p_n)_n$ in $R$ such that 
\[
\left\lVert1-\sum_{g\in K}\alpha_g(p_n)\right\rVert_2\to0\quad\text{and}\quad 
\lVert[x,p_n]\rVert_2\to0\quad \forall x\in R
\]
as $n\to\infty$. 
\end{definition}

The definition above does not depend on the choice of 
the outer action $\alpha:\Gamma\curvearrowright R$, 
because all outer actions of $\Gamma$ on $R$ are mutually cocycle conjugate 
(\cite{C75Ann,J80Mem,O,Msd}). 
In Proposition \ref{EG>Q}, it will be shown that 
any elementary amenable group has the property (Q). 

Next we introduce the notions of strong outerness, 
the weak Rohlin property and the tracial Rohlin property 
for group actions on $C^*$-algebras. 

\begin{definition}[{\cite[Definition 2.7]{MS}}]\label{so&wR}
Let $A$ be a unital simple $C^*$-algebra with $T(A)$ non-empty and 
let $\Gamma$ be a discrete countable amenable group. 
\begin{enumerate}
\item We say that an automorphism $\alpha\in\Aut(A)$ 
is not weakly inner for $\tau\in T(A)^\alpha$ 
if the weak extension of $\alpha$ to an automorphism of $\pi_\tau(A)''$ 
is outer. 
\item A cocycle action $(\alpha,u)$ of $\Gamma$ on $A$ 
is said to be strongly outer 
if $\alpha_g$ is not weakly inner 
for any $\tau\in T(A)^{\alpha_g}$ and $g\in\Gamma\setminus\{e\}$. 
\item We say that a cocycle action $(\alpha,u):\Gamma\curvearrowright A$ has 
the weak Rohlin property 
if for any finite subset $F\subset\Gamma$ and $\ep>0$ 
there exist an $(F,\ep)$-invariant finite subset $K\subset\Gamma$ and 
a central sequence $(f_n)_n$ in $A$ such that $0\leq f_n\leq1$, 
\[
\lim_{n\to\infty}\lVert\alpha_g(f_n)\alpha_h(f_n)\rVert=0
\]
for all $g,h\in K$ with $g\neq h$ and 
\[
\lim_{n\to\infty}
\max_{\tau\in T(A)}\lvert\tau(f_n)-\lvert K\rvert^{-1}\rvert=0. 
\]
When $G=\Z$, in addition to the conditions above, 
we further impose the restriction that 
$K$ is of the form $\{0,1,\dots,k\}$ for some $k\in\N$. 
\item Let $A$ be a unital simple $C^*$-algebra with tracial rank zero. 
We say that a cocycle action $(\alpha,u):\Gamma\curvearrowright A$ has 
the tracial Rohlin property 
if the central sequence $(f_n)_n$ in the definition above can be chosen 
as a central sequence consisting of projections. 
\end{enumerate}
\end{definition}

We remark that 
the definition of the tracial Rohlin property which we have introduced here 
is slightly stronger than that given in \cite{OP06ETDS2,P}

\subsection{The property (SI)}

We give the definition of the property (SI) 
which plays an important role in Section 4. 
The original definition given in \cite{S10JFA} is 
slightly different from this version, 
but they are actually equivalent 
(see the discussion following \cite[Definition 4.1]{MS}). 

\begin{definition}[{\cite[Definition 4.1]{MS}}]\label{DefofSI}
We say that a separable $C^*$-algebra $A$ has the property (SI) 
when for any central sequences $(x_n)_n$ and $(y_n)_n$ in $A$ satisfying 
$0\leq x_n\leq1$, $0\leq y_n\leq1$, 
\[
\lim_{n\to\infty}\max_{\tau\in T(A)}\tau(x_n)=0\quad\text{and}\quad 
\inf_{m\in\N}\liminf_{n\to\infty}\min_{\tau\in T(A)}\tau(y_n^m)>0, 
\]
there exists a central sequence $(s_n)_n$ in $A$ such that 
\[
\lim_{n\to\infty}\lVert s_n^*s_n-x_n\rVert=0\quad\text{and}\quad 
\lim_{n\to\infty}\lVert y_ns_n-s_n\rVert=0. 
\]
\end{definition}

\begin{remark}\label{RemofSI}
The property (SI) introduced in the definition above 
can be reformulated in terms of $\omega$-central sequences. 
Namely, $A$ has the property (SI) if and only if 
for any $\omega$-central sequences $(x_n)_n$ and $(y_n)_n$ of 
positive contractions in $A$ satisfying 
\[
\lim_{n\to\omega}\max_{\tau\in T(A)}\tau(x_n)=0\quad\text{and}\quad 
\inf_{m\in\N}\lim_{n\to\omega}\min_{\tau\in T(A)}\tau(y_n^m)>0, 
\]
there exists an $\omega$-central sequence $(s_n)_n$ in $A$ such that 
\[
\lim_{n\to\omega}\lVert s_n^*s_n-x_n\rVert=0\quad\text{and}\quad 
\lim_{n\to\omega}\lVert y_ns_n-s_n\rVert=0. 
\]
Indeed, these two formulations are both equivalent to the following condition: 
for any finite subset $F\subset A$, $\ep>0$ and $c>0$, 
there exist a finite subset $G\subset A$, $\delta>0$ and $m\in\N$ 
such that the following holds. 
If positive contractions $x,y\in A$ satisfy 
\[
\lVert[x,a]\rVert<\delta,\quad \lVert[y,a]\rVert<\delta\quad\forall a\in G, 
\]
\[
\max_{\tau\in T(A)}\tau(x)<\delta\quad\text{and}\quad 
\min_{\tau\in T(A)}\tau(y^m)>c, 
\]
then one can find $s\in A$ such that 
\[
\lVert[s,a]\rVert<\ep\quad\forall a\in F,\quad 
\lVert s^*s-x\rVert<\ep\quad\text{and}\quad \lVert ys-s\rVert<\ep. 
\]
\end{remark}

In \cite[Lemma 4.3]{MS}, we proved that 
certain $\mathcal{Z}$-stable $C^*$-algebras have the property (SI). 
Recently we have generalized this result and obtained the following 
(\cite{MS2}). 

\begin{theorem}[{\cite[Theorem 1.1, Theorem 4.2]{MS2}}]\label{Z=SI}
Let $A$ be a unital, simple, separable, nuclear, stably finite, 
infinite dimensional $C^*$-algebra. 
\begin{enumerate}
\item If $A$ is $\mathcal{Z}$-stable, then $A$ has the property (SI). 
\item Assume further that $A$ has finitely many extremal tracial states. 
If $A$ has the property (SI), then $A$ is $\mathcal{Z}$-stable. 
\end{enumerate}
\end{theorem}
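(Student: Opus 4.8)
The plan is to treat the two implications separately, since they rest on rather different machinery.

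For (1), I would first recall that $\mathcal{Z}$-stability implies strict comparison of positive elements (Rørdam), and exploit that from $A\cong A\otimes\mathcal{Z}$ the central sequence algebra $A_\omega$ absorbs a unital copy of $\mathcal{Z}$ and inherits a well-behaved comparison theory. Given central sequences $(x_n)_n$ and $(y_n)_n$ as in the hypothesis, I would pass to $A^\omega$ and form the central elements $x,y\in A_\omega$; the trace conditions become $\tau_\omega(x)=0$ for every limit trace $\tau_\omega$ while $\inf_m\tau_\omega(y^m)>0$ uniformly. The point is then to realize, inside $A_\omega$, an element $s$ with $s^*s=x$ and $ys=s$: the vanishing of $\tau_\omega(x)$ means that $x$ sits below the full-support element $y$ in trace, and the copy of $\mathcal{Z}$ in $A_\omega$ (together with strict comparison) provides enough room to produce such an $s$ exactly, rather than merely up to Cuntz subequivalence. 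A standard reindexing then yields the desired central sequence $(s_n)_n$, giving (SI). This direction needs no hypothesis on the number of traces.

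For (2), the goal is to produce a unital $*$-homomorphism $\mathcal{Z}\to A_\omega$; by the characterization of $\mathcal{D}$-stability for strongly self-absorbing $\mathcal{D}$ \cite{TW07TAMS}, this forces $A\cong A\otimes\mathcal{Z}$. I would reduce this further to the construction, for each $k$, of a c.p.c.\ order zero map $\phi:M_k\to A_\omega$ whose defect $1-\phi(1_{M_k})$ can be absorbed into the corner $\phi(e_{11})$, which is precisely the data assembling the generators of a prime dimension drop algebra inside the central sequence algebra. Here the finiteness of the extremal trace space enters crucially: since $T(A)$ has finitely many extremal points, the tracial von Neumann completion $\pi_\tau(A)''$ at each extremal $\tau$ is, by nuclearity (hence injectivity, via Haagerup \cite{H83Invent} and Connes \cite{C75Ann}) together with simplicity, stable finiteness and infinite dimensionality, the hyperfinite II$_1$-factor $R$, and one can work with the finite direct sum of these under uniform trace control.

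On the von Neumann side I would use that $R$ is McDuff, so its tracial central sequence algebra contains matrix units; this yields, simultaneously over the finitely many extremal traces, an order zero copy of $M_k$ in the central sequences of $\pi_\tau(A)''$ realizing the correct tracial values. The difficulty is that these central sequences are central only in $\lVert\cdot\rVert_2$, so lifting them to bounded sequences in $A$ produces an order zero map $\psi:M_k\to A_\omega$ that is honest in norm but whose unit defect $1-\psi(1_{M_k})$ is merely small in every limit trace, while $\psi(e_{11})$ retains uniformly positive trace. This is exactly the configuration to which the property (SI) applies: taking $x$ to be a cutdown of the defect and $y$ to be $\psi(e_{11})$, (SI) furnishes $s\in A_\omega$ with $s^*s\approx x$ and $\psi(e_{11})s\approx s$, allowing the defect to be pushed into a single matrix corner and thereby upgrading $\psi$ to a genuine building block of $\mathcal{Z}$.

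I expect the main obstacle to be precisely this bridge from the weak $\lVert\cdot\rVert_2$-central sequences of the II$_1$-factors to norm-central sequences in $A_\omega$: it is the step where property (SI) is indispensable, and it is also where the finite-trace hypothesis is genuinely used, since uniform smallness of the defect across all extremal traces underlies the applicability of (SI) and would break down for infinitely many extremal traces. Once the order zero building blocks are in place, assembling them into a unital copy of $\mathcal{Z}$ in $A_\omega$ and invoking \cite{TW07TAMS} to conclude $A\cong A\otimes\mathcal{Z}$ should be routine.
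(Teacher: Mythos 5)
Your plan is essentially the paper's own argument: part (1) is not reproved here but quoted from \cite{MS2}, and part (2) is exactly the $\Gamma=\{1\}$ case of Theorem \ref{wR>Z} --- matrix units from the McDuff central sequence algebra of the finite direct sum of hyperfinite II$_1$-factors, a lift to an order zero map into $A_\omega$ via projectivity of $C_0((0,1])\otimes M_k$, absorption of the tracially small unit defect by (SI), assembly into dimension drop algebras, and the standard strongly self-absorbing intertwining. One small correction of emphasis: the bridge from $\lVert\cdot\rVert_2$-central sequences in the von Neumann completion to norm-central sequences in $A$ is accomplished by Haagerup's amenability theorem (Proposition \ref{Haagerup2}, giving surjectivity of $\rho$ in Theorem \ref{exact}), not by (SI); the property (SI) enters only afterwards, to absorb the unit defect $1-\tilde\phi(\iota^2\otimes 1)$, which lies in the trace-kernel ideal $J$, into the corner $\tilde\phi(\iota\otimes e_{1,1})$.
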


In Section 4, 
we will give a slightly different proof of (2) of Theorem \ref{Z=SI} 
(Theorem \ref{wR>Z}).

\subsection{Approximate representability of cocycle actions}

In this subsection, we collect notations and terminologies 
which will be used in Section 6 and Section 7. 

For a Lipschitz continuous function $f$, 
we denote its Lipschitz constant by $\Lip(f)$. 

Let $A$ be a unital $C^*$-algebra. 
The collection of all continuous bounded affine maps 
from $T(A)$ to $\R$ is denoted by $\Aff(T(A))$. 
The dimension map 
\[
D_A:K_0(A)\to\Aff(T(A))
\]
is defined by $D_A([p])(\tau)=\tau(p)$. 
The connected component of the identity in $U(A)$ is denoted by $U(A)_0$. 
For $\tau\in T(A)$, 
the de la Harpe-Skandalis determinant (\cite{HS}) associated with $\tau$ is 
written 
\[
\Delta_\tau:U(A)_0\to\R/\tau(K_0(A)). 
\]
We let $\log$ be the standard branch 
defined on the complement of the negative real axis. 
If $\lVert u-1\rVert<2$, 
then $\Delta_\tau(u)=(2\pi\sqrt{-1})^{-1}\tau(\log u)+\tau(K_0(A))$. 
We frequently use the following fact: 
when $u,v\in U(A)$ satisfy $\lVert u-1\rVert+\lVert v-1\rVert<2$, 
one has $\tau(\log(uv))=\tau(\log u)+\tau(\log v)$ 
for any $\tau\in T(A)$ (see \cite[Lemma 1]{HS}). 

For a homomorphism $\phi$ between $C^*$-algebras, 
$K_0(\phi)$ and $K_1(\phi)$ are the induced homomorphisms on $K$-groups. 
Let $A$ and $B$ be unital $C^*$-algebras. 
Two unital homomorphisms $\phi,\psi:A\to B$ are said to be 
asymptotically unitarily equivalent 
if there exists a continuous family of unitaries 
$(u_t)_{t\in[0,\infty)}$ in $B$ such that 
\[
\phi(a)=\lim_{t\to\infty}\Ad u_t(\psi(a))
\]
for all $a\in A$. 
When there exists a sequence of unitaries $(u_n)_{n\in\N}$ in $B$ 
such that 
\[
\phi(a)=\lim_{n\to\infty}\Ad u_n(\psi(a))
\]
for all $a\in A$, 
$\phi$ and $\psi$ are said to be approximately unitarily equivalent. 
If $\phi$ and $\psi$ are approximately unitarily equivalent, 
then $K_i(\phi)=K_i(\psi)$ for $i=0,1$. 
An automorphism $\alpha\in\Aut(A)$ is said to be 
asymptotically (resp. approximately) inner 
if $\alpha$ is asymptotically (resp. approximately) 
unitarily equivalent to the identity map. 
As usual, we denote by $\aInn(A)$ 
the set of all approximately inner automorphisms of $A$. 
Evidently $\aInn(A)$ is a normal subgroup of $\Aut(A)$. 

Let $\phi\in\Aut(A)$ be an automorphism of a unital $C^*$-algebra $A$. 
The implementing unitary in the crossed product $C^*$-algebra 
$A\rtimes_\phi\Z$ is written $\lambda^\phi$. 
The set of all automorphisms $\psi\in\Aut(A\rtimes_\phi\Z)$ satisfying 
$\psi(A)=A$ and $\psi(\lambda^\phi)\lambda^{\phi*}\in A$ 
is denoted by $\Aut_\T(A\rtimes_\phi\Z)$ 
(similar definitions are found 
in \cite[Section 2]{IM} and \cite[Section 2]{M11crelle}). 
An automorphism $\psi\in\Aut_\T(A\rtimes_\phi\Z)$ 
is said to be $\T$-asymptotically inner 
if there exists a continuous family of unitaries 
$(u_t)_{t\in[0,\infty)}$ in $A$ such that 
\[
\psi(x)=\lim_{t\to\infty}\Ad u_t(x)
\]
for all $x\in A\rtimes_\phi\Z$. 
In an analogous way, one can define $\T$-approximate innerness. 

Next, we recall the notion of 
approximate (resp. asymptotical) representability of group actions. 

\begin{definition}[{\cite[Definition 2.2]{IM}}]\label{apprepre}
Let $\Gamma$ be a countable discrete group and 
let $A$ be a unital $C^*$-algebra. 
A cocycle action $(\alpha,u):\Gamma\curvearrowright A$ is said to be 
approximately representable 
if there exists a family of unitaries $(v_g)_{g\in\Gamma}$ in $A^\infty$ 
such that 
\[
v_gv_h=u(g,h)v_{gh},\quad 
\alpha_g(v_h)=u(g,h)u(ghg^{-1},g)^*v_{ghg^{-1}}
\]
and 
\[
v_gxv_g^*=\alpha_g(x)
\]
hold for all $g,h\in\Gamma$ and $x\in A$. 

Asymptotical representability is defined in an analogous way. 
\end{definition}

It is routine to check that 
approximate (resp. asymptotical) representability is preserved 
under cocycle conjugacy. 

The following proposition lists non-trivial examples of 
strongly outer, asymptotically representable actions 
on stably finite $C^*$-algebras. 

\begin{proposition}\label{examples}
Let $\alpha:\Gamma\curvearrowright A$ be a strongly outer action. 
Assume that one of the following conditions holds. 
\begin{enumerate}
\item $\Gamma=\Z$, 
$\alpha_g$ is asymptotically inner for every non-trivial $g\in\Z$, and 
$A$ is a unital simple AH algebra with real rank zero, slow dimension growth 
and finitely many extremal tracial states. 
\item $\Gamma=\Z^N$ and $A$ is a UHF algebra of infinite type. 
\item $\Gamma=\Z$ and $A$ is the Jiang-Su algebra. 
\item $\Gamma=\Z^2$ and $A$ is the Jiang-Su algebra. 
\end{enumerate}
Then $\alpha:\Gamma\curvearrowright A$ is asymptotically representable. 
\end{proposition}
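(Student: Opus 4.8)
The plan is to prove Proposition \ref{examples} by reducing each case to a known classification or structural result, since asymptotic representability is a statement about the existence of unitaries in $A^\infty$ implementing the action in a cocycle-compatible way. The organizing principle is that for actions with sufficiently strong approximate innerness, asymptotic representability is essentially the dual formulation of a Rohlin-type property, and the four cases correspond exactly to settings where such results are already in the literature. For case (3), where $\Gamma = \Z$ and $A = \mathcal{Z}$, I would invoke the second-named author's uniqueness theorem \cite{S10JFA}: strongly outer $\Z$-actions on $\mathcal{Z}$ are unique up to (strong) cocycle conjugacy, hence cocycle conjugate to a model action that is manifestly asymptotically representable (for instance a tensor-shift or inductive-limit model), and then use that asymptotic representability is preserved under cocycle conjugacy as remarked just above. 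Case (4), $\Gamma = \Z^2$ on $\mathcal{Z}$, would be handled identically using the uniqueness result from the authors' previous paper \cite{MS}.

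For case (2), with $\Gamma = \Z^N$ and $A$ a UHF algebra of infinite type, I would appeal to the first-named author's classification of strongly outer $\Z^N$-actions on UHF algebras of infinite type \cite{M11crelle}, which establishes that such actions have a suitable Rohlin property and are unique up to cocycle conjugacy; again one picks an explicit model action that is asymptotically representable and transports the property across the cocycle conjugacy. For case (1), with $\Gamma = \Z$ and $A$ a unital simple AH algebra of real rank zero and slow dimension growth with finitely many extremal traces, the extra hypothesis that $\alpha_g$ is asymptotically inner for every nontrivial $g$ is exactly what is needed: the Rohlin-type classification of Kishimoto and the extension by the first-named author \cite{M10CMP} applies, and the asymptotic innerness assumption lets one produce the implementing unitaries in $A^\infty$ as asymptotic limits rather than merely approximate ones, yielding asymptotical (not just approximate) representability.

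In each case the mechanism is the same: a single automorphism (or $\Z^N$-action) that is strongly outer and approximately inner in the appropriate sense possesses a Rohlin tower, and dualizing the Rohlin tower across the crossed-product picture produces the family $(v_g)$ satisfying the cocycle and intertwining relations of Definition \ref{apprepre} inside $A^\infty$. The requirement $v_g x v_g^* = \alpha_g(x)$ for $x \in A$ comes from approximate innerness, while the relations $v_g v_h = u(g,h) v_{gh}$ and $\alpha_g(v_h) = u(g,h) u(ghg^{-1},g)^* v_{ghg^{-1}}$ are verified by a direct computation once the unitaries are chosen compatibly with the tower.

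The main obstacle I anticipate is case (1), where one must genuinely produce asymptotically (rather than approximately) representable structure, so the continuous path of unitaries must be assembled carefully from the asymptotic innerness of each $\alpha_g$ together with a Rohlin tower that varies continuously; the interplay between the continuity in $t$ and the approximation in $A^\infty$ is delicate. The other three cases are more routine, resting entirely on transporting an explicit model along an already-established cocycle conjugacy, so the real content is confirming that the chosen models are asymptotically representable and that the invariance under cocycle conjugacy applies verbatim to the asymptotic version.
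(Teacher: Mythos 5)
Your proposal follows essentially the same route as the paper's proof: in each case one constructs an explicit strongly outer, asymptotically representable model action, invokes the relevant uniqueness theorem (\cite[Theorems 4.8, 4.9]{M10CMP} for (1), \cite[Theorems 3.4, 4.5]{M11crelle} for (2), \cite[Theorem 1.2]{S09} together with \cite[Theorem 1.3]{S10JFA} for (3), and \cite[Theorem 7.9]{MS} for (4)) to obtain cocycle conjugacy to the model, and transports asymptotic representability across the cocycle conjugacy. The worry you raise about case (1) being delicate is unnecessary, since the same model-and-transport mechanism handles it once the asymptotic innerness hypothesis puts the action within the scope of the classification theorem.
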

\begin{proof}
(1) follows from \cite[Theorem 4.8, 4.9]{M10CMP}, 
because one can easily construct a strongly outer, 
asymptotically representable action of $\Z$ on $A$. 
(2) follows from \cite[Theorem 3.4, 4.5]{M11crelle}. 
(3) follows from \cite[Theorem 1.2]{S09} and \cite[Theorem 1.3]{S10JFA}, 
because one can easily construct a strongly outer, 
asymptotically representable action of $\Z$ on $\mathcal{Z}$. 
In a similar fashion, (4) follows from \cite[Theorem 7.9]{MS}. 
\end{proof}

\section{The weak Rohlin property}

In this section we prove Theorem \ref{so=wR} and Theorem \ref{so=trR}. 
See Definition \ref{DefofQ} for the definition of the property (Q). 

\begin{lemma}\label{Q}
\begin{enumerate}
\item The group of integers $\Z$ has the property (Q). 
\item Any finite group has the property (Q). 
\item If $\Gamma=\bigcup_n\Gamma_n$ is an increasing union of 
countable discrete amenable groups $\Gamma_n$ with the property (Q), 
then $\Gamma$ also has the property (Q). 
\item If $1\to N\to\Gamma\to H\to1$ is a short exact sequence of 
countable discrete amenable groups and $N,H$ have the property (Q), 
then $\Gamma$ also has the property (Q). 
\item Any countable abelian group has the property (Q). 
\end{enumerate}
\end{lemma}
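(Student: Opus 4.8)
The plan is to treat the five assertions in the given order, exploiting that every clause in the definition of property (Q) is really a statement in the central sequence algebra $R_\omega=R^\omega\cap R'$: writing $p=(p_n)_n$, the condition $\lVert[x,p_n]\rVert_2\to0$ says exactly that $p$ is a projection of $R_\omega$ for the extended action, while $\lVert1-\sum_{g\in K}\alpha_g(p_n)\rVert_2\to0$ forces $\{\alpha_g(p)\}_{g\in K}$ to be an \emph{orthogonal} family of projections in $R_\omega$ with $\sum_{g\in K}\alpha_g(p)=1$ (the orthogonality is automatic, since summing the diagonal traces already gives $1$). Thus property (Q) amounts to producing, for each pair $(F,\ep)$, an $(F,\ep)$-invariant $K$ together with an exact $K$-indexed Rohlin tower in $R_\omega$, after which one recovers $(p_n)_n$ by choosing a representing central sequence. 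I would then observe that (5) is a formal consequence of (1)--(4): a countable abelian group is the increasing union of its finitely generated subgroups, each finitely generated abelian group is a finite extension of some $\Z^r$, and $\Z^r$ is built from $\Z$ by the iterated extensions $1\to\Z^{r-1}\to\Z^r\to\Z\to1$; so (5) follows by combining (1), (2), (4) and (3). The genuine content therefore lies in (1)--(4).

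For (1) and (2) I would pass to a convenient model action, which is legitimate because all outer actions of $\Gamma$ on $R$ are mutually cocycle conjugate and property (Q) is visibly a cocycle-conjugacy invariant. For $\Gamma=\Z$ the generator is an aperiodic automorphism, and Connes' Rohlin lemma for aperiodic automorphisms supplies, for each $n$, the required central Rohlin tower over a long interval $K=\{0,1,\dots,n-1\}$, which is $(F,\ep)$-invariant once $n$ is large. For a finite group $\Gamma$ one simply takes $K=\Gamma$ (trivially $(F,\ep)$-invariant), and the equivariant partition of unity $\{\alpha_g(p)\}_{g\in\Gamma}$ in $R_\omega$ is precisely the Rohlin property of an outer finite-group action on $R$ (Jones, Ocneanu).

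Statement (3) should be routine. Given a finite $F\subset\Gamma=\bigcup_n\Gamma_n$ we have $F\subset\Gamma_N$ for some $N$; the restriction of the outer $\Gamma$-action to $\Gamma_N$ is again outer, so property (Q) for $\Gamma_N$ yields $K\subset\Gamma_N\subset\Gamma$ and a tower. One need only note that $(F,\ep)$-invariance of $K$ is computed identically inside $\Gamma_N$ and inside $\Gamma$, and that the defining relations of (Q) involve only $\Gamma_N$-translates, so they transfer verbatim.

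The crux is (4). Given $1\to N\to\Gamma\to H\to1$, a finite $F\subset\Gamma$ and $\ep>0$, I would first apply (Q) for $N$ to obtain an $(F_N,\ep_N)$-invariant $K_N\subset N$ with a central Rohlin tower for $\alpha|_N$, and then apply (Q) for $H$ to the outer action that $H$ induces on the relative commutant of this $N$-tower in $R_\omega$, obtaining an $(F_H,\ep_H)$-invariant $K_H\subset H$. Lifting $K_H$ to a set $\tilde K_H$ of coset representatives in $\Gamma$ and setting $K=K_N\tilde K_H$, a standard Følner estimate shows $K$ is $(F,\ep)$-invariant for appropriate $F_N,\ep_N,F_H,\ep_H$, and the product of the two towers is a $K$-indexed tower in $R_\omega$. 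The main obstacle is exactly here: the section $H\to\Gamma$ is not a homomorphism, so the $H$-action on the commutant of the $N$-tower is a priori only a \emph{cocycle} action, and one must arrange the two towers to be genuinely compatible --- the $H$-tower lying in, and hence orthogonal to, the relative commutant of the $N$-tower, with the extension cocycle absorbed --- while also checking that the induced $H$-action is outer so that (Q) for $H$ applies (again using that outer cocycle actions of $H$ on $R$ satisfy (Q) by cocycle-conjugacy invariance). Once this compatibility and outerness are established, the remaining work is the routine Følner and $\lVert\cdot\rVert_2$ bookkeeping.
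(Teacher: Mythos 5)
Parts (1), (2), (3) and (5) of your proposal match the paper: (1) and (2) are exactly Connes' and Jones' Rohlin-type theorems for $\Z$ and for finite groups, (3) is immediate for the reason you give, and (5) is deduced from the others via finitely generated abelian groups just as in the paper. The problem is part (4), where you correctly locate the obstacle --- the section $\sigma:H\to\Gamma$ is not a homomorphism, so $H$ does not act on the relative commutant of the $N$-tower, and you would have to absorb the extension cocycle, verify outerness of the induced cocycle action, and force the two towers to be compatible --- but you do not resolve it; the sentence ``once this compatibility and outerness are established, the remaining work is routine'' leaves precisely the hard step open. It is not clear that this route can be completed as stated: the $\Gamma$-action on $R_\omega$ does not descend to anything on which property (Q) for $H$ can be directly invoked, and constructing an approximately $\alpha|_N$-invariant $H$-tower inside the original copy of $R$ from scratch is essentially the whole difficulty.

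The paper's proof dissolves this obstacle by a change of model rather than by working inside the given action. Since property (Q) may be tested on any single outer action of $\Gamma$ (all are mutually cocycle conjugate), one takes an auxiliary outer action $\beta:H\curvearrowright R$ and defines the \emph{genuine} outer action $\gamma_s=\alpha_s\otimes\beta_{\pi(s)}$ of $\Gamma$ on $R\,\bar\otimes\,R$, where $\pi:\Gamma\to H$ is the quotient map; no section and no cocycle ever enter. The $N$-tower $(p_n\otimes 1)$ coming from $\alpha|_N$ and the $H$-tower $(1\otimes q_n)$ coming from $\beta$ live in different tensor factors, so compatibility and orthogonality are automatic: $1\otimes q_n$ is exactly fixed by $\gamma_g$ for $g\in N$, and $\gamma_{\sigma(h)}(1\otimes q_n)=1\otimes\beta_h(q_n)$ independently of the choice of representative $\sigma(h)$. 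The tower for $K=\{\sigma(h)g\mid g\in K_1,\ h\in K_2\}$ is then simply $p_n\otimes q_n$, and the only remaining content is the F\o lner estimate for $K$, which requires enlarging the invariance request on $K_1\subset N$ by the conjugates $\sigma(h)^{-1}g\sigma(h)$ and the section defects $\sigma(h_2h_1)^{-1}\sigma(h_2)\sigma(h_1)$ --- the bookkeeping you correctly describe as routine. You should either supply this tensor-product construction or find another way to produce the compatible $H$-tower; as written, the argument for (4) is incomplete.
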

\begin{proof}
(1) follows from \cite{C75Ann} and (2) follows from \cite{J80Mem}. 
(3) is obvious. 
(5) is an immediate consequence of the other assertions, 
because any countable abelian group is an increasing union of 
finitely generated abelian groups. 

It remains for us to show (4). 
Let $N$ and $H$ be countable discrete amenable groups with the property (Q) 
and let $1\to N\to\Gamma\to H\to1$ be a short exact sequence. 
Let $\pi:\Gamma\to H$ be the quotient map and 
let $\sigma:H\to\Gamma$ be its right inverse. 
Let $\alpha:\Gamma\curvearrowright R$ be an outer action of $\Gamma$ 
on the AFD II$_1$-factor $R$ and 
let $\beta:H\curvearrowright R$ be an outer action of $H$ on $R$. 
Suppose that we are given a finite subset $F\subset\Gamma$ and $\ep>0$. 
Without loss of generality, we may assume that 
there exist finite subsets $F_1\subset N$ and $F_2\subset H$ 
such that $F=F_1\cup\sigma(F_2)$. 
Since $H$ has the property (Q), 
we can find an $(F_2,\ep/2)$-invariant finite subset $K_2\subset H$ 
and a sequence of projections $(q_n)_n$ in $R$ satisfying 
\[
\left\lVert1-\sum_{h\in K_2}\beta_h(q_n)\right\rVert_2\to0\quad\text{and}\quad 
\lVert[x,q_n]\rVert_2\to0\quad \forall x\in R. 
\]
Define a finite subset $\tilde F_1\subset N$ by 
\[
\tilde F_1=\{\sigma(h)^{-1}g\sigma(h)\mid h\in K_2,\ g\in F_1\}
\cup\{\sigma(h_2h_1)^{-1}\sigma(h_2)\sigma(h_1)\mid h_1\in K_2,\ h_2\in F_2\}. 
\]
Since $N$ has the property (Q), 
we can find an $(\tilde F_1,\ep/2)$-invariant finite subset 
$K_1\subset N$ and a sequence of projections $(p_n)_n$ in $R$ satisfying 
\[
\left\lVert1-\sum_{g\in K_1}\alpha_g(p_n)\right\rVert_2\to0
\quad\text{and}\quad 
\lVert[x,p_n]\rVert_2\to0\quad \forall x\in R. 
\]
Put $K=\{\sigma(h)g\in\Gamma\mid g\in K_1,\ h\in K_2\}$. 
For any $t\in F_1$, $h\in K_2$ and 
$g\in K_1\cap\bigcap_{s\in\tilde F_1}s^{-1}K_1$, one has 
\[
t\sigma(h)g=\sigma(h)\sigma(h)^{-1}t\sigma(h)g\in K. 
\]
Also, for any $t\in F_2$, $h\in K_2\cap\bigcap_{s\in F_2}s^{-1}K_2$ and 
$g\in K_1\cap\bigcap_{s\in\tilde F_1}s^{-1}K_1$, one has 
\[
\sigma(t)\sigma(h)g=\sigma(th)\sigma(th)^{-1}\sigma(t)\sigma(h)g\in K. 
\]
Hence 
\begin{align*}
\left\lvert K\cap\bigcap_{t\in F}t^{-1}K\right\rvert
&\geq\left\lvert K_2\cap\bigcap_{s\in F_2}s^{-1}K_2\right\rvert\cdot
\left\lvert K_1\cap\bigcap_{s\in\tilde F_1}s^{-1}K_1\right\rvert\\
&\geq(1{-}\ep/2)\lvert K_2\rvert\cdot(1{-}\ep/2)\lvert K_1\rvert
>(1{-}\ep)\lvert K\rvert, 
\end{align*}
thus $K$ is $(F,\ep)$-invariant. 

Define an outer action $\gamma:\Gamma\curvearrowright R\bar\otimes R$ 
by $\gamma_s=\alpha_s\otimes\beta_{\pi(s)}$ for $s\in\Gamma$. 
It is easy to see 
\[
\lim_{n\to\infty}
\lVert[x,p_n\otimes q_n]\rVert_2=0\quad \forall x\in R\bar\otimes R
\]
and 
\begin{align*}
\lim_{n\to\infty}
\left\lVert1-\sum_{s\in K}\gamma_s(p_n\otimes q_n)\right\rVert_2
&=\lim_{n\to\infty}\left\lVert1-\sum_{h\in K_2,g\in K_1}
\gamma_{\sigma(h)}(\alpha_g(p_n)\otimes q_n)\right\rVert_2\\
&=\lim_{n\to\infty}\left\lVert1-\sum_{h\in K_2}
1\otimes\beta_h(q_n)\right\rVert_2
=0, 
\end{align*}
which completes the proof. 
\end{proof}

\begin{remark}
Let $N$ be a countable discrete amenable group with the property (Q) and 
let $1\to N\to\Gamma\to\Z\to1$ be a short exact sequence. 
Thus, $\Gamma$ is isomorphic to a semidirect product $N\rtimes\Z$. 
Letting $a\in\Z$ denote a generator, 
we write $\Gamma=\{ga^i\mid g\in N,\ i\in\Z\}$. 
From the lemma above, $\Gamma$ has the property (Q). 
Moreover, its proof tells us that 
the `invariant' set in $\Gamma$ can be chosen of the form 
$\{ga^i\mid g\in K,\ 0\leq i\leq k{-}1\}$, 
where $K\subset N$ is an `invariant' set. 
More precisely, we get the following. 
Let $\alpha:\Gamma\curvearrowright R$ be 
an outer action of $\Gamma$ on the hyperfinite II$_1$-factor $R$. 
Then, for any finite subset $F\subset N$, $\ep>0$ and $k\in\N$, 
there exists an $(F,\ep)$-invariant finite subset $K\subset N$ and 
a sequence $(p_n)_n$ of projections in $R$ such that 
\[
\left\lVert1-\sum_{g\in K}\sum_{i=0}^{k-1}
\alpha_{ga^i}(p_n)\right\rVert_2\to0\quad\text{and}\quad 
\lVert[x,p_n]\rVert_2\to0\quad \forall x\in R
\]
as $n\to\infty$. 
\end{remark}

\begin{proposition}\label{EG>Q}
Every countable discrete elementary amenable group has the property (Q). 
\end{proposition}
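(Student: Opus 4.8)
The plan is to deduce Proposition \ref{EG>Q} from Lemma \ref{Q} by invoking the standard transfinite description of the class of elementary amenable groups. Although $\mathrm{EG}$ is defined through four closure operations (subgroups, quotients, extensions, increasing unions), a classical structural result on elementary amenable groups (due to C. Chou) shows that $\mathrm{EG}$ is already \emph{the smallest class of groups containing all finite and all abelian groups and closed under extensions and increasing unions alone}: the operations of passing to subgroups and to quotients are redundant. Since Lemma \ref{Q} supplies exactly the base cases --- finite groups in part (2) and abelian groups in part (5) --- together with closure under extensions (part (4)) and under increasing unions (part (3)), the proposition will follow by transfinite induction along this hierarchy. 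Every group that arises is a subquotient, extension, or increasing union of amenable groups, hence amenable, and remains countable throughout, so the hypotheses of Lemma \ref{Q} are met at each stage.

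Concretely, I would let $\mathrm{EG}_1$ be the class of all finite groups and all abelian groups, and define $\mathrm{EG}_\alpha$ by transfinite recursion: for a successor $\alpha=\beta+1$, let $\mathrm{EG}_\alpha$ consist of all groups that are either an extension $1\to N\to\Gamma\to H\to1$ with $N,H\in\mathrm{EG}_\beta$ or an increasing union of groups from $\mathrm{EG}_\beta$; and for a limit ordinal $\lambda$, set $\mathrm{EG}_\lambda=\bigcup_{\beta<\lambda}\mathrm{EG}_\beta$. The structural result guarantees $\mathrm{EG}=\bigcup_\alpha\mathrm{EG}_\alpha$. I would then show by transfinite induction on $\alpha$ that every countable group in $\mathrm{EG}_\alpha$ has the property (Q). The base case $\alpha=1$ is Lemma \ref{Q}(2) and (5). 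At a successor step an extension is handled by Lemma \ref{Q}(4) and an increasing union by Lemma \ref{Q}(3); here one first rewrites a countable directed union as an increasing union indexed by $\N$ by passing to a cofinal sequence exhausting the countably many elements, and notes that the constituent subgroups are countable and amenable, so that Lemma \ref{Q}(3) applies. Limit ordinals are immediate, since membership in $\mathrm{EG}_\lambda$ already places the group in some $\mathrm{EG}_\beta$ with $\beta<\lambda$.

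The main obstacle is precisely the structural fact that subgroups and quotients may be dropped from the list of generating operations: the property (Q) is not transparently inherited by subgroups or by quotients, and Lemma \ref{Q} offers no such closure, so a direct induction along the defining operations of $\mathrm{EG}$ is not available. I would justify the reduction through the transfinite induction underlying Chou's theorem: finite and abelian groups are closed under subgroups and quotients, and these two operations pass through the other two. Indeed, a subgroup of an extension $1\to N\to\Gamma\to H\to1$ is an extension of a subgroup of $H$ by a subgroup of $N$; a quotient of such an extension is an extension of a quotient of $H$ by a quotient of $N$; and both subgroups and quotients commute with increasing unions. Hence the class generated from finite and abelian groups by extensions and increasing unions is automatically closed under subgroups and quotients, and therefore coincides with $\mathrm{EG}$. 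Once this is in place, the remaining verifications --- amenability and countability of every intermediate group --- are routine, and the proposition follows.
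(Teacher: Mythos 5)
Your proposal is correct and is essentially the paper's own argument: the paper simply combines Lemma \ref{Q} with \cite[Proposition 2.2 (b)]{Chou}, which is exactly the structural fact you describe (that elementary amenable groups are generated from finite and abelian groups by extensions and increasing unions alone, arranged in a transfinite hierarchy). Your write-up just makes explicit the transfinite induction and the redundancy of subgroups and quotients that the citation to Chou encapsulates.
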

\begin{proof}
This follows from the lemma above and \cite[Proposition 2.2 (b)]{Chou}. 
\end{proof}

In order to state and prove Proposition \ref{Haagerup2} below, 
we need to recall U. Haagerup's result \cite[Theorem 3.1]{H83Invent}, 
which says that any nuclear $C^*$-algebra has 
a virtual diagonal in the sense of \cite{J72Amer}. 
In particular, any nuclear $C^*$-algebra is amenable. 
Here we quote the following theorem from \cite{FKK}, 
which is an interpretation of \cite[Theorem 3.1]{H83Invent}. 
See also \cite[Lemma 4.2]{KOS}. 

\begin{theorem}[{\cite[Theorem 2.1]{FKK}}]\label{Haagerup1}
Let $A$ be a unital nuclear $C^*$-algebra. 
For any finite subset $F\subset A$ and $\ep>0$, 
there exist $w_1,w_2,\dots,w_m\in A$ such that 
\[
\sum_{i=1}^mw_iw_i^*=1\quad\text{and}\quad 
\left\lVert\sum_{i=1}^maw_ixw_i^*-\sum_{i=1}^mw_ixw_i^*a\right\rVert
<\ep\lVert x\rVert\quad 
\forall a\in F,\ x\in A. 
\]
\end{theorem}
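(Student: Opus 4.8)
The plan is to deduce this from U. Haagerup's theorem, recalled above, that a nuclear $C^*$-algebra $A$ is amenable, i.e.\ possesses a virtual diagonal. Recall that a virtual diagonal is an element $M\in(A\hat\otimes A)^{**}$ of (the bidual of) the projective tensor product satisfying $a\cdot M=M\cdot a$ for all $a\in A$ and $m^{**}(M)=1$, where $m:A\hat\otimes A\to A$ denotes the multiplication map and the $A$-bimodule structure is $a\cdot(b\otimes c)=ab\otimes c$, $(b\otimes c)\cdot a=b\otimes ca$. The reformulation sought here is obtained by applying the contractive map $\Phi:A\hat\otimes A\to B(A)$ into the bounded operators on the Banach space $A$ given by $\Phi(b\otimes c)(x)=bxc$. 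A direct computation gives $\Phi(a\cdot u-u\cdot a)(x)=a\,\Phi(u)(x)-\Phi(u)(x)\,a$ and $\Phi(u)(1)=m(u)$, so that for $u=\sum_i w_i\otimes w_i^*$ the operator $T=\Phi(u)$ is exactly $T(x)=\sum_i w_ixw_i^*$, with $T(1)=\sum_i w_iw_i^*$ and $\lVert[a,T(x)]\rVert\le\lVert a\cdot u-u\cdot a\rVert_\wedge\,\lVert x\rVert$, where $\lVert\cdot\rVert_\wedge$ is the projective norm. Thus it suffices to produce a genuine element $u\in A\hat\otimes A$ of the special form $u=\sum_i w_i\otimes w_i^*$ with $\sum_i w_iw_i^*=1$ and $\lVert a\cdot u-u\cdot a\rVert_\wedge<\ep$ for every $a\in F$.

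The essential point, and the step I expect to be the main obstacle, is that the virtual diagonal can be taken to be a weak-$*$ limit of elements of this special positive form. This is where nuclearity, rather than mere amenability, is used: Haagerup's construction runs through completely positive approximations $A\to M_n\to A$, and the diagonal of a matrix algebra is the positive element $D_n=\tfrac1n\sum_{i,j}e_{ij}\otimes e_{ji}=\sum_{i,j}v_{ij}\otimes v_{ij}^*$ with $v_{ij}=n^{-1/2}e_{ij}$, which already has the desired shape and satisfies $\sum_{i,j}v_{ij}v_{ij}^*=1$. Transporting these positive matrix diagonals through the completely positive maps yields a net $(M_\lambda)$ in $A\hat\otimes A$, each of the form $M_\lambda=\sum_i w_i^\lambda\otimes(w_i^\lambda)^*$, whose weak-$*$ cluster point is a virtual diagonal. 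Granting this, the relations $a\cdot M=M\cdot a$ and $m^{**}(M)=1$ translate into $a\cdot M_\lambda-M_\lambda\cdot a\to0$ and $m(M_\lambda)\to1$ in the weak topology of $A\hat\otimes A$ and of $A$ respectively.

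To pass from weak to norm convergence I would apply Mazur's lemma in the Banach space $\big(\bigoplus_{a\in F}A\hat\otimes A\big)\oplus A$ to the net of defect vectors $\big((a\cdot M_\lambda-M_\lambda\cdot a)_{a\in F},\,m(M_\lambda)-1\big)$, which converges weakly to $0$; a suitable convex combination therefore converges to $0$ in norm. Crucially the special form is preserved under convex combinations, since $\sum_\lambda t_\lambda\sum_i w_i^\lambda\otimes(w_i^\lambda)^*=\sum_{\lambda,i}(\sqrt{t_\lambda}\,w_i^\lambda)\otimes(\sqrt{t_\lambda}\,w_i^\lambda)^*$. This produces a single $u=\sum_i w_i\otimes w_i^*$ with $\lVert a\cdot u-u\cdot a\rVert_\wedge$ arbitrarily small for $a\in F$ and $c:=\sum_i w_iw_i^*$ arbitrarily close to $1$. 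Finally I would enforce exact normalization: for $c$ close to $1$ it is positive and invertible, and replacing $w_i$ by $c^{-1/2}w_i$ gives $\sum_i(c^{-1/2}w_i)(c^{-1/2}w_i)^*=c^{-1/2}cc^{-1/2}=1$, while perturbing the commutator estimate only by terms controlled by $\lVert[a,c^{-1/2}]\rVert$ and $\lVert c^{-1/2}-1\rVert$, which are small because $c$ is close to $1$. Choosing the approximations fine enough that the total error stays below $\ep$, the map $T(x)=\sum_i w_ixw_i^*$ with these normalized $w_i$ satisfies the required conclusion.
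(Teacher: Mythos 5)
The paper does not prove this statement at all: it is quoted verbatim from \cite[Theorem 2.1]{FKK}, which the authors describe as ``an interpretation of'' Haagerup's Theorem 3.1, so there is no in-paper argument to compare against. Your reduction is nevertheless the intended one, and most of its steps are sound: the map $\Phi(b\otimes c)(x)=bxc$ is indeed contractive from $A\hat\otimes A$ to $B(A)$ and intertwines the bimodule actions with left/right multiplication; convex combinations do preserve the form $\sum_i w_i\otimes w_i^*$ via the $\sqrt{t_\lambda}$ trick; Mazur's lemma applies as you say because $u\mapsto a\cdot u-u\cdot a$ and $u\mapsto m(u)-1$ are affine; and the final renormalization by $c^{-1/2}$ works, using $\lVert\sum_i w_ixw_i^*\rVert\le\lVert x\rVert\,\lVert\sum_i w_iw_i^*\rVert$ to control the error.

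The soft spot is exactly the one you flag: the assertion that the virtual diagonal is a weak-$*$ cluster point of elements of the special positive form $\sum_i w_i\otimes w_i^*$. Mere amenability gives no such positivity, and your proposed justification --- transporting the matrix diagonal $\frac1n\sum_{i,j}e_{ij}\otimes e_{ji}$ through cpc approximations $A\to M_n\to A$ --- is not how Haagerup's argument runs and would not, by itself, yield the needed smallness of $\lVert a\cdot u-u\cdot a\rVert_{\wedge}$: controlling the commutator in the \emph{projective} norm is the hard content of his paper and rests on a separation argument together with his noncommutative Grothendieck-type inequality, not on approximate multiplicativity of the cp factorization. The gap closes, however, simply by citing Haagerup's Theorem 3.1 in the form in which it is actually stated: it produces a net $u_\lambda=\sum_i a_i^\lambda\otimes(a_i^\lambda)^*$ with $\sum_i a_i^\lambda(a_i^\lambda)^*\le1$, $\lVert a\cdot u_\lambda-u_\lambda\cdot a\rVert_{\wedge}\to0$, and $m(u_\lambda)a\to a$ \emph{in norm}. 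Quoting that statement makes your detour through the bidual, the weak topology, and Mazur's lemma unnecessary --- in the unital case one passes directly from $u_\lambda$ to the normalization step --- which is presumably why \cite{FKK} call their Theorem 2.1 an interpretation rather than a new result.
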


The following is a variant of \cite[Lemma 2.1]{S11}. 
We include the proof for completeness. 

\begin{proposition}\label{Haagerup2}
Let $A$ be a unital nuclear $C^*$-algebra. 
For any finite subset $F\subset A$ and $\ep>0$, 
there exist a finite subset $G\subset A$ and $\delta>0$ such that 
the following hold. 
If $x\in A$ is a positive element such that 
\[
\lVert x\rVert\leq1\quad\text{and}\quad 
\lVert[x,a]\rVert_2<\delta\quad \forall a\in G, 
\]
then there exists a positive element $y\in A$ such that 
\[
\lVert y\rVert\leq1,\quad \lVert x-y\rVert_2<\ep\quad\text{and}\quad 
\lVert[y,a]\rVert<\ep\quad \forall a\in F. 
\]
\end{proposition}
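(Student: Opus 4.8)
The plan is to exploit the amenability of $A$ in the form of Theorem \ref{Haagerup1}. Given $F$ and $\ep$, I would first apply that theorem to produce $w_1,\dots,w_m\in A$ with $\sum_{i=1}^mw_iw_i^*=1$ and
\[
\left\lVert\sum_{i=1}^maw_izw_i^*-\sum_{i=1}^mw_izw_i^*a\right\rVert<\ep\lVert z\rVert
\]
for all $a\in F$ and $z\in A$. Writing $\Phi(z)=\sum_{i=1}^mw_izw_i^*$, this says that $\Phi$ is a unital positive map whose values almost commute with $F$ in operator norm. The candidate is then $y=\Phi(x)$, and the required data are chosen \emph{after} the $w_i$: set $G=\{w_1,\dots,w_m\}$ and $\delta=\ep\,(1+\sum_{i}\lVert w_i\rVert)^{-1}$. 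This is legitimate, since $G$ and $\delta$ are only required to depend on $F$ and $\ep$.

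The two operator-norm properties come essentially for free. Because $\Phi$ is positive and unital, $0\le x\le1$ forces $0\le\Phi(x)\le\Phi(1)=1$, so $y=\Phi(x)$ is automatically a positive contraction. And taking $z=x$ in the displayed estimate gives $\lVert[y,a]\rVert<\ep\lVert x\rVert\le\ep$ for every $a\in F$, using $\lVert x\rVert\le1$. Thus the only substantial point is the $\lVert\cdot\rVert_2$-closeness $\lVert x-y\rVert_2<\ep$.

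This is the crux. Using $\sum_iw_iw_i^*=1$ I would rewrite the defect as
\[
x-y=x\sum_{i=1}^mw_iw_i^*-\sum_{i=1}^mw_ixw_i^*=\sum_{i=1}^m[x,w_i]w_i^*,
\]
so that it is governed entirely by the commutators $[x,w_i]$, which the hypothesis controls in $\lVert\cdot\rVert_2$. Fixing $\tau\in T(A)$ and writing $\lVert a\rVert_{2,\tau}=\tau(a^*a)^{1/2}$, the triangle inequality reduces matters to bounding each term, and here traciality is the key:
\[
\lVert[x,w_i]w_i^*\rVert_{2,\tau}^2=\tau\bigl([x,w_i]^*[x,w_i]\,w_i^*w_i\bigr)\le\lVert w_i\rVert^2\,\tau\bigl([x,w_i]^*[x,w_i]\bigr),
\]
where the equality cycles $w_i^*$ to the front under $\tau$ and the inequality uses $w_i^*w_i\le\lVert w_i\rVert^2$ together with the elementary fact $\tau(bc)\le\lVert c\rVert\,\tau(b)$ for positive $b,c$. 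Summing over $i$ and taking the supremum over $\tau$ then yields $\lVert x-y\rVert_2\le\sum_{i=1}^m\lVert w_i\rVert\,\lVert[x,w_i]\rVert_2<\delta\sum_i\lVert w_i\rVert<\ep$.

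I expect the only genuine subtlety to be this last tracial estimate, which is also the sole place where the hypothesis on $\lVert[x,\cdot]\rVert_2$ enters. The point is that the right multiplication by $w_i^*$ must be absorbed into the trace via $\tau(bc)\le\lVert c\rVert\,\tau(b)$ rather than bounded naively, and that the finitely many summands stay under control because $\delta$ was chosen small relative to $\sum_i\lVert w_i\rVert$. Everything else—positivity, the operator-norm commutation with $F$, and the contractivity of $y$—is a direct readout of the unital positive map $\Phi$ supplied by Theorem \ref{Haagerup1}.
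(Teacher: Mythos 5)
Your proposal is correct and follows essentially the same route as the paper: apply Theorem \ref{Haagerup1} to $F$ and $\ep$ to get $w_1,\dots,w_m$, set $y=\sum_i w_i x w_i^*$, and read off positivity, the norm commutation with $F$, and the $\lVert\cdot\rVert_2$-estimate from $x-y=\sum_i[x,w_i]w_i^*$. The paper simply takes $\delta=\ep/m$ (using $\lVert w_i\rVert\le1$, which follows from $\sum_i w_iw_i^*=1$) and leaves the tracial estimate implicit, whereas you spell it out; the content is identical.
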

\begin{proof}
By applying the theorem above to $F\subset A$ and $\ep>0$, 
we get $G=\{w_1,w_2,\dots,w_m\}\subset A$. 
Let $\delta=\ep/m$. 
Suppose that 
$x\in A$ is a positive contraction satisfying
$\lVert[x,w_i]\rVert_2<\delta$ for any $w_i\in G$. 
Put $y=\sum_iw_ixw_i^*$. 
Clearly 
\[
0\leq y\leq\sum_iw_iw_i^*=1\quad\text{and}\quad 
\lVert x-y\rVert_2<m\delta=\ep. 
\]
Furthermore, for any $a\in F$, 
\[
\lVert[y,a]\rVert
=\left\lVert\sum_{i=1}^maw_ixw_i^*-\sum_{i=1}^mw_ixw_i^*a\right\rVert
<\ep\lVert x\rVert\leq\ep. 
\]
\end{proof}

We are now ready to show 
the equivalence of strong outerness and the weak Rohlin property 
(see Definition \ref{so&wR} for the definitions) 
for cocycle actions of groups with the property (Q). 

\begin{theorem}\label{so=wR}
Let $A$ be a unital, simple, separable, nuclear, stably finite, 
infinite dimensional $C^*$-algebra with finitely many extremal tracial states 
and let $\Gamma$ be a countable discrete amenable group with the property (Q). 
For a cocycle action $(\alpha,u)$ of $\Gamma$ on $A$, 
the following conditions are equivalent. 
\begin{enumerate}
\item $(\alpha,u)$ is strongly outer. 
\item $(\alpha,u)$ has the weak Rohlin property. 
\end{enumerate}
\end{theorem}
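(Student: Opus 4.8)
The plan is to prove the two implications separately, with the main work being $(1)\Rightarrow(2)$.

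For the direction $(2)\Rightarrow(1)$, I would argue by contrapositive. Suppose $(\alpha,u)$ is \emph{not} strongly outer, so there exist $g\in\Gamma\setminus\{1\}$, a trace $\tau\in T(A)^{\alpha_g}$ and a unitary $U$ in the weak closure $M=\pi_\tau(A)''$ implementing $\alpha_g$, i.e.\ $\alpha_g(x)=UxU^*$ for all $x\in M$. The weak Rohlin property supplies a central sequence $(f_n)$ of positive contractions with $\lVert\alpha_g(f_n)f_n\rVert\to0$ (taking the subset $K$ to contain both $1$ and $g$) and $\tau(f_n)\to\lvert K\rvert^{-1}>0$. Passing to the von Neumann algebra via $\pi_\tau$, the images $\pi_\tau(f_n)$ form a central sequence in $M$ that is asymptotically orthogonal to its $\alpha_g$-translate, yet $\alpha_g$ is inner on $M$; since inner automorphisms of a II$_1$-factor act trivially on the central sequence algebra $M_\omega$, one gets $\alpha_g(f_n)\approx f_n$ in $\lVert\cdot\rVert_2$, forcing $\tau(f_n^2)\to0$ and hence $\tau(f_n)\to0$, contradicting $\tau(f_n)\to\lvert K\rvert^{-1}$.

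The substantive direction is $(1)\Rightarrow(2)$, and here the strategy is to first establish the Rohlin data at the von Neumann algebra level and then transport it back to $A$ using the property (Q) together with Haagerup's approximate-diagonal machinery (Proposition \ref{Haagerup2}). Because $A$ has finitely many extremal traces, I would work one extremal trace at a time: for each $\alpha$-orbit of extremal traces, the weak closure is a copy of the AFD II$_1$-factor $R$ on which $\alpha$ restricts (after the cocycle is absorbed) to an outer action of the stabilizer subgroup. Strong outerness is exactly what guarantees outerness of this induced von Neumann algebra action, so the property (Q) of $\Gamma$ yields, for any $(F,\ep)$, an invariant set $K\subset\Gamma$ and projections $(p_n)$ in $R$ that are central and satisfy $\lVert 1-\sum_{g\in K}\alpha_g(p_n)\rVert_2\to0$. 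The genuinely hard step is the descent: the projections $p_n$ live in the weak closure and are only central there, so one must produce \emph{norm}-central elements of $A$ that are simultaneously close in $\lVert\cdot\rVert_2$ to such projections for all the finitely many extremal traces at once.

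This descent is where Proposition \ref{Haagerup2} is the key device. A sequence that is central in $\lVert\cdot\rVert_2$ (equivalently, central in the bidual for each trace) can be replaced, via the averaging $y=\sum_i w_i x w_i^*$ built from Haagerup's virtual diagonal, by a genuinely norm-central element that is $\lVert\cdot\rVert_2$-close to the original; this is precisely the mechanism the introduction flags as removing the previous paper's extra hypothesis on $A$. I would first lift the finitely many $R$-central projections to a positive contraction $f_n\in A$ that is $\lVert\cdot\rVert_2$-central, arrange the orthogonality $\lVert\alpha_g(f_n)\alpha_h(f_n)\rVert_2\to0$ and the trace values $\tau(f_n)\to\lvert K\rvert^{-1}$ uniformly over $T(A)$ (using finiteness of the extremal set to patch the traces together), and then apply Proposition \ref{Haagerup2} to upgrade $\lVert\cdot\rVert_2$-centrality to norm-centrality without disturbing the traces. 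The remaining issue is promoting the $\lVert\cdot\rVert_2$-orthogonality of the translates to \emph{norm} orthogonality, $\lVert\alpha_g(f_n)\alpha_h(f_n)\rVert\to0$; this I would handle by a functional-calculus cutting argument—replacing $f_n$ by $(f_n-1/2)_+$ or a similar spectral modification and using simplicity and comparison in $A$—so that small $\lVert\cdot\rVert_2$-overlap becomes genuine norm-disjointness of supports, at the cost of only a controlled perturbation of the trace values. I expect this final norm-orthogonality upgrade, rather than the von Neumann algebra input, to be the main technical obstacle.
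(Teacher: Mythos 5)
Your direction $(2)\Rightarrow(1)$ and the second half of your $(1)\Rightarrow(2)$ argument (Kaplansky to get positive contractions in $A$, Proposition \ref{Haagerup2} to upgrade $\lVert\cdot\rVert_2$-centrality to norm-centrality, and a functional-calculus cut to turn small $\lVert\cdot\rVert_2$-overlap into norm-orthogonality) match the paper; the last step is exactly the argument of \cite[Proposition 3.3]{MS} and is not where the difficulty lies.

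The genuine gap is in how you produce the Rohlin projections at the von Neumann algebra level. You propose to apply the property (Q) ``to the induced action on the weak closure,'' working one orbit of extremal traces at a time, and to extract projections $(p_n)$ in $R$ with $\lVert1-\sum_{g\in K}\alpha_g(p_n)\rVert_2\to0$ for an $(F,\ep)$-invariant $K\subset\Gamma$. This does not typecheck: the full group $\Gamma$ does not act on a single summand $\pi_\tau(A)''\cong R$ — $\alpha_g$ permutes the minimal central summands of $M=\bigoplus_{\tau\in E}\pi_\tau(A)''$, and only the stabilizer $\Gamma_e$ of an orbit acts on the corresponding factor, and then only as a \emph{cocycle} action. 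The property (Q) as defined (Definition \ref{DefofQ}) concerns a genuine outer action of $\Gamma$ itself on $R$, so invoking it for the stabilizer's cocycle action yields at best a tower indexed by an invariant subset of $\Gamma_e$, not of $\Gamma$, and gives no way to assemble towers coherently across the summands. The missing idea is the paper's tensor-absorption step: fix a model genuine outer action $\gamma:\Gamma\curvearrowright R$ on an auxiliary copy of $R$, apply (Q) there to get $K$ and central projections $(p_n)$ in that copy, then use Ocneanu's theorem that the outer cocycle action of each stabilizer $\Gamma_e$ on $Me$ absorbs $\gamma|\Gamma_e$ tensorially, and assemble these cocycle conjugacies over the orbits into an isomorphism $\pi:M\bar\otimes R\to M$ intertwining $\bar\alpha_h\otimes\gamma_h$ with $\bar\alpha_h$; the desired projections are $q_n=\pi(1\otimes p_n)$, and one must still verify $\lVert\bar\alpha_g(q_n)-\pi(1\otimes\gamma_g(p_n))\rVert_2\to0$ to transfer the tower structure. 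Without this absorption mechanism (or an equivalent substitute) the von Neumann algebra input you take for granted is not available, and contrary to your closing remark it is this step, not the norm-orthogonality upgrade, that carries the weight of the proof.
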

\begin{proof}
The implication (2)$\Rightarrow$(1) was proved in \cite[Remark 2.8]{MS}. 
We show the other implication. 
Suppose that $(\alpha,u):\Gamma\curvearrowright A$ is strongly outer. 
Let $E$ be the set of extremal tracial states on $A$. 
For $\tau\in E$, it is well-known that 
$\pi_\tau(A)''$ is the AFD II$_1$-factor. 
Set $\pi=\bigoplus_{\tau\in E}\pi_\tau$ and 
$M=\pi(A)''=\bigoplus_{\tau\in E}\pi_\tau(A)''$. 
We identify $A$ with $\pi(A)$ and omit $\pi$. 
The cocycle action $(\alpha,u)$ on $A$ naturally extends to 
the cocycle action $(\bar\alpha,u)$ on $M$. 
Note that, for a bounded sequence $(x_n)_n$ in $M$, 
$x_n$ converges to zero in the strong operator topology 
if and only if $\lVert x_n\rVert_2$ converges to zero. 

Suppose that we are given a finite subset $F\subset\Gamma$ and $\ep>0$. 
Let $R$ be the AFD II$_1$-factor and 
let $\gamma:\Gamma\curvearrowright R$ be an outer action. 
Since $\Gamma$ has the property (Q), 
there exist an $(F,\ep)$-invariant finite subset $K\subset\Gamma$ and 
a sequence of projections $(p_n)_n$ in $R$ such that 
\[
\left\lVert1-\sum_{g\in K}\gamma_g(p_n)\right\rVert_2\to0\quad\text{and}\quad 
\lVert[x,p_n]\rVert_2\to0\quad \forall x\in R
\]
as $n\to\infty$. 

First, we would like to show that 
there exists a sequence of projections $(q_n)_n$ in $M$ such that 
\[
\lVert[x,q_n]\rVert_2\to0,\quad 
\lVert\bar\alpha_g(q_n)\bar\alpha_h(q_n)\rVert_2\to0\quad\text{and}\quad 
\tau(q_n)\to1/\lvert K\rvert
\]
for all $x\in M$, $g,h\in K$ with $g\neq h$ and $\tau\in T(M)$ 
as $n\to\infty$. 
The cocycle action $\bar\alpha$ induces an action of $\Gamma$ 
on the set of minimal central projections in $M$. 
For each $\Gamma$-orbit we choose and fix a minimal central projection. 
Let $E_0$ denote the set of such projections. 
There exist finite subsets $H_e\subset\Gamma$ for each $e\in E_0$ such that 
\[
\sum_{e\in E_0}\sum_{h\in H_e}\bar\alpha_h(e)=1. 
\]
Put $\Gamma_e=\{g\in\Gamma\mid\bar\alpha_g(e)=e\}$. 
Then $\Gamma_e$ is a subgroup of $\Gamma$ and 
\[
\Gamma=\bigcup_{h\in H_e}h\Gamma_e
\]
holds for every $e\in E_0$. 
Let $(\bar\alpha^e,u^e)$ be the restriction of $(\bar\alpha,u)$ 
to $\Gamma_e$ and $Me$, 
i.e. $\bar\alpha^e_g=\bar\alpha_g|Me$, $u^e(g,h)=u(g,h)e$ 
for $g,h\in\Gamma_e$. 
Since $(\alpha,u)$ is strongly outer, 
the cocycle action $(\bar\alpha^e,u^e)$ of $\Gamma_e$ on $Me$ is outer. 
Likewise, we let $\gamma^e$ denote the restriction of 
$\gamma:\Gamma\curvearrowright R$ to $\Gamma_e$. 
It follows from \cite[Chapter 1]{O} (see also \cite[Theorem 2.3]{Msd}) that 
$(\bar\alpha^e,u^e)$ is cocycle conjugate to 
$(\bar\alpha^e\otimes\gamma^e,u^e\otimes1)$. 
Hence, there exist an isomorphism $\pi_e:Me\bar\otimes R\to Me$ and 
a family of unitaries $(v^e_g)_{g\in\Gamma_e}$ in $Me$ such that 
\[
\Ad v^e_g\circ\bar\alpha_g
=\pi_e\circ(\bar\alpha^e_g\otimes\gamma^e_g)\circ\pi_e^{-1}
\quad \forall g\in\Gamma_e. 
\]
We define an isomorphism $\pi:M\bar\otimes R\to M$ by 
\[
\pi((\bar\alpha_h\otimes\gamma_h)(x))=\bar\alpha_h(\pi_e(x))\quad 
\forall x\in Me\bar\otimes R,\ h\in H_e. 
\]
Let $q_n=\pi(1\otimes p_n)$. 
We would like to show 
$\lVert\bar\alpha_g(q_n)-\pi(1\otimes\gamma_g(p_n))\rVert_2\to0$ 
for every $g\in\Gamma$ as $n\to\infty$. 
Once this is done, it is evident that 
$(q_n)_n$ is the desired sequence of projections. 
We have 
\begin{align*}
q_n&=\pi(1\otimes p_n)\\
&=\sum_{e\in E_0}\sum_{h\in H_e}\pi(\bar\alpha_h(e)\otimes p_n)\\
&=\sum_{e\in E_0}\sum_{h\in H_e}
\pi((\bar\alpha_h\otimes\gamma_h)(e\otimes\gamma_h^{-1}(p_n)))\\
&=\sum_{e\in E_0}\sum_{h\in H_e}
\bar\alpha_h(\pi_e(e\otimes\gamma_h^{-1}(p_n))). 
\end{align*}
Take $g\in\Gamma$. 
For each $h\in H_e$ there exists $k\in H_e$ such that $gh\in k\Gamma_e$, 
and the map $h\mapsto k$ is a bijection. 
Then one can verify 
\begin{align*}
&\lim_{n\to\infty}
\lVert\bar\alpha_g(\bar\alpha_h(\pi_e(e\otimes\gamma_h^{-1}(p_n))))
-\pi(\bar\alpha_k(e)\otimes\gamma_g(p_n))\rVert_2\\
&=\lim_{n\to\infty}
\lVert\bar\alpha_k(\bar\alpha_{k^{-1}gh}(\pi_e(e\otimes\gamma_h^{-1}(p_n))))
-\pi(\bar\alpha_k(e)\otimes\gamma_g(p_n))\rVert_2\\
&=\lim_{n\to\infty}
\lVert\bar\alpha_k(\pi_e((\bar\alpha_{k^{-1}gh}\otimes\gamma_{k^{-1}gh})
(e\otimes\gamma_h^{-1}(p_n))))
-\pi(\bar\alpha_k(e)\otimes\gamma_g(p_n))\rVert_2\\
&=\lim_{n\to\infty}
\lVert\bar\alpha_k(\pi_e(e\otimes\gamma_{k^{-1}g}(p_n))
-\pi(\bar\alpha_k(e)\otimes\gamma_g(p_n))\rVert_2=0. 
\end{align*}
Therefore we get 
\begin{align*}
&\lim_{n\to\infty}
\lVert\bar\alpha_g(q_n)-\pi(1\otimes\gamma_g(p_n))\rVert_2\\
&=\lim_{n\to\infty}
\left\lVert\sum_{e\in E_0}\sum_{h\in H_e}
\bar\alpha_g(\bar\alpha_h(\pi_e(e\otimes\gamma_h^{-1}(p_n))))
-\pi(1\otimes\gamma_g(p_n))\right\rVert_2\\
&=\lim_{n\to\infty}
\left\lVert\sum_{e\in E_0}\sum_{k\in H_e}
\pi(\bar\alpha_k(e)\otimes\gamma_g(p_n))
-\pi(1\otimes\gamma_g(p_n))\right\rVert_2=0. 
\end{align*}

By Kaplansky's density theorem, we may replace $(q_n)_n$ 
with a sequence $(x_n)_n$ of positive contractions in $A$. 
Thanks to Proposition \ref{Haagerup2}, we can further replace $(x_n)_n$ 
with a sequence $(y_n)_n$ of positive contractions in $A$ satisfying 
\[
\lVert[a,y_n]\rVert\to0,\quad 
\lVert\alpha_g(y_n)\alpha_h(y_n)\rVert_2\to0\quad\text{and}\quad 
\tau(y_n)\to1/\lvert K\rvert
\]
for all $a\in A$, $g,h\in K$ with $g\neq h$ and $\tau\in T(A)$ 
as $n\to\infty$. 
Finally, by the same way as \cite[Proposition 3.3]{MS}, 
we obtain a sequence $(z_n)_n$ of positive contractions in $A$ satisfying 
\[
\lVert[a,z_n]\rVert\to0,\quad 
\lVert\alpha_g(z_n)\alpha_h(z_n)\rVert\to0\quad\text{and}\quad 
\tau(z_n)\to1/\lvert K\rvert
\]
for all $a\in A$, $g,h\in K$ with $g\neq h$ and $\tau\in T(A)$ 
as $n\to\infty$. 
Consequently $(\alpha,u)$ has the weak Rohlin property. 
\end{proof}

For a $C^*$-algebra $A$ with tracial rank zero, 
we can prove that 
strong outerness is equivalent to the tracial Rohlin property 
in the sense of Definition \ref{so&wR} (4). 

\begin{theorem}\label{so=trR}
Let $A$ be a unital simple separable $C^*$-algebra with tracial rank zero and 
with finitely many extremal tracial states. 
Let $\Gamma$ be a countable discrete amenable group with the property (Q). 
For a cocycle action $(\alpha,u)$ of $\Gamma$ on $A$, 
the following conditions are equivalent. 
\begin{enumerate}
\item $(\alpha,u)$ is strongly outer. 
\item $(\alpha,u)$ has the tracial Rohlin property 
in the sense of Definition \ref{so&wR} (4). 
\end{enumerate}
\end{theorem}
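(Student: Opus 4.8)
The plan is to reduce Theorem~\ref{so=trR} to Theorem~\ref{so=wR}, exploiting the fact that tracial rank zero gives an abundance of projections that approximately commute with any given finite set up to small trace. Since the implication $(2)\Rightarrow(1)$ is formally identical to the weak Rohlin case (a central sequence of projections is in particular a central sequence of positive contractions, so strong outerness follows from \cite[Remark 2.8]{MS}), the real content is $(1)\Rightarrow(2)$. First I would invoke Theorem~\ref{so=wR} to obtain, for a given finite $F\subset\Gamma$ and $\ep>0$, an $(F,\ep)$-invariant set $K\subset\Gamma$ together with a central sequence $(z_n)_n$ of positive contractions satisfying $\lVert\alpha_g(z_n)\alpha_h(z_n)\rVert\to0$ for $g\neq h$ in $K$ and $\max_{\tau}\lvert\tau(z_n)-\lvert K\rvert^{-1}\rvert\to0$. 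The task is then to upgrade each $z_n$ to a projection without destroying these three properties.

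The key step is the \emph{projectionization} of the $z_n$. Because $A$ has tracial rank zero, for each $n$ there is a projection $f_n\in A$ that is close to $z_n$ in the following controlled sense: one can arrange $f_n$ to approximately commute with a prescribed finite set (including the elements of $F$ and the algebra elements witnessing centrality), to satisfy $\tau(f_n)\approx\tau(z_n)$ uniformly in $\tau$, and to have $f_n$ sit (approximately) under the support of $z_n$ so that the disjointness $\lVert\alpha_g(f_n)\alpha_h(f_n)\rVert\to0$ is inherited. Concretely, I would apply a functional-calculus cutoff: replace $z_n$ by $g_\eta(z_n)$ for a suitable continuous function that is $0$ near $0$ and $1$ on $[\eta,1]$, observe that $g_\eta(z_n)$ is still an $\ep$-disjoint central sequence with the correct trace (since the $L^2$-error introduced by the cutoff can be made uniformly small once the traces concentrate near $\lvert K\rvert^{-1}$), and then use the tracial-rank-zero approximation to find a projection $f_n$ in the hereditary subalgebra $\overline{z_nAz_n}$ with $\tau(f_n)$ close to $\tau(z_n)$ for all $\tau$. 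Projections lying under $\overline{z_nAz_n}$ automatically satisfy $\alpha_g(f_n)\alpha_h(f_n)=\alpha_g(f_n z_n')\,\alpha_h(z_n'' f_n)$ with norm controlled by $\lVert\alpha_g(z_n)\alpha_h(z_n)\rVert$, so disjointness is preserved; centrality is preserved because $f_n$ approximately commutes with the same finite sets by the definition of tracial rank zero.

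The main obstacle I anticipate is arranging the trace condition $\tau(f_n)\to\lvert K\rvert^{-1}$ \emph{uniformly} over all tracial states simultaneously with the near-commutation and the support constraint $f_n\in\overline{z_nAz_n}$. The defining property of tracial rank zero produces, for a single prescribed tolerance, a projection whose complementary cutoff is small in trace, but one must check that the finitely-many-extremal-traces hypothesis lets the uniform estimate $\max_{\tau\in T(A)}\lvert\tau(f_n)-\lvert K\rvert^{-1}\rvert\to0$ go through; here finiteness of the extreme boundary is exactly what converts pointwise trace control into uniform control, since $T(A)$ is the closed convex hull of finitely many points and every $\tau$ is a convex combination of the extreme ones. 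A secondary technical point is ensuring the functional-calculus cutoff does not spoil the trace: the difference $\tau(z_n)-\tau(g_\eta(z_n))$ is bounded by the mass of $z_n$ on $[0,\eta)$, which one must argue is uniformly small, and this again rests on the uniform trace convergence already available for $(z_n)_n$. Once these uniformities are secured, passing from $(z_n)_n$ to the projection sequence $(f_n)_n$ is routine, and the resulting $(f_n)_n$ is precisely the central sequence of projections witnessing the tracial Rohlin property.
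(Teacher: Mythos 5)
Your overall strategy---run Theorem \ref{so=wR} first and then ``projectionize'' the resulting positive contractions $z_n$ using tracial rank zero---differs from the paper's route, and the step you defer to routine is precisely where the proof lives. The definition of tracial rank zero does not directly hand you a projection $f_n$ that is simultaneously (i) norm-approximately central for a prescribed finite subset of $A$, (ii) of trace uniformly close to $\lvert K\rvert^{-1}$, and (iii) supported under $z_n$ so that disjointness is inherited; it only produces finite-dimensional subalgebras whose unit approximately commutes with a finite set and whose complement is small in the Cuntz sense. Extracting your $f_n$ from this is substantive. The paper sidesteps the issue entirely by never leaving the von Neumann level until the last moment: it reruns the construction of Theorem \ref{so=wR} to get genuine projections $q_n$ in $M=\pi(A)''$ (projections are free there), then descends to projections $e_n$ in $A$ with $\lVert\cdot\rVert_2$-control via \cite[Lemma 2.15]{OP06ETDS2}, and finally upgrades $\lVert\cdot\rVert_2$-centrality and $\lVert\cdot\rVert_2$-disjointness to norm estimates via \cite[Proposition 4.1]{M10CMP}. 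Some combination of lemmas of exactly this strength is what your projectionization step needs; without naming them the argument has a hole.

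Two further points. First, Theorem \ref{so=wR} assumes $A$ is nuclear, whereas Theorem \ref{so=trR} does not, so you are not entitled to invoke it as stated; the paper avoids this by reusing only the von Neumann--algebra part of that proof (which needs hyperfiniteness of $\pi_\tau(A)''$, supplied for tracial rank zero by \cite[Lemma 2.16]{OP06ETDS2} rather than by Haagerup's theorem) and replacing the Haagerup-based descent with the tracial-rank-zero lemmas above. Second, your claim that the cutoff error $\tau(z_n)-\tau(g_\eta(z_n))$ is controlled by ``the uniform trace convergence already available'' is not sufficient: $\tau(z_n)\to\lvert K\rvert^{-1}$ says nothing about the spectral mass of $z_n$ near $0$ (consider $z_n=\tfrac12 p_n$). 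One must additionally observe that the $K$-translates of $z_n$ are approximately orthogonal and approximately sum to a contraction of trace tending to $1$, which forces $\tau(z_n^2)\to\lvert K\rvert^{-1}$ as well and hence concentration of the spectral measure near $\{0,1\}$; only then is the cutoff harmless. This is recoverable, but it is an extra argument, not a consequence of what you have written.
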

\begin{proof}
The implication (2)$\Rightarrow$(1) can be shown 
in a similar fashion to \cite[Remark 2.8]{MS}. 
We prove (1)$\Rightarrow$(2). 
Suppose that $(\alpha,u):\Gamma\curvearrowright A$ is strongly outer. 
Define $M$ and $(\bar\alpha,u):\Gamma\curvearrowright M$ 
in the same way as Theorem \ref{so=wR}. 
It is well-known that 
$M$ is isomorphic to a finite direct sum of 
the hyperfinite II$_1$-factor 
(see \cite[Lemma 2.16]{OP06ETDS2}, \cite[Remark 2.6 (8)]{MS}). 
Exactly the same argument as Theorem \ref{so=wR} shows that 
there exists a sequence of projections $(q_n)_n$ in $M$ such that 
\[
\lVert[x,q_n]\rVert_2\to0,\quad 
\lVert\bar\alpha_g(q_n)\bar\alpha_h(q_n)\rVert_2\to0\quad\text{and}\quad 
\tau(q_n)\to1/\lvert K\rvert
\]
for all $x\in M$, $g,h\in K$ with $g\neq h$ and $\tau\in T(M)$ 
as $n\to\infty$. 
By \cite[Lemma 2.15]{OP06ETDS2}, 
we may replace $q_n$ with projections $e_n$ in $A$, that is, 
there exists a sequence of projections $(e_n)_n$ in $A$ such that 
\[
\lVert[a,e_n]\rVert_2\to0,\quad 
\lVert\alpha_g(e_n)\alpha_h(e_n)\rVert_2\to0\quad\text{and}\quad 
\tau(e_n)\to1/\lvert K\rvert
\]
for all $a\in A$, $g,h\in K$ with $g\neq h$ and $\tau\in T(A)$ 
as $n\to\infty$. 
Then, from \cite[Proposition 4.1]{M10CMP}, 
we can find a sequence $(f_n)_n$ of projections in $A$ such that 
\[
\lVert[a,f_n]\rVert\to0,\quad 
\lVert\alpha_g(f_n)\alpha_h(f_n)\rVert\to0\quad\text{and}\quad 
\tau(f_n)\to1/\lvert K\rvert
\]
for all $a\in A$, $g,h\in K$ with $g\neq h$ and $\tau\in T(A)$ 
as $n\to\infty$. 
Thus $(\alpha,u)$ has the tracial Rohlin property. 
\end{proof}

\section{$\mathcal{Z}$-stability of crossed products}

In this section, we prove Theorem \ref{wR>Z}. 
We also prove that 
the fixed point subalgebra of the central sequence algebra 
has strict comparison (Theorem \ref{sc}). 

\subsection{Fixed point subalgebras of central sequence algebras}

Throughout this subsection, we let $A$ be 
a unital, simple, separable, nuclear, stably finite, infinite dimensional 
$C^*$-algebra with finitely many extremal tracial states. 
We also fix a countable discrete amenable group $\Gamma$ and 
a cocycle action $(\alpha,u):\Gamma\curvearrowright A$ 
with the weak Rohlin property. 
Note that we allow $\Gamma$ to be trivial. 

Let $E$ be the set of extremal tracial states on $A$. 
Set $\pi=\bigoplus_{\tau\in E}\pi_\tau$ and 
$M=\pi(A)''=\bigoplus_{\tau\in E}\pi_\tau(A)''$. 
It is well-known that 
$\pi_\tau(A)''$ is the AFD II$_1$-factor $R$ for $\tau\in E$. 
We identify $A$ with $\pi(A)$ and omit $\pi$. 
It is clear that $T(A)$ is identified with $T(M)$. 
We let 
\[
\mathcal{R}=(\ell^\infty(\N,R)
/\{(x_n)_n\mid\lVert x_n\rVert_2\to0\text{ as }n\to\omega\})\cap R', 
\]
be the central sequence algebra of $R$ in the sense of von Neumann algebra, 
where $R$ is identified with the subalgebra 
consisting of equivalence classes of constant sequences. 
It is also well-known that $\mathcal{R}$ is a II$_1$-factor 
(see \cite[Theorem XIV.4.18]{TIII} for example). 
Similarly, we define $\mathcal{M}$ by 
\[
\mathcal{M}=(\ell^\infty(\N,M)
/\{(x_n)_n\mid\lVert x_n\rVert_2\to0\text{ as }n\to\omega\})\cap M'. 
\]
Clearly $\mathcal{M}$ is isomorphic to $\bigoplus_{\tau\in E}\mathcal{R}$. 
In particular, $T(\mathcal{M})$ is canonically identified with $T(M)$, 
and hence with $T(A)$. 
We let $T(A)/\Gamma\cong T(M)/\Gamma\cong T(\mathcal{M})/\Gamma$ denote 
the quotient space by the $\Gamma$-action $\tau\mapsto\tau\circ\alpha_g$. 
The cocycle action $(\alpha,u):\Gamma\curvearrowright A$ gives rise to 
the cocycle action $(\bar\alpha,u):\Gamma\curvearrowright M$. 
Notice that 
$\alpha:\Gamma\curvearrowright A_\omega$ and 
$\bar\alpha:\Gamma\curvearrowright\mathcal{M}$ are genuine actions. 

The inclusion map $A\hookrightarrow M$ induces the homomorphism 
\[
\rho:A_\omega\to\mathcal{M}. 
\]
Put $J=\Ker\rho$. 
For an $\omega$-central sequence $(x_n)_n$ in $A$, 
its image in $A_\omega$ belongs to $J$ if and only if 
$\lVert x_n\rVert_2\to0$ as $n\to\omega$. 
In other words, for $x\in A_\omega$, 
$x$ belongs to $J$ if and only if 
$\tau_\omega(x^*x)=0$ for any $\tau\in T(A)$ 
(see Section 2.1 for the definition of $\tau_\omega$). 
The action $\alpha$ globally preserves $J$, 
and so the fixed point subalgebra $J^\alpha$ is well-defined. 
Throughout this subsection, we keep the above notations. 

The following lemma is folklore among experts. 
For the reader's convenience, we include a proof here. 

\begin{lemma}
Let $(\gamma,w):\Gamma\curvearrowright R$ be 
an outer cocycle action of $\Gamma$ on the AFD II$_1$-factor $R$. 
Then $\mathcal{R}^\gamma$ is a II$_1$-factor. 
\end{lemma}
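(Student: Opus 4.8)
The plan is to reduce the statement to a computation with a Rohlin tower, exploiting that $\mathcal{R}$ is a factor and that an outer action of an amenable group on the AFD factor carries the Rohlin property. First I would observe that the cocycle becomes irrelevant on $\mathcal{R}$: since every $w(g,h)$ lies in $R$ and every element of $\mathcal{R}=R_\omega$ commutes with $R$, the inner perturbations $\Ad w(g,h)$ act trivially on $\mathcal{R}$, so the identity $\gamma_g\circ\gamma_h=\Ad w(g,h)\circ\gamma_{gh}$ collapses to $\gamma_g\circ\gamma_h=\gamma_{gh}$ there. Thus $\gamma$ restricts to a genuine action of $\Gamma$ on the II$_1$-factor $\mathcal{R}$, and $\mathcal{R}^\gamma$ is a von Neumann subalgebra whose center must be shown to be $\C 1$. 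Accordingly I would fix a self-adjoint $z\in Z(\mathcal{R}^\gamma)$ and aim to prove $z\in\C 1$.

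The key input is the Rohlin property coming from the structure theory of outer actions (\cite{C75Ann,J80Mem,O,Msd}): since $\gamma$ is outer and $\Gamma$ is amenable, for a suitable $(F,\ep)$-invariant $K\subset\Gamma$ there is a base projection $p\in\mathcal{R}$ whose translates $E_g:=\gamma_g(p)$ ($g\in K$) are mutually orthogonal, sum to $1$, and satisfy $\gamma_g(E_h)=E_{gh}$. Because $z$ is $\gamma$-fixed, $W^*(z)$ is $\gamma$-invariant, and a reindexation argument lets me choose the tower inside $\mathcal{R}\cap\{z\}'$, so that $z$ commutes with every $E_g$. Then $z=\sum_{g,h\in K}E_g z E_h=\sum_{g\in K}E_g z E_g$, and since $z$ is fixed, $E_g z E_g=\gamma_g(pzp)$; hence $z=\theta(pzp)$, where $\theta\colon p\mathcal{R}p\to\mathcal{R}$ is the map $\theta(y)=\sum_{g\in K}\gamma_g(y)$.

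Next I would analyze $\theta$. It is a normal unital $*$-homomorphism: the supports force $\gamma_g(y_1)\gamma_{g'}(y_2)=0$ for $g\neq g'$, so $\theta(y_1)\theta(y_2)=\sum_{g\in K}\gamma_g(y_1y_2)=\theta(y_1y_2)$, and $\theta(p)=\sum_{g\in K}E_g=1$. It scales the trace norm by $\lVert\theta(y)\rVert_2^2=\lvert K\rvert\,\lVert y\rVert_2^2$, hence is injective, and its image is a subfactor of $\mathcal{R}$ isomorphic to the corner $p\mathcal{R}p$, which is itself a factor because $\mathcal{R}$ is. Granting for the moment that $\theta$ takes values in $\mathcal{R}^\gamma$, centrality of $z$ gives, for every $y\in p\mathcal{R}p$, the relation $0=[z,\theta(y)]=[\theta(pzp),\theta(y)]=\theta([pzp,y])$; injectivity forces $[pzp,y]=0$. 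As $y$ ranges over the factor $p\mathcal{R}p$ I conclude $pzp\in Z(p\mathcal{R}p)=\C p$, say $pzp=\lambda p$, and therefore $z=\theta(\lambda p)=\lambda\sum_{g\in K}E_g=\lambda 1$. Hence $Z(\mathcal{R}^\gamma)=\C 1$ and $\mathcal{R}^\gamma$ is a factor.

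The hard part is the appeal to the Rohlin property in exactly the form used above: producing the tower inside the relative commutant $\mathcal{R}\cap\{z\}'$, and, more delicately, arranging the covariance $\gamma_g(E_h)=E_{gh}$ and the partition $\sum_{g\in K}E_g=1$ to hold \emph{exactly} rather than approximately, so that $\theta(y)$ genuinely lies in $\mathcal{R}^\gamma$. For finite $K$ there are boundary defects, small by the Følner condition; but because $\theta$ magnifies the $2$-norm by $\sqrt{\lvert K\rvert}$, these defects cannot simply be absorbed as errors, and this scaling is precisely what blocks a naive approximate argument. The standard remedy I would use is to realize the tower in the ultrapower $R^\omega$ and pass to a Følner limit, where the structure theory supplies the exact relations; once exactness is secured, the algebraic argument above goes through verbatim.
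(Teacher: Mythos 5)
Your reduction to triviality of the centre, and the observation that the $2$-cocycle $w$ acts trivially on $\mathcal{R}$ so that $\gamma$ is a genuine action there, are both fine; the closing algebra (injectivity of $\theta$, factoriality of $p\mathcal{R}p$, the conclusion $z=\lambda1$) would also be fine if its input existed. The gap is exactly at the point you flag as ``the hard part,'' and the remedy you propose does not close it. An exact finite Rohlin system $\{E_h\}_{h\in K}$ with $\sum_{h\in K}E_h=1$ and $\gamma_g(E_h)=E_{gh}$ for all $g\in\Gamma$ cannot exist when $\Gamma$ is infinite and $K$ is finite: left translation does not preserve $K$, so for $g$ with $gK\neq K$ one has $\gamma_g(\theta(y))=\sum_{h\in gK}\gamma_h(y)$, which differs from $\theta(y)$ by boundary terms of $2$-norm comparable to $\lvert K\setminus gK\rvert^{1/2}\lVert y\rVert_2$. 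Passing to the ultrapower does not help: Ocneanu's Rohlin theorem, even stated inside $\mathcal{R}$, produces an exact partition of unity that is only \emph{approximately} equivariant, with an error controlled by the F\o lner constant of $K$ rather than by a parameter tending to $0$ for fixed $K$; and, as you yourself observe, the $\sqrt{\lvert K\rvert}$ amplification built into $\theta$ prevents these errors from being absorbed. Worse, even an exactly covariant cyclic tower (say $\Gamma=\Z$, $K=\{0,\dots,N-1\}$ with wrap-around) would not place $\theta(y)$ in $\mathcal{R}^\gamma$ for general $y\in p\mathcal{R}p$, since $\gamma(\theta(y))-\theta(y)=\gamma^N(y)-y$ and $\gamma^N$ does not fix $p\mathcal{R}p$ pointwise. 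So $\theta$ never actually lands in the fixed-point algebra, and the centrality argument collapses.

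The paper's proof avoids Rohlin towers entirely. By Ocneanu's classification, $(\gamma,w)$ is cocycle conjugate to $(\gamma\otimes\id,w\otimes1)$ on $R\bar\otimes R$, which yields a unital homomorphism $\phi:R\to\mathcal{R}^\gamma$. Given a projection $p$ in the centre of $\mathcal{R}^\gamma$, one takes $q\in R$ with $\tau(\phi(q))=\tau(p)$, conjugates $p$ to $\phi(q)$ by a unitary $x\in\mathcal{R}$ (possible since $\mathcal{R}$ is a II$_1$-factor), notes that $(x\gamma_g(x^*))_g$ is a $\gamma$-cocycle in $\mathcal{R}\cap\{\phi(q)\}'$, and invokes Ocneanu's cohomology vanishing theorem to replace $x$ by a unitary $y^*x$ lying in $\mathcal{R}^\gamma$ and still conjugating $p$ to $\phi(q)$; centrality of $p$ then forces $p=\phi(q)$ for \emph{every} such $q$, which is possible only for $p=0$ or $p=1$. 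If you want to salvage your strategy, the honest substitutes for your exact tower are precisely these two inputs---the splitting $(\gamma,w)\sim(\gamma\otimes\id,w\otimes1)$ and cohomology vanishing---and I would rebuild the proof around them.
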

\begin{proof}
By \cite[Chapter 1]{O} (see also \cite[Theorem 2.3]{Msd}), 
$(\gamma,w)$ is cocycle conjugate to 
$(\gamma\otimes\id,w\otimes1):\Gamma\curvearrowright R\bar\otimes R$. 
Hence there exists a unital homomorphism $\phi:R\to\mathcal{R}^\gamma$ 
(or one may use \cite[Lemma 8.3]{O}, 
which says that $\mathcal{R}^\gamma$ is of type II$_1$). 
Let $\tau$ be the unique trace on $\mathcal{R}$. 
Suppose that 
$p$ is a projection belonging to the center of $\mathcal{R}^\gamma$. 
Take a projection $q\in R$ such that $\tau(\phi(q))=\tau(p)$. 
There exists a unitary $x\in\mathcal{R}$ such that $xpx^*=\phi(q)$, 
because $\mathcal{R}$ is a II$_1$-factor. 
Then $(x\gamma_g(x^*))_g$ is 
a $\gamma$-cocycle in $\mathcal{R}\cap\{\phi(q)\}'$. 
By the cohomology vanishing theorem \cite[Proposition 7.2]{O}, 
we can find a unitary $y\in\mathcal{R}\cap\{\phi(q)\}'$ such that 
$x\gamma_g(x^*)=y\gamma_g(y^*)$ for all $g\in\Gamma$. 
Therefore $y^*xpx^*y=\phi(q)$ and $y^*x\in\mathcal{R}^\gamma$. 
Since $p$ is in the center of $\mathcal{R}^\gamma$, we obtain $p=\phi(q)$. 
This means $p=0$ or $p=1$, 
and whence $\mathcal{R}^\gamma$ is a II$_1$-factor. 
\end{proof}

\begin{lemma}\label{II_1}
\begin{enumerate}
\item The fixed point algebra $\mathcal{M}^{\bar\alpha}$ is 
a finite direct sum of II$_1$-factors. 
\item The map $\tau\mapsto\tau|\mathcal{M}^{\bar\alpha}$ induces 
an isomorphism from $T(\mathcal{M})/\Gamma$ to $T(\mathcal{M}^{\bar\alpha})$. 
\end{enumerate}
\end{lemma}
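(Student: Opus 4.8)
The plan is to reduce the whole statement to the single-orbit situation and then feed it into the previous lemma. Since $E$ is finite, the $\Gamma$-action on $E$ (equivalently on the minimal central projections of $M$) has only finitely many orbits; write $E/\Gamma=\{O_1,\dots,O_k\}$, and for each orbit $O_i$ fix a representative $\tau_0=\tau_0(i)\in O_i$, with minimal central projection $e_0=e_0(i)\in M$ and stabilizer $\Gamma_{e_0}=\{g\in\Gamma\mid\bar\alpha_g(e_0)=e_0\}$. Because $\bar\alpha$ permutes the summands of $\mathcal{M}=\bigoplus_{\sigma\in E}\mathcal{R}_\sigma$ (where $\mathcal{R}_\sigma$ denotes the central sequence algebra of $\pi_\sigma(A)''$, a copy of $\mathcal{R}$) according to the $\Gamma$-action on $E$, the fixed point algebra splits as $\mathcal{M}^{\bar\alpha}=\bigoplus_i F_{O_i}$, where $F_{O_i}=\mathcal{M}^{\bar\alpha}\cap\bigoplus_{\sigma\in O_i}\mathcal{R}_\sigma$.

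First I would prove (1). Fixing an orbit $O=O_i$ with data $(e_0,\Gamma_{e_0})$, I choose coset representatives $H_{e_0}$ so that $\Gamma=\bigsqcup_{h\in H_{e_0}}h\Gamma_{e_0}$ and $O=\{\bar\alpha_h(e_0)\}_{h\in H_{e_0}}$. The assignment $x\mapsto(\bar\alpha_h(x))_h$, placing $\bar\alpha_h(x)$ in the $\bar\alpha_h(e_0)$-component, is a $*$-isomorphism from the fixed algebra of the induced $\Gamma_{e_0}$-action on the central sequence algebra $\mathcal{R}$ of $Me_0$ onto $F_O$; its well-definedness uses precisely that $x$ is $\Gamma_{e_0}$-fixed, together with the fact (already noted in the excerpt) that the cocycle $u$ acts trivially on central sequences, so that $\bar\alpha^{e_0}$ is a genuine action on $\mathcal{R}$. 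Thus $F_O\cong\mathcal{R}^{\bar\alpha^{e_0}}$, where $(\bar\alpha^{e_0},u^{e_0})$ is the restriction of $(\bar\alpha,u)$ to $\Gamma_{e_0}$ and $Me_0\cong R$. The crucial input is that this restricted cocycle action is outer: for $g\in\Gamma_{e_0}\setminus\{e\}$ one has $\tau_0\circ\alpha_g=\tau_0$, i.e. $\tau_0\in T(A)^{\alpha_g}$, so strong outerness forces $\bar\alpha_g|Me_0$ to be outer. The previous lemma then shows that $\mathcal{R}^{\bar\alpha^{e_0}}$, hence $F_O$, is a II$_1$-factor, and summing over the finitely many orbits gives (1).

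For (2) I would track traces through the decomposition. Each $\mathcal{R}_\sigma$ and each $F_{O_i}$ is a II$_1$-factor, hence carries a unique tracial state; thus $T(\mathcal{M})$ is the finite-dimensional simplex with extreme points $\{\operatorname{tr}_\sigma\}_{\sigma\in E}$ and $T(\mathcal{M}^{\bar\alpha})$ the simplex with extreme points $\{\operatorname{tr}_{O_i}\}_i$. The key observation is that each $\bar\alpha_g$ carries $\mathcal{R}_\sigma$ isomorphically onto $\mathcal{R}_{g\sigma}$ and, being a $*$-isomorphism of II$_1$-factors, is automatically trace-preserving; hence for $x\in F_O$ the value $\operatorname{tr}_\sigma(x)$ is independent of $\sigma\in O$ and equals the unique trace of $F_O$ at $x$. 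Consequently the restriction map $\tau\mapsto\tau|\mathcal{M}^{\bar\alpha}$ sends $\operatorname{tr}_\sigma$ to $\operatorname{tr}_{\Gamma\sigma}$, depends only on the $\Gamma$-orbit of $\sigma$, and restricts to a bijection $E/\Gamma\to\operatorname{ext}T(\mathcal{M}^{\bar\alpha})$ (distinct orbits land on distinct summands $F_O$, and every extreme trace of a finite direct sum of factors is supported on a single summand). Extending this bijection of extreme boundaries affinely identifies the two simplices, which is the asserted isomorphism.

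The heart of the matter is the reduction carried out in (1): formulating the induction isomorphism $F_O\cong\mathcal{R}^{\bar\alpha^{e_0}}$ correctly, and, above all, checking that the restricted cocycle action on $Me_0$ is outer so that the previous lemma applies — this is exactly where strong outerness and the implication $g\in\Gamma_{e_0}\Rightarrow\tau_0\circ\alpha_g=\tau_0$ enter. Once $\mathcal{M}^{\bar\alpha}$ is known to be a finite direct sum of factors, part (2) is trace bookkeeping, the only genuine point being the automatic trace-invariance of the isomorphisms $\bar\alpha_g$ between summands.
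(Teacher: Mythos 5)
Your proof is correct and follows essentially the same route as the paper: decompose $\mathcal{M}^{\bar\alpha}$ over the finitely many $\Gamma$-orbits of minimal central projections, identify each orbit piece with the fixed point algebra of the restricted cocycle action of the stabilizer on $\mathcal{M}e_0\cong\mathcal{R}$, verify outerness of that restriction from strong outerness, and apply the preceding lemma; part (2) is then the same trace bookkeeping the paper leaves implicit. No gaps.
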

\begin{proof}
(1) 
The cocycle action $\bar\alpha$ induces an action of $\Gamma$ 
on the set of minimal central projections of $M$. 
For each $\Gamma$-orbit we choose and fix a minimal central projection. 
Let $\{e_1,e_2,\dots,e_n\}$ denote the set of such projections. 
For each $i=1,2,\dots,n$, 
there exist finite subsets $H_i\subset\Gamma$ such that 
\[
\sum_{i=1}^n\sum_{h\in H_i}\bar\alpha_h(e_i)=1. 
\]
Put $\Gamma_i=\{g\in\Gamma\mid\bar\alpha_g(e_i)=e_i\}$. 
Then $\Gamma_i$ is a subgroup of $\Gamma$ and 
\[
\Gamma=\bigcup_{h\in H_i}h\Gamma_i
\]
holds. 
Let $(\bar\alpha^{(i)},u^{(i)}):\Gamma_i\curvearrowright Me_i$ be 
the restriction of $(\bar\alpha,u)$ to $\Gamma_i$ and $Me_i$. 
It is not so hard to see 
\[
\mathcal{M}^{\bar\alpha}=\left\{\sum_{i=1}^n\sum_{h\in H_i}\bar\alpha_h(x)\mid 
x\in(\mathcal{M}e_i)^{\bar\alpha^{(i)}}\right\}, 
\]
which is isomorphic to 
\[
\bigoplus_{i=1}^n(\mathcal{M}e_i)^{\bar\alpha^{(i)}}. 
\]
Since $(\bar\alpha^{(i)},u^{(i)}):\Gamma_i\curvearrowright Me_i$ 
is an outer cocycle action of a countable discrete amenable group 
on the AFD II$_1$-factor, by the lemma above, 
the fixed point algebra $(\mathcal{M}e_i)^{\bar\alpha^{(i)}}$ is 
a II$_1$-factor, which completes the proof. 

(2) immediately follows from the proof of (1). 
\end{proof}

\begin{theorem}\label{exact}
\begin{enumerate}
\item The homomorphism $\rho:A_\omega\to\mathcal{M}$ is surjective, that is, 
\[
\begin{CD}
0@>>>J@>>>A_\omega@>\rho>>\mathcal{M}@>>>0
\end{CD}
\]
is exact. 
\item The restriction of $\rho$ to $(A_\omega)^\alpha$ induces 
the following short exact sequence: 
\[
\begin{CD}
0@>>>J^\alpha@>>>(A_\omega)^\alpha@>\rho>>\mathcal{M}^{\bar\alpha}@>>>0. 
\end{CD}
\]
\end{enumerate}
\end{theorem}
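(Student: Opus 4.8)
The plan is to treat the two parts in turn, the first being essentially a $C^*$-to-von Neumann replacement argument and the second the genuinely equivariant statement. For (1), the kernel of $\rho$ is $J$ by definition, so the point is surjectivity; since $\mathcal M$ is the norm-closed span of its positive contractions and $\rho$ is a $*$-homomorphism, it suffices to lift a single positive contraction $z\in\mathcal M$. First I would represent $z$ by a bounded sequence $(\zeta_n)_n$ of positive contractions in $M$ that is $\omega$-central in $\lVert\cdot\rVert_2$. Because $E$ is finite, distinct $\tau\in E$ give mutually disjoint factor representations, so $A$ is $\sigma$-weakly dense in $M=\bigoplus_{\tau\in E}\pi_\tau(A)''$; Kaplansky's density theorem together with a diagonal selection then yields positive contractions $a_n\in A$ with $\lVert a_n-\zeta_n\rVert_2\to0$ as $n\to\omega$, which are automatically almost central in $\lVert\cdot\rVert_2$. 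The decisive step is to convert $\lVert\cdot\rVert_2$-centrality into genuine norm-centrality: I would apply Proposition \ref{Haagerup2} (which encodes Haagerup's amenability of nuclear $C^*$-algebras) together with a reindexation over an exhaustion of a dense subset of $A$ by finite sets and a sequence $\ep_m\to0$, producing positive contractions $b_n\in A$ with $\lVert b_n-a_n\rVert_2\to0$ and $\lVert[b_n,a]\rVert\to0$ as $n\to\omega$ for every $a\in A$. Then $(b_n)_n$ represents an element of $A_\omega$ mapping to $z$, and linearity of $\rho$ upgrades this to surjectivity.

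For (2), note first that $\rho$ is $\Gamma$-equivariant, where it is important that $\alpha$ and $\bar\alpha$ are \emph{genuine} actions on $A_\omega$ and $\mathcal M$, the unitaries $u(g,h)\in A$ acting trivially on central sequences. Hence $\rho$ carries $(A_\omega)^\alpha$ into $\mathcal M^{\bar\alpha}$ and $\Ker(\rho|_{(A_\omega)^\alpha})=J\cap(A_\omega)^\alpha=J^\alpha$, so exactness on the left and in the middle is formal and the whole content is the surjectivity of $\rho\colon(A_\omega)^\alpha\to\mathcal M^{\bar\alpha}$. Since $\rho|_{(A_\omega)^\alpha}$ is a $*$-homomorphism of $C^*$-algebras its range is automatically closed, so it is enough to produce exactly $\alpha$-invariant lifts whose images merely approximate a prescribed $z$, and then pass to a limit.

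Given $z\in\mathcal M^{\bar\alpha}$ I would lift it by (1) to $x\in A_\omega$. As $z$ is $\bar\alpha$-fixed, the elements $c_t:=\alpha_t(x)-x$ lie in $J$ for all $t$ and form a bounded $1$-cocycle, and the task becomes to realize it as a coboundary inside $J$: to find $j\in J$ with $\alpha_t(j)-j=c_t$ for every $t$, whereupon $x-j\in(A_\omega)^\alpha$ is the desired lift with $\rho(x-j)=z$. For an $(F,\ep)$-invariant set $K$ supplied by the weak Rohlin property, the Følner average $j_K:=-\lvert K\rvert^{-1}\sum_{g\in K}(\alpha_g(x)-x)\in J$ satisfies $\alpha_t(j_K)-j_K=c_t$ up to a term supported on $K\triangle tK$; since the cocycle is bounded and $\lvert K\triangle tK\rvert\le2\ep\lvert K\rvert$, this defect is $O(\ep)$ in \emph{operator} norm for $t\in F$. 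Letting $F\nearrow\Gamma$ and $\ep\to0$, and performing a reindexation in which the Rohlin tower accompanying $K$ is used to keep the averaged elements central, should then produce an exact central solution $j\in J$, and hence the invariant lift $x-j$.

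The hard part will be reconciling exact $\alpha$-invariance with membership in $A_\omega$. Approximate invariance is cheap — the boundary estimate above uses only amenability — but in passing to the limit one must ensure the averaged elements stay central, and a naive growing-Følner average loses control of $\lVert[\,\cdot\,,a]\rVert$ because centrality would be demanded uniformly over an expanding set $K$. This is exactly where the weak Rohlin property, rather than plain amenability, is indispensable: replacing the plain average by a tower-localized one confines the averaging to a region over which only centrality against a \emph{fixed} finite set is required at each stage, which is what lets the reindexation converge to a genuine element of $J$ solving the coboundary equation. I therefore expect the orthogonality of the tower levels and the Følner invariance of $K$ to be used in tandem precisely to control these boundary contributions in operator norm while preserving centrality.
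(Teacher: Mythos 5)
Your part (1) is the paper's proof verbatim in outline: Kaplansky's density theorem to pull the representative into $A$, Proposition \ref{Haagerup2} applied along an exhaustion $(F_m)_m$ with tolerances $1/m$, and a selection over sets in $\omega$ to trade $\lVert\cdot\rVert_2$-centrality for norm-centrality. Part (2) reaches the correct conclusion, but by a genuinely different mechanism than the paper, and your closing diagnosis of ``the hard part'' is off target. You use the normalized F{\o}lner average $y_K=\lvert K\rvert^{-1}\sum_{g\in K}\alpha_g(x)$ (your $x-j_K$): this is exactly an element of $A_\omega$ (a finite average of central elements), satisfies $\rho(y_K)=z$ on the nose because $z$ is $\bar\alpha$-invariant and $\rho$ is equivariant, and is $2\ep\lVert x\rVert$-invariant under $F$ in \emph{operator} norm by the triangle inequality, the $1/\lvert K\rvert$ normalization absorbing the $\le 2\ep\lvert K\rvert$ boundary terms. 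The paper instead forms the unnormalized tapered sum $c=\sum_{g\in K}h(g)\alpha_g(af)$ over a weak-Rohlin element $f$: there the identity $\sum_g\bar\alpha_g(\rho(f))=1$ gives $\rho(b)=z$, and the orthogonality $\alpha_g(f)\alpha_h(f)=0$ of the tower levels is what converts the pointwise bound $\lvert h(t^{-1}g)-h(g)\rvert\le 1/m$ into an operator-norm bound on the defect. In both versions the final step is the same routine reindexation over $(F,\ep)$ (the paper's ``standard trick on central sequences''): at stage $k$ one only needs the $k$-th approximant to be $1/k$-central against a \emph{fixed} finite set and $1/k$-invariant under a fixed finite set along a set in $\omega$, which each finite average supplies. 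So your worry that ``a naive growing-F{\o}lner average loses control of $\lVert[\cdot,a]\rVert$'' is a misdiagnosis -- centrality is never demanded uniformly over $K$ -- and consequently the vague appeal to a ``tower-localized'' average at the end is not needed for your argument to close (indeed, as written your proof of (2) never actually uses the weak Rohlin property, whereas the paper's does, precisely to avoid the normalization). The one thing you should make explicit rather than defer is that reindexation itself, since that, and not the coboundary equation, is the entire content of passing from approximate to exact invariance.
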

\begin{proof}
(1)
Take a positive contraction $x\in\mathcal{M}$. 
Let $(x_n)_n\in\ell^\infty(\N,M)$ be a representative sequence of $x$ 
consisting of positive contractions. 
By Kaplansky's density theorem, we may assume that $x_n$ is in $A$. 
We choose an increasing sequence $(F_m)_m$ of finite subsets of $A$ 
whose union is dense in $A$. 
Applying Proposition \ref{Haagerup2} to $F_m$ and $1/m$, 
we get a finite subset $G_m\subset A$ and $\delta_m>0$. 
Define 
\[
\Omega_m=\{n\in\N\mid n\geq m,\quad 
\lVert[x_n,a]\rVert_2<1/\delta_m\quad\forall a\in G_m\}. 
\]
Then $\Omega_m$ belongs to $\omega$ because $x$ is in $\mathcal{M}$. 
For each $n\in\Omega_m\setminus\Omega_{m+1}$, 
by means of Proposition \ref{Haagerup2}, 
we obtain a positive contraction $y_n\in A$ such that 
\[
\lVert x_n-y_n\rVert_2<1/m\quad\text{and}\quad 
\lVert[y_n,a]\rVert<1/m\quad\forall a\in F_m. 
\]
Put $y_n=0$ for $n\in\N\setminus\Omega_1$. 
It is easy to see that $(y_n)_n$ is an $\omega$-central sequence. 
Let $y\in A_\omega$ be the image of $(y_n)_n$. 
Since $\lVert x_n-y_n\rVert_2\to0$ as $n\to\omega$, 
we have $\rho(y)=x$ which completes the proof. 

(2)
Clearly 
$\rho((A_\omega)^\alpha)$ is contained in $\mathcal{M}^{\bar\alpha}$ 
and the kernel of $\rho|(A_\omega)^\alpha$ equals $J^\alpha$. 
It remains to show the surjectivity of $\rho|(A_\omega)^\alpha$. 

Take a positive element $x\in\mathcal{M}^{\bar\alpha}$. 
We would like to prove that $\rho((A_\omega)^\alpha)$ contains $x$. 
Since $\rho$ is surjective, 
there exists $a\in A_\omega$ such that $\rho(a)=x$. 
Choose a finite subset $F\subset\Gamma$, $\ep>0$ and $m\in\N$ arbitrarily. 
Choose $\delta>0$ so that 
\[
\delta(1+\lvert F\rvert+\dots+\lvert F\rvert^{m-1})<\ep. 
\]
Since $(\alpha,u)$ has the weak Rohlin property, 
there exists an $(F,\delta)$-invariant finite subset $K\subset\Gamma$ 
and a positive contraction $f\in A_\omega$ such that 
\[
\tau_\omega(f)=\vert K\rvert^{-1}\quad\text{and}\quad 
\alpha_g(f)\alpha_h(f)=0
\]
for all $\tau\in T(A)$ and $g,h\in K$ with $g\neq h$. 
Evidently $(\rho(\alpha_g(f)))_{g\in K}$ forms 
a partition of unity consisting of projections in $\mathcal{M}$. 
By the standard reindexation trick, we may assume that $f$ commutes with $a$. 
Define a function $\ell:K\to\N$ by 
\[
\ell(g)=\min\{n\in\N\mid\exists g_1,g_2,\dots,g_n\in F,\ 
gg_1\dots g_n\notin K\}. 
\]
One has $\lvert\ell^{-1}(1)\rvert\leq\delta\lvert K\rvert$, 
because $K$ is $(F,\delta)$-invariant. 
Then 
$\lvert\ell^{-1}(n)\rvert\leq\delta\lvert F\rvert^{n-1}\lvert K\rvert$ 
is obtained inductively, so that 
\[
\lvert\ell^{-1}(\{1,2,\dots,m\})\rvert
\leq\delta(1+\lvert F\rvert+\dots+\lvert F\rvert^{m-1})
\lvert K\rvert<\ep\lvert K\rvert. 
\]
Let $h:K\to[0,1]$ be a function 
defined by $h(g)=m^{-1}\min\{\ell(g){-}1,m\}$. 
The estimate above implies 
$\lvert h^{-1}(1)\rvert>(1{-}\ep)\lvert K\rvert$. 
We define positive contractions $b,c\in A_\omega$ by 
\[
b=\sum_{g\in K}\alpha_g(af)\quad\text{and}\quad 
c=\sum_{g\in K}h(g)\alpha_g(af)
\]
It is easy to see that 
$\lVert c-\alpha_g(c)\rVert$ is less than $1/m$ for all $g\in F$. 
Also, we have 
\[
\rho(b)=\sum_{g\in K}\bar\alpha_g(x)\bar\alpha_g(\rho(f))=x, 
\]
and so $a-b\in J$. 
Furthermore, $c\leq b$ and 
\[
\tau_\omega(b-c)\leq\sum_{h(g)\neq1}\tau_\omega(\alpha_g(af))
\leq\sum_{h(g)\neq1}\tau_\omega(\alpha_g(f))<\ep
\]
for all $\tau\in T(A)$. 
Hence $\tau_\omega((a-c)^*(a-c))=\tau_\omega((b-c)^*(b-c))<\ep$ holds 
for all $\tau\in T(A)$. 

Since $F$, $\ep$ and $m$ were arbitrary, 
the standard trick on central sequences implies that 
there exists a positive contraction $c\in(A_\omega)^\alpha$ 
such that $a-c\in J$, 
which completes the proof. 
\end{proof}

\begin{lemma}\label{appidentity}
For any $x\in J^\alpha$, 
there exists a positive contraction $e\in J^\alpha$ such that $ex=xe=x$. 
\end{lemma}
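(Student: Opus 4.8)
The plan is to reduce the two-sided identity to a one-sided local unit for a single positive element, to construct an approximate unit for that element inside $J^\alpha$ by functional calculus, and then to upgrade it to an exact unit by a reindexation argument.

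First I would set $b=x^*x+xx^*\in A_\omega$. Since $\alpha$ acts on $A_\omega$ as a genuine action and $\alpha_g(x)=x$, we get $\alpha_g(b)=b$, so $b\in J^\alpha$ and $b\geq0$. It then suffices to produce a positive contraction $e\in J^\alpha$ with $eb=b$: as $e$ and $b$ are self-adjoint this forces $be=b$, and since $0\leq x^*x\leq b$ together with $(1-e)b=0$ gives $0\leq(1-e)x^*x(1-e)\leq(1-e)b(1-e)=0$, we obtain $\lVert x(1-e)\rVert^2=\lVert(1-e)x^*x(1-e)\rVert=0$, i.e. $xe=x$; applying the same reasoning to $0\leq xx^*\leq b$ yields $(1-e)xx^*(1-e)=0$ and hence $ex=x$.

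Next I would build an approximate unit for $b$ inside $J^\alpha$. For $k\in\N$ let $g_k(t)=\min(kt,1)$ and set $e_k=g_k(b)$. Writing $g_k(t)=t\,h_k(t)$ with $h_k(t)=\min(k,t^{-1})$ bounded and continuous, we have $e_k=b\,h_k(b)$; since $J$ is an ideal of $A_\omega$ and $b\in J$, this gives $e_k\in J$. Because $b$ is $\alpha$-fixed and each $\alpha_g$ is an automorphism of $A_\omega$, functional calculus yields $\alpha_g(e_k)=g_k(\alpha_g(b))=g_k(b)=e_k$, so in fact $e_k\in J^\alpha$; moreover $0\leq e_k\leq1$, $e_kb=be_k$, and $\lVert e_kb-b\rVert=\lVert(g_k(b)-1)b\rVert\leq\sup_{t\geq0}\lvert g_k(t)-1\rvert\,t\leq1/k$. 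Thus $(e_k)_k$ is an increasing approximate unit for $b$ lying in $J^\alpha$.

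Finally I would pass to an exact local unit by reindexation, which I expect to be the only delicate point. Fix a representative $(b_n)_n$ of $b$ by positive contractions, a dense sequence $(a_i)_i$ in $A$, and an enumeration $(g_j)_j$ of $\Gamma$. For each fixed $k$ the sequence $(g_k(b_n))_n$ represents $e_k\in J^\alpha$, so $\lim_{n\to\omega}\lVert[g_k(b_n),a_i]\rVert=0$, $\lim_{n\to\omega}\lVert\alpha_{g_j}(g_k(b_n))-g_k(b_n)\rVert=0$, $\lim_{n\to\omega}\max_{\tau\in T(A)}\tau(g_k(b_n))=0$, and $\lim_{n\to\omega}\lVert g_k(b_n)b_n-b_n\rVert\leq1/k$. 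A standard slow-growth diagonal argument then produces a function $k(n)\to\infty$ along $\omega$ for which $e:=(g_{k(n)}(b_n))_n$ is $\omega$-central, satisfies $\alpha_g(e)=e$ for every $g\in\Gamma$ and $\tau_\omega(e)=0$ for every $\tau\in T(A)$, and obeys $\lVert g_{k(n)}(b_n)b_n-b_n\rVert\leq1/k(n)\to0$; hence $e\in J^\alpha$ is a positive contraction with $eb=be=b$. The subtlety is precisely that letting the functional-calculus parameter $k$ depend on $n$ is what forces the exact identity $eb=b$ (no single $g_k(b)$ works when $0$ is not isolated in the spectrum of $b$), while one must simultaneously keep the diagonal sequence central and $\alpha$-fixed; this reconciliation of only countably many $\omega$-limit conditions is available because $A$ is separable and $\Gamma$ is countable.
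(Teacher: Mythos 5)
Your proof is correct and follows essentially the same route as the paper: the paper also works with $y_n=x_n^*x_n+x_nx_n^*$, applies the functions $h_m(t)=\min(mt,1)$, and performs exactly your slow-growth diagonalization over $\omega$ (via sets $\Omega_m\in\omega$ collecting the centrality, $\alpha$-invariance and trace conditions) to let the functional-calculus parameter depend on $n$, concluding with the same estimate $\lVert(1-h_m(y_n))x_nx_n^*(1-h_m(y_n))\rVert\leq\lVert(1-h_m(y_n))y_n(1-h_m(y_n))\rVert<1/m$. The only cosmetic difference is that you first phrase the approximate unit at the level of $A_\omega$ before descending to representatives, while the paper works with representatives throughout.
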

\begin{proof}
Take $x\in J^\alpha$. 
Let $(x_n)_n$ be a representative sequence of $x$. 
We choose an increasing sequence $(F_m)_m$ of finite subsets of $A$ 
whose union is dense in $A$. 
Let $\Gamma=\{g_m\mid m\in\N\}$. 
For $m\in\N$, define a continuous function $h_m$ on $[0,\infty)$ by 
\[
h_m(t)=\begin{cases}mt&0\leq t\leq1/m\\1&1/m\leq t. \end{cases}
\]
Put $y_n=x_n^*x_n+x_nx_n^*$. 
It is easy to see that 
$(y_n)_n$ is an $\omega$-central sequence of positive elements 
satisfying $\tau(y_n)\to0$ and $\alpha_g(y_n)-y_n\to0$ as $n\to\omega$ 
for any $\tau\in T(A)$ and $g\in\Gamma$. 
Let $\Omega_m$ be the set of all natural numbers $n\geq m$ such that 
\[
\tau(h_m(y_n))<\frac{1}{m},\quad 
\lVert \alpha_{g_i}(h_m(y_n))-h_m(y_n)\rVert<\frac{1}{m}\quad\text{and}\quad 
\lVert[h_m(y_n),a]\rVert<\frac{1}{m}
\]
for all $\tau\in T(A)$, $i=1,2,\dots,m$ and $a\in F_m$. 
Then $\Omega_m$ belongs to $\omega$. 
Set $\widetilde\Omega_m=\Omega_1\cap\Omega_2\cap\dots\cap\Omega_m$. 
Define $e_n\in A$ by 
\[
e_n=\begin{cases}h_m(y_n)&
n\in\widetilde\Omega_m\setminus\widetilde\Omega_{m+1}\\
0&n\in\N\setminus\widetilde\Omega_1. \end{cases}
\]
Then $(e_n)_n$ is an $\omega$-central sequence 
satisfying $\tau(e_n)\to0$ and $\alpha_g(e_n)-e_n\to0$ as $n\to\omega$ 
for all $\tau\in T(A)$ and $g\in\Gamma$. 
Thus, the image $e\in A_\omega$ of $(e_n)_n$ is in $J^\alpha$. 
Moreover, for every $n\in\widetilde\Omega_m\setminus\widetilde\Omega_{m+1}$, 
we have 
\begin{align*}
\lVert x_n-e_nx_n\rVert^2
&=\lVert(1-h_m(y_n))x_nx_n^*(1-h_m(y_n))\rVert\\
&\leq\lVert(1-h_m(y_n))y_n(1-h_m(y_n))\rVert<m^{-1}. 
\end{align*}
Hence $x=ex$. 
Likewise, we obtain $x=xe$, which completes the proof. 
\end{proof}

In the following proposition, 
we consider an equivariant version of the property (SI). 
Notice that we need not assume that 
$A$ has finitely many extremal tracial states for this proposition. 
See Definition \ref{DefofSI} for the definition of the property (SI). 

\begin{proposition}\label{equivSI}
Suppose that $A$ has the property (SI). 
For any $\omega$-central sequences $(x_n)_n$ and $(y_n)_n$ of 
positive contractions in $A$ satisfying 
\[
\lim_{n\to\omega}\max_{\tau\in T(A)}\tau(x_n)=0,\quad 
\inf_{m\in\N}\lim_{n\to\omega}\min_{\tau\in T(A)}\tau(y_n^m)>0, 
\]
and 
\[
\lim_{n\to\omega}\lVert\alpha_g(x_n)-x_n\rVert
=\lim_{n\to\omega}\lVert\alpha_g(y_n)-y_n\rVert=0\quad\forall g\in\Gamma, 
\]
there exists an $\omega$-central sequence $(s_n)_n$ in $A$ such that 
\[
\lim_{n\to\omega}\lVert s_n^*s_n-x_n\rVert=0,\quad 
\lim_{n\to\omega}\lVert y_ns_n-s_n\rVert=0, 
\]
and 
\[
\lim_{n\to\omega}\lVert\alpha_g(s_n)-s_n\rVert=0\quad\forall g\in\Gamma. 
\]
\end{proposition}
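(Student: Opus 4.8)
The plan is to reformulate everything inside the central sequence algebra $A_\omega$, where $\alpha$ is a genuine action, and then to feed carefully chosen data into the ordinary property (SI) of $A$, using the weak Rohlin property to manufacture $\Gamma$-invariance. Writing $x=(x_n)_n$ and $y=(y_n)_n$ for the images in $A_\omega$, the first trace hypothesis forces $\tau_\omega(x)=0$ for every $\tau$, hence $x\in J^\alpha$; the invariance hypotheses place $x,y\in(A_\omega)^\alpha$; and the second hypothesis says $\rho(y)\in\mathcal{M}^{\bar\alpha}$ is large in the sense $\inf_m\min_\tau\tau_\omega(y^m)>0$. What I must produce is an element $s\in(A_\omega)^\alpha$ with $s^*s=x$ and $ys=s$.

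First I would invoke the weak Rohlin property: for a finite $F\subset\Gamma$ and small $\ep>0$ it yields an $(F,\ep)$-invariant Følner set $K$ and a positive contraction $f\in A_\omega$ with $\alpha_g(f)\alpha_h(f)=0$ for distinct $g,h\in K$ and $\tau_\omega(f)=|K|^{-1}$ for all $\tau$. By the standard reindexation trick I arrange that $f$ commutes with $x$ and $y$. Since the $\alpha_g(f)$ are orthogonal and $\sum_{g\in K}\tau_\omega(\alpha_g(f))=1$, the images $\bar\alpha_g(\rho(f))$ are orthogonal positive contractions in $\mathcal{M}$ summing to $1$, so each is a projection; thus $\rho(f)$ is a projection, $f$ is a genuine projection modulo $J$, and $e:=\sum_{g\in K}\alpha_g(f)$ satisfies $1-e\in J$.

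The core step is to build a base element in the hereditary subalgebra $\overline{fA_\omega f}$ and spread it over the tower. Applying property (SI) inside the corner cut down by $f$, to the small element $xf$ and to $y$, I would obtain $s_0\in\overline{fA_\omega f}$ with $s_0^*s_0=xf$ and $ys_0=s_0$. Because $s_0$ lies in the hereditary subalgebra of $f$, the translates $\alpha_g(s_0)$ sit in the mutually orthogonal corners $\overline{\alpha_g(f)A_\omega\alpha_g(f)}$, so $s:=\sum_{g\in K}\alpha_g(s_0)$ satisfies $s^*s=\sum_{g\in K}\alpha_g(s_0^*s_0)=xe$, while $ys=\sum_{g\in K}\alpha_g(ys_0)=s$ by invariance of $y$, and the Følner condition makes $\lVert\alpha_h(s)-s\rVert$ small for $h\in F$. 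The residual discrepancy $x-s^*s=x(1-e)$ lies in $J$ and carries no trace, so it can be absorbed by a further application of property (SI); letting $F\uparrow\Gamma$ and $\ep\downarrow0$ and running a diagonal/reindexation argument then upgrades the approximate relations to the exact identities $s^*s=x$, $ys=s$ with $s\in(A_\omega)^\alpha$.

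The main obstacle is the corner step: to run property (SI) under $f$ I need $y$ to remain large after cutting down, i.e. $\inf_m\min_\tau\tau_\omega(fy^mf)/\tau_\omega(f)>0$, which is precisely where largeness of $y$ can be lost. I expect to resolve this by positioning the Rohlin tower independently of $y$: passing to $\mathcal{M}$, where $\rho(f)$ is an honest projection and Lemma \ref{II_1} gives a finite direct sum of II$_1$-factors, I would arrange $\rho(f)$ to be free from $\rho(y)$, giving $\tau_\omega(fy^mf)\approx\tau_\omega(f)\,\tau_\omega(y^m)$ and hence corner-largeness. Making this independence coexist with the Rohlin relations, and ensuring that the $J$-valued correction can itself be chosen $\Gamma$-invariant and orthogonal to $s$, is the delicate heart of the argument.
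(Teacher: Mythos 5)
Your overall architecture (Rohlin tower from the weak Rohlin property, cut down, apply property (SI), average over the tower, F\o lner set for approximate invariance) is the same as the paper's, but two of the steps you yourself flag as delicate are genuine gaps, and the paper's proof is organized precisely so as to avoid them. First, ``applying property (SI) inside the corner $\overline{fA_\omega f}$'' is not a licensed move: (SI) as defined is a statement about $A$ and $T(A)$, not about hereditary subalgebras of $A_\omega$ with their renormalized traces. The paper achieves the corner localization legitimately by cutting down $y$ rather than $x$: it applies the ordinary (SI) in $A$ to the pair $(x_n,\tilde y_n)$ with $\tilde y_n=y_n^{1/2}f_l^{1/m}y_n^{1/2}$, whose largeness $\inf_m\lim_n\min_\tau\tau(\tilde y_n^m)\geq\nu\lvert K\rvert^{-1}$ is exactly the ``corner-largeness'' you worry about; it is supplied by \cite[Lemma 4.6]{MS} (asymptotic factorization $\tau(y_n^mf_l)\approx\tau(y_n^m)\tau(f_l)$ for the central sequence $(f_l)_l$), which is the rigorous version of the ``freeness'' you gesture at. The resulting $r_n$ then lies approximately under $\tilde y_n$, hence under $\alpha_g(f_l)$-orthogonal corners after translation, with no corner version of (SI) needed.

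Second, and more seriously, your assembly $s=\sum_{g\in K}\alpha_g(s_0)$ with $s_0^*s_0=xf$ gives $s^*s=xe$ where $e=\sum_g\alpha_g(f)$, and the discrepancy $x-xe=x(1-e)$ lies in $J$ but is \emph{not} small in operator norm ($1-e\in J$ only means its $2$-norms vanish), whereas the conclusion demands $\lVert s_n^*s_n-x_n\rVert\to0$ in norm. Your proposed repair --- a further application of (SI) to $x(1-e)$ producing an invariant correction orthogonal to $s$ --- is left unexecuted and is circular as stated, since the correction must itself be $\alpha$-invariant, which is the statement being proved. The paper sidesteps this entirely with the normalization $s_n=\lvert K\rvert^{-1/2}\sum_{g\in K}\alpha_g(r_n)$: keeping $x_n$ whole in the (SI) application, one gets $s_n^*s_n\approx\lvert K\rvert^{-1}\sum_g\alpha_g(r_n^*r_n)\approx\lvert K\rvert^{-1}\sum_g\alpha_g(x_n)\approx x_n$ in norm directly from the hypothesis $\lVert\alpha_g(x_n)-x_n\rVert\to0$, with no residual term to absorb.
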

\begin{proof}
The assertion is actually contained in the proof of \cite[Theorem 4.7]{MS}, 
but we include a proof here for completeness. 

We first note that, by Remark \ref{RemofSI},  
the assertion is equivalent to the property (SI) itself, 
when $\Gamma$ is trivial. 
Let us consider the general case. 
Let $(x_n)_n$ and $(y_n)_n$ be as in the statement. 
Put 
\[
\nu=\inf_{m\in\N}\lim_{n\to\omega}\min_{\tau\in T(A)}\tau(y_n^m)>0. 
\]
Take a finite subset $F\subset\Gamma$ and $\ep>0$ arbitrarily. 
Since $(\alpha,u):\Gamma\curvearrowright A$ has the weak Rohlin property, 
we can find an $(F,\ep)$-invariant finite subset $K\subset\Gamma$ and 
a central sequence $(f_l)_l$ of positive contractions satisfying 
\[
\lim_{l\to\infty}
\max_{\tau\in T(A)}\lvert\tau(f_l)-\lvert K\rvert^{-1}\rvert=0
\quad\text{and}\quad 
\lim_{l\to\infty}\alpha_g(f_l)\alpha_h(f_l)=0
\]
for all $g,h\in K$ with $g\neq h$. 

We claim that there exists an $\omega$-central sequence $(\tilde y_n)_n$ 
of positive contractions in $A$ such that 
\[
\tilde y_n\leq y_n,\quad 
\inf_{m\in\N}\lim_{n\to\omega}
\min_{\tau\in T(A)}\tau(\tilde y_n^m)=\nu\lvert K\rvert^{-1}
\quad\text{and}\quad 
\lim_{n\to\omega}\alpha_g(\tilde y_n)\alpha_h(\tilde y_n)=0
\]
for all $g,h\in K$ with $g\neq h$. 
To this end, it suffices to show that 
for any finite subset $G\subset A$, $\delta>0$ and $m\in\N$, 
there exists a sequence $(\tilde y_n)_n$ of positive contractions in $A$ 
such that 
\[
\tilde y_n\leq y_n,\quad 
\lim_{n\to\omega}
\min_{\tau\in T(A)}\tau(\tilde y_n^m)>\nu\lvert K\rvert^{-1}-2\delta,\quad 
\lim_{n\to\omega}\lVert\alpha_g(\tilde y_n)\alpha_h(\tilde y_n)\rVert<\delta
\]
for all $g,h\in K$ with $g\neq h$ and 
\[
\lim_{n\to\omega}\lVert[\tilde y_n,a]\rVert<\delta\quad\forall a\in G. 
\]
Choose $l\in\N$ so that 
\[
\max_{\tau\in T(A)}\lvert\tau(f_l)-\lvert K\rvert^{-1}\rvert<\delta,\quad 
\lVert\alpha_g(f_l^{1/m})\alpha_h(f_l^{1/m})\rVert<\delta
\]
for all $g,h\in K$ with $g\neq h$ and 
\[
\lVert[f_l^{1/m},a]\rVert<\delta\quad\forall a\in G. 
\]
Set $\tilde y_n=y_n^{1/2}f_l^{1/m}y_n^{1/2}$. 
Clearly $\tilde y_n\leq y_n$. 
By means of \cite[Lemma 4.6]{MS}, we get 
\begin{align*}
\lim_{n\to\omega}\min_{\tau\in T(A)}\tau(\tilde y_n^m)
&=\lim_{n\to\omega}\min_{\tau\in T(A)}\tau(y_n^mf_l)\\
&>\lim_{n\to\omega}\min_{\tau\in T(A)}\tau(y_n^m)\lvert K\rvert^{-1}-2\delta
\geq\nu-2\delta. 
\end{align*}
We also have 
\[
\lim_{n\to\omega}\lVert\alpha_g(\tilde y_n)\alpha_h(\tilde y_n)\rVert
\leq\lVert\alpha_g(f_l^{1/m})\alpha_h(f_l^{1/m})\rVert<\delta
\]
for all $g,h\in K$ with $g\neq h$ and 
\[
\lim_{n\to\omega}\lVert[\tilde y_n,a]\rVert
\leq\lVert[f_l^{1/m},a]\rVert<\delta
\]
for all $a\in G$. 
Thus, the proof of the claim is completed. 

Because $A$ has the property (SI) (see also Remark \ref{RemofSI}), 
there exists an $\omega$-central sequence $(r_n)_n$ in $A$ such that 
\[
r_n^*r_n-x_n\to0\quad\text{and}\quad 
\tilde y_nr_n-r_n\to0
\]
as $n\to\omega$. 
Define $s_n\in A$ by 
\[
s_n=\frac{1}{\sqrt{\lvert K\rvert}}\sum_{g\in K}\alpha_g(r_n). 
\]
Then it is easy to see 
\[
s_n^*s_n-x_n\to0\quad\text{and}\quad y_ns_n-s_n\to0
\]
as $n\to\omega$. 
Besides, for any $h\in F$, 
\begin{align*}
\lim_{n\to\omega}\lVert s_n-\alpha_h(s_n)\rVert
&\leq\lim_{n\to\omega}\frac{1}{\sqrt{\lvert K\rvert}}
\left\lVert\sum_{g\in K\setminus h^{-1}K}\alpha_g(r_n)\right\rVert
+\frac{1}{\sqrt{\lvert K\rvert}}
\left\lVert\sum_{g\in K\setminus hK}\alpha_g(r_n)\right\rVert \\
&\leq\frac{\lvert K\setminus h^{-1}K\rvert^{1/2}}{\lvert K\rvert^{1/2}}
+\frac{\lvert K\setminus hK\rvert^{1/2}}{\lvert K\rvert^{1/2}}
\leq2\sqrt{\ep}. 
\end{align*}
Since $F\subset\Gamma$ and $\ep>0$ were arbitrary, the proof is completed. 
\end{proof}

\begin{lemma}\label{taukillJ}
Suppose that $A$ has the property (SI). 
For any $\tau\in T((A_\omega)^\alpha)$ and $x\in J^\alpha$, 
one has $\tau(x)=0$. 
\end{lemma}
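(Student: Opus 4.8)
The plan is to prove $\tau(x)=0$ for every positive contraction $x\in J^\alpha$; the general case then follows by scaling and linearity, since $J^\alpha$ is the linear span of its positive elements. So fix such an $x$ and a trace $\tau\in T((A_\omega)^\alpha)$. My strategy is to manufacture, for each $N\in\N$, a family $p_1,\dots,p_N$ of mutually orthogonal positive contractions in $J^\alpha$ with $\tau(p_i)=\tau(x)$ for all $i$. Granting this, orthogonality gives $\lVert\sum_i p_i\rVert\le1$, so $N\tau(x)=\tau(\sum_i p_i)\le1$, and letting $N\to\infty$ forces $\tau(x)=0$.

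First I would record the two facts that feed the property (SI). Since $x\in J$ we have $\tau_\omega(x^2)=0$ for every $\tau\in T(A)$, whence $\tau_\omega(x)=0$ by Cauchy--Schwarz; because $A$ has only finitely many extremal traces and $\tau\mapsto\tau(x_n)$ is affine on $T(A)$, this upgrades to $\lim_{n\to\omega}\max_{\tau\in T(A)}\tau(x_n)=0$ for a representative $(x_n)_n$, so $x$ meets the ``small'' hypothesis of Definition \ref{DefofSI}. On the other side, whenever $e\in J^\alpha$ is a positive contraction, $1-e$ is $\alpha$-invariant and $(1-e)^m-1\in J$ for every $m$ (as $J$ is an ideal), so $\tau_\omega((1-e)^m)=1$ for all $\tau$ and $m$; by the same affineness and finiteness argument one gets $\inf_m\lim_{n\to\omega}\min_{\tau\in T(A)}\tau(y_n^m)=1>0$ for a representative $(y_n)_n$ of $1-e$, so $1-e$ is an admissible ``large'' element.

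The inductive step is the heart of the matter. Suppose $p_1,\dots,p_k\in J^\alpha$ are mutually orthogonal positive contractions with $\tau(p_i)=\tau(x)$. Applying Lemma \ref{appidentity} to $\sum_i p_i\in J^\alpha$ produces a positive contraction $e\in J^\alpha$ with $e\bigl(\sum_i p_i\bigr)=\bigl(\sum_i p_i\bigr)e=\sum_i p_i$, and from this one checks $p_ie=p_i$ for each $i$. I would then apply the equivariant property (SI), Proposition \ref{equivSI}, to the small element $x$ and the large element $1-e$, obtaining $s\in(A_\omega)^\alpha$ with $s^*s=x$ and $(1-e)s=s$, that is, $es=0$. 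Set $p_{k+1}=ss^*$. Then $p_{k+1}$ is a positive contraction in $(A_\omega)^\alpha$ since $s$ is $\alpha$-invariant; it lies in $J$ because $\tau_\omega(p_{k+1})=\tau_\omega(s^*s)=\tau_\omega(x)=0$ for all $\tau\in T(A)$; its trace is $\tau(p_{k+1})=\tau(ss^*)=\tau(s^*s)=\tau(x)$ by the trace property; and it is orthogonal to each $p_i$, since $es=0$ gives $ep_{k+1}=0$, hence $p_ip_{k+1}=p_ie\,p_{k+1}=0$. Starting from $p_1=x$ and iterating $N$ times yields the desired orthogonal family.

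I expect the main obstacle to be arranging that the element $s$ produced at each step genuinely lies in the fixed-point algebra $(A_\omega)^\alpha$, not merely in $A_\omega$: this is precisely what forces the use of the equivariant version of (SI) in Proposition \ref{equivSI}, and it is what legitimizes the trace identity $\tau(ss^*)=\tau(s^*s)$, since both $ss^*$ and $s^*s$ must lie in the domain of $\tau$. A secondary point requiring care is verifying that $1-e$ satisfies the ``large'' hypothesis uniformly over $T(A)$, where the standing assumption of finitely many extremal tracial states is used.
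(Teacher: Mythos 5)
Your proposal is correct and follows essentially the same route as the paper: starting from $x$, one repeatedly uses Lemma \ref{appidentity} to get a local unit $e$ for the orthogonal family built so far, then feeds the small element $x$ and the large element $1-e$ into Proposition \ref{equivSI} to produce $s\in(A_\omega)^\alpha$ with $s^*s=x$ and $es=0$, and adjoins $ss^*$ to the family; the bound $x+s_1s_1^*+\dots+s_Ns_N^*\le1$ then forces $\tau(x)=0$. The verifications you flag (smallness of $x$, largeness of $1-e$, and $\alpha$-invariance of $s$ legitimizing $\tau(ss^*)=\tau(s^*s)$) are exactly the points the paper's argument relies on.
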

\begin{proof}
Let $x\in J^\alpha$ be a positive contraction. 
It suffices to show $\tau(x)=0$ for all $\tau\in T((A_\omega)^\alpha)$. 
By Lemma \ref{appidentity}, 
we can find a positive contraction $e_1\in J^\alpha$ such that $xe_1=x$. 
By Proposition \ref{equivSI}, 
there exists $s_1\in J^\alpha$ such that $s_1^*s_1=x$ and $(1-e_1)s_1=s_1$. 
Applying Lemma \ref{appidentity} to the positive contraction $x+s_1s_1^*$, 
we obtain a positive contraction $e_2\in J^\alpha$ 
such that $(x+s_1s_1^*)e_2=x+s_1s_1^*$. 
By Proposition \ref{equivSI}, 
there exists $s_2\in J^\alpha$ such that $s_2^*s_2=x$ and $(1-e_2)s_2=s_2$. 
Then $x+s_1s_1^*+s_2s_2^*$ is again a positive contraction in $J^\alpha$. 
Repeating this argument, 
one can find $(s_i)_i\subset J^\alpha$ satisfying 
\[
s_i^*s_i=x\quad\text{and}\quad 
x+s_1s_1^*+s_2s_2^*+\dots+s_ns_n^*\leq1
\]
for every $n\in\N$. 
Hence $\tau(x)=0$ for all $\tau\in T((A_\omega)^\alpha)$. 
\end{proof}

\begin{lemma}
Suppose that $A$ has the property (SI). 
\begin{enumerate}
\item The map $\tau\mapsto\tau_\omega|(A_\omega)^\alpha$ induces 
an isomorphism from $T(A)/\Gamma$ to $T((A_\omega)^\alpha)$. 
\item The map $\tau\mapsto\tau_\omega$ is 
an isomorphism from $T(A)$ to $T(A_\omega)$. 
\item If $A$ has a unique tracial state, then so does $(A_\omega)^\alpha$. 
\end{enumerate}
\end{lemma}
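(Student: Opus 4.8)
The plan is to derive all three statements from the exact sequences of Theorem \ref{exact}, the factor structure provided by Lemma \ref{II_1}, and the trace-vanishing Lemma \ref{taukillJ}. The guiding principle is that, by Lemma \ref{taukillJ}, every tracial state on the fixed-point central sequence algebra annihilates the trace-kernel ideal $J^\alpha$, and therefore descends through $\rho$ to a tracial state on the finite von Neumann algebra $\mathcal{M}^{\bar\alpha}$, whose trace space is already understood via Lemma \ref{II_1}(2).

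For (1) I would first check that $\tau\mapsto\tau_\omega|(A_\omega)^\alpha$ is constant on $\Gamma$-orbits. Indeed, for $x\in(A_\omega)^\alpha$ with representative $(x_n)_n$ and $g\in\Gamma$ one has $\alpha_g(x)=x$ in $A_\omega$, so $(\tau\circ\alpha_g)_\omega(x)=\lim_{n\to\omega}\tau(\alpha_g(x_n))=\tau_\omega(\alpha_g(x))=\tau_\omega(x)$, and the map factors through $T(A)/\Gamma$. Next, Lemma \ref{taukillJ} together with the exact sequence of Theorem \ref{exact}(2) shows that every $\sigma\in T((A_\omega)^\alpha)$ vanishes on $J^\alpha$ and hence equals $\bar\sigma\circ\rho$ for a unique $\bar\sigma\in T(\mathcal{M}^{\bar\alpha})$, while conversely every trace on $\mathcal{M}^{\bar\alpha}$ pulls back along $\rho$; this yields a bijection $T((A_\omega)^\alpha)\cong T(\mathcal{M}^{\bar\alpha})$. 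I would then compose with the isomorphism $T(\mathcal{M})/\Gamma\cong T(\mathcal{M}^{\bar\alpha})$ of Lemma \ref{II_1}(2) and the canonical identification $T(\mathcal{M})\cong T(A)$, and finally verify that the resulting isomorphism $T(A)/\Gamma\cong T((A_\omega)^\alpha)$ is exactly the one induced by $\tau\mapsto\tau_\omega|(A_\omega)^\alpha$, which amounts to the identity $\tau_\omega(x)=(\tau|\mathcal{M}^{\bar\alpha})(\rho(x))$ obtained by propagating a representative sequence through $\rho$.

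Statement (2) is the instance of the same argument for the trivial group, which is permitted in this subsection and has the weak Rohlin property trivially: Theorem \ref{exact}(1) and the trivial-group case of Lemma \ref{taukillJ} show that every trace on $A_\omega$ annihilates $J$ and therefore factors through $\rho$ as a trace on $\mathcal{M}$, so $\tau\mapsto\tau_\omega$ is surjective by $T(\mathcal{M})\cong T(A)$, while injectivity is immediate from $\tau_\omega|A=\tau$. Statement (3) follows at once from (1): if $T(A)$ is a single point, then so is $T(A)/\Gamma$, whence $(A_\omega)^\alpha$ has a unique tracial state.

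I do not anticipate a genuine obstacle, since the substantive work—surjectivity of $\rho$, the description of $\mathcal{M}^{\bar\alpha}$ as a finite direct sum of II$_1$-factors, and the vanishing of traces on $J^\alpha$—has already been carried out in Theorem \ref{exact} and in Lemmas \ref{II_1} and \ref{taukillJ}. The only points demanding care are confirming that the abstract bijection extracted from the exact sequence coincides with the explicitly given map $\tau\mapsto\tau_\omega|(A_\omega)^\alpha$, and that it is an affine homeomorphism rather than a mere bijection; both are settled by noting that each intermediate identification is affine and weak-$*$ continuous and by tracing representative sequences through $\rho$.
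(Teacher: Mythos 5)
Your argument is correct and is essentially the paper's own proof: the authors simply state that (1) follows from Lemma \ref{II_1}, Theorem \ref{exact} and Lemma \ref{taukillJ}, and that (2) and (3) are direct consequences of (1) (with (2) being the trivial-group instance, which the subsection explicitly permits). Your write-up just supplies the routine details — factoring traces through $\rho$ via the vanishing on $J^\alpha$, and matching the resulting bijection with $\tau\mapsto\tau_\omega|(A_\omega)^\alpha$ via $\tau_\omega=\tau\circ\rho$ — all of which the paper leaves implicit.
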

\begin{proof}
(1) immediately follows from 
Lemma \ref{II_1}, Theorem \ref{exact} and Lemma \ref{taukillJ}. 
(2) and (3) are direct consequences of (1). 
\end{proof}

The following proposition says that 
$(A_\omega)^\alpha$ has strict comparison in a certain sense. 
We remark that the condition 
$d_{\tau_\omega}(a)<d_{\tau_\omega}(b)$ for all $\tau\in T(A)$ is equivalent 
to the condition $d_\tau(a)<d_\tau(b)$ for all $\tau\in T((A_\omega)^\alpha)$, 
thanks to the lemma above. 

\begin{proposition}\label{sc}
Suppose that $A$ has the property (SI). 
Let $a,b\in(A_\omega)^\alpha$ be positive elements 
satisfying $d_{\tau_\omega}(a)<d_{\tau_\omega}(b)$ for all $\tau\in T(A)$. 
Then there exists $r\in(A_\omega)^\alpha$ such that $rbr^*=a$. 
\end{proposition}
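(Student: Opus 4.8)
The plan is to reduce the claim to a Cuntz subequivalence $a\precsim(b-\ep)_+$ holding \emph{inside} the fixed point algebra $(A_\omega)^\alpha$, and then to appeal to the standard fact that subequivalence to a strictly positive cut-down already yields an exact element $r$ with $rbr^*=a$; since every witnessing element will be produced in $(A_\omega)^\alpha$, the resulting $r$ automatically lies there. After normalizing $a$ and $b$ to positive contractions, the first point is that the hypothesis transfers verbatim to the von Neumann quotient. Indeed, since $\tau_\omega$ factors through $\rho$ (traces vanish on $J^\alpha$ by Lemma \ref{taukillJ}), one has $d_{\tau_\omega}(a)=d_\tau(\rho(a))$ and $d_{\tau_\omega}(b)=d_\tau(\rho(b))$, so that $d_\tau(\rho(a))<d_\tau(\rho(b))$ for every $\tau\in T(\mathcal{M}^{\bar\alpha})$.

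Next I would settle the comparison of the bulk inside $\mathcal{M}^{\bar\alpha}$, which by Lemma \ref{II_1} is a finite direct sum of $\mathrm{II}_1$-factors and hence enjoys strict comparison. Writing $p_a$ and $p_b$ for the support projections of $\rho(a)$ and $\rho((b-\ep)_+)$, the inequality above gives $\tau(p_a)<\tau(p_b)$ for all $\tau$ once $\ep>0$ is chosen small enough; here the assumption of finitely many extremal traces is used to secure a uniform gap over the compact trace simplex. Strict comparison then furnishes a partial isometry $\bar v\in\mathcal{M}^{\bar\alpha}$ with $\bar v^*\bar v=p_a$ and $\bar v\bar v^*\le p_b$, leaving an orthogonal leftover subprojection $\bar p=p_b-\bar v\bar v^*$ that is uniformly full, i.e. $\tau(\bar p)\ge\kappa>0$ for all $\tau$. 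Using surjectivity of $\rho$ on the fixed point algebras (Theorem \ref{exact} (2)), I would lift $\bar v$ to a contraction $v_0\in(A_\omega)^\alpha$ and $\bar p$ to a positive element $y\in(A_\omega)^\alpha$ inside the hereditary subalgebra of $b$ orthogonal to the corner used by $v_0$. By construction the defect $a-v_0bv_0^*$ is annihilated by $\rho$, hence lies in $J^\alpha$ and is trace small in the sense of Lemma \ref{taukillJ}.

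It then remains to absorb this trace small defect, and this is exactly where the equivariant property (SI) enters. Applying Proposition \ref{equivSI} with the trace small positive element $(a-v_0bv_0^*)_+$ (or a convenient majorant) in the role of $(x_n)_n$ and the uniformly full element $y$ in the role of $(y_n)_n$, I obtain $s\in(A_\omega)^\alpha$ with $s^*s=(a-v_0bv_0^*)_+$ and $ys=s$, so that $ss^*$ lies in the hereditary subalgebra of $y$ and is orthogonal to the range corner of $v_0$. Since the two target corners $\bar v\bar v^*$ and $\bar p$ are orthogonal subprojections of $p_b$, combining $v_0$ and $s$ produces a single element realizing $a\precsim(b-\ep)_+$ in $(A_\omega)^\alpha$; the Rørdam-type lemma quoted above then converts this into the sought $r\in(A_\omega)^\alpha$ with $rbr^*=a$.

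The step I expect to be the main obstacle is the support and orthogonality bookkeeping: one must arrange that the full corner $y$ used to swallow the trace small defect is genuinely orthogonal, inside the hereditary subalgebra of $b$, to the corner handling the bulk, and that every auxiliary element is chosen $\alpha$-fixed so that the assembled $r$ remains in $(A_\omega)^\alpha$. This is precisely why the \emph{equivariant} refinement of (SI) in Proposition \ref{equivSI}, rather than plain (SI), is indispensable; by comparison the von Neumann comparison and the lifting are routine once the uniform trace gap over the finitely many extremal traces has been secured.
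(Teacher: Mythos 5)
Your overall architecture coincides with the paper's: push the comparison into $\mathcal{M}^{\bar\alpha}$ (a finite direct sum of II$_1$-factors by Lemma \ref{II_1}), lift along the surjection $\rho$ of Theorem \ref{exact} (2), and absorb the resulting $J^\alpha$-defect into a uniformly full leftover corner of $b$ via the equivariant property (SI) of Proposition \ref{equivSI}. However, the two steps you describe as routine bookkeeping are exactly where the content lies, and as written they do not go through. First, you need, inside $(A_\omega)^\alpha$ and not merely in the quotient $\mathcal{M}^{\bar\alpha}$, an \emph{exact} orthogonal splitting of $b$ into a bulk corner and a uniformly full leftover corner. Lifting the two orthogonal projections $\bar v\bar v^*$ and $\bar p$ separately along $\rho$ destroys orthogonality, and there is no mechanism in your argument forcing the lift $y$ of $\bar p$ to land in the hereditary subalgebra of $b$ orthogonal to the range of $v_0$. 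The paper's solution is the one genuinely non-routine idea of the proof: choose $k$ with $d_{\tau_\omega}(a)<\frac{k-1}{k}d_{\tau_\omega}(b)$, take a unital embedding $M_k\to\mathcal{M}^{\bar\alpha}$, and use projectivity of $C_0((0,1])\otimes M_k$ to lift it to a cone $\tilde\phi$ in $(A_\omega)^\alpha$ whose range, after reindexation, commutes with $b$; the elements $\tilde\phi(\iota\otimes(1{-}p))$ and $\tilde\phi(\iota\otimes p)$ then furnish exactly orthogonal corners of $b$ living in the fixed point algebra.

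Second, your recombination step is broken. If you only know $\rho(a-v_0bv_0^*)=0$ and feed $(a-v_0bv_0^*)_+$ into Proposition \ref{equivSI}, then $v_0bv_0^*+s^*s=v_0bv_0^*+(a-v_0bv_0^*)_+\neq a$ in general, and the Cuntz-subequivalence shortcut at the end only yields $(a-\eta)_+=rbr^*$ for each $\eta>0$ rather than the exact identity required by the statement. The paper avoids both problems simultaneously by taking $r=a^{1/2}wg(b)^{1/2}\tilde\phi(\iota\otimes(1{-}p))^{1/2}$ with $g(t)=\min\{\ep^{-1},t^{-1}\}$: this gives $rbr^*\le a$ on the nose with defect a genuine positive element of $J^\alpha$, so that (SI) produces $s$ with $s^*s=a-rbr^*$ exactly and supported under $h(b)\tilde\phi(\iota\otimes p)$, whence $t=s^*g(b)^{1/2}\tilde\phi(\iota\otimes p)^{1/2}$ satisfies $(r+t)b(r+t)^*=rbr^*+s^*s=a$. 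Without the commuting matrix-cone lift and this specific form of $r$, neither the orthogonality nor the exact equality can be secured, so the proposal has a genuine gap at its technical core even though its outline is the right one.
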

\begin{proof}
For a subset $X\subset\R$, 
we let $1_X$ be the characteristic function of $X$. 
Notice that we keep the identification 
between $T(A)$, $T(M)$ and $T(\mathcal{M})$. 
By $\tau\circ\rho=\tau_\omega$, we have 
\[
d_{\tau_\omega}(a)=\lim_{n\to\infty}\tau_\omega(a^{1/n})
=\lim_{n\to\infty}\tau(\rho(a)^{1/n})
=\tau(1_{(0,\infty)}(\rho(a))), 
\]
where the support projection $1_{(0,\infty)}(\rho(a))$ is well-defined 
because $\mathcal{M}^{\bar\alpha}$ is a von Neumann algebra. 
Similarly we get 
\[
d_{\tau_\omega}(b)=\tau(1_{(0,\infty)}(\rho(b))). 
\]
Choose $k\in\N$ so that 
\[
d_{\tau_\omega}(a)<\frac{k-1}{k}d_{\tau_\omega}(b)
\quad\forall \tau\in T(A). 
\]
By Lemma \ref{II_1} (1), 
there exists a unital homomorphism $\phi:M_k\to\mathcal{M}^{\bar\alpha}$. 
By \cite[Theorem 10.2.1]{Lo}, $C_0((0,1])\otimes M_k$ is projective. 
It follows from Theorem \ref{exact} that we can find a homomorphism 
$\tilde\phi:C_0((0,1])\otimes M_k\to(A_\omega)^\alpha$ such that 
$\rho(\tilde\phi(\iota\otimes x))=\phi(x)$ holds for every $x\in M_k$, 
where $\iota\in C_0((0,1])$ is the identity map. 
By the usual fast reindexation trick, 
we may and do assume that the range of $\tilde\phi$ commutes with $b$. 
Let $p\in M_k$ be a rank one projection. 
Since $\rho(b)$ commutes with elements of $\phi(M_k)$, we have 
\[
\tau(1_{(0,\infty)}(\rho(b)\phi(p)))
=\tau(1_{(0,\infty)}(\rho(b))\phi(p))
=\frac{1}{k}\tau(1_{(0,\infty)}(\rho(b)))
\]
and 
\[
\tau(1_{(0,\infty)}(\rho(b)\phi(1{-}p)))
=\tau(1_{(0,\infty)}(\rho(b))\phi(1{-}p))
=\frac{k{-}1}{k}\tau(1_{(0,\infty)}(\rho(b)))
\]
for all $\tau\in T(\mathcal{M})$. 
As $T(\mathcal{M})$ has finitely many extremal points, 
there exists $\ep>0$ such that 
\begin{equation}
\min_{\tau\in T(\mathcal{M})}\tau(1_{(\ep,\infty)}(\rho(b)\phi(p)))>0
\label{mouse}
\end{equation}
and 
\begin{equation}
\tau(1_{(0,\infty)}(\rho(a)))<\tau(1_{(\ep,\infty)}(\rho(b)\phi(1{-}p)))
\quad\forall \tau\in T(\mathcal{M}). 
\label{cow}
\end{equation}
Define continuous functions $g$ and $h$ on $[0,\infty)$ 
by $g(t)=\min\{\ep^{-1},t^{-1}\}$ and $h(t)=tg(t)$. 
By Lemma \ref{II_1}, 
$\mathcal{M}^{\bar\alpha}$ is a finite direct sum of II$_1$-factors and 
every trace on $\mathcal{M}^{\bar\alpha}$ is 
the restriction of a trace on $\mathcal{M}$. 
It follows from \eqref{cow} that 
we can find a unitary $v\in\mathcal{M}^{\bar\alpha}$ such that 
\[
1_{(0,\infty)}(\rho(a))\leq v1_{(\ep,\infty)}(\rho(b)\phi(1{-}p))v^*. 
\]
By Theorem \ref{exact}, 
there exists a unitary $w\in(A_\omega)^\alpha$ such that $\rho(w)=v$. 
Set 
\[
r=a^{1/2}wg(b)^{1/2}\tilde\phi(\iota\otimes(1{-}p))^{1/2}. 
\]
Then 
\begin{align*}
rbr^*
&=a^{1/2}wg(b)^{1/2}\tilde\phi(\iota\otimes(1{-}p))^{1/2}b
\tilde\phi(\iota\otimes(1{-}p))^{1/2}g(b)^{1/2}w^*a^{1/2}\\
&=a^{1/2}wh(b)\tilde\phi(\iota\otimes(1{-}p))w^*a^{1/2}\leq a 
\end{align*}
and 
\begin{align*}
\rho(rbr^*)
&=\rho(a^{1/2}wg(b)^{1/2}\tilde\phi(\iota\otimes(1{-}p))^{1/2}b
\tilde\phi(\iota\otimes(1{-}p))^{1/2}g(b)^{1/2}w^*a^{1/2})\\
&=\rho(a^{1/2})(v1_{(\ep,\infty)}(\rho(b)\phi(1{-}p))v^*)
v\rho(h(b))\phi(1{-}p)v^*\rho(a^{1/2})\\
&=\rho(a^{1/2})v1_{(\ep,\infty)}(\rho(b)\phi(1{-}p))
h(\rho(b)\phi(1{-}p))v^*\rho(a^{1/2})\\
&=\rho(a^{1/2})v1_{(\ep,\infty)}(\rho(b)\phi(1{-}p))v^*\rho(a^{1/2})\\
&=\rho(a). 
\end{align*}
Thus, $a-rbr^*$ is a positive element of $J^\alpha$. 
On the other hand, one has 
\begin{align*}
\tau_\omega((h(b)\tilde\phi(\iota\otimes p))^m)
&=\tau((\rho(h(b))\phi(p))^m)\\
&=\tau(h(\rho(b)\phi(p))^m)\\
&\geq\tau(1_{(\ep,\infty)}(\rho(b)\phi(p))). 
\end{align*}
This, together with \eqref{mouse}, implies 
\[
\inf_{m\in\N}\min_{\tau\in T(\mathcal{M})}
\tau_\omega((h(b)\tilde\phi(\iota\otimes p))^m)>0. 
\]
Therefore, by appealing to Proposition \ref{equivSI}, 
we can find $s\in(A_\omega)^\alpha$ such that 
\[
s^*s=a-rbr^*\quad\text{and}\quad h(b)\tilde\phi(\iota\otimes p)s=s. 
\]
Put 
\[
t=s^*g(b)^{1/2}\tilde\phi(\iota\otimes p)^{1/2}. 
\]
Clearly $rbt^*=0$, and so we get 
\begin{align*}
(r+t)b(r+t)^*&=rbr^*+tbt^*\\
&=rbr^*+s^*g(b)^{1/2}\tilde\phi(\iota\otimes p)^{1/2}b
\tilde\phi(\iota\otimes p)^{1/2}g(b)^{1/2}s\\
&=rbr^*+s^*h(b)\tilde\phi(\iota\otimes p)s=a. 
\end{align*}
\end{proof}

\subsection{$\mathcal{Z}$-stability of crossed products}

In this subsection, we state direct consequences 
of the results obtained in Section 3 and Section 4.1. 
Throughout this subsection, we let $A$ be 
a unital, simple, separable, nuclear, stably finite, infinite dimensional 
$C^*$-algebra with finitely many extremal tracial states. 
See Definition \ref{Defofca} and Definition \ref{Defoftc} 
for the definitions of (strong) cocycle conjugacy 
and twisted crossed products. 

We note that 
the following theorem contains Theorem \ref{Z=SI} (2) 
as a special case (namely $\Gamma=\{1\}$). 

\begin{theorem}\label{wR>Z}
Suppose that $A$ has the property (SI). 
Let $(\alpha,u):\Gamma\curvearrowright A$ be a cocycle action 
of a countable discrete amenable group $\Gamma$ with the weak Rohlin property. 
Then there exists a unital homomorphism 
from $\mathcal{Z}$ to $(A_\omega)^\alpha$, 
and hence $(\alpha,u):\Gamma\curvearrowright A$ is 
strongly cocycle conjugate to 
$(\alpha\otimes\id,u\otimes1):\Gamma\curvearrowright A\otimes\mathcal{Z}$. 
In particular, 
the twisted crossed product $A\rtimes_{(\alpha,u)}\Gamma$ 
is $\mathcal{Z}$-stable. 
\end{theorem}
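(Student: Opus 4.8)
The plan is to prove the three assertions in sequence, where the heart of the matter is producing a unital homomorphism $\mathcal{Z}\to(A_\omega)^\alpha$; the remaining two statements then follow from general machinery. I would first recall the characterization of $\mathcal{Z}$-stability in terms of central sequence algebras: by the work on strongly self-absorbing algebras, a separable unital $A$ absorbs $\mathcal{Z}$ precisely when there is a unital homomorphism $\mathcal{Z}\to A_\omega\cap A'=A_\omega$. The equivariant strengthening (strong cocycle conjugacy of $(\alpha,u)$ with its tensor by $\id_{\mathcal{Z}}$) follows from the same principle applied in the fixed-point algebra: a unital embedding of $\mathcal{Z}$ into $(A_\omega)^\alpha$ supplies, via the standard reindexation and intertwining argument, the unitaries $(x_n)_n$ implementing the strong cocycle conjugacy. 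So everything reduces to constructing $\mathcal{Z}\hookrightarrow(A_\omega)^\alpha$.

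To build this embedding I would use the characterization of $\mathcal{Z}$ as the inductive limit of dimension-drop algebras, equivalently the fact (from the Jiang--Su and Rørdam--Winter circle of ideas) that a unital separable $C^*$-algebra $B$ contains a unital copy of $\mathcal{Z}$ in $B_\omega$ as soon as $B$ admits, for suitable $k$, unital embeddings of matrix algebras together with the comparison and connecting data packaged by the property (SI). Concretely, the strategy is: for each $k$ produce a unital homomorphism $M_k\to(A_\omega)^\alpha$ (this is exactly the map $\phi$ coming from Lemma \ref{II_1}(1), lifted to $(A_\omega)^\alpha$ by projectivity of $C_0((0,1])\otimes M_k$ as in the proof of Proposition \ref{sc}), then use Proposition \ref{sc} to obtain the order-zero comparison between the complementary pieces needed to assemble a generalized dimension-drop algebra $Z_{k,k+1}$. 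The crucial inputs are precisely the two structural facts established in Section 4.1: the exact sequence of Theorem \ref{exact} (which lets us lift matrix units and unitaries from the von Neumann algebraic fixed-point algebra $\mathcal{M}^{\bar\alpha}$ to $(A_\omega)^\alpha$), and the strict comparison of Proposition \ref{sc} together with the equivariant property (SI) of Proposition \ref{equivSI} (which supply the element $s$ realizing the required Cuntz subequivalences inside $(A_\omega)^\alpha$).

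The main obstacle I anticipate is organizing the construction of the dimension-drop embedding so that all the maps genuinely land in the \emph{fixed-point} algebra $(A_\omega)^\alpha$ rather than merely in $A_\omega$, and that the comparisons hold simultaneously for all finitely many extremal traces. The traces cause no real trouble because $T((A_\omega)^\alpha)\cong T(A)/\Gamma$ is finite and the comparison hypothesis $d_{\tau_\omega}(a)<d_{\tau_\omega}(b)$ in Proposition \ref{sc} is uniform over this finite set; the equivariance is handled by the fast reindexation trick, which is already used repeatedly above to make ranges commute and to pass from approximate to exact relations. The delicate point is matching the trace values of the two orthogonal order-zero maps so that Proposition \ref{sc} applies, i.e.\ choosing the matrix size $k$ and the cut-off so that the image of the rank-one corner is strictly dominated in dimension by its complement; this is the role of inequality \eqref{cow}, and it must be arranged coherently as $k$ varies to obtain the inductive limit $\mathcal{Z}$.

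Once the unital homomorphism $\mathcal{Z}\to(A_\omega)^\alpha$ is in hand, I would conclude as follows. The composition with the inclusion $(A_\omega)^\alpha\hookrightarrow A_\omega$ shows $A$ is $\mathcal{Z}$-stable. For the equivariant statement, the embedding into the fixed-point algebra means the copy of $\mathcal{Z}$ is $\alpha$-invariant pointwise, so the standard McDuff-type argument upgrades this to an isomorphism $A\cong A\otimes\mathcal{Z}$ intertwining $(\alpha,u)$ with $(\alpha\otimes\id,u\otimes1)$ up to an approximately inner perturbation, which is exactly strong cocycle conjugacy in the sense of Definition \ref{Defofca}(3). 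Finally, strong cocycle conjugacy of $(\alpha,u)$ with $(\alpha\otimes\id,u\otimes1)$ gives an isomorphism of the twisted crossed products, and since $(A\otimes\mathcal{Z})\rtimes_{(\alpha\otimes\id,u\otimes1)}\Gamma\cong(A\rtimes_{(\alpha,u)}\Gamma)\otimes\mathcal{Z}$, the crossed product $A\rtimes_{(\alpha,u)}\Gamma$ is $\mathcal{Z}$-stable.
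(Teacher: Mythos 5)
Your proposal is correct and follows essentially the same route as the paper: reduce to embedding the prime dimension drop algebras $I(k,k{+}1)$ into $(A_\omega)^\alpha$, build the order-zero map $\tilde\phi:C_0((0,1])\otimes M_k\to(A_\omega)^\alpha$ by lifting the unital copy of $M_k$ in $\mathcal{M}^{\bar\alpha}$ (Lemma \ref{II_1}, Theorem \ref{exact}, projectivity), obtain the connecting element $s$ from the equivariant property (SI) of Proposition \ref{equivSI}, and then conclude strong cocycle conjugacy and $\mathcal{Z}$-stability of the crossed product by the standard McDuff-type argument. The only cosmetic difference is that you route the comparison step through Proposition \ref{sc}, whereas the paper applies Proposition \ref{equivSI} directly (the trace of $1-\tilde\phi(\iota^2\otimes1)$ already vanishes, so no strict comparison is needed); both rest on the same underlying (SI) machinery.
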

\begin{proof}
We first prove the existence of a unital homomorphism 
from $\mathcal{Z}$ to $(A_\omega)^\alpha$. 
As shown in \cite{JS}, $\mathcal{Z}$ is 
an inductive limit of the prime dimension drop algebras $I(k,k{+}1)$'s. 
Hence, in the light of \cite[Proposition 2.2]{TW08CJM}, 
it suffices to construct 
a unital homomorphism from $I(k,k{+}1)$ to $(A_\omega)^\alpha$. 
Let $(e_{i,j})_{i,j}$ be a system of matrix units for $M_k$. 
As in the proof of Proposition \ref{sc}, 
we can find a homomorphism 
$\tilde\phi:C_0((0,1])\otimes M_k\to(A_\omega)^\alpha$ such that 
$\tau_\omega(\tilde\phi(\iota^m\otimes e_{1,1}))=1/k$ 
for every $\tau\in T(A)$ and $m\in\N$, 
where $\iota\in C_0((0,1])$ is the identity map. 
Clearly 
\[
\tilde\phi(\iota\otimes e_{1,1})\geq0\quad\text{and}\quad 
\tilde\phi(\iota\otimes e_{1,i})\tilde\phi(\iota\otimes e_{1,j})^*
=\begin{cases}\tilde\phi(\iota\otimes e_{1,1})^2&i=j\\0&i\neq j. \end{cases}
\]
By Lemma \ref{equivSI}, there exists $s\in(A_\omega)^\alpha$ such that 
\[
s^*s=1-\tilde\phi(\iota^2\otimes1)\quad\text{and}\quad 
\tilde\phi(\iota\otimes e_{1,1})s=s. 
\]
It follows from \cite[Proposition 2.1]{S10JFA} that 
there exists a unital homomorphism from $I(k,k{+}1)$ to $(A_\omega)^\alpha$. 

Once we get a unital embedding $\mathcal{Z}\to(A_\omega)^\alpha$, 
strong cocycle conjugacy 
between $(\alpha,u)$ and $(\alpha\otimes\id,u\otimes1)$ follows 
by essentially the same method as \cite[Theorem 7.2.2]{Rtext}. 
Then we can conclude that 
the twisted crossed product $A\rtimes_{(\alpha,u)}\Gamma$ 
is $\mathcal{Z}$-stable. 
\end{proof}

\begin{corollary}
Let $\Gamma$ be a countable discrete amenable group with the property (Q). 
Suppose that $A$ has the property (SI). 
Let $(\alpha,u):\Gamma\curvearrowright A$ be a strongly outer cocycle action. 
Then there exists a unital homomorphism 
from $\mathcal{Z}$ to $(A_\omega)^\alpha$, 
and hence $(\alpha,u)$ is strongly cocycle conjugate to 
$(\alpha\otimes\id,u\otimes1):\Gamma\curvearrowright A\otimes\mathcal{Z}$. 
In particular, 
the twisted crossed product $A\rtimes_{(\alpha,u)}\Gamma$ 
is $\mathcal{Z}$-stable. 
\end{corollary}
\begin{proof}
By Theorem \ref{so=wR}, 
the cocycle action $(\alpha,u)$ has the weak Rohlin property. 
Then the conclusion follows from the theorem above. 
\end{proof}

\begin{corollary}\label{so>Z}
Let $\Gamma$ be an elementary amenable group. 
Suppose that $A$ has the property (SI). 
Let $(\alpha,u):\Gamma\curvearrowright A$ be a strongly outer cocycle action. 
Then there exists a unital homomorphism 
from $\mathcal{Z}$ to $(A_\omega)^\alpha$, 
and hence $(\alpha,u)$ is strongly cocycle conjugate to 
$(\alpha\otimes\id,u\otimes1):\Gamma\curvearrowright A\otimes\mathcal{Z}$. 
In particular, 
the twisted crossed product $A\rtimes_{(\alpha,u)}\Gamma$ 
is $\mathcal{Z}$-stable. 
\end{corollary}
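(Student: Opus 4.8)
The plan is to reduce Corollary \ref{so>Z} to the machinery already assembled, by checking that an elementary amenable group meets the group-theoretic hypotheses under which the weak Rohlin property becomes available, and then quoting Theorem \ref{wR>Z} verbatim. The standing assumptions of this subsection already guarantee that $A$ is unital, simple, separable, nuclear, stably finite, infinite dimensional, and has finitely many extremal tracial states, so the only thing requiring verification about the pair $(A,\Gamma)$ is the input on $\Gamma$. Throughout, $\Gamma$ is understood to be countable and discrete, consistently with the running conventions and with the hypotheses of Proposition \ref{EG>Q}.

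First I would observe that an elementary amenable group is in particular amenable, so the amenability requirement on $\Gamma$ in Theorem \ref{so=wR} is automatic. The one nontrivial ingredient is that $\Gamma$ possesses the property (Q), and this is exactly the content of Proposition \ref{EG>Q}, which asserts that every countable discrete elementary amenable group has the property (Q). Thus $\Gamma$ is a countable discrete amenable group with the property (Q), which is precisely the class of groups handled by Theorem \ref{so=wR}.

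Next I would invoke Theorem \ref{so=wR}, applied to the strongly outer cocycle action $(\alpha,u):\Gamma\curvearrowright A$, to conclude that $(\alpha,u)$ has the weak Rohlin property. Here the equivalence $(1)\Leftrightarrow(2)$ uses precisely the standing hypotheses on $A$ together with the property (Q) of $\Gamma$ established in the previous step, so all the conditions of that theorem are in force.

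Finally, since $A$ is assumed to have the property (SI) and $(\alpha,u)$ now has the weak Rohlin property, Theorem \ref{wR>Z} applies directly: it produces a unital homomorphism from $\mathcal{Z}$ into $(A_\omega)^\alpha$, yields the strong cocycle conjugacy of $(\alpha,u)$ with $(\alpha\otimes\id,u\otimes1):\Gamma\curvearrowright A\otimes\mathcal{Z}$, and gives the $\mathcal{Z}$-stability of $A\rtimes_{(\alpha,u)}\Gamma$. I do not expect any genuine obstacle at this final stage; the entire substance of the statement has already been absorbed into Proposition \ref{EG>Q} (the hyperfinite Rohlin input for elementary amenable groups) and into the equivalence of strong outerness with the weak Rohlin property in Theorem \ref{so=wR}. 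The proof of the corollary therefore amounts to careful bookkeeping of which hypotheses each cited result requires and verifying that they are all met.
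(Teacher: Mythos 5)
Your proposal is correct and follows exactly the paper's route: Proposition \ref{EG>Q} supplies the property (Q), Theorem \ref{so=wR} upgrades strong outerness to the weak Rohlin property, and Theorem \ref{wR>Z} then gives the conclusion (the paper merely factors the last two steps through the preceding corollary). No gaps.
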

\begin{proof}
By Proposition \ref{EG>Q}, $\Gamma$ has the property (Q). 
Then the conclusion follows from the corollary above. 
\end{proof}

\begin{remark}
In the corollaries above, the assumption of the property (SI) 
can be replaced with $\mathcal{Z}$-stability of $A$, 
thanks to Theorem \ref{Z=SI} (1). 
\end{remark}


\section{A remark on central sequence algebras}

In this section, we show that 
for any unital separable nuclear $C^*$-algebra $A$ which is not of type I, 
the central sequence algebra $A_\omega$ has a subquotient 
which is isomorphic to a II$_1$-factor (Theorem \ref{Glimm}). 
This may be regarded as a variant of Glimm's theorem, 
which says that if a separable $C^*$-algebra is not of type I, 
then it has a subquotient isomorphic to a UHF algebra. 

Let $M$ be a von Neumann algebra with separable predual $M_*$. 
We recall the definition of the central sequence algebra of $M$ 
in the sense of von Neumann algebra from \cite[Section II]{C74JFA} 
(see also \cite[Chapter XIV \S4]{TIII}). 

For a normal positive functional $\phi\in M_*$, we let 
\[
\lVert x\rVert_\phi^\#
=\left(\frac{\phi(x^*x)+\phi(xx^*)}{2}\right)^{1/2}. 
\]
Since $x\mapsto\phi(x^*x)^{1/2}$ is a seminorm, 
$x\mapsto\lVert x\rVert_\phi^\#$ is also a seminorm on $M$. 
The locally convex topology on $M$ 
defined by the family of these seminorms $\lVert\cdot\rVert_\phi^\#$ 
is called the $*$-strong topology 
(see \cite[Definition II.2.3]{TI} for example). 

A bounded sequence $(x_n)_n$ in $M$ is called strongly $\omega$-central if 
\[
\lim_{n\to\omega}
\sup\{\lvert\phi(x_ny-yx_n)\rvert\mid y\in M,\ \lVert y\rVert\leq1\}=0
\]
for every $\phi\in M_*$. 
Let $\mathcal{C}$ denote the set of all strongly $\omega$-central sequences. 
Clearly $\mathcal{C}$ is a unital $C^*$-subalgebra of $\ell^\infty(\N,M)$. 
Let $\mathcal{I}$ denote the set of all bounded sequences in $M$ 
$\omega$-converging to zero in the $*$-strong topology. 
It is easy to see that 
$\mathcal{I}$ is a norm-closed two-sided ideal of $\mathcal{C}$. 
We call the quotient $\mathcal{M}=\mathcal{C}/\mathcal{I}$ 
the central sequence algebra of $M$. 
It is well-known that 
if $M$ is a hyperfinite type II factor 
then $\mathcal{M}$ is a II$_1$-factor 
(see \cite[Theorem XIV.4.18]{TIII} and \cite[Lemma 2.11.(b)]{C74JFA}). 

\begin{theorem}\label{Glimm}
Let $A$ be a unital separable nuclear $C^*$-algebra 
which is not of type I. 
Then there exists a subquotient of $A_\omega=A^\omega\cap A'$ 
which is isomorphic to a II$_1$-factor. 
\end{theorem}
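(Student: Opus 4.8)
The plan is to realize a hyperfinite II$_1$-factor as a subquotient of $A_\omega$ by comparing it with the von Neumann central sequence algebra of a non-type-I factor representation, in the spirit of the exact sequence of Theorem \ref{exact} but now without any trace at our disposal. I would first use the classical fact that a separable $C^*$-algebra is of type I if and only if every factor representation generates a type I factor. Since $A$ is not of type I, there is a factor representation $\pi$ of $A$ such that $M:=\pi(A)''$ is a factor of type II or III; and since $A$ is nuclear, $M$ is injective (equivalently hyperfinite) by the theorems of Choi--Effros and Connes. Thus $M$ is a hyperfinite factor not of type I, and I let $\mathcal{M}=\mathcal{C}/\mathcal{I}$ be its central sequence algebra as defined in this section.

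Next I would construct a surjective $*$-homomorphism $\rho\colon A_\omega\to\mathcal{M}$. For a bounded representative $(x_n)_n$ of an element of $A_\omega=A^\omega\cap A'$, the sequence $(\pi(x_n))_n$ is strongly $\omega$-central in $M$: it is bounded and asymptotically commutes in norm with the $\sigma$-strongly dense subalgebra $\pi(A)$, and this standard fact upgrades to strong $\omega$-centrality with respect to every $\lVert\cdot\rVert_\phi^\#$; moreover $\omega$-null sequences land in $\mathcal{I}$. This defines $\rho$. For surjectivity, given a strongly $\omega$-central representative $(y_n)_n$ of a class in $\mathcal{M}$, I would use Kaplansky's density theorem to replace each $y_n$ by some $\pi(b_n)$ with $b_n\in A$, and then run the nuclearity (Haagerup) averaging argument of Proposition \ref{Haagerup2}, now measured in the seminorms $\lVert\cdot\rVert_\phi^\#$ in place of $\lVert\cdot\rVert_2$, to convert $(b_n)_n$ into a genuinely norm-central sequence in $A$ whose image under $\pi$ is $*$-strongly $\omega$-close to $(y_n)_n$. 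A reindexation then yields a preimage in $A_\omega$, exactly paralleling the proof of Theorem \ref{exact}(1).

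Finally I would exploit that every injective factor not of type I absorbs the hyperfinite II$_1$-factor $R$ tensorially, $M\cong M\bar\otimes R$: this is immediate in types II$_1$ and II$_\infty$, and for type III it follows from the (dis)continuous decomposition of $M$ over the injective II$_\infty$-factor $R_{0,1}$ together with $R_{0,1}\cong R_{0,1}\bar\otimes R$. Such absorption provides a unital embedding of $R$ into $\mathcal{M}=M'\cap M^\omega$. Writing $P\cong R$ for its image, the subalgebra $B:=\rho^{-1}(P)\subseteq A_\omega$ contains $\Ker\rho$ as a two-sided ideal and satisfies $B/\Ker\rho\cong P$, a II$_1$-factor; this is the desired subquotient of $A_\omega$. (When $M$ happens to be of type II one may simply quote the statement of this section that $\mathcal{M}$ itself is a II$_1$-factor and take $\rho$ as the quotient map.)

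The main obstacle is the construction and surjectivity of $\rho$ in this traceless setting. In Section 4 the analogous surjection relied essentially on the $2$-norm coming from the finitely many extremal traces, whereas here $M$ may be of type III and no trace is available; the Haagerup amenability estimate must therefore be carried out uniformly over the defining seminorms $\lVert\cdot\rVert_\phi^\#$ of the $*$-strong topology, and one must verify carefully that asymptotic norm-centrality over the $\sigma$-strongly dense subalgebra $\pi(A)$ does upgrade to strong $\omega$-centrality in $M$.
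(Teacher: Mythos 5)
Your overall strategy coincides with the paper's: take a non-type-I factor representation $\pi$ with $M=\pi(A)''$ hyperfinite, form the von Neumann central sequence algebra $\mathcal{M}=\mathcal{C}/\mathcal{I}$, and map a piece of $A_\omega$ onto it by combining Kaplansky's density theorem with the Haagerup averaging of Theorem \ref{Haagerup1}. However, there is a genuine gap at the step you yourself flag: you need $\rho$ to be defined on \emph{all} of $A_\omega$, i.e.\ that for every norm-$\omega$-central sequence $(x_n)_n$ in $A$ the sequence $(\pi(x_n))_n$ is strongly $\omega$-central in $M$ (small against every seminorm $\lVert\cdot\rVert_\phi^\#$, uniformly over the unit ball of $M$). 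This upgrade is not a formal consequence of asymptotic norm-commutation with the $\sigma$-strongly dense subalgebra $\pi(A)$; it is proved in \cite[Lemma XIV.3.4 (ii)]{TIII} only when $M$ is finite (the paper's remark after the theorem invokes exactly this for the hyperfinite II$_1$-factor), and there is no reason to expect it for II$_\infty$ or type III, where no trace controls the predual. The paper sidesteps the issue entirely: it defines $B\subset\ell^\infty(\N,A)$ to be the $C^*$-algebra of those $\omega$-central sequences whose image under $\pi$ happens to be strongly $\omega$-central, notes $c_\omega(A)\subset B$, and proves only that $B/c_\omega(A)\to\mathcal{M}$ is surjective --- which suffices, since the goal is a subquotient, not a quotient, of $A_\omega$. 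You should make the same move; as written, your $\rho$ may not exist.

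A second, avoidable complication: you only extract from non-type-I-ness a factor representation of type II \emph{or} III, and then must handle type III by proving $M\cong M\bar\otimes R$ for injective type III factors. That absorption is true but rests on the full Connes--Haagerup--Krieger classification (your sketch via the continuous decomposition still requires checking that the tensor factor $R$ inside the core can be chosen invariant under the dual action), and it buys you nothing: Glimm's theorem in its standard form already asserts that a separable non-type-I $C^*$-algebra admits a factor representation of type II, which is what the paper uses, and for a hyperfinite type II factor $\mathcal{M}$ is itself a II$_1$-factor by \cite[Theorem XIV.4.18]{TIII} and \cite[Lemma 2.11.(b)]{C74JFA}. With those two repairs --- restricting the domain to $B$ and invoking the type II form of Glimm's theorem --- your argument becomes the paper's proof.
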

\begin{proof}
Since $A$ is not of type I, by Glimm's theorem, 
there exist a type II factor $M$ with separable predual and 
a unital homomorphism $\pi:A\to M$ 
such that $\pi(A)$ is $*$-strongly dense in $M$. 
Moreover, $M$ is hyperfinite, because $A$ is assumed to be nuclear. 

Let $\mathcal{C}$, $\mathcal{I}$ and $\mathcal{M}$ be as above. 
We let $B\subset\ell^\infty(\N,A)$ be 
the set of all $\omega$-central sequences $(a_n)_n$ such that 
the sequence $(\pi(a_n))_n$ in $M$ is strongly $\omega$-central. 
It is routine to check that 
$B$ is a unital $C^*$-subalgebra of $\ell^\infty(\N,A)$ and 
that $B$ contains $c_\omega(A)$. 
Let $\tilde\pi:\ell^\infty(\N,A)\to\ell^\infty(\N,M)$ be 
the homomorphism which is naturally induced by $\pi$. 
Then $\tilde\pi(B)$ is contained in $\mathcal{C}$ and 
$\tilde\pi(c_\omega(A))$ is contained in $\mathcal{I}$. 
Hence $\tilde\pi|B$ induces a unital homomorphism 
from $B/c_\omega(A)\subset A_\omega$ to $\mathcal{M}$. 
As mentioned above, $\mathcal{M}$ is known to be a II$_1$-factor. 
Therefore, it remains for us to show that 
the homomorphism $B/c_\omega(A)\to\mathcal{M}$ is surjective. 
The proof is essentially the same as \cite[Lemma 2.1]{S11}, 
but we include it for completeness. 

Take a strongly $\omega$-central sequence $(x_n)_n\in\mathcal{C}$ arbitrarily. 
We assume $\lVert x_n\rVert\leq1$. 
Let $\psi\in M_*$ be a normal positive faithful functional. 
Then $\lVert\cdot\rVert_\psi^\#$ is a norm and 
the $*$-strong topology agrees with 
the topology induced by the norm $\lVert\cdot\rVert_\psi^\#$ 
on any bounded subset of $M$ (see \cite[Proposition III.5.3]{TI} for example). 
Hence it suffices to show that 
there exists an $\omega$-central sequence $(a_n)_n$ in $A$ such that 
$\lVert x_n-\pi(a_n)\rVert_\psi^\#\to0$ as $n\to\omega$. 
By Kaplansky's density theorem (see \cite[Theorem II.4.8]{TI} for example), 
we can find $(b_n)_n\in\ell^\infty(\N,A)$ such that 
$\lVert b_n\rVert\leq1$ and 
$\lVert x_n-\pi(b_n)\rVert_\psi^\#\to0$ as $n\to\infty$. 
We choose an increasing sequence $(F_m)_m$ of finite subsets of $A$ 
whose union is dense in $A$. 
Since $A$ is nuclear, 
we can apply Theorem \ref{Haagerup1} to $F_m$ and $1/m$, 
and get a finite subset $G_m\subset A$. 
Define 
\[
\Omega_m=\{n\in\N\mid n\geq m,\quad 
\lVert[\pi(b_n),\pi(w)]\pi(w^*)\rVert_\psi^\#<m^{-1}\lvert G_m\rvert^{-1}
\quad\forall w\in G_m\}. 
\]
As $(x_n)_n$ is strongly $\omega$-central, 
$(\pi(b_n))_n$ is also strongly $\omega$-central. 
Therefore, by \cite[Lemma XIV.3.4 (i)]{TIII}, 
$[\pi(b_n),\pi(w)]$ tends to zero in the $*$-strong topology 
as $n\to\omega$ for every $w\in A$. 
It follows that $\Omega_m$ belongs to $\omega$. 
For each $n\in\Omega_m\setminus\Omega_{m+1}$, we define $a_n\in A$ by 
\[
a_n=\sum_{w\in G_m}wb_nw^*. 
\]
By the choice of $G_m$, we have $\lVert[a_n,c]\rVert<1/m$ for all $c\in F_m$, 
that is, $(a_n)_n$ is an $\omega$-central sequence. 
Besides, 
\begin{align*}
\lVert x_n-\pi(a_n)\rVert_\psi^\#
&=\left\lVert x_n-\sum_{w\in G_m}\pi(wb_nw^*)\right\rVert_\psi^\#\\
&\leq\left\lVert x_n-\sum_{w\in G_m}\pi(b_n)\pi(ww^*)\right\rVert_\psi^\#
+1/m\\
&=\lVert x_n-\pi(b_n)\rVert_\psi^\#+1/m, 
\end{align*}
and so $\lVert x_n-\pi(a_n)\rVert_\psi^\#\to0$ as $n\to\omega$. 
\end{proof}

\begin{remark}
In the theorem above, suppose that $M$ is the hyperfinite II$_1$-factor. 
Then, by \cite[Lemma XIV.3.4 (ii)]{TIII}, 
we can see that for any $\omega$-central sequence $(a_n)_n$ in $A$, 
the sequence $(\pi(a_n))_n$ in $M$ is strongly $\omega$-central. 
Thus, $B$ above equals $A_\omega=A^\omega\cap A'$, 
and whence there exists a unital homomorphism 
from $A_\omega$ to the II$_1$-factor $\mathcal{M}$. 
\end{remark}

\section{Actions of the Klein bottle group I}

In this section, we would like to prove that 
all strongly outer cocycle actions of the Klein bottle group 
on a UHF algebra are mutually cocycle conjugate (Theorem \ref{KleinUHF}). 
We note that some of the ideas and techniques used in this section 
are borrowed from \cite{IM2}. 

\subsection{The OrderExt invariant}

In this subsection, 
we would like to collect some basic facts on the $\OrderExt$ group 
for later use. 
The $\OrderExt$ invariant was introduced 
by A. Kishimoto and A. Kumjian \cite{KK} 
in order to determine 
when an approximately inner automorphism of 
a unital simple AT algebra with real rank zero is asymptotically inner. 
In the later subsections, we use this invariant 
for classification of certain group actions on AF algebras. 

We begin with the definition of the $\OrderExt$ group (\cite[Section 2]{KK}). 
Let $G_0,G_1,F$ be abelian groups and let $D:G_0\to F$ be a homomorphism. 
When 
\[
\begin{CD}
\xi:\quad 0@>>>G_0@>\iota>>E_\xi@>q>>G_1@>>>0
\end{CD}
\]
is exact, $R$ is in $\Hom(E_\xi,F)$ and $R\circ\iota=D$, 
the pair $(\xi,R)$ is called an order-extension. 
Two order-extensions $(\xi,R)$ and $(\xi',R')$ are equivalent 
if there exists an isomorphism $\theta:E_\xi\to E_{\xi'}$ such that 
$R=R'\circ\theta$ and 
\[
\begin{CD}
\xi:\quad 0@>>>G_0@>>>E_\xi@>>>G_1@>>>0\\
@.@|@VV\theta V@|@.\\
\xi':\quad 0@>>>G_0@>>>E_{\xi'}@>>>G_1@>>>0
\end{CD}
\]
is commutative. 
Then $\OrderExt(G_1,G_0,D)$ consists of 
equivalence classes of all order-extensions. 
As shown in \cite{KK}, 
$\OrderExt(G_1,G_0,D)$ is equipped with an abelian group structure. 
The map sending $(\xi,R)$ to $\xi$ induces 
a homomorphism from $\OrderExt(G_1,G_0,D)$ onto $\Ext(G_1,G_0)$. 

When $(\xi,R)$ is an order-extension, 
there exists $r\in\Hom(G_1,F/\Ima D)$ 
such that the following diagram is commutative: 
\[
\begin{CD}
0@>>>G_0@>\iota>>E_\xi@>q>>G_1@>>>0\\
@.@VVDV@VVRV@VVr V@.\\
0@>>>\Ima D@>>>F@>>>F/\Ima D@>>>0, 
\end{CD}
\]
Clearly $r$ depends only on the $\OrderExt$ class of $(\xi,R)$ and 
the map $(\xi,R)\mapsto r$ gives rise to a homomorphism. 
We denote it by $\rho:\OrderExt(G_1,G_0,D)\to\Hom(G_1,F/\Ima D)$. 

\begin{lemma}\label{rho}
Let $G_0,G_1,F$ be abelian groups and let $D:G_0\to F$ be a homomorphism. 
If $D$ is injective, then 
the homomorphism $\rho:\OrderExt(G_1,G_0,D)\to\Hom(G_1,F/\Ima D)$ 
is an isomorphism. 
\end{lemma}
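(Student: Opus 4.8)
The plan is to construct an explicit inverse to $\rho$ by pulling back the bottom extension $0\to\Ima D\to F\to F/\Ima D\to 0$ along a given homomorphism $r\colon G_1\to F/\Ima D$. The injectivity of $D$ is exactly what makes this pullback a bona fide order-extension (one with an injective left-hand map), and this is the hypothesis I expect to be indispensable. Since $\rho$ is already known to be a homomorphism, it will suffice to produce a two-sided set-theoretic inverse.

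First I would define a map $\Phi\colon\Hom(G_1,F/\Ima D)\to\OrderExt(G_1,G_0,D)$ as follows. Writing $\pi\colon F\to F/\Ima D$ for the quotient map, set
\[
E_r=\{(g,f)\in G_1\oplus F\mid\pi(f)=r(g)\},
\]
and define $q_r\colon E_r\to G_1$ by $q_r(g,f)=g$, the map $\iota_r\colon G_0\to E_r$ by $\iota_r(g_0)=(0,D(g_0))$, and $R_r\colon E_r\to F$ by $R_r(g,f)=f$. A direct check gives $q_r$ surjective and $\Ima\iota_r=\Ker q_r=\{(0,f)\mid f\in\Ima D\}$; here the injectivity of $D$ is precisely what guarantees that $\iota_r$ is injective, so that $\xi_r\colon 0\to G_0\to E_r\to G_1\to 0$ is genuinely exact. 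Since $R_r\circ\iota_r=D$ by construction, $(\xi_r,R_r)$ is an order-extension, and I would set $\Phi(r)=[(\xi_r,R_r)]$.

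Next I would verify $\rho\circ\Phi=\id$, which is immediate: for $(g,f)\in E_r$ one has $\pi(R_r(g,f))=\pi(f)=r(q_r(g,f))$, so the homomorphism associated with $(\xi_r,R_r)$ is exactly $r$. The essential point, and the step I expect to be the main obstacle, is to show $\Phi\circ\rho=\id$, i.e.\ that an arbitrary order-extension $(\xi,R)$ with associated homomorphism $r$ is equivalent to $\Phi(r)$. For this I would define $\theta\colon E_\xi\to E_r$ by $\theta(e)=(q(e),R(e))$; it lands in $E_r$ precisely because the defining relation $\pi\circ R=r\circ q$ holds, it fixes $G_0$ and $G_1$, and it satisfies $R_r\circ\theta=R$, so it is a morphism of order-extensions. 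It remains to see that $\theta$ is bijective, and this is where injectivity of $D$ re-enters: surjectivity follows from a diagram chase using $\Ker\pi=\Ima D$ (given $(g,f)\in E_r$, lift $g$ to $e_0$ and correct by an element of $\iota(G_0)$ to match $R$), while injectivity follows because $\theta(e)=0$ forces $e=\iota(g_0)$ with $D(g_0)=0$, whence $g_0=0$; alternatively one applies the five lemma, whose bottom row is exact thanks to $D$ being injective. These two identities show $\rho$ is bijective, hence an isomorphism.
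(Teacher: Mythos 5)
Your proof is correct, and the surjectivity half coincides with the paper's: both construct the pullback extension $E_r=\{(g,f)\in G_1\times F\mid \pi(f)=r(g)\}$ with $\iota_r(h)=(0,D(h))$, $q_r(g,f)=g$, $R_r(g,f)=f$, and both observe that injectivity of $D$ is exactly what makes $\iota_r$ injective, so that $\rho([(\xi_r,R_r)])=r$. Where you genuinely diverge is in the injectivity of $\rho$. The paper exploits the group structure: if $\rho([(\xi,R)])=0$ then $R(E_\xi)\subseteq\Ima D$, and since $\Ker D=0$ it invokes Lemma 2.4 and Proposition 2.5 of \cite{KK} to conclude $[(\xi,R)]=0$. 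You instead prove the stronger statement that \emph{every} order-extension with invariant $r$ is equivalent to the pullback $\Phi(r)$, via the explicit map $\theta(e)=(q(e),R(e))$, whose bijectivity follows from the five lemma (or the diagram chase you sketch, which is sound: surjectivity uses $\Ker\pi=\Ima D$ to correct a lift by an element of $\iota(G_0)$, and injectivity uses $\Ker D=0$). Your route is self-contained, replacing the appeal to Kishimoto--Kumjian's structural results on $\OrderExt$ by an elementary argument, and it produces a two-sided inverse $\Phi$ rather than merely showing $\Ker\rho=0$; the paper's route is shorter on the page but outsources the key computation. Both arguments use the injectivity of $D$ at the same two pressure points, namely exactness of the pullback row and the final cancellation.
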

\begin{proof}
Suppose that $r\in\Hom(G_1,F/\Ima D)$ is given. 
Define $\xi\in\Ext(G_1,G_0)$ by 
\[
E_\xi=\{(g,f)\in G_1\times F\mid r(g)=f+\Ima D\},\quad 
\iota(h)=(0,D(h)),\quad q(g,f)=g. 
\]
Note that $\iota$ is injective, because $D$ is injective. 
We define $R:E_\xi\to F$ by $R(g,f)=f$. 
Then one has $\rho([(\xi,R)])=r$. 
To prove the injectivity of $\rho$, 
we assume that an order-extension $(\xi,R)$ satisfies $\rho([(\xi,R)])=0$. 
Then $R(E_\xi)$ is contained in $\Ima D$. 
Since $\Ker D=0$, 
by Lemma 2.4 and Proposition 2.5 of \cite{KK}, we get $[(\xi,R)]=0$. 
Therefore $\rho$ is an isomorphism. 
\end{proof}

We would like to recall the main result of \cite{KK}. 
Let $B$ be a unital simple AT algebra with real rank zero. 
As described in \cite{KK}, 
there exist natural homomorphisms 
\[
\tilde\eta_0:\aInn(B)\to\OrderExt(K_1(B),K_0(B),D_B)
\]
and 
\[
\eta_1:\aInn(B)\to\Ext(K_0(B),K_1(B)). 
\]
The following is the main result of \cite{KK}. 

\begin{theorem}[{\cite[Theorem 4.4]{KK}}]\label{KishiKum}
Suppose that $B$ is a unital simple AT algebra with real rank zero. 
Then the homomorphism 
\[
\tilde\eta_0\oplus\eta_1:\aInn(B)\to
\OrderExt(K_1(B),K_0(B),D_B)\oplus\Ext(K_0(B),K_1(B))
\]
is surjective and 
its kernel equals the set of all asymptotically inner automorphisms of $B$. 
\end{theorem}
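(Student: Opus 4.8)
The plan is to read off both invariants from the mapping torus of $\alpha$ and then treat surjectivity, the easy inclusion, and the hard inclusion of the kernel statement separately. Write $M_\alpha=\{f\in C([0,1],B)\mid f(1)=\alpha(f(0))\}$, which sits in the extension $0\to SB\to M_\alpha\to B\to0$ whose six-term sequence has connecting maps $\id-K_i(\alpha)$. Since $\alpha$ is approximately inner, $K_0(\alpha)=\id$ and $K_1(\alpha)=\id$, so the six-term sequence breaks into
\[
0\to K_1(B)\to K_0(M_\alpha)\to K_0(B)\to0,\qquad 0\to K_0(B)\to K_1(M_\alpha)\to K_1(B)\to0.
\]
The first gives $\eta_1(\alpha)\in\Ext(K_0(B),K_1(B))$. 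For the second I would equip $K_1(M_\alpha)$ with the homomorphism $R:K_1(M_\alpha)\to\Aff(T(B))$ obtained by integrating the de la Harpe--Skandalis determinant along the torus direction, sending a unitary loop $u(\cdot)$ in $M_\alpha$ to $\tau\mapsto(2\pi\sqrt{-1})^{-1}\int_0^1\tau(u(t)^*\dot u(t))\,dt$; on the embedded copy of $K_0(B)$ this restricts to $D_B$, producing an order-extension $\tilde\eta_0(\alpha)\in\OrderExt(K_1(B),K_0(B),D_B)$. First I would verify that $\tilde\eta_0$ and $\eta_1$ are well defined on $\aInn(B)$ and additive, using homotopy invariance of $M_\alpha$ in $\alpha$ and naturality of the determinant, and in particular that both invariants vanish on inner automorphisms (the mapping torus of $\Ad u$ is isomorphic to that of $\id$ via $u$).

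For the easy inclusion, suppose $\alpha$ is asymptotically inner, say $\alpha(x)=\lim_{t\to\infty}\Ad u_t(x)$ for a norm-continuous path $(u_t)_{t\ge0}$. I would use the path to construct explicit splittings of the two short exact sequences: the continuous family $(u_t)$ untwists the relation defining $M_\alpha$, forcing both extension classes to split, and makes $R$ factor through $D_B$, so the order-extension is trivial as well. Hence $\tilde\eta_0\oplus\eta_1$ annihilates every asymptotically inner automorphism.

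For surjectivity I would use the AT structure $B=\varinjlim B_n$ with each $B_n$ a finite direct sum of circle algebras. Given a target pair in $\OrderExt(K_1(B),K_0(B),D_B)\oplus\Ext(K_0(B),K_1(B))$, I would realize it by a telescoping sequence of inner automorphisms: at each stage I choose unitaries whose de la Harpe--Skandalis determinants and $K_1$-classes prescribe the required $R$ and extension data, exploiting that on circle algebras both the determinant and the $K_1$-valued winding can be set freely and independently. The composite $\Ad$ of these unitaries converges to an approximately inner $\alpha$ whose mapping-torus invariants are, by construction, the prescribed classes.

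The main obstacle is the converse inclusion: if $\tilde\eta_0(\alpha)=0$ and $\eta_1(\alpha)=0$, then $\alpha$ is asymptotically inner. Here I would run an Elliott-type approximate intertwining over $[0,\infty)$. The vanishing of the two invariants is precisely the obstruction whose absence permits, for each finite $F\subset B$ and $\ep>0$, a unitary $u$ with $\lVert\Ad u(a)-\alpha(a)\rVert<\ep$ on $F$ to be joined by a norm-continuous path of unitaries to one that agrees with $\alpha$ on a larger finite set, without reintroducing any determinant or extension defect. The heart is a uniqueness statement of the form that approximately inner automorphisms with equal $(\tilde\eta_0,\eta_1)$ are approximately unitarily equivalent through such paths; this rests on the Rohlin property for aperiodic automorphisms of simple AT algebras of real rank zero (Kishimoto) and on the classification up to approximate unitary equivalence of homomorphisms from interval and circle algebras into $B$. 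Telescoping the path corrections over the intervals $[n,n+1]$ and controlling the errors geometrically then assembles a single norm-continuous $(u_t)_{t\ge0}$ with $\Ad u_t\to\alpha$. I expect the delicate point to be arranging the successive corrections to depend continuously on the parameter and to have summable errors, so that the telescoped path genuinely converges to $\alpha$ pointwise as $t\to\infty$.
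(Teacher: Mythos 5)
This statement is not proved in the paper at all: it is quoted verbatim, with attribution, as Theorem~4.4 of Kishimoto--Kumjian \cite{KK}, and the authors use it as a black box (via Proposition \ref{eta>asymp}). So there is no internal proof to compare your attempt against; what can be assessed is whether your sketch is a viable reconstruction of the argument in \cite{KK}. Your setup is correct and does match how the invariants are actually defined there: the mapping torus extension $0\to SB\to M_\alpha\to B\to 0$, the collapse of the six-term sequence using $K_i(\alpha)=\id$, and the rotation map $R$ on $K_1(M_\alpha)$ restricting to $D_B$ (note that $R$ is well defined into $\Aff(T(B))$ precisely because $\tau\circ\alpha=\tau$ for approximately inner $\alpha$, which makes the boundary terms in the homotopy variation cancel --- worth saying explicitly). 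The easy inclusion and the general architecture (surjectivity by telescoping inner automorphisms so that the extension data accumulates in the limit, kernel by an intertwining over $[0,\infty)$) are also the right shape.

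The genuine gap is in the hard inclusion, and it is twofold. First, the entire content of that direction is compressed into an asserted ``uniqueness statement'' that approximately inner automorphisms with equal invariants are approximately unitarily equivalent \emph{through paths}; this is exactly the theorem being proved, so as written the argument is circular at its core. Second, the tool you name for it --- the Rohlin property for aperiodic automorphisms --- cannot carry the general case: the theorem applies to every approximately inner automorphism, including inner and periodic ones, for which no Rohlin-type hypothesis holds. The actual engine in \cite{KK} is a basic-homotopy-lemma argument in the spirit of \cite{BEEK} and Lin: writing $\alpha=\lim\Ad v_n$, the obstruction to joining the almost-central unitaries $v_n^*v_{n+1}$ to $1$ through almost-commuting paths is measured by Bott elements and de la Harpe--Skandalis determinants, and the vanishing of $\eta_1$ and $\tilde\eta_0$ is what allows these obstructions to be corrected telescopically. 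Your surjectivity paragraph has a milder version of the same issue: since each finite stage $\Ad(u_1\cdots u_n)$ has trivial invariants, you must explain how a prescribed nontrivial class in $\OrderExt(K_1(B),K_0(B),D_B)\oplus\Ext(K_0(B),K_1(B))$ emerges in the limit from the connecting data, which is an inductive-limit computation rather than a pointwise choice of determinants on circle algebras.
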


In the rest of this subsection, 
we let $A$ be a unital simple AF algebra and 
let $\phi\in\aInn(A)$ be an approximately inner automorphism 
such that $\phi^n$ is not weakly inner for every $n\in\N$. 
By the Pimsner-Voiculescu exact sequence, 
$(K_0(A\rtimes_\phi\Z),K_1(A\rtimes_\phi\Z))$ are 
naturally identified with $(K_0(A),K_0(A))$ (see \cite[Section 6]{M10CMP}). 
The space of tracial states $T(A\rtimes_\phi\Z)$ is also 
identified with $T(A)$. 
These identifications induce the natural isomorphism 
\[
\OrderExt(K_1(A\rtimes_\phi\Z),K_0(A\rtimes_\phi\Z),D_{A\rtimes_\phi\Z})
\cong\OrderExt(K_0(A),K_0(A),D_A). 
\]
From now on we will freely identify these two $\OrderExt$ groups. 

\begin{lemma}\label{eta1}
For any approximately inner automorphism $\psi\in\Aut_\T(A\rtimes_\phi\Z)$, 
one has $\eta_1(\psi)=0$. 
\end{lemma}
\begin{proof}
This follows from the proof of \cite[Lemma 6.3]{M10CMP}. 
\end{proof}

From Theorem \ref{KishiKum} and the lemma above, 
we obtain the following. 

\begin{proposition}\label{eta>asymp}
Suppose that 
$A\rtimes_\phi\Z$ is a unital simple AT algebra with real rank zero. 
Let $\psi\in\Aut_\T(A\rtimes_\phi\Z)$ be an approximately inner automorphism. 
If $\tilde\eta_0(\psi)=0$, then $\psi$ is asymptotically inner. 
\end{proposition}

For $t\in\R$ we define $\hat\phi_t\in\Aut_\T(A\rtimes_\phi\Z)$ by 
\[
\hat\phi_t(x)=x\quad\forall x\in A\quad\text{and}\quad 
\hat\phi_t(\lambda^\phi)=\exp(2\pi\sqrt{-1}t)\lambda^\phi. 
\]

\begin{lemma}\label{dualaction}
Let $\rho:\OrderExt(K_0(A),K_0(A),D_A)\to\Hom(K_0(A),\Aff(T(A))/\Ima D_A)$ 
be the homomorphism described above. 
For every $t\in\R$ and $x\in K_0(A)$, one has 
\[
\rho(\tilde\eta_0(\hat\phi_t))(x)=tD_A(x)+\Ima D_A. 
\]
\end{lemma}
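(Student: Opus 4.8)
The plan is to use the explicit description of the homomorphism $\rho\circ\tilde\eta_0$ in terms of the de la Harpe--Skandalis determinant. Recall from \cite{KK} that for an approximately inner automorphism $\psi$ of $B=A\rtimes_\phi\Z$ (which therefore acts trivially on $K_1(B)$) and a unitary $u$ over $B$ representing a class $[u]\in K_1(B)$, the unitary $\psi(u)u^*$ lies in the connected component of the identity, and one has
\[
\rho(\tilde\eta_0(\psi))([u])=\Delta_\tau(\psi(u)u^*)\in\Aff(T(B))/\Ima D_B,
\]
where $\Delta_\tau$ is the de la Harpe--Skandalis determinant, regarded as taking values in $\Aff(T(B))/\Ima D_B$ by letting $\tau$ vary. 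Note that $\hat\phi_t$ is approximately inner, fixes every trace of $B$, and acts trivially on $K_0(B)$ and $K_1(B)$, so that $\tilde\eta_0(\hat\phi_t)$ is defined. Granting this formula, the computation reduces to choosing a convenient unitary representative for each class of $K_1(B)\cong K_0(A)$ and evaluating the determinant.

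First I would write down explicit generators of $K_1(B)$. Let $x=[p]\in K_0(A)$ with $p$ a projection in (a matrix algebra over) $A$. Since $\phi$ is approximately inner we have $\phi_*=\id$ on $K_0(A)$, so $[\phi(p)]=[p]$, and as $A$ is a unital simple AF algebra there is a unitary $w\in U(A)$ with $w\phi(p)w^*=p$. This identity is equivalent to $w\lambda^\phi p=pw\lambda^\phi$, so $w\lambda^\phi$ commutes with $p$ and
\[
u_x=pw\lambda^\phi p+(1-p)
\]
is a unitary in $B$. Under the Pimsner--Voiculescu identification $K_1(B)\cong K_0(A)$ fixed in \cite[Section 6]{M10CMP}, the class $[u_x]$ corresponds to $x$; this is the standard description of the connecting map.

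Next I would carry out the determinant computation. Since $\hat\phi_t$ fixes $A$ pointwise and satisfies $\hat\phi_t(\lambda^\phi)=\exp(2\pi\sqrt{-1}t)\lambda^\phi$, we get $\hat\phi_t(u_x)=\exp(2\pi\sqrt{-1}t)(pw\lambda^\phi p)+(1-p)$, whence, using orthogonality of the supports $p$ and $1-p$,
\[
\hat\phi_t(u_x)u_x^*=\exp(2\pi\sqrt{-1}t)p+(1-p)=\exp(2\pi\sqrt{-1}tp).
\]
Evaluating the determinant along the path $s\mapsto\exp(2\pi\sqrt{-1}stp)$, $s\in[0,1]$, gives $\Delta_\tau(\exp(2\pi\sqrt{-1}tp))=t\tau(p)$ modulo $\tau(K_0(B))$, that is, $tD_A(x)$ modulo $\Ima D_A$. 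Combined with the formula of the first paragraph, this yields $\rho(\tilde\eta_0(\hat\phi_t))(x)=tD_A(x)+\Ima D_A$, as claimed.

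The main obstacle is the first step: pinning down the determinant description of $\rho\circ\tilde\eta_0$ and checking that it is well defined. The key algebraic facts are that $\Delta_\tau$ is invariant under conjugation and that $\Delta_\tau(\psi(z)z^*)=0$ for $z\in U(B)_0$ (so the value is independent of the chosen representative $u$), both following from the trace property of $\Delta_\tau$; writing $\psi=\lim_n\Ad w_n$ one has $\psi(u)u^*=\lim_n w_nuw_n^*u^*$ and the invariant is $\lim_n\Delta_\tau(w_nuw_n^*u^*)$. A secondary point is to ensure that the orientation of the Pimsner--Voiculescu identification matches the one used for the stated sign; reversing it would only replace $t$ by $-t$, and the normalization of \cite[Section 6]{M10CMP} is the one producing $+tD_A(x)$.
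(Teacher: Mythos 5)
Your proposal is correct and follows essentially the same route as the paper: both identify the class $x=[p]$ with an explicit unitary in $A\rtimes_\phi\Z$ built from $\lambda^\phi$ and $p$ (the paper uses $\lambda^\phi p+v(1-p)$ with $vpv^*=\phi(p)$, you use $w\lambda^\phi p+(1-p)$ with $w\phi(p)w^*=p$, which represent the same class since $K_1(A)=0$), observe that $\hat\phi_t$ multiplies the $p$-corner by $\exp(2\pi\sqrt{-1}t)$, and compute the resulting determinant $\Delta_\tau(\exp(2\pi\sqrt{-1}tp))=t\tau(p)$. The paper packages this as an explicit path in the mapping torus and the integral $\frac{1}{2\pi\sqrt{-1}}\int_0^1\tau(\dot f f^*)\,ds$, which is exactly the determinant formula $\rho(\tilde\eta_0(\psi))([u])=\Delta_\tau(\psi(u)u^*)$ you invoke.
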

\begin{proof}
Let $t\in\R$ and let $p\in A$ be a projection. 
It suffices to show the equation for $x=[p]$. 
Under the identification between $K_0(A)$ and $K_1(A\rtimes_\phi\Z)$, 
$[p]$ corresponds to $[\lambda^\phi p+v(1-p)]$, 
where $v$ is a unitary in $A$ satisfying $vpv^*=\phi(p)$. 
Define $f:[0,1]\to U(A\rtimes_\phi\Z)$ by 
\[
f(s)=\exp(2\pi\sqrt{-1}st)\lambda^\phi p+v(1-p). 
\]
Then $f$ is a unitary in the mapping torus of $\hat\phi_t$ and 
\[
\frac{1}{2\pi\sqrt{-1}}\int_0^1\tau(\dot{f}(s)f(s)^*)\,ds
=\frac{1}{2\pi\sqrt{-1}}\int_0^1\tau(2\pi\sqrt{-1}tp)\,ds
=t\tau(p)
\]
for $\tau\in T(A)$. 
Hence $\rho(\tilde\eta_0(\hat\phi_t))([p])=tD_A([p])+\Ima D_A$. 
\end{proof}

We conclude this subsection by the following lemma, 
which will be used in Section 6.3. 

\begin{lemma}\label{reverse}
Suppose that 
$\sigma\in\Aut(A\rtimes_\phi\Z)$ satisfies 
$\sigma|A\in\aInn(A)$ and $\sigma(\lambda^\phi)\lambda^\phi\in A$. 
Let $\psi\in\Aut_\T(A\rtimes_\phi\Z)$ be an approximately inner automorphism. 
Then $\sigma\circ\psi\circ\sigma^{-1}$ is in $\Aut_\T(A\rtimes_\phi\Z)$ and 
$\tilde\eta_0(\sigma\circ\psi\circ\sigma^{-1})=-\tilde\eta_0(\psi)$. 
\end{lemma}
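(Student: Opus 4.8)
The plan is to separate the statement into its two assertions: membership of $\sigma\circ\psi\circ\sigma^{-1}$ in $\Aut_\T(A\rtimes_\phi\Z)$, which is a direct computation, and the identity on $\tilde\eta_0$, which I would reduce through Lemma \ref{rho} to an identity of homomorphisms where the sign becomes transparent.

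First I would record normal forms. Writing $b=\sigma(\lambda^\phi)\lambda^\phi\in A$, the hypothesis gives $b\in U(A)$ and $\sigma(\lambda^\phi)=b\lambda^{\phi*}$; similarly set $w=\psi(\lambda^\phi)\lambda^{\phi*}\in U(A)$, so $\psi(\lambda^\phi)=w\lambda^\phi$. Since $\sigma(A)=\psi(A)=A$ we get $(\sigma\psi\sigma^{-1})(A)=A$. Applying $\sigma^{-1}$ to $\sigma(\lambda^\phi)=b\lambda^{\phi*}$ shows $\sigma^{-1}(\lambda^\phi)\in\lambda^{\phi*}U(A)$, and a short computation then yields $(\sigma\psi\sigma^{-1})(\lambda^\phi)\in\lambda^\phi A$, whence $(\sigma\psi\sigma^{-1})(\lambda^\phi)\lambda^{\phi*}\in A$. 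Thus $\sigma\psi\sigma^{-1}\in\Aut_\T(A\rtimes_\phi\Z)$, and since the conjugate of an approximately inner automorphism is again approximately inner (via $\Ad u_n\mapsto\Ad\sigma(u_n)$), the invariant $\tilde\eta_0(\sigma\psi\sigma^{-1})$ is defined.

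Next, as $A$ is a unital simple AF algebra its tracial states separate $K_0(A)$, so $D_A$ is injective and Lemma \ref{rho} makes $\rho\colon\OrderExt(K_0(A),K_0(A),D_A)\to\Hom(K_0(A),\Aff(T(A))/\Ima D_A)$ an isomorphism. It therefore suffices to prove $\rho(\tilde\eta_0(\sigma\psi\sigma^{-1}))=-\rho(\tilde\eta_0(\psi))$. For this I would use naturality of $\tilde\eta_0$ under conjugation: $\sigma$ induces an isomorphism of mapping tori $M_\psi\to M_{\sigma\psi\sigma^{-1}}$, $f\mapsto\sigma\circ f$, which restricts to $\sigma$ on the suspension ideal and induces $\sigma$ on the quotient, hence intertwines every structure map defining the $\OrderExt$ invariant and gives $\tilde\eta_0(\sigma\psi\sigma^{-1})=\sigma_*(\tilde\eta_0(\psi))$. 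Through $\rho$ this reads $\rho(\tilde\eta_0(\sigma\psi\sigma^{-1}))=\bar\sigma_*\circ\rho(\tilde\eta_0(\psi))\circ(\sigma_*)^{-1}$, where $\sigma_*$ is the action on $K_1(A\rtimes_\phi\Z)\cong K_0(A)$ and $\bar\sigma_*$ the action on $\Aff(T(A))/\Ima D_A$. On traces, $\sigma|A$ is approximately inner and fixes every $\tau\in T(A)$; as $\phi^n$ is properly outer for $n\neq0$, each tracial state of $A\rtimes_\phi\Z$ is determined by its restriction to $A$, so $\sigma$ acts trivially and $\bar\sigma_*=\id$. On $K_1$, the class $[p]\in K_0(A)$ corresponds to $[\lambda^\phi p+v(1-p)]$, and $\sigma$ sends this to $[b\lambda^{\phi*}\sigma(p)+\sigma(v)(1-\sigma(p))]$, a unitary lying modulo $A$ in the $\lambda^{\phi*}$-graded component, whose Pimsner--Voiculescu class is $-[\sigma(p)]=-[p]$. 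Hence $\sigma_*=-\id$ on $K_1$, and substituting gives $\rho(\tilde\eta_0(\sigma\psi\sigma^{-1}))=\rho(\tilde\eta_0(\psi))\circ(-\id)=-\rho(\tilde\eta_0(\psi))$; injectivity of $\rho$ then yields $\tilde\eta_0(\sigma\psi\sigma^{-1})=-\tilde\eta_0(\psi)$.

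The hard part will be the sign computation, namely verifying that $\sigma$ acts as $-\id$ on $K_1(A\rtimes_\phi\Z)$: one must check that $\sigma$ carries the generating unitary $\lambda^\phi p+v(1-p)$ into the $\lambda^{\phi*}$-graded component modulo $A$ and that this reverses the Bott identification, which is precisely where the reversing hypothesis $\sigma(\lambda^\phi)\lambda^\phi\in A$ (as opposed to $\psi(\lambda^\phi)\lambda^{\phi*}\in A$) enters. A more computational alternative, bypassing the abstract naturality of $\tilde\eta_0$, is to evaluate both classes directly through de la Harpe--Skandalis determinants of explicit paths, exactly in the style of the proof of Lemma \ref{dualaction}: one compares the loop associated with $\psi$ at $U=\lambda^\phi p+v(1-p)$ with its $\sigma$-image at $\sigma(U)$, and the sign emerges from $[\sigma(U)]=-[U]$.
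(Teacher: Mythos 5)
Your first paragraph (membership in $\Aut_\T(A\rtimes_\phi\Z)$) and your key computations ($K_0(\sigma)=\id$ because $\sigma|A\in\aInn(A)$, $K_1(\sigma)=-\id$ because $\sigma(\lambda^\phi)\lambda^\phi\in A$, triviality of the action on traces) are exactly the ingredients the paper uses. The gap is in how you convert these into the identity $\tilde\eta_0(\sigma\circ\psi\circ\sigma^{-1})=-\tilde\eta_0(\psi)$. You reduce to the image under $\rho$ and then invoke injectivity of $\rho$ via Lemma \ref{rho}, justifying this by the claim that for a unital simple AF algebra the tracial states separate $K_0(A)$, so that $D_A$ is injective. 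That claim is false in general: a simple dimension group may have a nontrivial infinitesimal subgroup, so $D_A:K_0(A)\to\Aff(T(A))$ need not be injective, and Lemma \ref{rho} then gives you nothing. The lemma is stated under the standing hypothesis of this subsection that $A$ is an arbitrary unital simple AF algebra, so your argument only proves a special case (it does cover the UHF case where the lemma is eventually applied, since there $K_0(A)\subset\Q$ and $D_A$ is injective, but it does not prove the lemma as stated). Your ``computational alternative'' via de la Harpe--Skandalis determinants suffers from the same defect, since it again only computes $\rho$ of the invariant.

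The paper avoids this by never passing through $\rho$: it writes down the commutative diagram of mapping-torus extensions induced by $\sigma$, obtains the commutative ladder of six-term $K_1$-sequences with $\id$ on $K_0(B)$, an isomorphism $\theta$ in the middle, and $-\id$ on $K_1(B)$, concludes $\xi'=-\xi$ directly in $\Ext(K_1(B),K_0(B))$ (the pullback of an extension along $-\id$ is its negative), and checks $R=R'\circ\theta$ for the rotation maps; the equality $[(\xi',R')]=-[(\xi,R)]$ then follows from the definition of the group structure on $\OrderExt$, with no injectivity of $\rho$ needed. To repair your proof, replace the reduction through $\rho$ by this direct argument on order-extensions; everything else you wrote can stay.
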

\begin{proof}
Let $B=A\rtimes_\phi\Z$. 
It is straightforward to check 
$\sigma\circ\psi\circ\sigma^{-1}\in\Aut_\T(B)$. 
Let 
\[
M(B,\psi)=\{f\in C([0,1])\otimes B\mid\psi(f(0))=f(1)\}
\]
and 
\[
M(B,\sigma\circ\psi\circ\sigma^{-1})
=\{f\in C([0,1])\otimes B\mid(\sigma\circ\psi\circ\sigma^{-1})(f(0))=f(1)\}
\]
be mapping tori. 
Then we have the following commutative diagram: 
\[
\begin{CD}
0@>>>C_0(0,1)\otimes B@>>>M(B,\psi)@>>>B@>>>0\\
@.@VV\id\otimes\sigma V@VVV@VV\sigma V@.\\
0@>>>C_0(0,1)\otimes B@>>>M(B,\sigma\circ\psi\circ\sigma^{-1})@>>>B@>>>0, 
\end{CD}
\]
where the horizontal sequences are exact. 
As mentioned above, 
$K_*(B)=K_*(A\rtimes_\phi\Z)$ is naturally isomorphic to $K_0(A)$. 
Since $\sigma|A$ is approximately inner, we get $K_0(\sigma)=\id$ on $K_0(B)$. 
Also, from $\sigma(\lambda^\phi)\lambda^\phi\in A$, 
we obtain $K_1(\sigma)=-\id$ on $K_1(B)$. 
Thus, we get the commutative diagram: 
\[
\begin{CD}
\xi:\quad 0@>>>K_0(B)@>>>K_1(M(B,\psi))@>>>K_1(B)@>>>0\\
@.@VV\id V@VV\theta V@VV-\id V@.\\
\xi':\quad 0@>>>K_0(B)@>>>K_1(M(B,\sigma\circ\psi\circ\sigma^{-1}))
@>>>K_1(B)@>>>0, 
\end{CD}
\]
where the horizontal sequences $\xi$ and $\xi'$ are exact. 
Hence $\xi'=-\xi$ in $\Ext(K_1(B),K_0(B))$. 
Moreover, 
letting $R$ and $R'$ be the corresponding rotation maps 
for $\psi$ and $\sigma\circ\psi\circ\sigma^{-1}$ respectively, 
we can easily check $R=R'\circ\theta$. 
Then the conclusion follows from the definition of $\tilde\eta_0(\cdot)$. 
\end{proof}

\subsection{Cocycle actions of $\Z^2$ on UHF algebras}

In this subsection, we would like to 
summarize the results obtained in \cite[Section 6]{M10CMP} 
and generalize them to cocycle actions of $\Z^2$. 
Throughout this subsection, 
we write $\Z^2=\langle a,b\mid bab^{-1}=a\rangle$. 
For a cocycle action $(\alpha,u):\Z^2\curvearrowright A$, 
we put 
\[
\check u=u(b,a)u(a,b)^*, 
\]
so that 
\[
\alpha_b\circ\alpha_a=\Ad\check u\circ\alpha_a\circ\alpha_b
\]
holds. 
When $A$ is a UHF algebra or the Jiang-Su algebra, 
$(\alpha,u)$ is cocycle conjugate to a genuine action 
if and only if $\Delta_\tau(\check u)=0$ (\cite[Theorem 8.6, 8.8]{MS}). 

First, 
let us recall the invariant of cocycle actions of $\Z^2$ 
taking its values in an $\OrderExt$ group (\cite[Definition 6.4]{M10CMP}). 
Let $(\alpha,u):\Z^2\curvearrowright A$ be 
a strongly outer and locally $KK$-trivial (i.e. $K_0(\alpha_g)=\id$) 
cocycle action of $\Z^2$ on a unital simple AF algebra $A$. 
The automorphism $\alpha_b\in\Aut(A)$ extends to 
$\tilde\alpha_b\in\Aut_\T(A\rtimes_{\alpha_a}\Z)$ 
by letting $\tilde\alpha_b(x)=\alpha_b(x)$ for all $x\in A$ and 
$\tilde\alpha_b(\lambda^{\alpha_a})=\check u\lambda^{\alpha_a}$. 
As mentioned in the previous subsection, 
by the Pimsner-Voiculescu exact sequence, 
$(K_0(A\rtimes_{\alpha_a}\Z),K_1(A\rtimes_{\alpha_a}\Z))$ are 
naturally identified with $(K_0(A),K_0(A))$. 
The space of tracial states $T(A\rtimes_{\alpha_a}\Z)$ is also 
identified with $T(A)$. 
Under these identifications, 
we regard $\tilde\eta_0(\tilde\alpha_b)$ 
as an element in $\OrderExt(K_0(A),K_0(A),D_A)$, 
and define our invariant $c(\alpha,u)\in\OrderExt(K_0(A),K_0(A),D_A)$ by 
\[
c(\alpha,u)=\tilde\eta_0(\tilde\alpha_b)
\in\OrderExt(K_0(A),K_0(A),D_A). 
\]

\begin{remark}\label{detofu}
Let $\rho:\OrderExt(K_0(A),K_0(A),D_A)\to\Hom(K_0(A),\Aff(T(A))/\Ima D)$ 
be the homomorphism described in the previous subsection. 
Then one has $\rho(c(\alpha,u))([1])=\Delta_\tau(\check u)$. 
Indeed, 
under the identification between $K_0(A)$ and $K_1(A\rtimes_{\alpha_a}\Z)$, 
$[1]$ corresponds to $[\lambda^{\alpha_a}]$. 
Let $z:[0,1]\to U(A)$ be a piecewise smooth path 
such that $z(0)=1$ and $z(1)=\check u$. 
Then $f(t)=z(t)\lambda^{\alpha_a}$ is 
a unitary in the mapping torus of $\tilde\alpha_b$. 
Therefore $\rho(c(\alpha,u))([1])=\Delta_\tau(\check u)$. 
\end{remark}

\begin{lemma}
Let $(\alpha,u),(\beta,v):\Z^2\curvearrowright A$ be 
strongly outer and locally $KK$-trivial cocycle actions of $\Z^2$ 
on a unital simple AF algebra $A$. 
If $(\alpha,u)$ and $(\beta,v)$ are cocycle conjugate 
via an approximately inner automorphism $\theta\in\aInn(A)$, 
then $c(\alpha,u)=c(\beta,v)$. 
\end{lemma}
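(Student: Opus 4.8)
The plan is to transport the entire construction defining $c$ along the cocycle conjugacy and to verify that the value of $\tilde\eta_0$ is unchanged. Write $B_\alpha=A\rtimes_{\alpha_a}\Z$ and $B_\beta=A\rtimes_{\beta_a}\Z$, and set $\check v=v(b,a)v(a,b)^*$ in analogy with $\check u$. First I would use the relation $\theta\circ\alpha_a\circ\theta^{-1}=\Ad w_a\circ\beta_a$ to extend $\theta$ to an isomorphism $\Theta\colon B_\alpha\to B_\beta$ by setting $\Theta|A=\theta$ and $\Theta(\lambda^{\alpha_a})=w_a\lambda^{\beta_a}$; one checks directly that this respects both crossed-product relations and hence is a well-defined $*$-isomorphism.

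The heart of the argument is then to show that $\Theta$ conjugates $\tilde\alpha_b$ to $\tilde\beta_b$ up to an inner automorphism, namely
\[
\Theta\circ\tilde\alpha_b\circ\Theta^{-1}=\Ad w_b\circ\tilde\beta_b.
\]
On $A$ both sides equal $\Ad w_b\circ\beta_b$, using $\theta\circ\alpha_b\circ\theta^{-1}=\Ad w_b\circ\beta_b$. On the implementing unitary one evaluates $\Theta\tilde\alpha_b\Theta^{-1}(\lambda^{\beta_a})$ and $\Ad w_b\circ\tilde\beta_b(\lambda^{\beta_a})$ and checks they coincide. The only nontrivial input is the cocycle identity $\theta(u(g,h))=w_g\beta_g(w_h)v(g,h)w_{gh}^*$, which (since $ab=ba$ forces $w_{ab}=w_{ba}$) yields $\theta(\check u)=w_b\beta_b(w_a)\,\check v\,\beta_a(w_b^*)w_a^*$, and this is exactly what makes the two expressions agree. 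This computation is routine but is the one step where all the cocycle data must be combined, so I expect it to be the main bookkeeping obstacle.

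It then remains to see that $\Theta$ acts trivially on the relevant $\OrderExt$ group and that the inner factor $\Ad w_b$ contributes nothing. Since $\theta\in\aInn(A)$ we have $K_0(\theta)=\id$ and $\theta$ fixes every trace, so under the Pimsner--Voiculescu identifications $K_0(B_\bullet)\cong K_0(A)$ and $T(B_\bullet)\cong T(A)$ the map $\Theta$ is the identity on $K_0$ and on traces. For $K_1(B_\bullet)\cong K_0(A)$ the class of $[p]$ is represented by $\lambda^{\alpha_a}p+v(1-p)$ with $vpv^*=\alpha_a(p)$; applying $\Theta$ and comparing with the corresponding generator of $K_1(B_\beta)$ produces a unitary lying in $A$, which is null-homotopic because $U(A)=U(A)_0$ for a unital AF algebra ($K_1(A)=0$). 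Hence $\Theta_*=\id$ on $\OrderExt(K_0(A),K_0(A),D_A)$, and by naturality of $\tilde\eta_0$ we get $\tilde\eta_0(\Theta\tilde\alpha_b\Theta^{-1})=\tilde\eta_0(\tilde\alpha_b)=c(\alpha,u)$.

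Finally, $\tilde\eta_0$ is a homomorphism on $\aInn(B_\beta)$ that vanishes on asymptotically inner (in particular inner) automorphisms by Theorem \ref{KishiKum}, so
\[
\tilde\eta_0(\Ad w_b\circ\tilde\beta_b)=\tilde\eta_0(\Ad w_b)+\tilde\eta_0(\tilde\beta_b)=\tilde\eta_0(\tilde\beta_b)=c(\beta,v).
\]
Combining this with the identity of the previous paragraph gives $c(\alpha,u)=c(\beta,v)$, as desired. Aside from the cocycle bookkeeping in the second step, the only point needing genuine care is the triviality of $\Theta_*$ on $K_1$, where the AF hypothesis (connectedness of $U(A)$) is precisely what is used.
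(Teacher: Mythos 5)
Your proof is correct and follows essentially the same route as the paper's: the same extension $\Theta(\lambda^{\alpha_a})=w_a\lambda^{\beta_a}$ and the same cocycle bookkeeping yielding $\theta(\check u)=w_b\beta_b(w_a)\check v\beta_a(w_b^*)w_a^*$ form the heart of both arguments. The only difference is in packaging: the paper absorbs the inner factor $\Ad w_b$ into a single mapping-torus isomorphism $f(t)\mapsto z(t)^*\theta(f(t))z(t)$ along a path $z$ from $1$ to $w_b$ and reads off the conclusion from the resulting commutative diagram, whereas you split off $\Ad w_b$ and dispose of it via additivity of $\tilde\eta_0$ and its vanishing on inner automorphisms.
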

\begin{proof}
There exists a family of unitaries $(w_g)_{g\in\Z^2}$ in $A$ such that 
\[
\theta\circ\alpha_g\circ\theta^{-1}=\Ad w_g\circ\beta_g\quad\text{and}\quad 
\theta(u(g,h))=w_g\beta_g(w_h)v(g,h)w_{gh}^*
\]
hold for every $g,h\in\Z^2$. 
Then $\theta$ extends to an isomorphism 
from $A\rtimes_{\alpha_a}\Z$ to $A\rtimes_{\beta_a}\Z$ by letting 
\[
\theta(\lambda^{\alpha_a})=w_a\lambda^{\beta_a}. 
\]
As in \cite[Section 6]{M10CMP}, 
we let $M(A\rtimes_{\alpha_a}\Z,\tilde\alpha_b)$ and 
$M(A\rtimes_{\beta_a}\Z,\tilde\beta_b)$ denote the mapping tori. 
Take a continuous path of unitaries $z:[0,1]\to U(A)$ 
such that $z(0)=1$ and $z(1)=w_b$. 
Define an isomorphism 
$\tilde\theta:M(A\rtimes_{\alpha_a}\Z,\tilde\alpha_b)
\to M(A\rtimes_{\beta_a}\Z,\tilde\beta_b)$ by 
\[
\tilde\theta(f)(t)=z(t)^*\theta(f(t))z(t). 
\]
Since 
\[
w_b^*\theta(\alpha_b(x))w_b
=(\Ad w_b^*\circ\theta\circ\alpha_b)(x)
=\beta_b(\theta(x))
\]
for any $x\in A$ and 
\begin{align*}
w_b^*\theta(\tilde\alpha_b(\lambda^{\alpha_a}))w_b
&=w_b^*\theta(\check u\lambda^{\alpha_a})w_b\\
&=w_b^*\theta(\check u)w_a\lambda^{\beta_a}w_b\\
&=w_b^*w_b\beta_b(w_a)\check v
\beta_a(w_b^*)w_a^*w_a\lambda^{\beta_a}w_b\\
&=\beta_b(w_a)\check v\lambda^{\beta_a}\\
&=\tilde\beta_b(w_a\lambda^{\beta_a})\\
&=\tilde\beta_b(\theta(\lambda^{\alpha_a})), 
\end{align*}
$\tilde\theta$ is well-defined. 
Hence we obtain the following commutative diagram: 
\[
\begin{CD}
0@>>>C_0(0,1)\otimes(A\rtimes_{\alpha_a}\Z)@>>>
M(A\rtimes_{\alpha_a}\Z,\tilde\alpha_b)@>>>A\rtimes_{\alpha_a}\Z@>>>0\\
@.@VV\tilde\theta V@VV\tilde\theta V@VV\theta V\\
0@>>>C_0(0,1)\otimes(A\rtimes_{\beta_a}\Z)@>>>
M(A\rtimes_{\beta_a}\Z,\tilde\beta_b)@>>>A\rtimes_{\beta_a}\Z@>>>0. 
\end{CD}
\]
From this it is not so hard to see 
$\tilde\eta_0(\tilde\alpha_b)=\tilde\eta_0(\tilde\beta_b)$, 
because $K_0(\theta)=\id$. 
Therefore we get $c(\alpha,u)=c(\beta,v)$. 
\end{proof}

In the same way as \cite[Theorem 6.6]{M10CMP}, 
we can prove the following. 

\begin{theorem}
Let $(\alpha,u),(\beta,v):\Z^2\curvearrowright A$ be 
strongly outer, locally $KK$-trivial cocycle actions of $\Z^2$ 
on a unital simple AF algebra $A$ with finitely many extremal tracial states. 
The following are equivalent. 
\begin{enumerate}
\item $c(\alpha,u)=c(\beta,v)$ in $\OrderExt(K_0(A),K_0(A),D_A)$. 
\item $(\alpha,u)$ and $(\beta,v)$ are cocycle conjugate 
via an approximately inner automorphism $\theta\in\Aut(A)$. 
\end{enumerate}
\end{theorem}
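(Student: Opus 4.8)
The implication (2)$\Rightarrow$(1) is exactly the preceding lemma, so the whole content lies in proving (1)$\Rightarrow$(2), and my plan is to adapt the Evans--Kishimoto intertwining argument of \cite[Theorem 6.6]{M10CMP} to the cocycle setting. First I would dispose of the $a$-direction. Restricting $(\alpha,u)$ and $(\beta,v)$ to $\langle a\rangle\cong\Z$ yields cocycle actions of $\Z$; absorbing the scalar-type $2$-cocycle by an inner perturbation, I may regard $\alpha_a,\beta_a\in\Aut(A)$ as genuine automorphisms. They are strongly outer, and local $KK$-triviality forces $K_0(\alpha_a)=K_0(\beta_a)=\id$, so both lie in $\aInn(A)$ (an automorphism of a unital simple AF algebra acting trivially on $K_0$ is approximately inner). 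By the uniqueness up to strong cocycle conjugacy of strongly outer approximately inner automorphisms of such $A$ (which possess the Rohlin property), and using the preceding lemma to see that replacing $(\beta,v)$ by a cocycle conjugate through an automorphism in $\aInn(A)$ leaves $c(\beta,v)$ unchanged, I may assume $\alpha_a=\beta_a=:\phi$.

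Set $B=A\rtimes_\phi\Z$, a unital simple AT algebra of real rank zero, with implementing unitary $\lambda^\phi$. The extensions $\tilde\alpha_b,\tilde\beta_b\in\Aut_\T(B)$ restrict to $\alpha_b,\beta_b\in\aInn(A)$ and send $\lambda^\phi$ to $\check u\lambda^\phi$ and $\check v\lambda^\phi$ respectively; since $K_1(A)=0$ gives $[\check u\lambda^\phi]=[\check v\lambda^\phi]=[\lambda^\phi]$ in $K_1(B)$, both act as the identity on $K_*(B)\cong(K_0(A),K_0(A))$, whence $\tilde\alpha_b,\tilde\beta_b\in\aInn(B)$. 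The hypothesis $c(\alpha,u)=c(\beta,v)$ reads $\tilde\eta_0(\tilde\alpha_b)=\tilde\eta_0(\tilde\beta_b)$, and since $\tilde\eta_0$ is a homomorphism on $\aInn(B)$ this gives
\[
\tilde\eta_0\!\left(\tilde\alpha_b\circ\tilde\beta_b^{-1}\right)=0.
\]
The automorphism $\tilde\alpha_b\circ\tilde\beta_b^{-1}$ lies in $\Aut_\T(B)$ and is approximately inner, so Lemma \ref{eta1} together with Proposition \ref{eta>asymp} shows that it is asymptotically inner; the point to verify carefully is that the implementing path of unitaries can be chosen inside $A$, i.e. that it is $\T$-asymptotically inner.

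With $\phi$ enjoying the Rohlin property in the $a$-direction (coming from strong outerness via Theorem \ref{so=trR}) and the $\T$-asymptotic innerness of $\tilde\alpha_b\circ\tilde\beta_b^{-1}$ in the $b$-direction, I would run a two-dimensional Evans--Kishimoto intertwining. Alternating between the two directions, one builds a sequence of unitaries in $A$ that asymptotically implement the $b$-move while being corrected against Rohlin towers for $\phi$, and in the limit obtains an automorphism $\theta\in\aInn(A)$ and a family of unitaries $(w_g)_{g\in\Z^2}$ in $A$ satisfying $\theta\circ\alpha_g\circ\theta^{-1}=\Ad w_g\circ\beta_g$ and the cocycle identity $\theta(u(g,h))=w_g\beta_g(w_h)v(g,h)w_{gh}^*$. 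The main obstacle is precisely this intertwining: one must keep the two families of approximations mutually compatible so that the cocycle relations hold \emph{exactly} in the limit, which requires arranging the $b$-direction unitary path to commute asymptotically with the Rohlin projections for $\phi$ and ensuring that $\theta$ stays approximately inner at every stage. It is here that the asymptotic, rather than merely approximate, innerness supplied by the vanishing of the $\OrderExt$-invariant is indispensable, since it is what forces the telescoping product of correcting unitaries to converge.
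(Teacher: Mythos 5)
Your proposal follows essentially the same route as the paper: reduce the $a$-direction via the uniqueness (up to unitary perturbation and cocycle conjugacy) of strongly outer approximately inner automorphisms, pass to $A\rtimes_{\alpha_a}\Z$, use the equality of $\OrderExt$ invariants together with Proposition \ref{eta>asymp} to upgrade $\tilde\alpha_b\circ\tilde\beta_b^{-1}$ to an asymptotically inner ($\T$-asymptotically inner, via asymptotic representability of $\alpha_a$) automorphism, and conclude with the equivariant Evans--Kishimoto intertwining, which is exactly the content of \cite[Theorem 6.1]{M10CMP} invoked in the paper. The only cosmetic difference is that you normalize $\beta_a=\alpha_a$ outright, whereas the paper keeps $\beta_a=\Ad w\circ\alpha_a$ and carries the unitary $w$ through the identification of the two crossed products; both are correct.
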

\begin{proof}
We give only sketchy arguments 
as the proof of \cite[Theorem 6.6]{M10CMP} applies almost verbatim. 
(2)$\Rightarrow$(1) was shown in the lemma above 
without assuming that $A$ has finitely many extremal traces. 
Let us consider the other implication (1)$\Rightarrow$(2). 
It is well-known (see \cite[Section 4]{M10CMP} for example) that 
$A\rtimes_{\alpha_a}\Z$ and $A\rtimes_{\beta_a}\Z$ are 
unital simple AT algebras with real rank zero. 
By \cite[Theorem 4.8, 4.9]{M10CMP}, 
we may assume that 
there exists $w\in U(A)$ such that $\beta_a=\Ad w\circ\alpha_a$. 
Define an isomorphism $\theta:A\rtimes_{\beta_a}\Z\to A\rtimes_{\alpha_a}\Z$ 
by $\theta(x)=x$ for $x\in A$ and 
$\theta(\lambda^{\beta_a})=w\lambda^{\alpha_a}$. 
We can observe 
\[
\tilde\eta_0(\tilde\alpha_b)=c(\alpha,u)=c(\beta,v)=\tilde\eta_0(\tilde\beta_b)
=\tilde\eta_0(\theta\circ\tilde\beta_b\circ\theta^{-1}). 
\]
It follows from Proposition \ref{eta>asymp} that 
$\tilde\alpha_b$ and $\theta\circ\tilde\beta_b\circ\theta^{-1}$ are 
asymptotically unitarily equivalent. 
Then \cite[Theorem 6.1]{M10CMP} applies and yields 
an approximately inner automorphism $\mu\in\Aut_\T(A\rtimes_{\alpha_a}\Z)$ 
and $t\in U(A)$ such that $\mu|A$ is in $\aInn(A)$ and 
\[
\mu\circ\theta\circ\tilde\beta_b\circ\theta^{-1}\circ\mu^{-1}
=\Ad t\circ\tilde\alpha_b. 
\]
(Actually, in \cite[Theorem 6.1]{M10CMP}, 
$A$ is assumed to have a unique trace. 
We can weaken this hypothesis 
by using Theorem \ref{so=trR} instead of \cite[Lemma 5.1]{M10CMP}. 
See also Theorem  \ref{so>R}.) 
Let $s=\mu(\lambda^{\alpha_a})(\lambda^{\alpha_a})^*\in U(A)$. 
One can deduce 
\[
(\mu|A)\circ\beta_b\circ(\mu|A)^{-1}=\Ad t\circ\alpha_b
\]
and 
\[
(\mu|A)\circ\beta_a\circ(\mu|A)^{-1}=(\Ad\mu(w)s)\circ\alpha_a. 
\]
Moreover, we can verify 
\[
t\alpha_b(\mu(w)s)\check u\alpha_a(t^*)s^*\mu(w^*)=\mu(\check v), 
\]
which implies that $(\alpha,u)$ and $(\beta,v)$ are cocycle conjugate. 
\end{proof}

Let us consider the case that $A$ is a UHF algebra. 
As in \cite[Section 5]{KM}, for every prime number $p$, we define 
\[
\zeta_A(p)=\sup\{k\in\N\cup\{0\}\mid[1]\text{ is divisible by }p^k
\text{ in }K_0(A)\}\in\{0,1,2,\ldots,\infty\}. 
\]
In \cite{KM}, it was shown that 
there exists a bijective correspondence between 
the set of (strong) cocycle conjugacy classes of 
strongly outer $\Z^2$-actions on $A$ and the abelian group 
\[
\prod_{1\leq\zeta_A(p)<\infty}\Z/p^{\zeta_A(p)}\Z. 
\]
As mentioned in \cite[Remark 6.7]{M10CMP}, 
it is easy to see that this group is isomorphic to 
\[
\{r\in\Hom(K_0(A),\R/\Ima D_A)\mid r([1])=0\}. 
\]
It is also known that 
this group is isomorphic to the fundamental group $\pi_1(\Aut(A))$ (\cite{T}). 

The following theorem is 
a strengthened version of the main result of \cite{KM} 
(see also \cite[Theorem 8.6]{MS}). 

\begin{theorem}
Assume that $A$ is a UHF algebra. 
There exist bijective correspondences between the following three sets. 
\begin{enumerate}
\item Cocycle conjugacy classes of 
strongly outer cocycle actions of $\Z^2$ on $A$. 
\item $\OrderExt(K_0(A),K_0(A),D_A)$. 
\item $\Hom(K_0(A),\R/K_0(A))$. 
\end{enumerate}
The correspondences are given 
by $(\alpha,u)\mapsto c(\alpha,u)$ and $(\alpha,u)\mapsto\rho(c(\alpha,u))$. 
Furthermore, 
a strongly outer cocycle action $(\alpha,u):\Z^2\curvearrowright A$ is 
cocycle conjugate to a genuine action if and only if 
$\rho(c(\alpha,u))([1])=0$. 
\end{theorem}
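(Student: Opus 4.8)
The plan is to prove the two bijections separately and then read off the last assertion, so I would organize the argument around the map $c$ into (2) and the isomorphism $\rho$ between (2) and (3). The passage from (2) to (3) is the routine part. Since $A$ is UHF it has a unique tracial state, whence $\Aff(T(A))=\R$ and the dimension map $D_A\colon K_0(A)\to\R$ is injective, with image the standard copy of $K_0(A)\subset\Q\subset\R$; thus $\Aff(T(A))/\Ima D_A=\R/K_0(A)$. Lemma \ref{rho} then applies verbatim to give that $\rho\colon\OrderExt(K_0(A),K_0(A),D_A)\to\Hom(K_0(A),\R/K_0(A))$ is an isomorphism, so (2) and (3) are identified and the second stated correspondence is just $\rho$ composed with the first.

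Next I would show that $c$ descends to an injection from (1) into (2). Because $K_0(A)$ is a subgroup of $\Q$ with $[1_A]=1$, every automorphism of $A$ acts as the identity on $K_0(A)$; as $A$ is AF this forces every automorphism to be approximately inner, so $\aInn(A)=\Aut(A)$ and every cocycle action of $\Z^2$ on $A$ is automatically locally $KK$-trivial. Consequently the preceding classification theorem applies to all strongly outer cocycle actions, and its equivalence reads: $c(\alpha,u)=c(\beta,v)$ if and only if $(\alpha,u)$ and $(\beta,v)$ are cocycle conjugate (the conjugating automorphism being approximately inner automatically). This simultaneously yields well-definedness and injectivity of $c$ on cocycle conjugacy classes.

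The surjectivity of $c$ is the heart of the matter, and I would obtain it by a base-point-plus-twist construction. By \cite{KM}, rephrased in the paragraph preceding the theorem, genuine strongly outer $\Z^2$-actions realize precisely the homomorphisms $r_0\in\Hom(K_0(A),\R/K_0(A))$ with $r_0([1])=0$, and for such an action $\check u=1$ and $\rho(c(\alpha,1))=r_0$. Given an arbitrary target $r$, I would choose $s\in\R$ with $s+K_0(A)=r([1])$ and set $r_0=r-sD_A(\cdot)$, which satisfies $r_0([1])=0$; pick a genuine strongly outer action $\alpha$ with $\rho(c(\alpha,1))=r_0$, and attach to it the scalar $\T$-valued $2$-cocycle $u_s$ on $\Z^2$ with $\check{u_s}=e^{2\pi\sqrt{-1}s}\cdot1$. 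This leaves $\alpha_a,\alpha_b$ (hence strong outerness) unchanged and is compatible since $\Ad\check{u_s}=\id$. The extension $\tilde\alpha_b$ of the twisted action then factors as $\hat\phi_s\circ\tilde\alpha_b^{(0)}$, where $\tilde\alpha_b^{(0)}$ is the extension attached to $(\alpha,1)$ and $\hat\phi_s$ is the automorphism of $A\rtimes_{\alpha_a}\Z$ from Lemma \ref{dualaction}. Since $\tilde\eta_0$ is a homomorphism, $c(\alpha,u_s)=\tilde\eta_0(\hat\phi_s)+c(\alpha,1)$, and applying $\rho$ together with Lemma \ref{dualaction} gives $\rho(c(\alpha,u_s))=sD_A(\cdot)+r_0=r$. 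Hence $\rho\circ c$, and therefore $c$, is surjective, completing the bijections.

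Finally, the last assertion falls out immediately: by Remark \ref{detofu} one has $\rho(c(\alpha,u))([1])=\Delta_\tau(\check u)$, and by \cite[Theorem 8.6, 8.8]{MS} a strongly outer cocycle action on a UHF algebra is cocycle conjugate to a genuine action exactly when $\Delta_\tau(\check u)=0$. The main obstacle I anticipate is the surjectivity step, precisely the production of invariants with $r([1])\neq0$, which is not attainable by genuine actions alone; the device that resolves it is that the de la Harpe--Skandalis determinant of the scalar unitary $e^{2\pi\sqrt{-1}s}$ sweeps out all of $\R/K_0(A)$, so scalar $2$-cocycles supply exactly the extra freedom beyond the $r([1])=0$ locus covered by \cite{KM}.
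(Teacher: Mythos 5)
Your proposal is correct and follows essentially the same route as the paper: $\rho$ is an isomorphism by Lemma \ref{rho}, injectivity of $c$ on cocycle conjugacy classes comes from the preceding classification theorem together with the fact that $\Aut(A)=\aInn(A)$ for a UHF algebra, and surjectivity is obtained exactly as in the paper by taking a genuine action realizing $r-sD_A(\cdot)$ via \cite{KM} and twisting it by a scalar $2$-cocycle, with Lemma \ref{dualaction} supplying the missing $sD_A(\cdot)$ summand. The only (harmless) deviation is in the final assertion, where you invoke \cite[Theorem 8.6, 8.8]{MS} together with Remark \ref{detofu}, whereas the paper re-derives the equivalence internally from the bijection just established and \cite[Lemma 5.3]{KM}; both arguments are valid.
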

\begin{proof}
Since $D_A:K_0(A)\to\Aff(T(A))\cong\R$ is injective, 
by Lemma \ref{rho}, the homomorphism 
\[
\rho:\OrderExt(K_0(A),K_0(A),D_A)\to\Hom(K_0(A),\R/K_0(A)) 
\]
is an isomorphism. 
By virtue of the theorem above, 
the correspondence from (1) to (2) is injective. 
Take $r\in\Hom(K_0(A),\R/\Ima D_A)$. 
Let $t\in\R$ be such that $r([1])=t+\Ima D_A$. 
Define $r'\in\Hom(K_0(A),\R/\Ima D_A)$ by 
\[
r'(x)=r(x)-tD_A(x)+\Ima D_A
\]
so that $r'([1])=0$. 
By \cite[Lemma 5.3]{KM} and the observation above, 
there exists a strongly outer action $\gamma:\Z^2\curvearrowright A$ 
such that $\rho(c(\gamma,1))=r'$. 
By Lemma \ref{dualaction}, we have 
\[
\rho(\tilde\eta_0((\widehat\gamma_a)_t\circ\tilde\gamma_b))(x)
=tD_A(x)+r'(x)=r(x). 
\]
Take a scalar valued 2-cocycle $u:\Z^2\times\Z^2\to\T\subset U(A)$ 
such that $\check u=\exp(2\pi\sqrt{-1}t)$ 
(note that the 2-cohomology group $H^2(\Z^2,\T)$ is isomorphic to $\T$). 
Then $\rho(c(\gamma,u))=r$. 
Thus the correspondence from (1) to (2) is surjective. 

Let $(\alpha,u):\Z^2\curvearrowright A$ be 
a strongly outer cocycle action. 
Evidently, if $(\alpha,u)$ is cocycle conjugate to a genuine action, 
then $\rho(c(\alpha,u))([1])=0$ 
thanks to the theorem above and Remark \ref{detofu}. 
If $\rho(c(\alpha,u))([1])=0$, then 
by \cite[Lemma 5.3]{KM} and the observation above, 
there exists a strongly outer action $\gamma:\Z^2\curvearrowright A$ 
such that $c(\gamma,1)=c(\alpha,u)$. 
By using the theorem above again, we can conclude that 
$(\alpha,u)$ is cocycle conjugate to the genuine action $\gamma$. 
\end{proof}

We will use the following lemma in the next subsection. 

\begin{lemma}\label{toadjust}
Let $A$ be a UHF algebra and 
let $\phi$ be an automorphism of $A$ 
such that $\phi^n$ is not weakly inner for every $n\in\N$. 
For any $c\in\OrderExt(K_0(A),K_0(A),D_A)$, 
there exists $\psi\in\Aut_\T(A\rtimes_\phi\Z)$ 
such that $\tilde\eta_0(\psi)=c$. 
\end{lemma}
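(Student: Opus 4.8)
The plan is to use that, for a UHF algebra, the rotation map $\rho$ detects the whole $\OrderExt$ invariant, and then to build the required $\psi$ out of two explicit pieces: a dual (gauge) automorphism and an automorphism arising from an extension of $\phi$ to a cocycle $\Z^2$-action. First I would fix the standing identifications. Since $A$ is UHF, every automorphism is $KK$-trivial and approximately inner, so $\phi\in\aInn(A)$; as $\phi^n$ is not weakly inner for every $n$, the crossed product $B=A\rtimes_\phi\Z$ is a unital simple AT algebra with real rank zero (so Theorem \ref{KishiKum} applies) and $\phi$ has the Rohlin property, which here also follows from Theorem \ref{so=trR} applied to the $\Z$-action generated by $\phi$. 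By the Pimsner--Voiculescu sequence, $K_0(B)$ and $K_1(B)$ are identified with $K_0(A)$ and $T(B)$ with the unique trace of $A$, so $\OrderExt(K_1(B),K_0(B),D_B)$ is identified with $\OrderExt(K_0(A),K_0(A),D_A)$. Because $D_A\colon K_0(A)\to\Aff(T(A))\cong\R$ is injective, Lemma \ref{rho} shows that
\[
\rho\colon\OrderExt(K_0(A),K_0(A),D_A)\to\Hom(K_0(A),\R/\Ima D_A)
\]
is an isomorphism; hence, given $c$, it suffices to produce $\psi\in\Aut_\T(B)\cap\aInn(B)$ with $\rho(\tilde\eta_0(\psi))=\rho(c)=:r$.

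Next I would split $r$. Using $D_A([1])=1$, choose $t\in\R$ with $r([1])=t+\Ima D_A$ and set $r'=r-tD_A$, so that $r'([1])=0$. The first summand is realized by the gauge automorphism: Lemma \ref{dualaction} gives $\rho(\tilde\eta_0(\hat\phi_t))=tD_A$, and $\hat\phi_t\in\Aut_\T(B)\cap\aInn(B)$ (the latter by the Rohlin property of $\phi$). Since $\tilde\eta_0$ is a homomorphism on $\aInn(B)$ and $\rho$ is a homomorphism, it remains to find $\psi'\in\Aut_\T(B)\cap\aInn(B)$ with $\rho(\tilde\eta_0(\psi'))=r'$; then $\psi=\hat\phi_t\circ\psi'$ lies in $\Aut_\T(B)$, satisfies $\rho(\tilde\eta_0(\psi))=tD_A+r'=r$, and, $\rho$ being injective, $\tilde\eta_0(\psi)=c$.

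The heart of the matter is realizing $r'$ with $r'([1])=0$. Here I would extend $\phi$ to a strongly outer, locally $KK$-trivial cocycle action $(\gamma,v)\colon\Z^2\curvearrowright A$ with $\gamma_a=\phi$, in the spirit of \cite[Lemma 5.3]{KM}: concretely, produce $\beta=\gamma_b\in\aInn(A)$ together with $w=\check v\in U(A)$ obeying the exact covariance relation $\beta\circ\phi\circ\beta^{-1}=\Ad w\circ\phi$, so that $\psi'|A=\beta$ and $\psi'(\lambda^\phi)=w\lambda^\phi$ define $\psi'=\tilde\gamma_b\in\Aut_\T(B)$. One normalizes $w$ by a scalar so that $\Delta_\tau(w)=0$; then, exactly as in Remark \ref{detofu}, $\rho(\tilde\eta_0(\psi'))([1])=\Delta_\tau(w)=0$, matching $r'([1])=0$, while the values $r'([p])$ on projections are prescribed through the de la Harpe--Skandalis determinant data of the unitaries used to define $\beta$. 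The identity $\tilde\eta_0(\psi')=c(\gamma,v)$ and its rotation are computed via the mapping torus of $\psi'$ and the rotation map, just as in Lemma \ref{dualaction} and Remark \ref{detofu}, which yields $\rho(\tilde\eta_0(\psi'))=r'$.

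The main obstacle is precisely this construction. Getting $\beta$ and $w$ to satisfy the covariance relation \emph{exactly}, rather than approximately, while simultaneously prescribing the determinant invariant $r'$, is where the real work lies: the Rohlin towers of $\phi$ give only approximate commutation at each finite stage, and the difference from \cite[Lemma 5.3]{KM} is that the first generator must be the \emph{given} automorphism $\phi$ and not a product-type one. Upgrading approximate to exact covariance therefore requires the inductive intertwining argument of \cite{KM,M10CMP}, in which the perturbing unitaries are chosen so that the defects are absorbed in the limit and the prescribed de la Harpe--Skandalis determinants are achieved; the Rohlin property of $\phi$ supplied by Theorem \ref{so=trR} is exactly what makes this possible.
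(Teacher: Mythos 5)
Your reductions at the start are sound and in fact mirror how the paper proves the classification theorem immediately preceding this lemma: $\rho$ is an isomorphism by Lemma \ref{rho}, one splits $r=tD_A+r'$ with $r'([1])=0$, the summand $tD_A$ is realized by the gauge automorphism via Lemma \ref{dualaction}, and $r'$ is realized through \cite[Lemma 5.3]{KM}. But the step you yourself flag as ``the heart of the matter'' --- producing $\beta$ and $w$ with the \emph{exact} covariance relation $\beta\circ\phi\circ\beta^{-1}=\Ad w\circ\phi$ for the \emph{given} $\phi$, with prescribed determinant data --- is left as an assertion that ``the inductive intertwining argument of \cite{KM,M10CMP}'' will work. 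That is a genuine gap: you have not carried out the construction, and it is by far the hardest part of your plan.

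The point you are missing is that no such construction is needed. The paper's proof is two lines: by the preceding theorem (surjectivity of $(\alpha,u)\mapsto c(\alpha,u)$, proved with \emph{no constraint} on $\alpha_a$), there is a strongly outer cocycle action $(\alpha,u):\Z^2\curvearrowright A$ with $c(\alpha,u)=c$; by Kishimoto's outer conjugacy theorem \cite[Theorem 1.3]{K95crelle} one may assume, after conjugating the whole cocycle action, that $\alpha_a=\Ad w\circ\phi$ for some $w\in U(A)$; then the isomorphism $\theta:A\rtimes_{\alpha_a}\Z\to A\rtimes_\phi\Z$ fixing $A$ and sending $\lambda^{\alpha_a}\mapsto w\lambda^\phi$ carries $\tilde\alpha_b$ to an element of $\Aut_\T(A\rtimes_\phi\Z)$ with $\tilde\eta_0(\theta\circ\tilde\alpha_b\circ\theta^{-1})=\tilde\eta_0(\tilde\alpha_b)=c$. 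In other words, the discrepancy between the given $\phi$ and the first generator of the realizing action is absorbed into the identification of crossed products rather than removed by an intertwining argument; this is exactly the device that makes the lemma an easy corollary of the classification theorem instead of a fresh construction. You should either supply the intertwining argument in full or, better, replace that paragraph with the outer-conjugacy-plus-transport argument.
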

\begin{proof}
From the theorem above, there exists 
a strongly outer cocycle action $(\alpha,u):\Z^2\curvearrowright A$ 
such that $c(\alpha,u)=c$. 
By \cite[Theorem 1.3]{K95crelle}, 
we may assume that there exists $w\in U(A)$ 
such that $\alpha_a=\Ad w\circ\phi$. 
Define an isomorphism $\theta:A\rtimes_{\alpha_a}\Z\to A\rtimes_\phi\Z$ 
by $\theta(x)=x$ for $x\in A$ and $\theta(\lambda^{\alpha_a})=w\lambda^\phi$. 
Then 
$\theta\circ\tilde\alpha_b\circ\theta^{-1}$ is in $\Aut_\T(A\rtimes_\phi\Z)$ 
and 
\[
\tilde\eta_0(\theta\circ\tilde\alpha_b\circ\theta^{-1})
=\tilde\eta_0(\tilde\alpha_b)=c(\alpha,u)=c. 
\]
\end{proof}

\subsection{Actions of the Klein bottle group on UHF algebras}

Throughout this subsection, 
we let $\Gamma=\langle a,b\mid bab^{-1}=a^{-1}\rangle$. 
The group $\Gamma$ is isomorphic to 
the fundamental group of the Klein bottle and 
is called the Klein bottle group. 
In this subsection, we will show that 
all strongly outer cocycle actions of $\Gamma$ on a UHF algebra 
are cocycle conjugate to each other. 

First, we would like to show 
the existence of strongly outer, approximately representable $\Gamma$-actions 
on the Jiang-Su algebra $\mathcal{Z}$. 
See Definition \ref{apprepre} 
for the definition of approximate representability. 

\begin{lemma}
There exists a homomorphism $\pi:\Gamma\to U(C([0,1])\otimes M_2)$ such that 
$\pi(g)$ is not a scalar multiple of $1$ for any $g\in\Gamma\setminus\{1\}$ 
and $\pi(g)(0)=1$ for all $g\in\Gamma$. 
\end{lemma}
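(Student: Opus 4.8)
The plan is to reduce the statement to the choice of two unitary paths and then to build those paths in two stages, turning on the two generators on disjoint subintervals.

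By the universal property of the presentation $\Gamma=\langle a,b\mid bab^{-1}=a^{-1}\rangle$, giving a homomorphism $\pi\colon\Gamma\to U(C([0,1])\otimes M_2)$ is the same as choosing unitaries $U=\pi(a)$ and $V=\pi(b)$ in $C([0,1],U(M_2))$ subject to the single relation $VUV^*=U^*$ (pointwise in $t$). If in addition $U(0)=V(0)=1$, then $\pi(g)(0)=1$ for every $g\in\Gamma$ automatically. So I would only need to produce such $U,V$ with the extra feature that, for every nontrivial $g$, the value $\pi(g)(t)\in M_2$ fails to be a scalar multiple of $1_{M_2}$ for some $t$; this is stronger than, and hence implies, that $\pi(g)$ is not a scalar multiple of the unit of $C([0,1])\otimes M_2$.

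First I would record the obstruction that rules out the naive guess. If $U(t)=\mathrm{diag}(e^{\sqrt{-1}\theta(t)},e^{-\sqrt{-1}\theta(t)})$ has distinct eigenvalues, then any $V(t)$ with $V(t)U(t)V(t)^*=U(t)^*$ must interchange the two eigenspaces, so $V(t)$ is anti-diagonal and has trace $0$, and such a $V(t)$ cannot converge to $1$ as $t\to0$. The remedy is to decouple the supports of $U-1$ and $V-1$. On $[0,1/2]$ I set $U\equiv1$ and let $V$ run along a path $W$ with $W(0)=1$ and $W(1)=S:=\bigl(\begin{smallmatrix}0&1\\1&0\end{smallmatrix}\bigr)$; here the relation $V\cdot1\cdot V^*=1$ holds for free. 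On $[1/2,1]$ I keep $V\equiv S$ and let $U(t)=\mathrm{diag}(e^{\sqrt{-1}\theta(t)},e^{-\sqrt{-1}\theta(t)})$ with $\theta(1/2)=0$ increasing to some $\theta_1$ with $\theta_1/\pi$ irrational; then $SU(t)S^*=U(t)^*$ holds because $S$ swaps the diagonal entries. The two stages agree at $t=1/2$ (where $U=1$ and $V=S$), so $U,V$ are genuine continuous unitary paths satisfying the relation everywhere, with $U(0)=V(0)=1$.

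For the non-scalar condition I would use the normal form: every element of $\Gamma$ is uniquely $a^mb^n$, and $\pi(a^mb^n)(t)=U(t)^mV(t)^n$. Evaluating at $t=1$ (where $V=S$, so $S^n$ is $S$ or $1$ according to the parity of $n$): if $n$ is odd then $U(1)^mS$ is anti-diagonal, hence never scalar; if $n$ is even and $m\neq0$ then $U(1)^m=\mathrm{diag}(e^{\sqrt{-1}m\theta_1},e^{-\sqrt{-1}m\theta_1})$ is non-scalar since $m\theta_1\notin\pi\Z$. The only remaining elements are $b^n$ with $n$ even and nonzero, and for these I would choose $W(s)=\exp(\sqrt{-1}\pi s K)$ with $\exp(\sqrt{-1}\pi K)=S$ (so $K=\tfrac12(1-S)$ has eigenvalues $0,1$); then $W(s_0)$ has the eigenvalue $e^{\sqrt{-1}\pi s_0}$ of infinite order at any irrational $s_0\in(0,1)$, whence $W(s_0)^n$ is non-scalar for all $n\neq0$. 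Taking $t=s_0/2$ covers these last elements, so every nontrivial $g$ takes a non-scalar value somewhere.

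The main obstacle is exactly the continuity/trace obstruction isolated in the second paragraph: one cannot keep $V$ close to $1$ near $t=0$ while having it implement the eigenvalue-flip of a $U$ with distinct eigenvalues. Once this is circumvented by turning on $U$ and $V$ on disjoint subintervals, the remaining points (continuity at the junction, verification of the defining relation, and the parity case-check for non-scalarness) are all routine.
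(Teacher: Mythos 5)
Your construction is exactly the one in the paper: both proofs turn on $\pi(a)$ and $\pi(b)$ on the disjoint subintervals $[1/2,1]$ and $[0,1/2]$ respectively, with $\pi(b)$ running from $1$ to the flip $\bigl(\begin{smallmatrix}0&1\\1&0\end{smallmatrix}\bigr)$ and $\pi(a)$ a diagonal rotation that the flip conjugates to its inverse. Your extra care in choosing the path $W$ and the angle $\theta_1$, together with the normal-form case check of non-scalarity for $a^mb^n$, merely makes explicit what the paper's proof dismisses with ``clearly''.
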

\begin{proof}
Put 
\[
u(t)
=\begin{bmatrix}\exp\pi\sqrt{-1}t&0\\0&\exp(-\pi\sqrt{-1}t)\end{bmatrix}. 
\]
Let $v:[0,1]\to M_2$ be a continuous path of unitaries 
such that 
\[
v(0)=\begin{bmatrix}1&0\\0&1\end{bmatrix},\quad 
v(1)=\begin{bmatrix}0&1\\1&0\end{bmatrix}. 
\]
One can define a homomorphism $\pi:\Gamma\to U(C([0,1])\otimes M_2)$ by 
\[
\pi(a)(t)=\begin{cases}1&t\leq1/2\\
u(t-1/2)&1/2\leq t\end{cases}
\quad\text{and}\quad 
\pi(b)(t)=\begin{cases}v(2t)&t\leq1/2\\
v(1)&1/2\leq t. \end{cases}
\]
Clearly 
$\pi(g)$ is not a scalar multiple of $1$ for any $g\in\Gamma\setminus\{1\}$ 
and $\pi(a)(0)=\pi(b)(0)=1$. 
\end{proof}

\begin{proposition}\label{exist}
There exists a strongly outer, approximately representable action 
$\alpha:\Gamma\curvearrowright\mathcal{Z}$. 
In particular, for any unital $\mathcal{Z}$-stable $C^*$-algebra $A$, 
there exists a strongly outer, approximately representable action 
$\alpha:\Gamma\curvearrowright A$. 
\end{proposition}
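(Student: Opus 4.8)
The plan is to construct the action on $\mathcal{Z}$ directly from the homomorphism $\pi$ of the previous lemma, and then to deduce the general case by tensoring. Indeed, once a strongly outer, approximately representable action $\alpha\colon\Gamma\curvearrowright\mathcal{Z}$ is in hand, for any unital $\mathcal{Z}$-stable $A$ I would fix an isomorphism $A\cong A\otimes\mathcal{Z}$ and consider $\id_A\otimes\alpha$. Approximate representability is inherited because the unitaries $1\otimes v_g\in(A\otimes\mathcal{Z})^\infty$ satisfy the defining relations of Definition \ref{apprepre} as soon as $(v_g)_g$ does. Strong outerness is inherited as well: since $\mathcal{Z}$ has a unique tracial state, every $(\id\otimes\alpha_g)$-invariant trace on $A\otimes\mathcal{Z}$ is of the form $\sigma\otimes\tau_{\mathcal{Z}}$, so its weak closure is $\pi_\sigma(A)''\bar\otimes R$ with $R=\pi_{\tau_{\mathcal{Z}}}(\mathcal{Z})''$ the AFD II$_1$-factor, and $\id\bar\otimes\bar\alpha_g$ is outer whenever $\bar\alpha_g$ is, which holds by the strong outerness of $\alpha$.

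To build $\alpha$ on $\mathcal{Z}$ itself I would use $\pi$ as the source of building-block symmetries and realize $\mathcal{Z}$ as a $\Gamma$-equivariant inductive limit, in the spirit of the construction of strongly outer, asymptotically representable $\Z^2$-actions on $\mathcal{Z}$ in \cite[Theorem 7.9]{MS}. Concretely, I would take a sequence of dimension drop algebras $A_n$, built from tensor copies of $C([0,1])\otimes M_2$ and matrix amplifications subject to the boundary conditions of the Jiang--Su telescope, and equip each $A_n$ with the product-type action $\alpha^{(n)}_g$ coming from conjugation by the tensor powers of $\pi(g)$. The condition $\pi(g)(0)=1$ is exactly what guarantees that each such conjugation preserves the dimension drop boundary condition imposed at the endpoint $0$, so that $\alpha^{(n)}$ is a genuine action of $\Gamma$ on $A_n$.

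I would then choose unital injective connecting maps $\phi_n\colon A_n\to A_{n+1}$ of the standard Jiang--Su form, subject to the extra requirement that $\phi_n\circ\alpha^{(n)}_g=\alpha^{(n+1)}_g\circ\phi_n$ for every $g$, so that the limit action $\alpha=\varinjlim\alpha^{(n)}$ is defined on $\varinjlim A_n$. The connecting maps are to be arranged, following Jiang--Su's criteria, so that the limit is simple and monotracial with $(K_0,K_1)$ equal to $(\Z,0)$ and the correct order unit, hence isomorphic to $\mathcal{Z}$. Strong outerness of $\alpha$ then follows from the fact that $\pi(g)$ is non-scalar for every $g\in\Gamma\setminus\{1\}$: under the unique trace the weak extension $\bar\alpha_g$ on $R$ is a product-type automorphism with infinitely many non-trivial tensor factors, and such automorphisms of the AFD II$_1$-factor are outer. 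For approximate representability I would represent $v_g$ by the unitaries $w_g^{(n)}=\pi(g)^{\otimes n}$ acting by conjugation on the first $n$ tensor factors of $A_n$; the relation $\pi(g)(0)=1$ guarantees that $w_g^{(n)}$ respects the boundary conditions and hence is a unitary of $A_n$. By construction $w_g^{(n)}x(w_g^{(n)})^*=\alpha_g(x)$ once $x$ lies in $A_m$ with $m\le n$, so that $v_gxv_g^*=\alpha_g(x)$ for all $x\in\mathcal{Z}$, while $w_g^{(n)}w_h^{(n)}=w_{gh}^{(n)}$ and $\alpha_g(w_h^{(n)})=w_{ghg^{-1}}^{(n)}$ hold because $\pi$ is a homomorphism.

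The main obstacle is the simultaneous fulfillment of the requirements on the connecting maps $\phi_n$: they must be of the combinatorial form dictated by the Jiang--Su recipe, controlling the spectral distributions so that the limit is exactly $\mathcal{Z}$; they must intertwine the conjugation actions $\alpha^{(n)}_g$ and $\alpha^{(n+1)}_g$; and they must be compatible with the unitaries $w_g^{(n)}$ witnessing approximate representability. Reconciling the equivariance constraint with the freedom needed to pin down the invariant of the limit is the delicate point, and it is where the explicit form of $\pi$, in particular $\pi(g)(0)=1$ together with the non-scalarity of $\pi(g)$, is used most heavily.
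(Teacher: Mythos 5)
Your reduction of the general case to $\mathcal{Z}$ via $\id_A\otimes\alpha$ is exactly what the paper does, and your justification of why strong outerness and approximate representability pass to $\id_A\otimes\alpha$ is fine. The problem is the construction on $\mathcal{Z}$ itself. You propose to rebuild $\mathcal{Z}$ as a $\Gamma$-equivariant inductive limit of dimension drop algebras, with connecting maps that simultaneously follow the Jiang--Su recipe (to pin down simplicity, the unique trace, and the Elliott invariant) and intertwine the conjugation actions $\alpha^{(n)}_g$. You yourself identify this compatibility as ``the delicate point'' and do not resolve it; as written, the existence of such equivariant connecting maps is asserted, not proved, and it is precisely the hard part of your approach. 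So the proposal has a genuine gap at its core step.

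The paper avoids this entirely. The role of the condition $\pi(g)(0)=1$ is not to make conjugation preserve a boundary condition in a telescope; it is to place all the unitaries $\pi(g)$ inside the subalgebra $\{f\in C([0,1])\otimes M_2\mid f(0)\in\C\}$, which admits a unital embedding into $\mathcal{Z}$. This immediately yields a group homomorphism $\pi:\Gamma\to U(\mathcal{Z})$ with $\Ad\pi(g)\neq\id$ for $g\neq1$, and one then simply sets $\alpha_g=\bigotimes_{k\in\N}\Ad\pi(g)$ on $\bigotimes_{\N}\mathcal{Z}\cong\mathcal{Z}$. Since $\pi$ is a genuine homomorphism into a unitary group, this is a genuine action with no cocycle to control; approximate representability is witnessed by the finite-stage unitaries $\pi(g)^{\otimes n}\otimes1\otimes\cdots$ (which satisfy $v_gv_h=v_{gh}$ and $\alpha_g(v_h)=v_{ghg^{-1}}$ because $\pi$ is multiplicative); and strong outerness follows because the weak extension to the AFD II$_1$-factor is an infinite tensor product of non-trivial inner automorphisms, hence outer. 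No equivariant inductive limit, and no control of connecting maps, is needed. If you want to salvage your route you would have to actually produce the equivariant connecting maps with the required spectral distribution properties, which is substantially more work than the statement warrants.
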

\begin{proof}
Since the $C^*$-algebra 
\[
\{f\in C([0,1])\otimes M_2\mid f(0)\in\C\}
\]
is embeddable to $\mathcal{Z}$, by the lemma above, 
there exists a homomorphism $\pi:\Gamma\to U(\mathcal{Z})$ 
such that $\Ad\pi(g)\neq\id$ for all $g\in\Gamma\setminus\{1\}$. 
Define an action $\alpha$ of $\Gamma$ 
on the infinite tensor product of $\mathcal{Z}$ by 
\[
\alpha_g=\bigotimes_{k\in\N}\Ad\pi(g). 
\]
It is clear that 
$\alpha$ is strongly outer and approximately representable. 
For any unital $C^*$-algebra $A$, 
the action $\id\otimes\alpha:\Gamma\curvearrowright A\otimes\mathcal{Z}$ is 
strongly outer and approximately representable. 
\end{proof}

The following is a Rohlin type theorem 
for $\Gamma$-actions on unital simple AF algebras. 
The proof is almost the same 
as \cite[Theorem 3]{N} and \cite[Theorem 5.5]{M10CMP}. 

\begin{theorem}\label{so>R}
Let $A$ be a unital simple AF algebra 
with finitely many extremal tracial states and 
let $(\alpha,u):\Gamma\curvearrowright A$ be a strongly outer cocycle action. 
Assume further that 
$\alpha_a^r$ and $\alpha_b^s$ are in $\aInn(A)$ for some $r,s\in\N$. 
Then for any $m\in\N$, 
there exist projections $e,f\in(A_\infty)^{\alpha_a}$ such that 
\[
\alpha_b^m(e)=e,\quad \alpha_b^{m+1}(f)=f
\]
and 
\[
\sum_{i=0}^{m-1}\alpha_b^i(e)+\sum_{j=0}^m\alpha_b^j(f)=1. 
\]
\end{theorem}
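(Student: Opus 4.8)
The plan is to regard $\alpha_b$ as a single automorphism of the fixed point algebra $(A_\infty)^{\alpha_a}$ and to produce for it the classical two-tower Rohlin partition of heights $m$ and $m+1$, first in the weak closure and then by lifting back to $(A_\infty)^{\alpha_a}$. First I would pass to the von Neumann setting. Write $E$ for the (finite) set of extremal tracial states and $M=\bigoplus_{\tau\in E}\pi_\tau(A)''$, each summand being the AFD II$_1$-factor, and let $\mathcal{M}=\bigoplus_{\tau\in E}\mathcal{R}$ be the von Neumann central sequence algebra of Section 4.1. The cocycle action $(\alpha,u)$ extends to a genuine $\Gamma$-action $\bar\alpha$ on $\mathcal{M}$. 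Because the relation $bab^{-1}=a^{-1}$ forces $\bar\alpha_b\circ\bar\alpha_a\circ\bar\alpha_b^{-1}=\bar\alpha_a^{-1}$ on $\mathcal{M}$, the automorphism $\bar\alpha_b$ preserves $N:=\mathcal{M}^{\bar\alpha_a}=\mathcal{M}^{\bar\alpha_a^{-1}}$. Applying Lemma \ref{II_1} to the normal subgroup $\langle a\rangle\cong\Z$, whose restricted action on $M$ is outer by strong outerness, shows that $N$ is a finite direct sum of II$_1$-factors on which $\bar\alpha_b$ acts; moreover the analogue of Theorem \ref{exact} for $\langle a\rangle$ gives a surjection $\rho:(A_\omega)^{\alpha_a}\to N$ with kernel $J^{\alpha_a}$.

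Next I would check that $\bar\alpha_b$ acts aperiodically on $N$, i.e. that no power $\bar\alpha_b^k$ with $k\neq0$ restricts to an inner automorphism of $N$. This is the point where strong outerness of $\alpha_{a^ib^k}$ for $(i,k)\neq(0,0)$ is used: an inner implementation of $\bar\alpha_b^k$ on the Galois algebra $N=\mathcal{M}^{\bar\alpha_a}$ of the outer $\langle a\rangle$-action would, through the commutation/crossed-product description of $N$ inside $\mathcal{M}$, force some $\bar\alpha_{a^ib^k}$ to be inner on $\mathcal{M}$, contradicting strong outerness. Granting this, Connes' Rohlin lemma for an aperiodic automorphism of a II$_1$-factor — applied summand-orbit by summand-orbit, exactly as in the orbit decomposition of Lemma \ref{II_1} and Theorem \ref{so=wR}, to absorb the permutation of the summands of $N$ by $\bar\alpha_b$ — produces projections $\bar e,\bar f\in N$ with $\bar\alpha_b^m(\bar e)=\bar e$, $\bar\alpha_b^{m+1}(\bar f)=\bar f$ and $\sum_{i=0}^{m-1}\bar\alpha_b^i(\bar e)+\sum_{j=0}^{m}\bar\alpha_b^j(\bar f)=1$.

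Finally I would lift $\bar e,\bar f$ to genuine projections $e,f\in(A_\infty)^{\alpha_a}$ realizing the same relations exactly (transferring between $(A_\omega)^{\alpha_a}$ and $(A_\infty)^{\alpha_a}$ by the standard reindexation). Using the exact sequence above together with the fact that $A$ is AF, hence of tracial rank zero so that approximate tower projections can be perturbed to exact ones (cf. \cite[Lemma 2.15]{OP06ETDS2} and \cite[Proposition 4.1]{M10CMP}), one lifts the two towers approximately and then corrects them to satisfy $\alpha_b^m(e)=e$, $\alpha_b^{m+1}(f)=f$ and the exact partition of unity, all inside the $\alpha_a$-fixed algebra. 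This is the step where the hypotheses $\alpha_a^r,\alpha_b^s\in\aInn(A)$ enter, as in \cite[Theorem 3]{N} and \cite[Theorem 5.5]{M10CMP}: they kill the $K$-theoretic and determinant obstructions that otherwise block the passage from approximate to exact covariant tower relations. I expect this last lifting-and-perturbation step — manufacturing exact $\alpha_b$-covariant towers inside $(A_\infty)^{\alpha_a}$ out of the von Neumann towers — to be the main technical obstacle, whereas the algebraic input (aperiodicity of $\bar\alpha_b$ on $N$ and the factor structure of $N$) is comparatively routine given the machinery of Section 4.1.
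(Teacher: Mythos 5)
Your plan diverges from the paper's proof in a way that exposes a real gap at the decisive step. The paper does \emph{not} build the two towers in the von Neumann central sequence algebra and lift them. It first uses Proposition \ref{EG>Q} and Theorem \ref{so=trR} to get tracial Rohlin projections for the whole cocycle action over a rectangular F\o lner set $\{b^ja^i\mid 0\leq i\leq m_1{-}1,\ 0\leq j\leq m_2{-}1\}$ (this is where the von Neumann technology is spent), then uses $\alpha_a^r\in\aInn(A)$ and Kishimoto's argument from \cite[Lemma 3.1]{K95crelle} to collapse the $a$-direction into a single central sequence of projections $e_n$ with $\tau(e_n)\to1/m$, $\alpha_a(e_n)\approx e_n$ and $e_n\alpha_b^j(e_n)\approx0$ for $1\leq j\leq m-1$. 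The two towers of heights $m$ and $m+1$, the exact periodicity $\alpha_b^m(e)=e$, and the exact partition of unity are then manufactured by the $C^*$-algebraic Berg-technique construction of \cite[Theorem 3]{N} and \cite[Theorem 5.5]{M10CMP}, whose input is precisely that single projection; this is where $\alpha_b^s\in\aInn(A)$ is used to find the almost-central partial isometries gluing $\alpha_b^m(e)$ back to $e$.

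The gap in your version is the lifting step. A lift along $\rho:(A_\omega)^{\alpha_a}\to\mathcal{M}^{\bar\alpha_a}$ of your exact von Neumann towers is a projection only modulo $J^{\alpha_a}$, and elements of $J$ are small only in $\lVert\cdot\rVert_2$, not in operator norm; so the lifted data satisfy the tower relations only in trace norm. The perturbation results you cite (\cite[Lemma 2.15]{OP06ETDS2}, \cite[Proposition 4.1]{M10CMP}) upgrade trace-norm smallness to norm smallness for commutators, orthogonality of translates and trace conditions, but they do not produce the covariant periodicity $\lVert\alpha_b^m(e_n)-e_n\rVert\to0$ nor a norm-exact partition of unity: two projections that are $\lVert\cdot\rVert_2$-close need not be norm-close, and making them norm-close while preserving centrality and the other relations is exactly the content of the Nakamura--Matui construction, not a consequence of tracial rank zero alone. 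So after your lift you would still have to run \cite[Theorem 5.5]{M10CMP} essentially from scratch, and its required input is the single approximately $\alpha_a$-invariant projection of trace $1/m$ --- which your von Neumann towers can supply, but then the detour through Connes' Rohlin lemma and the (only sketched) aperiodicity of $\bar\alpha_b$ on $\mathcal{M}^{\bar\alpha_a}$ buys nothing. In short: the two-tower structure cannot be imported from the von Neumann level; it has to be built on the $C^*$ level, and that is the step your proposal leaves unaddressed.
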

\begin{proof}
For any $\ep>0$, 
by Proposition \ref{EG>Q} and Theorem \ref{so=trR}, 
there exists an $(\{a,b\},\ep)$-invariant finite subset $K\subset\Gamma$ and 
a central sequence $(e_n)_n$ of projections in $A$ such that 
\[
\lim_{n\to\infty}\lVert\alpha_g(f_n)\alpha_h(f_n)\rVert=0
\]
for all $g,h\in K$ with $g\neq h$ and 
\[
\lim_{n\to\infty}
\max_{\tau\in T(A)}\lvert\tau(f_n)-\lvert K\rvert^{-1}\rvert=0. 
\]
In addition, by the proof of Lemma \ref{Q} (4), 
we may assume that $K$ is of the form 
\[
\{b^ja^i\in\Gamma\mid0\leq i\leq m_1-1,\quad 0\leq j\leq m_2-1\}
\]
for some $m_1,m_2\in\N$. 
Since $\alpha_a^r$ is approximately inner, 
when $m_1$ is large enough, 
we can construct a central sequence of projections $(f'_n)_n$ in $A$ 
such that 
\[
\lim_{n\to\infty}\lVert f'_n(f_n+\alpha_a(f_n)+\dots+\alpha_a^{m_1-1}(f_n))
-f'_n\rVert=0, 
\]
\[
\alpha_a(f'_n)\approx f'_n\quad\text{and}\quad 
\tau(f'_n)\approx m_1\tau(f_n)\quad\forall\tau\in T(A), 
\]
by using the arguments in \cite[Lemma 3.1]{K95crelle} 
(see also \cite[Lemma 6]{N}). 
Consequently we obtain the following: 
for any $m\in\N$, 
there exists a central sequence of projections $(e_n)_n$ in $A$ such that 
\[
\lim_{n\to\infty}
\max_{\tau\in T(A)}\lvert\tau(e_n)-m^{-1}\rvert=0,\quad 
\lim_{n\to\infty}\lVert e_n-\alpha_a(e_n)\rVert=0
\]
and 
\[
\lim_{n\to\infty}\lVert e_n\alpha_b^j(e_n)\rVert=0
\]
for all $j=1,2,\dots,m{-}1$. 
By using this instead of \cite[Lemma 5.2]{M10CMP}, 
one can complete the proof in exactly the same way 
as that of \cite[Theorem 5.5]{M10CMP}. 
\end{proof}

Next we would like to establish 
a $\Gamma$-action version of \cite[Theorem 6.1]{M10CMP} 
(or \cite[Theorem 4.11]{IM}). 
We note that the idea of the proof is taken from \cite{IM2}. 

\begin{theorem}\label{KleinAF}
Let $A$ be a unital simple AF algebra 
with finitely many extremal tracial states and 
let $\phi\in\aInn(A)$. 
For $i=1,2$, assume that 
$\psi_i\in\Aut(A\rtimes_\phi\Z)$ satisfies the following conditions. 
\begin{enumerate}
\item $\psi_i(A)=A$ and $\psi_i(\lambda^\phi)\lambda^\phi\in A$. 
\item $\phi^n\circ(\psi_i|A)^m\in\Aut(A)$ is not weakly inner 
for each $(n,m)\in\Z^2\setminus\{(0,0)\}$. 
\item $\psi_i^r|A$ is approximately inner for some $r\in\N$. 
\end{enumerate}
Assume further that $\psi_1\circ\psi_2^{-1}$ is asymptotically inner. 
Then there exists an approximately inner automorphism 
$\mu\in\Aut_\T(A\rtimes_\phi\Z)$ and a unitary $v\in U(A)$ 
such that $\mu|A$ is also approximately inner and 
\[
\mu\circ\psi_1\circ\mu^{-1}=\Ad v\circ\psi_2. 
\]
\end{theorem}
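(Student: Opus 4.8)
The plan is to prove this by a one-sided Evans--Kishimoto intertwining argument, building $\mu$ as a convergent infinite product $\Ad(\dots u_2u_1)$ of inner automorphisms coming from unitaries $u_k\in U(A)$, so that $\Ad(u_n\cdots u_1)\circ\psi_1\circ\Ad(u_n\cdots u_1)^{-1}$ converges to $\Ad v\circ\psi_2$ along an exhaustion of $A\rtimes_\phi\Z$ with summable perturbations. The driving mechanism is the Rohlin property. First I would observe that hypotheses (1)--(3) encode a strongly outer cocycle action of $\Gamma=\langle a,b\mid bab^{-1}=a^{-1}\rangle$ on $A$: setting $\alpha_a=\phi$, $\alpha_b=\psi_i|A$ and $w_i=\psi_i(\lambda^\phi)\lambda^\phi\in U(A)$ (hypothesis (1)), a direct computation from $\psi_i(\lambda^\phi)=w_i(\lambda^\phi)^*$ gives $(\psi_i|A)\circ\phi\circ(\psi_i|A)^{-1}=\Ad w_i\circ\phi^{-1}$, so the inversion relation holds; hypothesis (2) is precisely strong outerness and hypothesis (3) supplies $\alpha_a^r,\alpha_b^s\in\aInn(A)$, so Theorem~\ref{so>R} applies and furnishes Rohlin towers $e,f\in(A_\infty)^{\alpha_a}$ for the $\alpha_b$-action.

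Next I would set $\sigma:=\psi_1\circ\psi_2^{-1}$ and record that it lies in $\Aut_\T(A\rtimes_\phi\Z)$: since $\psi_i$ reverses the grading of $\lambda^\phi$, the two reversals cancel, so $\sigma(\lambda^\phi)\in A\lambda^\phi$ while $\sigma(A)=A$. Because $\sigma$ is asymptotically inner, $K_*(\sigma)=\id$; under the Pimsner--Voiculescu identifications of Section~6.1 (recall $\phi\in\aInn(A)$, so $K_0(\phi)=\id$) this forces $K_0(\sigma|A)=\id$, whence $\sigma|A\in\aInn(A)$ and we obtain unitaries in $A$ implementing $\sigma|A$. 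The remaining, grading-sensitive, part of $\sigma$ is controlled through the de la Harpe--Skandalis determinant $\Delta_\tau$ along the implementing path, which I would track so that the accumulated defect collapses into a single inner $\Ad v$, $v\in U(A)$.

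The inductive step is the heart of the matter. Given a finite set $F\subset A\rtimes_\phi\Z$ and tolerance $\ep$, I would invoke Theorem~\ref{so>R} to produce a Rohlin tower for $\alpha_b=\psi_1|A$ of large height $m$, and telescope the $A$-unitaries implementing $\sigma$ along its levels --- distributing successive partial products across consecutive levels $e,\alpha_b(e),\dots,\alpha_b^{m-1}(e)$ --- to manufacture $u\in U(A)$ whose conjugation moves $\psi_1$ closer to $\psi_2$ on $F$, the error being localized at the single boundary level of the tower and hence of size $O(1/m)$. Iterating over an increasing exhaustion with rapidly shrinking tolerances yields a convergent product defining $\mu$; by construction the $u_k$ lie in $A$, so $\mu$ is approximately inner, restricts to an approximately inner automorphism of $A$, belongs to $\Aut_\T(A\rtimes_\phi\Z)$, and satisfies $\mu\circ\psi_1\circ\mu^{-1}=\Ad v\circ\psi_2$.

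The main obstacle, and the genuine departure from the $\Z^2$ case of \cite[Theorem 6.1]{M10CMP}, is the inversion $bab^{-1}=a^{-1}$. Because $\alpha_b$ carries $\lambda^\phi$ into the $(-1)$-graded part, conjugating by the tower interchanges the $\pm$-graded components of the crossed product, and one must check that the telescoped perturbation nonetheless stays in $A$ and that the grading-reversing shape of $\psi_i$ is preserved throughout the recursion. Equally delicate is the rotation-number bookkeeping: under inversion the $a$-direction contributes with a sign, exactly as in Lemma~\ref{reverse} where $K_1(\sigma)=-\id$, so I expect the careful tracking of $\Delta_\tau$ --- needed to guarantee that the residual defect is an honest inner $\Ad v$ rather than something only approximately inner --- together with keeping the perturbations $A$-valued at every stage, to be the technically hardest part.
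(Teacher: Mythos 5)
Your overall skeleton matches the paper's: hypotheses (1)--(3) do encode a strongly outer cocycle action of the Klein bottle group generated by $\phi$ and $\psi_i|A$ (your computation $(\psi_i|A)\circ\phi\circ(\psi_i|A)^{-1}=\Ad w_i\circ\phi^{-1}$ is correct), Theorem \ref{so>R} supplies the Rohlin towers in $(A_\infty)^\phi$, and the conclusion is reached by an (equivariant) Evans--Kishimoto intertwining in which all perturbing unitaries are kept in $A$. That much is exactly what the paper does.

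The genuine gap is in how you get the intertwining started. The argument needs $\psi_1\circ\psi_2^{-1}$ to be \emph{$\T$-asymptotically inner}, i.e.\ implemented asymptotically by unitaries lying in $A$ acting on the whole of $A\rtimes_\phi\Z$ --- otherwise the correcting unitaries cannot be taken in $A$ and the resulting $\mu$ will neither lie in $\Aut_\T(A\rtimes_\phi\Z)$ nor restrict to an approximately inner automorphism of $A$. Your route only establishes something strictly weaker: the Pimsner--Voiculescu/$K_0$ computation gives $\sigma|A\in\aInn(A)$, which produces unitaries in $A$ implementing $\sigma$ \emph{on $A$} but says nothing about how they move $\lambda^\phi$; the condition you actually need is the twisted coboundary relation $u_n w_1\phi^{-1}(u_n^*)\to w_2$ alongside $\Ad u_n\to\sigma|A$, and "tracking $\Delta_\tau$ along the implementing path" is not an argument for producing such $u_n$. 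The paper closes this gap by first observing that the $\Z$-action generated by $\phi$ is asymptotically representable (Proposition \ref{examples}(1), which rests on the classification of $\Z$-actions on AH algebras) and then invoking \cite[Theorem 4.8]{IM}, which converts asymptotic innerness of $\sigma$ in the crossed product into $\T$-asymptotic innerness; this is the load-bearing step your proposal omits, and the hypothesis that $\psi_1\circ\psi_2^{-1}$ is asymptotically (not merely approximately) inner is consumed precisely there. A secondary, lesser point: a purely one-sided iteration $\Ad(u_n\cdots u_1)\circ\psi_1\circ\Ad(u_n\cdots u_1)^{-1}$ does not obviously converge; the Evans--Kishimoto scheme is an alternating one (perturbing $\psi_1$ and $\psi_2$ in turn), with almost-centrality of each new unitary relative to the previous finite set --- supplied by the Rohlin/stability lemma \cite[Lemma 4.10]{M10CMP} --- doing the bookkeeping, and the single $\mu$ in the statement arising as a quotient of the two limit automorphisms.
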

\begin{proof}
We would like to 
apply the Evans-Kishimoto intertwining argument to $\psi_1$ and $\psi_2$. 
It is easy to see that 
$\psi_1\circ\psi_2^{-1}$ is in $\Aut_\T(A\rtimes_\phi\Z)$. 
By Proposition \ref{examples} (1), 
(the $\Z$-action generated by) $\phi$ is asymptotically representable. 
Then \cite[Theorem 4.8]{IM} implies that 
$\psi_1\circ\psi_2^{-1}$ is $\T$-asymptotically inner. 
For each $i=1,2$, 
the cocycle action of $\Gamma$ generated by $\phi$ and $\psi_i|A$ 
is strongly outer. 
It follows from the theorem above that 
we can find Rohlin projections for $\psi_i$ 
in the fixed point algebra $(A_\infty)^\phi$. 
Hence, 
with the help of \cite[Lemma 4.10]{M10CMP} (or \cite[Lemma 3.3]{M11crelle}), 
the (equivariant version of) Evans-Kishimoto intertwining argument 
shows the statement. 
\end{proof}

We are now ready to prove the uniqueness of 
strongly outer cocycle actions of the Klein bottle group on UHF algebras. 

\begin{theorem}\label{KleinUHF}
Let $(\alpha,u)$ and $(\beta,v)$ be strongly outer cocycle actions 
of $\Gamma$ on a UHF algebra $A$. 
Then $(\alpha,u)$ and $(\beta,v)$ are cocycle conjugate. 
\end{theorem}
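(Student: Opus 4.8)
The plan is to reduce to a single crossed product and then apply the equivariant Evans--Kishimoto intertwining packaged in Theorem \ref{KleinAF}. Writing every element of $\Gamma$ uniquely in the normal form $a^nb^m$, strong outerness says exactly that $\alpha_{a^nb^m}$ is not weakly inner for $(n,m)\neq(0,0)$. Since $A$ is a UHF algebra, every automorphism is approximately inner and the strongly outer automorphisms $\alpha_a,\beta_a$ have the Rohlin property, so by Kishimoto's uniqueness theorem \cite{K95crelle} (together with the triviality of $2$-cocycles on $\langle a\rangle\cong\Z$) I can, after composing with an isomorphism of $A$ and performing a cocycle perturbation, assume $\alpha_a=\beta_a=:\phi$, a fixed strongly outer automorphism with $\phi^n$ not weakly inner for all $n\neq0$. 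Set $B=A\rtimes_\phi\Z$. The relation $bab^{-1}=a^{-1}$ forces the extensions $\psi_1:=\tilde\alpha_b$ and $\psi_2:=\tilde\beta_b$ to send $\lambda^\phi$ to a unitary multiple of $(\lambda^\phi)^*$; that is, $\psi_i(\lambda^\phi)\lambda^\phi\in A$. This ``reversing'' behaviour is precisely hypothesis (1) of Theorem \ref{KleinAF}, and it is the essential structural difference from the $\Z^2$ case treated above.

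Next I would verify the hypotheses of Theorem \ref{KleinAF} for $\psi_1,\psi_2$. Hypothesis (2) is strong outerness read through the normal form $a^nb^m$, and hypothesis (3) holds with $r=1$ since $\psi_i|A=\alpha_b,\beta_b\in\aInn(A)$ on a UHF algebra. The one remaining input is that $\psi_1\circ\psi_2^{-1}$ is asymptotically inner. As both $\psi_i$ reverse the grading, their composite preserves it, so $\psi_1\psi_2^{-1}\in\Aut_\T(B)$; on $K$-theory, identifying $K_0(B)=K_1(B)=K_0(A)$, one has $K_0(\psi_i)=\id$ and $K_1(\psi_i)=-\id$, whence $K_*(\psi_1\psi_2^{-1})=\id$ and $\psi_1\psi_2^{-1}$ is approximately inner. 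By Lemma \ref{eta1} its $\eta_1$-invariant vanishes, so by Proposition \ref{eta>asymp} it remains only to show that $c:=\tilde\eta_0(\psi_1\psi_2^{-1})\in\OrderExt(K_0(A),K_0(A),D_A)$ is zero.

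This vanishing is the heart of the matter and the step I expect to be the main obstacle. Since $A$ is UHF, $D_A\colon K_0(A)\to\Aff(T(A))\cong\R$ is injective, so by Lemma \ref{rho} the homomorphism $\rho$ into $\Hom(K_0(A),\R/\Ima D_A)$ is an isomorphism and it suffices to prove $\rho(c)=0$. Here I would exploit the reversal symmetry: applying Lemma \ref{reverse} with the reversing automorphism $\psi_2^{-1}$ gives $\tilde\eta_0(\psi_2^{-1}\psi_1)=-\tilde\eta_0(\psi_1\psi_2^{-1})$, which through $\rho$ simply reflects $K_1(\psi_i)=-\id$. Combining this with a direct rotation-number computation in the spirit of Lemma \ref{dualaction} and Remark \ref{detofu}---evaluating $\rho(c)$ on the generator $[\lambda^\phi]$ and on a spanning family of classes by writing the unitary $\psi_1\psi_2^{-1}(\lambda^\phi)(\lambda^\phi)^*\in U(A)$ in terms of the cocycle data and taking de la Harpe--Skandalis determinants---I expect the inversion relation $bab^{-1}=a^{-1}$ to force $\rho(c)=0$, exactly where for $\Z^2$ the same quantity equals the nonvanishing classifying invariant $\Delta_\tau(\check u)$. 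By injectivity of $\rho$ this yields $c=0$, and hence $\psi_1\psi_2^{-1}$ is asymptotically inner.

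Finally, Theorem \ref{KleinAF} produces an approximately inner $\mu\in\Aut_\T(B)$ with $\mu|A\in\aInn(A)$ and a unitary $v\in U(A)$ satisfying $\mu\circ\psi_1\circ\mu^{-1}=\Ad v\circ\psi_2$. Restricting this identity to $A$ and reading off the induced action on $\lambda^\phi$ produces the conjugating isomorphism $\mu|A$ together with cocycle unitaries $w_a=\mu(\lambda^\phi)(\lambda^\phi)^*$ and $w_b$ obtained from $v$, whose cocycle identities are checked exactly as at the end of the $\Z^2$ theorem above. This exhibits $(\alpha,u)$ and $(\beta,v)$ as cocycle conjugate, completing the argument.
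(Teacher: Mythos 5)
Your overall architecture matches the paper's: reduce to a single crossed product $B=A\rtimes_{\alpha_a}\Z$ via Kishimoto's theorem, view $\tilde\alpha_b$ and (a transported copy of) $\tilde\beta_b$ as grading-reversing automorphisms of $B$, feed them into Theorem \ref{KleinAF}, and unwind the resulting conjugacy into a cocycle conjugacy. The hypotheses (1)--(3) of Theorem \ref{KleinAF} are verified correctly. But the step you yourself flag as ``the heart of the matter'' --- showing $\tilde\eta_0(\psi_1\circ\psi_2^{-1})=0$ --- is a genuine gap, and moreover the statement you hope to prove there is false in general. The relation $bab^{-1}=a^{-1}$ does \emph{not} force $\rho(c)=0$: starting from one strongly outer action $\alpha$ one can set $\beta_a=\alpha_a$ and $\beta_b=\sigma\circ\alpha_b$ for a suitable $\sigma$ extending to $\tilde\sigma\in\Aut_\T(B)$, and then $\psi_1\circ\psi_2^{-1}=\tilde\sigma^{-1}$, whose $\tilde\eta_0$-invariant can be \emph{any} element of $\OrderExt(K_0(A),K_0(A),D_A)\cong\Hom(K_0(A),\R/\Ima D_A)$ by Lemma \ref{toadjust}. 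The symmetry you invoke from Lemma \ref{reverse} only yields $\tilde\eta_0(\psi_2^{-1}\psi_1)=-\tilde\eta_0(\psi_1\psi_2^{-1})$, which is a relation between two different (conjugate) elements and imposes no constraint; and the rotation-number computation on $[\lambda^\phi]$ at best gives a $2$-torsion condition on the single class $[1]$, not vanishing on all of $K_0(A)$.

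The paper's resolution is not to prove the invariant vanishes but to \emph{make} it vanish by adjusting the comparison. Since $\Hom(K_0(A),\R/\Ima D_A)$ is divisible, choose $c$ with $-2c=\tilde\eta_0(\theta\circ\tilde\beta_b\circ\theta^{-1}\circ\tilde\alpha_b^{-1})$, realize $c=\tilde\eta_0(\psi)$ for some $\psi\in\Aut_\T(B)$ by Lemma \ref{toadjust}, and replace $\theta\circ\tilde\beta_b\circ\theta^{-1}$ by its conjugate under $\psi$. The point is that conjugating $\psi^{-1}$ by the grading-reversing automorphism $\theta\circ\tilde\beta_b\circ\theta^{-1}$ negates its invariant (Lemma \ref{reverse}), so the conjugation shifts the total invariant by $c+c=2c$, exactly cancelling $-2c$. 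Only after this correction is the difference asymptotically inner and Theorem \ref{KleinAF} applicable. Your proof needs this mechanism (or an equivalent one) inserted at the key step; without it, the argument does not go through.
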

\begin{proof}
Put $\check u=u(b,a)u(a^{-1},b)^*u(a^{-1},a)$ and 
$\check v=v(b,a)v(a^{-1},b)^*v(a^{-1},a)$. 
Because 
\[
\check u(\lambda^{\alpha_a})^*\alpha_b(x)\lambda^{\alpha_a}\check u^*
=\Ad\check u(\alpha^{-1}_a(\alpha_b(x))=\alpha_b(\alpha_a(x))
\]
for $x\in A$, 
the automorphism $\alpha_b$ extends to 
$\tilde\alpha_b\in\Aut(A\rtimes_{\alpha_a}\Z)$ by letting 
$\tilde\alpha_b(\lambda^{\alpha_a})=\check u(\lambda^{\alpha_a})^*$. 
Likewise, we can extend $\beta_b$ 
to $\tilde\beta_b\in\Aut(A\rtimes_{\beta_a}\Z)$. 
By \cite[Theorem 1.3]{K95crelle}, we may assume that 
there exists $w\in U(A)$ such that $\beta_a=\Ad w\circ\alpha_a$. 
Define an isomorphism $\theta:A\rtimes_{\beta_a}\Z\to A\rtimes_{\alpha_a}\Z$ 
by $\theta(x)=x$ for $x\in A$ and 
$\theta(\lambda^{\beta_a})=w\lambda^{\alpha_a}$. 
Then $\theta\circ\tilde\beta_b\circ\theta^{-1}\circ\tilde\alpha_b^{-1}$ is 
in $\Aut_\T(A\rtimes_{\alpha_a}\Z)$. 
Since 
\[
\OrderExt(K_0(A),K_0(A),D_A)\cong\Hom(K_0(A),\R/\Ima D_A)
\]
is divisible, 
there exists $c\in\OrderExt(K_0(A),K_0(A),D_A)$ such that 
\[
-2c=\tilde\eta_0(\theta\circ\tilde\beta_b\circ\theta^{-1}
\circ\tilde\alpha_b^{-1}). 
\]
By Lemma \ref{toadjust}, 
there exists $\psi\in\Aut_\T(A\rtimes_{\alpha_a}\Z)$ 
such that $\tilde\eta_0(\psi)=c$. 
Then 
\begin{align*}
&\tilde\eta_0(
\psi\circ\theta\circ\tilde\beta_b\circ\theta^{-1}\circ\psi^{-1}
\circ\tilde\alpha_b^{-1})\\
&=\tilde\eta_0(
\psi\circ\theta\circ\tilde\beta_b\circ\theta^{-1}\circ\psi^{-1}
\circ\theta\circ\tilde\beta_b^{-1}\circ\theta^{-1}
\circ\theta\circ\tilde\beta_b\circ\theta^{-1}\circ\tilde\alpha_b^{-1})\\
&=\tilde\eta_0(\psi)+\tilde\eta_0(
\theta\circ\tilde\beta_b\circ\theta^{-1}\circ\psi^{-1}
\circ\theta\circ\tilde\beta_b^{-1}\circ\theta^{-1})
+\tilde\eta_0(
\theta\circ\tilde\beta_b\circ\theta^{-1}\circ\tilde\alpha_b^{-1})\\
&=c+c+\tilde\eta_0(
\theta\circ\tilde\beta_b\circ\theta^{-1}\circ\tilde\alpha_b^{-1})=0, 
\end{align*}
where we have used Lemma \ref{reverse}. 
Therefore, by Lemma \ref{eta>asymp}, 
\[
\psi\circ\theta\circ\tilde\beta_b\circ\theta^{-1}\circ\psi^{-1}
\circ\tilde\alpha_b^{-1}\in\Aut_\T(A\rtimes_{\alpha_a}\Z)
\]
is asymptotically inner. 
It follows from the theorem above that 
there exist an approximately inner automorphism 
$\mu\in\Aut_\T(A\rtimes_{\alpha_a}\Z)$ and a unitary $t\in U(A)$ 
such that 
\begin{equation}
\mu\circ\psi\circ\theta\circ\tilde\beta_b\circ\theta^{-1}\circ\psi^{-1}
\circ\mu^{-1}=\Ad t\circ\tilde\alpha_b. 
\label{tiger}
\end{equation}
Let $\nu=\mu\circ\psi$ and 
let $s=\nu(\lambda^{\alpha_a})(\lambda^{\alpha_a})^*\in U(A)$. 
One can deduce 
\begin{equation}
(\nu|A)\circ\beta_b\circ(\nu|A)^{-1}=\Ad t\circ\alpha_b
\label{rabbit}
\end{equation}
and 
\begin{equation}
(\nu|A)\circ\beta_a\circ(\nu|A)^{-1}=(\Ad\nu(w)s)\circ\alpha_a. 
\label{dragon}
\end{equation}
Moreover, from \eqref{tiger}, 
\begin{align*}
(\nu\circ\theta\circ\tilde\beta_b\circ\theta^{-1}\circ\nu^{-1})
(\lambda^{\alpha_a})
&=(\nu\circ\theta\circ\tilde\beta_b\circ\theta^{-1})
(\nu^{-1}(s^*)\lambda^{\alpha_a})\\
&=(\nu\circ\theta\circ\tilde\beta_b)(\nu^{-1}(s^*)w^*\lambda^{\beta_a})\\
&=(\nu\circ\theta)
(\beta_b(\nu^{-1}(\nu(w)s)^*)\check v(\lambda^{\beta_a})^*)\\
&=\nu(\beta_b(\nu^{-1}(\nu(w)s)^*)\check v(\lambda^{\alpha_a})^*w^*)\\
&=\Ad t(\alpha_b(\nu(w)s)^*)\nu(\check v)(\lambda^{\alpha_a})^*(\nu(w)s)^*\\
&=\Ad t(\alpha_b(\nu(w)s)^*)\nu(\check v)
\alpha_a^{-1}(\nu(w)s)^*(\lambda^{\alpha_a})^*
\end{align*}
is equal to 
\[
\Ad t(\tilde\alpha_b(\lambda^{\alpha_a}))
=t\check u\alpha_a^{-1}(t^*)(\lambda^{\alpha_a})^*. 
\]
Hence we can verify 
\[
\nu(\check v)
=t\alpha_b(\nu(w)s)\check u\alpha_a^{-1}(t^*)\alpha_a^{-1}(\nu(w)s). 
\]
This, together with \eqref{rabbit}, \eqref{dragon}, 
implies that $(\alpha,u)$ and $(\beta,v)$ are cocycle conjugate. 
\end{proof}

\begin{corollary}\label{UHFapprepre}
Any strongly outer cocycle action of $\Gamma$ on a UHF algebra $A$ 
is approximately representable. 
\end{corollary}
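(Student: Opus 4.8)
The plan is to obtain this as a formal consequence of the classification Theorem~\ref{KleinUHF} together with the existence result Proposition~\ref{exist}. The whole point is that, among all strongly outer cocycle actions of $\Gamma$ on $A$, at least one is already known to be approximately representable, and cocycle conjugacy transports this property to every other.

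First I would record that any UHF algebra $A$ is $\mathcal{Z}$-stable, i.e.\ $A\cong A\otimes\mathcal{Z}$; this is standard, since $\mathcal{Z}$ embeds unitally into the approximately central copies of $A$ sitting inside $A_\infty\cap A'$. Feeding this into Proposition~\ref{exist}, we obtain a \emph{genuine} action $\beta:\Gamma\curvearrowright A$ that is simultaneously strongly outer and approximately representable. In particular $\beta$ is itself one of the strongly outer cocycle actions covered by Theorem~\ref{KleinUHF}.

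Next, given an arbitrary strongly outer cocycle action $(\alpha,u):\Gamma\curvearrowright A$, Theorem~\ref{KleinUHF} asserts that $(\alpha,u)$ and $\beta$ are cocycle conjugate. As noted immediately after Definition~\ref{apprepre}, approximate representability is preserved under cocycle conjugacy. Since $\beta$ is approximately representable, so is $(\alpha,u)$, which is exactly the claimed conclusion.

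I do not expect any genuine obstacle here: the corollary is a direct combination of the two cited results, and the only step requiring a moment's attention is verifying that $A$ is $\mathcal{Z}$-stable so that Proposition~\ref{exist} applies to produce the reference action $\beta$.
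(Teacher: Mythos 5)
Your argument is precisely the paper's proof: the authors likewise obtain the corollary by combining Theorem~\ref{KleinUHF} with Proposition~\ref{exist}, using that approximate representability is preserved under cocycle conjugacy. The one step you make explicit --- that a UHF algebra is $\mathcal{Z}$-stable so that Proposition~\ref{exist} applies --- is standard and correctly handled.
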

\begin{proof}
This immediately follows from the theorem above and Proposition \ref{exist}. 
\end{proof}

\begin{remark}\label{Koftwistedcc}
Let $(\alpha,u):\Gamma\curvearrowright A$ be 
a strongly outer cocycle action on a UHF algebra $A$. 
By \cite[Corollary 5.9]{MS}, 
$A\rtimes_{(\alpha,u)}\Gamma
\cong(A\rtimes_{\alpha_a}\Z)\rtimes_{\tilde\alpha_b}\Z$ is 
a unital simple AH algebra with real rank zero and slow dimension growth. 
By means of the Pimsner-Voiculescu exact sequence, we get 
\[
K_0(A\rtimes_{(\alpha,u)}\Gamma)\cong K_0(A),\quad 
K_1(A\rtimes_{(\alpha,u)}\Gamma)\cong
\begin{cases}K_0(A)&\zeta_A(2)=\infty\\
K_0(A)\oplus\Z/2\Z&\text{otherwise. }\end{cases}
\]
Note that $\zeta_A(2)=\infty$ if and only if 
$A$ absorbs the UHF algebra of type $2^\infty$ tensorially. 
\end{remark}

\section{Actions of the Klein bottle group II}

Throughout this section, we let $\Gamma$ denote 
the Klein bottle group $\langle a,b\mid bab^{-1}=a^{-1}\rangle$. 
In this section we will prove that 
all strongly outer cocycle actions of $\Gamma$ on the Jiang-Su algebra 
are cocycle conjugate to each other (Theorem \ref{KleinZ}).

\subsection{Homotopy of unitaries}

In this subsection 
we establish Lemma \ref{homotopy1} and Lemma \ref{homotopy2}, 
which will be used in the next subsection 
to obtain certain cohomology vanishing type results. 

The following lemma is 
a variant of \cite[Lemma 4.10]{M10CMP} or \cite[Lemma 6.3]{MS}, 
and is essentially contained in the proof of \cite[Theorem 5.3]{S10JFA}. 
But we include a proof here for completeness. 

\begin{lemma}\label{homotopy1}
Let $B$ be a UHF algebra with a unique trace $\tau$ and 
let $\phi$ be an automorphism of $B$ 
such that $\phi^n$ is not weakly inner for all $n\in\N$. 
For any finite subset $F\subset B$ and $\ep>0$, 
there exist a finite subset $G\subset B$ and $\delta>0$ 
such that the following holds. 
If $x\in U(B)$ satisfies 
\[
\lVert[x,c]\rVert<\delta\quad\forall c\in G,\quad 
\lVert\phi(x)-x\rVert<\delta,\quad 
\tau(\log(x\phi(x^*)))=0, 
\]
then we can find a continuous map $z:[0,1]\to U(B)$ such that 
$z(0)=1$, $z(1)=x$, 
\[
\lVert[z(t),c]\rVert<\ep\quad\forall c\in F,\quad 
\lVert\phi(z(t))-z(t)\rVert<\ep\quad\forall t\in[0,1]\quad\text{and}\quad 
\Lip(z)<4. 
\]
In addition, if $F=\emptyset$, then $G=\emptyset$ is possible. 
\end{lemma}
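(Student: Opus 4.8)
The plan is to combine the Rohlin property of $\phi$ with the de la Harpe--Skandalis determinant, in the spirit of \cite[Lemma 4.10]{M10CMP} and \cite[Lemma 6.3]{MS}. Since $\phi^n$ is not weakly inner for every $n\in\N$, the $\Z$-action generated by $\phi$ is strongly outer, so by Theorem \ref{so=trR} it has the tracial Rohlin property. Hence, for a height $k$ to be fixed from $\ep$, I would take Rohlin projections $e_0,\dots,e_{k-1}$ with $e_i\approx\phi^i(e_0)$, $\sum_i e_i\approx1$, $\tau(e_i)\approx1/k$, chosen almost central. Centrality of the tower is the only place $F$ enters, via $G$; when $F=\emptyset$ no near-centrality of the tower is required, and the tower exists from strong outerness of $\phi$ alone, which is exactly why $G=\emptyset$ is then admissible.

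First I would pass to logarithms. Taking $G\supset F\cup\phi(F)\cup\phi^{-1}(F)$ and $\delta$ small, the hypotheses force $u:=x\phi(x^*)$ to satisfy $\lVert u-1\rVert<\delta$, so that $a:=(2\pi\sqrt{-1})^{-1}\log u$ is a well-defined self-adjoint element with $\lVert a\rVert$ of order $\delta$, which is almost central and almost $\phi$-invariant, and which satisfies $\tau(a)=0$ by the determinant hypothesis. The element $a$ records the equivariant ``winding'' of $x$ relative to $\phi$ across the branch cut of $\log$, and the vanishing of its trace is precisely what will allow this winding to be cancelled.

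The core step is to build the path from the straight-line path $z_0(t)=\exp(t\log x)$ and then correct its failure of $\phi$-invariance using the tower. After an arbitrarily small perturbation of $x$ creating a spectral gap at $-1$ (so that $-1\notin\Sp(x)$ and $\lVert\log x\rVert<\pi$), the path $z_0$ is almost central with Lipschitz constant $\lVert\log x\rVert<\pi$; I would restore the endpoint $x$ by appending a negligible almost-invariant segment. The invariance defect of $z_0$ is governed by $a$ together with the branch-cut jumps, and I would add on each level $e_i$ a phase correction accumulating $a$ along the tower, so that applying $\phi$ (which shifts $e_i$ to $e_{i+1}$) telescopically cancels the defect. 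Because the levels are mutually orthogonal, these corrections do not add up in norm, so the corrected path keeps Lipschitz constant $\pi+O(k\delta)$; choosing $\delta\ll1/k$ makes the correction negligible. The telescoping leaves a single boundary term at the top of the tower, whose trace is $\approx k\tau(ae_0)\approx\tau(a)=0$, and it is exactly the hypothesis $\tau(\log(x\phi(x^*)))=0$ that lets this term be absorbed, yielding a path that is $\phi$-invariant to within $\ep$.

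The hard part will be the Lipschitz bound $\Lip(z)<4$ in tandem with the equivariant handling of the branch cut. A naive coboundary primitive for $a$ has norm of order $k\lVert a\rVert$ and produces a path far too long, so one must (i) distribute the correcting phases across the mutually orthogonal tower levels, so that the total Lipschitz constant is a maximum over levels rather than a sum, and (ii) isolate the genuinely long portion as the single connection $\exp(t\log x)$ of length at most $\pi<4$, reducing everything else to negligible corrections through $\delta\ll1/k$. Controlling the branch cut of $\log$ equivariantly around the tower, which is made possible precisely by $\tau(\log(x\phi(x^*)))=0$, is the conceptual heart of the argument; the remaining estimates (centrality with respect to $F$, and the $\phi$-invariance bound for every $t$) are routine given the tower.
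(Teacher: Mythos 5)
The central step of your argument is not actually carried out, and the mechanisms you sketch for it do not work as stated. After creating a spectral gap at $-1$, the straight-line path $z_0(t)=\exp(t\log x)$ has invariance defect $d(t)=z_0(t)\phi(z_0(t)^*)$, and this defect is \emph{not} small and \emph{not} scalar for intermediate $t$: if $\phi$ exchanges spectral values of $x$ lying on opposite sides of $-1$ (which is compatible with $\lVert\phi(x)-x\rVert<\delta$), then $\lVert\phi(\log x)-\log x\rVert$ can be close to $2\pi$ and $\lVert d(1/2)-1\rVert$ close to $2$, even though $d(0)=1$ and $\lVert d(1)-1\rVert<\delta$. Your proposed correction "accumulating $a$ along the tower" only addresses the scalar, trace-level obstruction $\tau(a)=0$; a correction of the form $c(t)=\sum_i e^{i\theta_i(t)}e_i$ can never cancel a non-scalar defect. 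If instead you solve the cohomological equation $c(t)^*\phi(c(t))\approx d(t)$ by the standard Rohlin prescription $c(t)=\sum_i v_i(t)e_i$ with $v_i(t)=d(t)\phi(d(t))\cdots\phi^{i-1}(d(t))$, then orthogonality of the levels does give you a maximum rather than a sum, but the maximum is attained at the top level, where $\Lip(v_{k-1})\lesssim k\cdot\Lip(d)$ grows linearly in the tower height $k$ — and $\Lip(d)$ cannot be made small since $d$ travels a distance of order $1$ in time of order $1$. So the bound $\Lip(z)<4$ is exactly what your scheme fails to deliver; the sentence declaring the remaining estimates "routine given the tower" covers precisely the non-routine part. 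A secondary but genuine problem: the tracial Rohlin property from Theorem \ref{so=trR} gives towers with $\tau(1-\sum_i\phi^i(e))$ small, not $\lVert1-\sum_i\phi^i(e)\rVert$ small, and a projection of small trace still has norm one, so the defect on the complement of the tower is uncontrolled in operator norm; you would need Kishimoto's genuine Rohlin property for UHF automorphisms \cite{K95crelle}.

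For comparison, the paper sidesteps all of this: it first invokes \cite[Theorem 1.3, 1.4]{K95crelle} to replace $\phi$ by an explicit product-type automorphism $\sigma=\lim\Ad(y_1y_2\cdots y_n)$ of $B=\bigotimes_nM_{d_n}$ (strong cocycle conjugacy preserves the statement), approximates $x$ by $x_0\in B_m\cap B_n'$ using almost-commutation with $G$, and observes that almost $\sigma$-invariance of $x_0$ becomes almost commutation with the single unitary $y=y_{n+1}\cdots y_m$ in a matrix algebra, with $\tau(\log(x_0yx_0^*y^*))=0$. The path with $\Lip\leq\pi+\ep$ then comes from the finite-dimensional homotopy lemma \cite[Lemma 4.2]{KM}, which is where the branch-cut difficulty is genuinely resolved (by explicit spectral manipulation in matrix algebras), and the $F=\emptyset$ case is quoted from \cite[Lemma 8.1]{MS}. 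If you want to keep your tower-based outline, you would in effect have to reprove a version of that basic homotopy lemma from scratch, and you should expect that to be the bulk of the work rather than a routine estimate.
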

\begin{proof}
The case $F=G=\emptyset$ is contained in \cite[Lemma 8.1]{MS}, 
and so we treat the general case here. 
Let $(d_n)_n$ be a sequence of natural numbers such that 
$d_n<d_{n+1}$ and 
\[
B\cong M_{d_1}\otimes M_{d_2}\otimes M_{d_3}\otimes\dots. 
\]
We regard $B_n=M_{d_1}\otimes\dots\otimes M_{d_n}$ 
as a subalgebra of $B$. 
Let $y_n\in B_n\cap B_{n-1}'$ be a unitary 
whose spectrum is $\{\zeta_n^k\mid k=1,2,\dots,d_n\}$, 
where $\zeta_n=\exp(2\pi\sqrt{-1}/d_n)$. 
Define an automorphism $\sigma$ of $B$ 
by $\sigma=\lim_{n\to\infty}\Ad(y_1y_2\dots y_n)$. 
Then $\sigma^m$ is not weakly inner for all $m\in\N$. 
By \cite[Theorem 1.3, 1.4]{K95crelle}, 
the $\Z$-actions generated by $\phi$ and $\sigma$ are 
strongly cocycle conjugate. 
Hence it suffices to show the assertion for $\sigma$. 

Suppose that we are given $F\subset B$ and $\ep>0$. 
Without loss of generality, 
we may assume that there exists $n\in\N$ such that 
$F$ is contained in the unit ball of $B_n$. 
Applying \cite[Lemma 4.2]{KM} to $\ep/2$, 
we obtain a positive real number $\delta_1>0$. 
We may assume $\delta_1$ is less than $\min\{2,\ep\}$. 
Choose a finite subset $G\subset B$ and $\delta_2>0$ so that 
if $u\in U(B)$ satisfies $\lVert[u,c]\rVert<\delta_2$ for all $c\in G$, 
then there exists $u_0\in U(B\cap B_n')$ 
such that $\lVert u-u_0\rVert<\delta_1/4$. 
Let $\delta=\min\{\delta_1/2,\delta_2\}$. 

Suppose that $x\in U(B)$ satisfies 
\[
\lVert[x,c]\rVert<\delta\quad\forall c\in G,\quad 
\lVert\sigma(x)-x\rVert<\delta,\quad 
\tau(\log(x\sigma(x^*)))=0. 
\]
By the choice of $\delta$, 
we can find $x_0\in B\cap B_n'$ such that $\lVert x-x_0\rVert<\delta_1/4$. 
We may assume that there exists $m>n$ such that $x_0\in B_m\cap B_n'$. 
Put $y=y_{n+1}y_{n+2}\dots y_m\in B_m\cap B_n'$. 
Then 
\[
\lVert[y,x_0]\rVert=\lVert\sigma(x_0)-x_0\rVert
<\lVert\sigma(x)-x\rVert+\delta_1/2<\delta+\delta_1/2\leq\delta_1. 
\]
Also, from $\lVert x-x_0\rVert<\delta_1/4<1/2$ and 
$\lVert x-\sigma(x)\rVert<\delta<1$, one has 
\begin{align*}
\tau(\log(x_0yx_0^*y^*))
&=\tau(\log(x_0\sigma(x_0^*)))\\
&=\tau(\log(x_0x^*x\sigma(x^*)\sigma(xx_0^*)))\\
&=\tau(\log(x\sigma(x^*)))=0. 
\end{align*}
It follows from \cite[Lemma 4.2]{KM} that 
one can find a path of unitaries $w:[0,1]\to B_m\cap B_n'$ such that 
$w(0)=1$, $w(1)=x_0$, $\Lip(w)\leq\pi+\ep$ and 
$\lVert[y,w(t)]\rVert<\ep/2$ for every $t\in[0,1]$. 
Note that $yw(t)y^*$ is equal to $\sigma(w(t))$. 
Take a path of unitaries $w':[0,1]\to B$ such that 
$w'(0)=1$, $w'(1)=xx_0^*$ and $\Lip(w')<\delta_1/4$. 
Then the path $z(t)=w'(t)w(t)$ meets the requirement. 
\end{proof}

In order to prove Lemma \ref{homotopy2}, 
we need the following lemma. 
The proof uses \cite{L10Mem}. 

\begin{lemma}
Let $C$ be a unital simple AT algebra with a unique trace $\tau$. 
Assume that 
$K_i(C)$ is isomorphic to a non-zero subgroup of $\Q$ for each $i=0,1$. 
Let $w\in U(C)$ be such that $[w]\neq0$. 
Suppose that $(x_n)_n$ is a central sequence of unitaries in $C$ 
such that $[x_n]=0$ and 
$\tau(\log(x_nwx_n^*w^*))=0$ for all sufficiently large $n$. 
Then there exists a central sequence $(z_n)_n$ of unitaries in $C([0,1],C)$ 
such that $z_n(0)=1$, $z_n(1)=x_n$ and $\Lip(z_n)<7$ 
for all sufficiently large $n$. 
\end{lemma}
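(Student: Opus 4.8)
The plan is to treat this as the crossed-product counterpart of Lemma \ref{homotopy1}: heuristically $w$ plays the role of an implementing unitary, so that $x_nwx_n^*w^*$ corresponds to $x\phi(x^*)$ (indeed, when $w=\lambda^\phi$ and $x_n$ lies in the coefficient algebra one has $x_nwx_n^*w^*=x_n\phi(x_n^*)$), and the hypothesis $\tau(\log(x_nwx_n^*w^*))=0$ is then exactly the determinant-vanishing condition appearing there. Since $C$ is a unital simple AT algebra it has tracial rank at most one, and with a unique trace and $K_i(C)$ a subgroup of $\Q$ it falls squarely within the scope of Lin's homotopy theory \cite{L10Mem}. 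The strategy is therefore: for each large $n$, produce a controlled-length path from $1$ to a good approximant of $x_n$ that approximately commutes with a prescribed finite set, using $w$ as a reference unitary, and then promote the resulting sequence of paths to a genuine central sequence by reindexation.

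First I would record that, since $(x_n)_n$ is central and $w\in C$ is fixed, $x_nwx_n^*\to w$, hence $x_nwx_n^*w^*\to1$ and $\log(x_nwx_n^*w^*)$ is defined for all large $n$; this is what makes the hypothesis $\tau(\log(x_nwx_n^*w^*))=0$ meaningful. Writing $C=\varinjlim C_k$ as an inductive limit of circle algebras, I would approximate $x_n$ by a unitary $x_n'$ that commutes exactly with a large finite-dimensional part of the structure (and with $w$ up to small error), using approximate centrality; a short path joins $x_n'$ to $x_n$, and the determinant condition transfers to $x_n'$ up to a controllable error.

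Next I would invoke the homotopy lemma of \cite{L10Mem} (the tracial-rank-one analogue of \cite[Lemma 4.2]{KM}) with $w$ as reference unitary. The inputs are $[x_n']=0$ in $K_1(C)$, approximate commutation with $w$ and the chosen finite set, and the vanishing of the rotation/determinant invariant encoded by $\tau(\log(x_n'w(x_n')^*w^*))\approx0$; the output is a path of unitaries from $1$ to $x_n'$ of length close to $2\pi$ that approximately commutes with the prescribed finite set. Concatenating with the short connecting path and reparametrising, the total Lipschitz constant is bounded by $7$ (essentially $2\pi$ plus lower-order corrections). Finally, running this construction along an increasing exhaustion of $C$ with shrinking tolerances and applying the standard reindexation trick upgrades ``approximately commuting with a finite set'' to a genuine central sequence $(z_n)_n$ in $C([0,1],C)$ with the stated endpoints and Lipschitz bound.

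The hard part will be the central homotopy step itself: obtaining the uniform length bound $\Lip(z_n)<7$ simultaneously with asymptotic commutation with all of $C$. In the UHF setting of Lemma \ref{homotopy1} this was handled by a concrete spectral model together with \cite[Lemma 4.2]{KM}, but for a general simple AT algebra of tracial rank one one must appeal to the full force of \cite{L10Mem}. The precise determinant hypothesis $\tau(\log(x_nwx_n^*w^*))=0$, together with $[w]\neq0$ so that $w$ genuinely spans the relevant $K_1$-direction, is what I expect to force the de la Harpe--Skandalis rotation obstruction to vanish and thereby keep the path length bounded rather than growing with $n$.
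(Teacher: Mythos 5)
Your overall strategy --- reduce to Lin's homotopy lemma in \cite{L10Mem} and then upgrade to a central sequence by reindexation --- is the same as the paper's, and your heuristic identification of $x_nwx_n^*w^*$ with $x_n\phi(x_n^*)$ correctly anticipates how the lemma is used later. But there is a genuine gap at the step you yourself flag as the hard part: you never verify the full set of $K$-theoretic hypotheses of \cite[Corollary 17.9]{L10Mem}. That result requires, for a prescribed finite set $P$ of projections and a prescribed finite set $U$ of unitaries in $C$, that $[x_np+1-p]=0$ in $K_1(C)$ for every $p\in P$ and that $\Bott(u,x_n)=0$ in $K_0(C)$ for every $u\in U$. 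Your proposal only addresses the two special instances $p=1$ (i.e.\ $[x_n]=0$) and $u=w$, and says you ``expect'' the hypothesis $[w]\neq0$ to force the rest. That expectation is exactly the missing argument, and it is where both standing assumptions of the lemma are actually consumed.

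The paper closes this gap as follows. The unitary $x\in C_\infty$ determined by $(x_n)_n$ gives a homomorphism $\pi:C_0(\T\setminus\{1\})\otimes C\to C^\infty$, $f\otimes c\mapsto f(x)c$, whose induced maps one views as $K_1(\pi):K_0(C)\to K_1(C^\infty)$ (sending $[p]$ to $[x_np+1-p]$) and $K_0(\pi):K_1(C)\to K_0(C^\infty)$ (sending $[u]$ to $\Bott(u,x)$). The hypothesis $[x_n]=0$ gives $K_1(\pi)([1])=0$; the hypothesis $\tau(\log(x_nwx_n^*w^*))=0$ gives $\tau(\Bott(w,x_n))=0$ via \cite[Theorem 3.6]{L09AJM}, and hence $\Bott(w,x_n)=0$ because $K_0(C)$ has no infinitesimals, so $K_0(\pi)([w])=0$. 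Since $K_0(C)$ and $K_1(C)$ are nonzero subgroups of $\Q$ (rank one and torsion free) and $[1]\neq0$, $[w]\neq0$, killing these single classes forces $K_1(\pi)=0$ and $K_0(\pi)=0$, which is precisely the statement that \emph{all} the obstructions required by \cite[Corollary 17.9]{L10Mem} vanish. Without some version of this propagation argument your invocation of Lin's lemma is unjustified: for a general finite set of unitaries $u$ there is no a priori reason for $\Bott(u,x_n)$ to vanish, and the lemma's conclusion (a short path almost commuting with a prescribed finite subset of $C$) can genuinely fail when it does not. A secondary, more cosmetic issue is your plan to first replace $x_n$ by a unitary commuting exactly with a ``large finite-dimensional part'': this is a UHF-specific device that has no direct analogue in a general simple AT algebra and is not needed once Lin's theorem is applied directly to $x_n$.
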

\begin{proof}
For almost commuting unitaries $u,v$ in a unital $C^*$-algebra $A$, 
we denote their Bott class by $\Bott(u,v)\in K_0(A)$ 
(\cite[Section 1]{BEEK}). 
When $A$ is a unital simple separable $C^*$-algebra with tracial rank zero, 
by \cite[Theorem 3.6]{L09AJM}, we have 
\[
\tau(\Bott(u,v))=\frac{1}{2\pi\sqrt{-1}}\tau(\log(vuv^*u^*))
\quad\forall\tau\in T(A). 
\]

Let $x\in C_\infty$ be the image of $(x_n)_n$. 
Since the unitary $x$ commutes with $C$, 
there exists a homomorphism $\pi:C_0(\T\setminus\{1\})\otimes C\to C^\infty$ 
such that $\pi(f\otimes c)=f(x)c$ 
for every $f\in C_0(\T\setminus\{1\})$ and $c\in C$. 
We regard $K_i(\pi)$ as homomorphisms from $K_{1-i}(C)$ to $K_i(C^\infty)$. 
Then $K_1(\pi)([1])=[x]=0$, 
because $[x_n]=0$ in $K_1(C)$ for all sufficiently large $n$. 
Also, one has $K_0(\pi)([w])=\Bott(w,x)=0$, 
because $\tau(\Bott(w,x_n))=(2\pi\sqrt{-1})^{-1}\tau(\log(x_nwx_n^*w^*))=0$ 
for all sufficiently large $n$ and $K_0(C)$ has no infinitesimal. 
As $K_i(C)$ is isomorphic to a non-zero subgroup of $\Q$ for each $i=0,1$, 
we obtain $K_1(\pi)=0$ and $K_0(\pi)=0$. 
Hence we have the following. 
\begin{itemize}
\item Let $p\in C$ be a projection. 
Then for all sufficiently large $n$, the $K_1$-class of $x_np+1-p$ is zero. 
\item Let $u\in C$ be a unitary. 
Then for all sufficiently large $n$, $\Bott(u,x_n)$ equals zero. 
\end{itemize}

Once this is done, one can construct $(z_n)_n$ as follows. 
Let $F\subset C$ be a finite subset and let $\ep>0$. 
By applying \cite[Corollary 17.9]{L10Mem} 
(see also \cite[Theorem 5.4]{M11JFA}), 
we get a finite subset $G\subset C$, $\delta>0$, 
a finite subset $P$ of projections in $C$ and 
a finite subset $U$ of unitaries in $C$. 
Notice that 
we do not need to work with the entire $K$-group $\underline{K}(C)$ of $C$, 
because $K_i(C)$ are torsion free. 
For all sufficiently large $n$, 
\[
\lVert[c,x_n]\rVert<\delta,\quad [x_np+1-p]=0\quad\text{and}\quad 
\Bott(u,x_n)=0
\]
hold for all $c\in G$, $p\in P$ and $u\in U$. 
It follows from \cite[Corollary 17.9]{L10Mem} that 
there exists a continuous map $z_n:[0,1]\to U(C)$ such that 
$z_n(0)=1$, $z_n(1)=x_n$, $\Lip(z_n)<2\pi+\ep$ and 
$\lVert[c,z_n(t)]\rVert<\ep$ for all $c\in F$ and $t\in[0,1]$. 
Since $F$ and $\ep$ were arbitrary, the proof is completed. 
\end{proof}

\begin{lemma}\label{homotopy2}
Let $B$ be a UHF algebra with a unique trace $\tau$ and 
let $(\alpha,u):\Gamma\curvearrowright B$ be a strongly outer cocycle action. 
Suppose that $B$ absorbs the UHF algebra of type $2^\infty$ tensorially. 
For any finite subset $F\subset B$ and $\ep>0$, 
there exist a finite subset $G\subset B$ and $\delta>0$ 
such that the following holds. 
If $x\in U(B)$ satisfies 
\[
\lVert[x,c]\rVert<\delta\quad\forall c\in G,\quad 
\lVert\alpha_a(x)-x\rVert<\delta,\quad 
\lVert\alpha_b(x)-x\rVert<\delta,\quad 
\tau(\log(x\alpha_b(x^*)))=0, 
\]
then we can find a continuous map $z:[0,1]\to U(B)$ such that 
$z(0)=1$, $z(1)=x$, $\Lip(z)<7$, 
\[
\lVert[z(t),c]\rVert<\ep,\quad 
\lVert\alpha_a(z(t))-z(t)\rVert<\ep\quad\text{and}\quad 
\lVert\alpha_b(z(t))-z(t)\rVert<\ep
\]
for all $c\in F$ and $t\in[0,1]$. 
\end{lemma}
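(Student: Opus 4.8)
The plan is to recast the three approximate conditions on $x$ as a single approximate-centrality condition in the twisted crossed product $D=B\rtimes_{(\alpha,u)}\Gamma$, and then to feed the determinant hypothesis into the preceding lemma as its Bott-vanishing condition. In $D$ the canonical unitaries $\lambda^\alpha_a,\lambda^\alpha_b$ satisfy $\lambda^\alpha_g y\lambda^{\alpha*}_g=\alpha_g(y)$ for $y\in B$, so a unitary $x\in U(B)$ approximately commutes with $B$, with $\lambda^\alpha_a$ and with $\lambda^\alpha_b$ precisely when $\lVert[x,c]\rVert$, $\lVert\alpha_a(x)-x\rVert$ and $\lVert\alpha_b(x)-x\rVert$ are small; that is, the conditions on $(G,\delta)$ say exactly that $x$ is approximately central in $D$. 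Arguing by contradiction and reindexing in the usual way, I would reduce to a central sequence $(x_n)_n$ of unitaries in $D$, each $x_n\in U(B)$, with $[x_n]=0$ and $\tau(\log(x_n\lambda^\alpha_b x_n^*\lambda^{\alpha*}_b))=\tau(\log(x_n\alpha_b(x_n^*)))=0$.

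By Remark \ref{Koftwistedcc}, $D$ is a unital simple AH algebra of real rank zero and slow dimension growth, hence of tracial rank zero, with unique trace $\tau$; and because $B$ absorbs the UHF algebra of type $2^\infty$ we have $\zeta_B(2)=\infty$, so Remark \ref{Koftwistedcc} also gives $K_0(D)\cong K_1(D)\cong K_0(B)$, a non-zero subgroup of $\Q$ with no torsion. This is exactly the input needed to run the preceding lemma with $C$ replaced by $D$, since its proof uses only tracial rank zero together with torsion-free $K$-groups that are subgroups of $\Q$. Taking $w=\lambda^\alpha_b$, the Pimsner--Voiculescu sequence (applied to the iterated crossed product $(B\rtimes_{\alpha_a}\Z)\rtimes_{\tilde\alpha_b}\Z$ of Remark \ref{Koftwistedcc}) shows $[\lambda^\alpha_b]\neq0$, while $[x_n]=0$ because $x_n\in U(B)$ and $K_1(B)=0$; and $x_n\lambda^\alpha_b x_n^*\lambda^{\alpha*}_b=x_n\alpha_b(x_n^*)$ turns the determinant hypothesis into the required condition $\tau(\log(x_n w x_n^* w^*))=0$. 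The preceding lemma then produces a central sequence of unitaries $z_n$ in $C([0,1],D)$ with $z_n(0)=1$, $z_n(1)=x_n$ and $\Lip(z_n)<7$. Approximate centrality of each $z_n(t)$ in $D$ forces $z_n(t)$ to approximately commute with $B$, $\lambda^\alpha_a$ and $\lambda^\alpha_b$, i.e.\ to satisfy the three desired estimates uniformly in $t$.

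The one point that requires genuine work --- and which I expect to be the main obstacle --- is that this path lives in $U(D)$, whereas the conclusion demands $z(t)\in U(B)$; indeed, only for a path inside $B$ does approximate commutation with $\lambda^\alpha_b$ translate into approximate $\alpha_b$-invariance. To descend to $B$ I would exploit that $(\alpha,u)$ is strongly outer and $\Gamma$ is elementary amenable, so by Proposition \ref{EG>Q} and Theorem \ref{so=wR} the action has the weak Rohlin property. Parallel to the central-sequence arguments of Section 4, the weak Rohlin property should give that every approximately central sequence of $D$ is represented by a $\Gamma$-invariant approximately central sequence of $B$; applying this to the central sequence $(z_n)_n$ in $C([0,1],D)$ produces $B$-valued paths $\tilde z_n$ that are $\Gamma$-invariant, approximately central in $B$, agree with $z_n$ up to a small $\lVert\cdot\rVert_2$-error (so the bound $\Lip(\tilde z_n)<7$ is retained), and keep the endpoints $1$ and $x_n\in B$. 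Feeding these $B$-valued paths back into the contradiction hypothesis then completes the proof.
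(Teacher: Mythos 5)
Your reduction to the twisted crossed product $D=B\rtimes_{(\alpha,u)}\Gamma$, the identification of its $K$-theory via Remark \ref{Koftwistedcc} (where $\zeta_B(2)=\infty$ kills the $\Z/2\Z$ summand, so $D$ is in fact a unital simple AT algebra of real rank zero and the preceding lemma applies as stated), and the application of that lemma with $w=\lambda^\alpha_b$, $[x_n]=0$, $\tau(\log(x_n\lambda^\alpha_bx_n^*(\lambda^\alpha_b)^*))=0$ all coincide with the paper's argument. You have also correctly located the crux: descending the path from $U(C([0,1],D))$ to $U(C([0,1],B))$. However, the mechanism you propose for that descent does not work. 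The weak Rohlin property is intrinsically a $\lVert\cdot\rVert_2$-statement: the Ocneanu-type identification of central sequences of the crossed product with $\Gamma$-invariant central sequences of $B$ that you invoke holds at the von Neumann level, and its $C^*$-lift via Proposition \ref{Haagerup2} only returns elements of $B$ agreeing with the given ones up to small trace norm. That is fatal here for three reasons: the conclusion of the lemma is an operator-norm statement ($\lVert[z(t),c]\rVert<\ep$, $\lVert\alpha_a(z(t))-z(t)\rVert<\ep$, $\lVert\alpha_b(z(t))-z(t)\rVert<\ep$); the endpoint $z(1)=x$ must hold exactly, and a path whose endpoint is merely $\lVert\cdot\rVert_2$-close to $x$ cannot in general be corrected by a norm-small, Lipschitz-controlled homotopy; and a pointwise-in-$t$ replacement gives no control on $\Lip(\tilde z_n)$ unless it is implemented by a single homomorphism. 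Nothing in Sections 3--4 upgrades such trace-norm approximations of unitaries to norm approximations.

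The paper's actual descent uses a much stronger input: Corollary \ref{UHFapprepre}, i.e. approximate representability of $(\alpha,u)$, which is itself a consequence of the classification Theorem \ref{KleinUHF} together with Proposition \ref{exist}. Approximate representability supplies unitaries $v_g\in B^\infty$ with $v_gv_h=u(g,h)v_{gh}$ and $v_gxv_g^*=\alpha_g(x)$, hence (by the universal property of the twisted crossed product) a unital homomorphism $D\to B^\infty$ restricting to the identity on $B$ and sending $\lambda^\alpha_g$ to $v_g$. Pushing the central sequence of paths $(z_n)_n$ through this homomorphism, as in \cite[Lemma 3.3]{M11crelle}, keeps the endpoints $1$ and $x_n$ exactly, preserves $\Lip(z_n)<7$, and converts approximate commutation with $\lambda^\alpha_a,\lambda^\alpha_b$ into approximate $\alpha_a$- and $\alpha_b$-invariance in operator norm. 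To complete your argument you must replace the weak-Rohlin step by this (or an equivalent norm-level) device; as written, your final paragraph is a genuine gap.
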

\begin{proof}
The proof is by contradiction. 
Assume that there exist a finite subset $F\subset B$ and $\ep>0$ 
such that the assertion does not hold. 
Then 
we would have a central sequence $(x_n)_n$ of unitaries in $B$ satisfying 
\[
\lim_{n\to\infty}\lVert\alpha_g(x_n)-x_n\rVert=0\quad\forall g\in\Gamma
\quad\text{and}\quad 
\tau(\log(x_n\alpha_b(x_n^*)))=0, 
\]
and such that there does not exist a unitary $z\in C([0,1],B)$ 
as described in the statement. 

Let $C=B\rtimes_{(\alpha,u)}\Gamma$ be 
the twisted crossed product $C^*$-algebra. 
Since $B$ absorbs the UHF algebra of type $2^\infty$, 
by Remark \ref{Koftwistedcc}, 
$C$ is a unital simple AT algebra with real rank zero 
and $K_0(C)\cong K_1(C)\cong K_0(B)$. 
Evidently $[\lambda^\alpha_b]$ is not zero in $K_1(C)$. 
We regard $B$ as a subalgebra of $C$ and 
think of $\tau$ as the unique tracial state on $C$. 
Then the sequence $(x_n)_n$ consisting of unitaries in $B$ 
is a central sequence in $C$ such that $[x_n]=0$ in $K_1(C)$ and 
\[
\tau(\log(x_n\lambda^\alpha_bx_n^*(\lambda^\alpha_b)^*)))
=\tau(\log(x_n\alpha_b(x_n^*)))=0. 
\]
It follows from the lemma above that 
there exists a central sequence $(z_n)_n$ of unitaries in $C([0,1],C)$ 
such that $z_n(0)=1$, $z_n(1)=x_n$ and $\Lip(z_n)<7$ 
for all sufficiently large $n$. 
By Corollary \ref{UHFapprepre}, 
$(\alpha,u):\Gamma\curvearrowright B$ is approximately representable. 
Therefore, in the same way as \cite[Lemma 3.3]{M11crelle}, 
we may further assume that 
$z_n(t)$ is in $B$ for all $n\in\N$ and $t\in[0,1]$. 
When $n$ is sufficiently large, we have 
\[
\lVert[z_n(t),c]\rVert<\ep,\quad 
\lVert\alpha_a(z_n(t))-z_n(t)\rVert<\ep\quad\text{and}\quad 
\lVert\alpha_b(z_n(t))-z_n(t)\rVert<\ep
\]
for all $c\in F$ and $t\in[0,1]$, 
which is a contradiction. 
\end{proof}

\subsection{Cohomology vanishing}

In this subsection we prove Proposition \ref{CVonZ}, 
which is a cohomology vanishing type result 
for cocycle actions of $\Gamma$ on $\mathcal{Z}$. 
We also prove that 
all strongly outer actions of $\Gamma$ on a certain UHF algebra 
are strongly cocycle conjugate (Theorem \ref{KleinUHF2}). 

\begin{lemma}\label{CVonUHF1}
Let $(\alpha,u):\Gamma\curvearrowright B$ be a strongly outer cocycle action 
on a UHF algebra $B$ with a unique trace $\tau$. 
For any finite subset $F\subset B$ and $\ep>0$, 
there exist a finite subset $G\subset B$ and $\delta>0$ 
such that the following holds. 
If $x\in U(B)$ satisfies 
\[
\lVert[x,c]\rVert<\delta\quad\forall c\in G,\quad 
\lVert\alpha_a(x)-x\rVert<\delta,\quad 
\tau(\log(x\alpha_a(x^*)))=0, 
\]
then there exists $y\in U(B)$ such that 
\[
\lVert[y,c]\rVert<\ep\quad\forall c\in F,\quad 
\lVert\alpha_a(y)-y\rVert<\ep\quad\text{and}\quad 
\lVert x-y\alpha_b(y^*)\rVert<\ep. 
\]
In addition, if $F=\emptyset$ and $u(g,h)=1$ for all $g,h\in\Gamma$, 
then $G=\emptyset$ is possible. 
\end{lemma}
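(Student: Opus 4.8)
The plan is to exhibit $x$ as an approximate $\alpha_b$-coboundary while staying among nearly $\alpha_a$-invariant, nearly central unitaries, by feeding an interpolating path for $\alpha_a$ into a Rohlin tower for $\alpha_b$. Two inputs are available. First, since $(\alpha,u)$ is strongly outer and $a^n\ne e$ in $\Gamma$ for every $n\ne0$, the automorphism $\alpha_a^n$ differs from $\alpha_{a^n}$ by an inner automorphism and hence is not weakly inner for any $n\in\N$; as $B$ is a UHF algebra with unique trace $\tau$, Lemma \ref{homotopy1} applies to $\phi=\alpha_a$. Given the target data $(F,\ep)$, I would first fix the auxiliary finite set and tolerance to which Lemma \ref{homotopy1} is applied, obtaining $(G_0,\delta_0)$, and place these (together with the finitely many cocycle unitaries $u(g,h)$ appearing below) into the set $G$ and threshold $\delta$ of the present statement. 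The determinant hypothesis $\tau(\log(x\alpha_a(x^*)))=0$ is then exactly what Lemma \ref{homotopy1} needs to produce a continuous path $z\colon[0,1]\to U(B)$ with $z(0)=1$, $z(1)=x$, $\Lip(z)<4$, with $z(t)$ nearly $\alpha_a$-invariant and almost commuting with $F$ for all $t$.

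For the second input, every automorphism of a UHF algebra is approximately inner, so $\alpha_a,\alpha_b\in\aInn(B)$ (with $r=s=1$), and Theorem \ref{so>R} applies to the strongly outer cocycle action $(\alpha,u)$, yielding, for any prescribed height, Rohlin projections $e,f\in(B_\infty)^{\alpha_a}$ with $\alpha_b^m(e)=e$, $\alpha_b^{m+1}(f)=f$ and $\sum_{i=0}^{m-1}\alpha_b^i(e)+\sum_{j=0}^{m}\alpha_b^j(f)=1$. These towers are exactly $\alpha_a$-invariant and asymptotically central. On each tower I would build $y$ level by level out of the telescoping partial products $\prod_{0\le j<k}\alpha_b^j(x)$ (ordered with $\alpha_b^0(x)$ on the left, so that $\big(\prod_{j<k}\alpha_b^j(x)\big)\alpha_b\big(\prod_{j<k-1}\alpha_b^j(x)\big)^*=x$ and consecutive levels differ by $x$), corrected near the ends by values $z(\cdot)$ of the path, and set $y=\sum_{k}X_k\alpha_b^k(e)+\sum_k Y_k\alpha_b^k(f)$. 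Because the tower projections lie in $(B_\infty)^{\alpha_a}$, the path is nearly $\alpha_a$-invariant, and $x$ almost commutes with the cocycle unitaries intertwining $\alpha_a$ with $\alpha_b^j$ (this is why they were placed in $G$), the resulting $y$ is nearly $\alpha_a$-invariant and almost commutes with $F$, and one computes $y\alpha_b(y^*)\approx\sum_k X_k\alpha_b(X_{k-1})^*\alpha_b^k(e)+\cdots\approx x$.

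The hard part is closing the seam of each tower in operator norm rather than merely in $\lVert\cdot\rVert_2$: the naive telescoping matches $x$ on every interior level but leaves an order-one discrepancy on the single level where the top of the tower is identified with the bottom, and one isolated bad level is invisible to the trace yet fatal for the operator-norm estimate demanded by the conclusion. This is precisely where the path is used: I would spread the returning ``unwinding'' across the tower, so that the telescoping relation $X_k\alpha_b(X_{k-1})^*\approx x$ acquires only an error $O(\Lip(z)/m)\le O(4/m)$ per level, coming from the bounded-Lipschitz increments $z((k\pm1)/m)z(k/m)^*$, which is uniformly small once the height $m$ is large. The two consecutive heights $m$ and $m+1$ guarantee that the projections form an honest partition of unity, so no level is left uncovered. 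I expect verifying that this distributed construction simultaneously closes both seams, keeps the $\Lip$ control, and preserves near-$\alpha_a$-invariance to be the most delicate bookkeeping; it follows the Rohlin-tower averaging technique of \cite[Section 4]{M10CMP}.

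Finally, the projections $e,f$ live in the central sequence algebra, so the construction actually produces a central sequence $(y_n)$ of unitaries in $B$; a standard reindexation argument then extracts a single $y\in U(B)$ for which the three required estimates hold to within $\ep$, after first choosing $m$ large (to beat the $O(4/m)$ seam error) and then $n$ large (to make the tower and commutation approximations sufficiently good). For the addendum, if $u(g,h)\equiv1$ the action is genuine, so $\alpha_a\alpha_b^j=\alpha_b^j\alpha_a^{(-1)^j}$ holds with no intertwining unitaries and near-$\alpha_a$-invariance of the partial products needs no commutation hypothesis on $x$; combined with the clause of Lemma \ref{homotopy1} that $F=\emptyset$ allows $G_0=\emptyset$, this lets one take $G=\emptyset$.
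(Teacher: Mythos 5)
Your overall architecture (telescoping products of translates of $x$ along the two Rohlin towers of heights $m$ and $m+1$ supplied by Theorem \ref{so>R}, with the wrap-around error distributed over the tower by a bounded-Lipschitz path from Lemma \ref{homotopy1}) is the paper's, but you apply Lemma \ref{homotopy1} to the wrong unitary, and this is a genuine gap rather than bookkeeping. The seam discrepancy on the tower of height $m$ is not $x$ but the accumulated product $x_m=x\,\alpha_b(x)\cdots\alpha_b^{m-1}(x)$ (your $X_m$): writing $Y_k=X_k\alpha_b^k(w_k)$, the interior levels give $Y_{k+1}\alpha_b(Y_k)^*\approx x\,\alpha_b^{k+1}(w_{k+1}w_k^*)$, while the wrap-around gives $Y_0\alpha_b(Y_{m-1})^*\approx w_0\,\alpha_b^{m}(w_{m-1})^*X_m^*\,x$, so closing the seam with per-level error $O(1/m)$ forces $w_0\approx X_m\alpha_b^m(w_{m-1})$ with consecutive $w_k$'s within $O(1/m)$ of each other; that is, it requires a path of Lipschitz constant $O(1)$ joining $1$ to $X_m$ inside the nearly central, nearly $\alpha_a$-invariant unitaries. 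Your path $z$ joins $1$ to $x$: spreading its increments over the tower unwinds only a single factor of $x$, leaving an order-one discrepancy at the seam, and concatenating $m$ translated copies of $z$ to reach $X_m$ destroys the Lipschitz bound. The paper instead applies Lemma \ref{homotopy1} directly to $x_m$ and $x_{m+1}$, obtaining paths $z_0,z_1$ with $z_i(0)=x_{m+i}$, $z_i(1)=1$, $\Lip(z_i)\leq4$, and sets $y=\sum_i x_i\alpha_b^i(z_0(i/m))\alpha_b^i(e)+\sum_j x_j\alpha_b^j(z_1(j/(m+1)))\alpha_b^j(f)$, which yields $\lVert x-y\alpha_b(y^*)\rVert\leq4/m$.

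Because of this, you also skip the one genuinely nontrivial verification of the proof: to invoke Lemma \ref{homotopy1} for $x_m$ and $x_{m+1}$ one must show $\tau(\log(x_i\alpha_a(x_i^*)))=0$ for every $i$, which does not follow formally from the hypothesis on $x$. The paper proves it inductively using the relation $\alpha_a^{-1}\circ\alpha_b=\Ad\check u^*\circ\alpha_b\circ\alpha_a$ with $\check u=u(b,a)u(a^{-1},b)^*u(a^{-1},a)$, the additivity of $\tau\circ\log$ on products of unitaries close to $1$, and the fact that $\check u$ is homotopic to $1$ while $(\alpha_b(x_{i,n}))_n$ is central, so the conjugation-by-$\check u$ terms contribute zero to the trace of the logarithm. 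This computation, together with aiming the homotopy lemma at the accumulated products rather than at $x$, is the heart of the argument; the remaining ingredients of your outline (the central-sequence/reindexation framing versus the paper's proof by contradiction in $(B_\infty)^{\alpha_a}$, the applicability of Theorem \ref{so>R} with $r=s=1$, and the treatment of the $F=\emptyset$, $u\equiv1$ addendum via $(B^\infty)^{\alpha_a}$ and $\check u=1$) do match the paper.
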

\begin{proof}
The proof is by contradiction. 
Assume that there exist a finite subset $F\subset B$ and $\ep>0$ 
such that the assertion does not hold. 
Then 
we would have a central sequence $(x_n)_n$ of unitaries in $B$ satisfying 
$\alpha_a(x_n)-x_n\to0$ as $n\to\infty$ and 
$\tau(\log(x_n\alpha_a(x_n^*)))=0$, 
and such that there does not exist a unitary $y\in B$ 
as described in the statement. 
Let $x\in U((B_\infty)^{\alpha_a})$ be the image of $(x_n)_n$. 
Choose $m\in\N$ so that $4/m<\ep$. 
Define $x_{i,n}\in U(B)$ for $i=0,1,2,\dots$ 
by $x_{0,n}=1$ and $x_{i+1,n}=x_n\alpha_b(x_{i,n})$. 
It is easy to see that $(x_{i,n})_n$ is also a central sequence and 
$\alpha_a(x_{i,n})-x_{i,n}\to0$ as $n\to\infty$. 
Put $\check u=u(b,a)u(a^{-1},b)^*u(a^{-1},a)$ 
(we need $u(a^{-1},a)$ 
because $\alpha_{a^{-1}}$ is not necessarily equal to $\alpha_a^{-1}$). 
For each $i$, when $n$ is sufficiently large, 
\begin{align*}
&\tau(\log(x_{i+1,n}\alpha_a(x_{i+1,n}^*)))\\
&=\tau(\log(x_n\alpha_b(x_{i,n})\alpha_a(\alpha_b(x_{i,n}^*)x_n^*)))\\
&=\tau(\log(\alpha_b(x_{i,n})\alpha_a(\alpha_b(x_{i,n}^*))))
+\tau(\log(\alpha_a(x_n^*)x_n))\\
&=\tau(\log(\alpha_a^{-1}(\alpha_b(x_{i,n}))\alpha_b(x_{i,n}^*)))\\
&=\tau(\log(\check u^*
\alpha_b(\alpha_a(x_{i,n}))\check u\alpha_b(x_{i,n}^*)))\\
&=\tau(\log
(\check u^*\alpha_b(\alpha_a(x_{i,n}))\alpha_b(x_{i,n}^*)\check u))
+\tau(\log(\check u^*\alpha_b(x_{i,n})\check u\alpha_b(x_{i,n}^*)))\\
&=\tau(\log(\alpha_a(x_{i,n})x_{i,n}^*)), 
\end{align*}
where the last equality follows 
since $(\alpha_b(x_{i,n}))_n$ is central and 
$\check u$ is homotopic to $1$. 
Therefore, for each $i$, 
we have $\tau(\log(x_{i,n}\alpha_a(x_{i,n}^*)))=0$ 
for all sufficiently large $n$. 

Let $x_i\in U((B_\infty)^{\alpha_a})$ be the image of $(x_{i,n})_n$. 
By Lemma \ref{homotopy1}, 
there exist paths of unitaries $z_0,z_1\in C([0,1],(B_\infty)^{\alpha_a})$ 
such that $z_0(1)=z_1(1)=1$, $z_0(0)=x_m$, $z_1(0)=x_{m+1}$, 
$\Lip(z_0)\leq4$ and $\Lip(z_1)\leq4$. 
By Theorem \ref{so>R}, 
there exist projections $e,f\in(B_\infty)^{\alpha_a}$ such that 
\[
\alpha_b^m(e)=e,\quad \alpha_b^{m+1}(f)=f
\]
and 
\[
\sum_{i=0}^{m-1}\alpha_b^i(e)+\sum_{j=0}^m\alpha_b^j(f)=1. 
\]
We may further assume that 
the projections $e,f$ commute with 
$\{\alpha_b^k(x_i),z_0(t),z_1(t)\mid k\in\Z,i\in\N,t\in[0,1]\}$. 
Define a unitary $y\in(B_\infty)^{\alpha_a}$ by 
\[
y=\sum_{i=0}^{m-1}x_i\alpha_b^i(z_0(i/m))\alpha_b^i(e)
+\sum_{j=0}^mx_j\alpha_b^j(z_1(j/(m{+}1)))\alpha_b^j(f). 
\]
It is easy to check $\lVert x-y\alpha_b(y^*)\rVert\leq4/m<\ep$. 
This is a contradiction. 

We consider the case $F=\emptyset$ and $u(g,h)=1$ for all $g,h\in\Gamma$. 
In this case we would have a unitary $x$ in $(B^\infty)^{\alpha_a}$, 
and $\check u$ defined above is equal to $1$. 
By replacing $(B_\infty)^{\alpha_a}$ with $(B^\infty)^{\alpha_a}$, 
the same proof works. 
\end{proof}

\begin{lemma}
Let $B$ be a UHF algebra of infinite type with a unique trace $\tau$ and 
let $(\alpha,u):\Gamma\curvearrowright B$ be a strongly outer cocycle action. 
For any finite subset $F\subset B$, $\ep>0$ and 
$r\in\tau(K_0(B))$ with $\lvert r\rvert<1/2$, 
there exists $z\in U(B)$ such that 
\[
\lVert[z,c]\rVert<\ep\quad\forall c\in F,\quad 
\lVert z-\alpha_a(z)\rVert<\ep,\quad 
\lVert z-\alpha_b(z)\rVert<\lvert e^{2\pi\sqrt{-1}r}-1\rvert+\ep
\]
and 
\[
\tau(\log(z\alpha_b(z^*)))=2\pi\sqrt{-1}r. 
\]
\end{lemma}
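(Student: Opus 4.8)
Looking at this lemma, I need to construct a unitary $z$ that nearly commutes with $F$, is nearly $\alpha_a$-invariant, is approximately $\alpha_b$-invariant up to the prescribed displacement, and whose de la Harpe–Skandalis determinant relative to $\alpha_b$ equals exactly $2\pi\sqrt{-1}r$.

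Let me think about the structure. We have a UHF algebra of infinite type, so $K_0(B)$ is a dense subgroup of $\mathbb{Q}$ containing $\mathbb{Z}$, and $\tau(K_0(B))$ is dense in $\mathbb{R}$. We're given $r \in \tau(K_0(B))$ with $|r| < 1/2$. The target is a unitary close to being invariant under both $\alpha_a$ and $\alpha_b$, with a controlled "cocycle defect" under $\alpha_b$.

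The key tools available: Theorem \ref{so>R} (Rohlin projections for strongly outer cocycle actions of $\Gamma$ in the fixed point algebra $(B_\infty)^{\alpha_a}$), and Lemma \ref{CVonUHF1} (cohomology vanishing along $a$). Also the fact that $B$ is UHF of infinite type gives projections of any trace value in $\tau(K_0(B))$.

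Let me sketch the plan.

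=== PROOF PROPOSAL ===

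The plan is to build $z$ by hand inside the fixed point algebra $(B_\infty)^{\alpha_a}$ using the Rohlin towers supplied by Theorem \ref{so>R}, arranging the $\alpha_b$-defect to concentrate at the bottom of each tower so that its determinant comes out to exactly $2\pi\sqrt{-1}r$.

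First I would reduce to working in the central sequence algebra. Since $F$ and $\ep$ are arbitrary, it suffices to produce a unitary $z\in(B_\infty)^{\alpha_a}$ with $\lVert z-\alpha_b(z)\rVert\leq\lvert e^{2\pi\sqrt{-1}r}-1\rvert$ and $\tau_\infty(\log(z\alpha_b(z^*)))=2\pi\sqrt{-1}r$; a central sequence representing such a $z$ then yields the desired $z\in U(B)$ for large index. Because $B$ is of infinite type, $\tau(K_0(B))$ is a dense divisible-looking subgroup of $\R$, so I may write $r = \tau(p)$ for a projection $p\in B$, and I will use $p$ together with the scalar $e^{2\pi\sqrt{-1}}$ to create the determinant. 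The core construction is as follows. Fix a large $m\in\N$. By Theorem \ref{so>R} choose Rohlin projections $e,f\in(B_\infty)^{\alpha_a}$ with $\alpha_b^m(e)=e$, $\alpha_b^{m+1}(f)=f$ and
\[
\sum_{i=0}^{m-1}\alpha_b^i(e)+\sum_{j=0}^{m}\alpha_b^j(f)=1,
\]
and arrange that $e,f$ commute with a copy of $p$ sitting inside $(B_\infty)^{\alpha_a}$ (using that $B$ is infinite type to embed matrix units equivariantly). I will then define $z$ to be a product of $m$ or $m{+}1$ scalar phase jumps distributed one per rung of each tower: concretely, set
\[
z=\sum_{i=0}^{m-1}\omega^{i}\,\alpha_b^i(ep)+\sum_{i=0}^{m-1}\alpha_b^i(e(1{-}p))
+\sum_{j=0}^{m}\tilde\omega^{\,j}\,\alpha_b^j(fp)+\sum_{j=0}^{m}\alpha_b^j(f(1{-}p)),
\]
where $\omega=\exp(2\pi\sqrt{-1}r/m)$ and $\tilde\omega=\exp(2\pi\sqrt{-1}r/(m{+}1))$ are chosen so that after one full turn around each tower the accumulated phase is exactly $e^{2\pi\sqrt{-1}r}$ on the range of $p$.

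The point of this choice is that $z\alpha_b(z^*)$ is then, on each tower, a unitary differing from $1$ only by the single phase $\omega$ (resp. $\tilde\omega$) at each rung except where the tower wraps around, so $\lVert z-\alpha_b(z)\rVert$ is controlled by $\lvert\omega-1\rvert$ plus a wrap-around term supported on $e+f$ of small trace; this gives the bound $\lvert e^{2\pi\sqrt{-1}r}-1\rvert+\ep$ once $m$ is large. For the determinant computation I would use the additivity $\tau(\log(uv))=\tau(\log u)+\tau(\log v)$ for unitaries close to $1$ (the de la Harpe–Skandalis fact recalled in Section 2.5): summing the logarithms of the per-rung phases $\omega^{\pm1}$ weighted by $\tau(\alpha_b^i(ep))=\tau(ep)$ telescopes, and the wrap-around jump of size $2\pi\sqrt{-1}r$ on the bottom rung contributes $2\pi\sqrt{-1}r\,\tau(ep+fp)$. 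Since $\sum_i\alpha_b^i(e)+\sum_j\alpha_b^j(f)=1$ and $p$ commutes with the towers, $\tau(ep+fp)$ aggregates to $\tau(p)=r$ after accounting for the tower heights, and the telescoped interior terms cancel, leaving exactly $\tau_\infty(\log(z\alpha_b(z^*)))=2\pi\sqrt{-1}r$.

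The main obstacle I anticipate is the exact determinant bookkeeping: the logarithm is additive only when the partial products stay in the domain $\{\lVert u-1\rVert<2\}$, so the argument must be organized so that each factor of $z\alpha_b(z^*)$ is individually close to $1$ (forcing $m$ large enough that $\lvert\omega-1\rvert$ and $\lvert\tilde\omega-1\rvert$ are small) while the single large wrap-around jump is isolated and handled separately, and the weights $\tau(ep)$ versus $\tau(fp)$ must be combined correctly against the two different tower heights $m$ and $m{+}1$ so that the total comes out to $r$ rather than some nearby rational. A secondary technical point is ensuring $p$ can be chosen in $(B_\infty)^{\alpha_a}$ commuting with the Rohlin projections with $\tau_\infty(p)=r$; this I would secure by first splitting off a UHF tensor factor of $B$ on which $\alpha$ acts trivially (available since $B$ is infinite type and by a reindexation trick), producing the needed central projection of trace $r$ independent of the towers.
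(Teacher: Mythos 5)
Your construction does not produce the required determinant, and this is a fatal gap rather than a bookkeeping issue. Write $z=\sum_i c_i\,\alpha_b^i(ep)+\cdots$ with $c_i=\omega^i$, $\omega=\exp(2\pi\sqrt{-1}r/m)$, as you propose. Then $z\alpha_b(z^*)$ has rung phases $c_i\bar c_{i-1}=\omega$ for $i=1,\dots,m-1$ and wrap-around phase $c_0\bar c_{m-1}=\omega^{-(m-1)}=\exp(-2\pi\sqrt{-1}r(m{-}1)/m)$. Since $\lvert r(m{-}1)/m\rvert<1/2$, the standard branch gives $\log(\omega^{-(m-1)})=-2\pi\sqrt{-1}\,r(m{-}1)/m$, which exactly cancels the $(m{-}1)$ interior contributions: the trace of the logarithm over the $e$-tower is $0$, and likewise for the $f$-tower, so $\tau(\log(z\alpha_b(z^*)))=0$, not $2\pi\sqrt{-1}r$. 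More structurally, any such tower construction contributes $2\pi\sqrt{-1}\,k\,\tau(e)$ per tower, where $k$ is the integer winding of the accumulated phases; hitting the \emph{exact} value $2\pi\sqrt{-1}r$ would require $k_1\tau(e)+k_2\tau(f)=r$, and the traces of the Rohlin projections (constrained only by $m\tau(e)+(m{+}1)\tau(f)=1$) cannot be prescribed to make this an identity. The exactness of the determinant is precisely what the lemma is for (it is used to cancel determinants on the nose in Lemma \ref{CVonUHF2}), so an approximate version does not suffice. Note also that achieving both $\lVert z-\alpha_b(z)\rVert\leq\lvert e^{2\pi\sqrt{-1}r}-1\rvert+\ep$ and $\tau(\log(z\alpha_b(z^*)))=2\pi\sqrt{-1}r$ forces $z\alpha_b(z^*)$ to be within $O(\ep)$ of the scalar $e^{2\pi\sqrt{-1}r}\cdot 1$, i.e.\ you need an exact scalar commutation relation, which a two-tower phase distribution cannot deliver (it would need $rm\in\Z$ and $r(m{+}1)\in\Z$ simultaneously).

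The paper's route is different and avoids this entirely: since $r\in\tau(K_0(B))$ and $B$ is of infinite type, one writes $B=\bigotimes M_{d_n}$ with $d_n\mid d_{n+1}$ and $rd_n\in\Z$ eventually, takes the product-type automorphism $\sigma=\lim\Ad(y_1\cdots y_n)$ from Lemma \ref{homotopy1}, and picks the clock--shift partner $z_n\in M_{d_n}$ of $y_n$ with $z_ny_nz_n^*y_n^*=e^{2\pi\sqrt{-1}r}$ \emph{exactly}; then $z_n\sigma(z_n^*)$ is the scalar $e^{2\pi\sqrt{-1}r}$ and both the norm and the determinant come out on the nose. This is done for an explicit model action $\beta_a=\gamma_a\otimes\id$, $\beta_b=\gamma_b\otimes\sigma$ on $\mathcal{Z}\otimes B$, and the central sequence is then transported to the given $(\alpha,u)$ via the classification theorem (Theorem \ref{KleinUHF}), using that $w_b$ is homotopic to $1$ to preserve the determinant. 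Your suggestion to ``split off a UHF tensor factor on which $\alpha$ acts trivially by a reindexation trick'' is also unjustified for an arbitrary strongly outer cocycle action; the only tool in the paper that provides such a product decomposition is the cocycle conjugacy with a model action, which is a major theorem rather than a reindexation argument.
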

\begin{proof}
Suppose that we are given $r\in\tau(K_0(B))$ with $\lvert r\rvert<1/2$. 
Let $\sigma=\lim\Ad(y_1y_2\dots y_n)$ be the automorphism of 
$B=M_{d_1}\otimes M_{d_2}\otimes M_{d_3}\otimes\dots$ 
constructed in the proof of Lemma \ref{homotopy1}. 
Since $B$ is of infinite type, we may further impose the condition 
that $d_n$ divides $d_{n+1}$. 
We regard $M_{d_n}$ as a subalgebra of $B$. 
When $n$ is sufficiently large, there exists a unitary $z_n\in M_{d_n}$ 
such that $z_ny_nz_n^*y_n^*=e^{2\pi\sqrt{-1}r}$. 
Hence we have $\tau(\log(z_n\sigma(z_n^*)))=2\pi\sqrt{-1}r$ and 
$\lVert z_n-\sigma(z_n)\rVert=\lvert e^{2\pi\sqrt{-1}r}-1\rvert$. 
Moreover $(z_n)_n$ is a central sequence in $B$. 

Take a strongly outer action $\gamma:\Gamma\curvearrowright\mathcal{Z}$. 
Define $\beta:\Gamma\curvearrowright\mathcal{Z}\otimes B$ 
by $\beta_a=\gamma_a\otimes\id$ and $\beta_b=\gamma_b\otimes\sigma$. 
Set $z'_n=1\otimes z_n\in\mathcal{Z}\otimes B$. 
It is clear that 
$(z'_n)_n$ is a central sequence in $\mathcal{Z}\otimes B$ 
satisfying $z'_n=\beta_a(z'_n)$, 
$\lVert z'_n-\beta_b(z'_n)\rVert=\lvert e^{2\pi\sqrt{-1}r}-1\rvert$ and 
$\tau(\log(z'_n\beta_b(z'_n)^*))=2\pi\sqrt{-1}r$. 

Let $(\alpha,u):\Gamma\curvearrowright\mathcal{Z}\otimes B$ be 
a strongly outer cocycle action. 
By Theorem \ref{KleinUHF}, $(\alpha,u)$ is cocycle conjugate to $\beta$. 
Thus, there exist $\theta\in\Aut(\mathcal{Z}\otimes B)$ and 
a family of unitaries $(w_g)_{g\in\Gamma}$ in $\mathcal{Z}\otimes B$ such that 
\[
\theta\circ\alpha_g\circ\theta^{-1}=\Ad w_g\circ\beta_g
\quad\forall g\in\Gamma. 
\]
Put $z''_n=\theta^{-1}(z'_n)$. 
Then $(z''_n)_n$ is again a central sequence in $\mathcal{Z}\otimes B$. 
It is straightforward to see 
\[
\lim_{n\to\infty}\lVert z''_n-\alpha_a(z''_n)\rVert=0\quad\text{and}\quad 
\lim_{n\to\infty}\lVert z''_n-\alpha_b(z''_n)\rVert
=\lvert e^{2\pi\sqrt{-1}r}-1\rvert. 
\]
In addition, for sufficiently large $n$, 
\[
\tau(\log(z''_n\alpha_b(z''_n)^*))
=\tau(\log(z'_nw_b\beta_b(z'_n)^*w_b^*))
=\tau(\log(z'_n\beta_b(z'_n)^*))
=2\pi\sqrt{-1}r, 
\]
because $(z'_n)_n$ is central and $w_b$ is homotopic to $1$. 
The proof is completed. 
\end{proof}

\begin{lemma}\label{CVonUHF2}
Let $(\alpha,u):\Gamma\curvearrowright B$ be a strongly outer cocycle action 
on a UHF algebra $B$ of infinite type with a unique trace $\tau$. 
For any finite subset $F\subset B$ and $\ep>0$, 
there exist a finite subset $G\subset B$ and $\delta>0$ 
such that the following holds. 
If $x\in U(B)$ satisfies 
\[
\lVert[x,c]\rVert<\delta\quad\forall c\in G,\quad 
\lVert\alpha_a(x)-x\rVert<\delta,\quad 
\tau(\log(x\alpha_a(x^*)))=0,\quad 
\Delta_\tau(x)=0
\]
then there exists $y\in U(B)$ such that 
\[
\lVert[y,c]\rVert<\ep\quad\forall c\in F,\quad 
\lVert\alpha_a(y)-y\rVert<\ep,\quad 
\lVert x-y\alpha_b(y^*)\rVert<\ep
\]
and 
\[
\tau(\log(y^*x\alpha_b(y)))=0. 
\]
\end{lemma}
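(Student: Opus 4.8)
The plan is to solve the twisted cohomology equation in two stages: first produce a unitary $y_0$ satisfying the three norm estimates by a direct appeal to Lemma~\ref{CVonUHF1}, and then correct $y_0$ by a near-central unitary $z$, supplied by the preceding lemma, so as to pin the value of $\tau(\log(\,\cdot\,))$ to exactly $0$ without spoiling the norm estimates. The essential extra ingredient, compared with Lemma~\ref{CVonUHF1}, is the hypothesis $\Delta_\tau(x)=0$, which I would use to guarantee that the correction needed lies in $\tau(K_0(B))$ and is therefore accessible to the preceding lemma.

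First I would apply Lemma~\ref{CVonUHF1} to a finite set $F_1\supset F$ and a tolerance $\ep_1\ll\ep$ (to be fixed later), obtaining $G$ and $\delta$; for $x$ as in the hypothesis this yields $y_0\in U(B)$ with $\lVert[y_0,c]\rVert<\ep_1$ for $c\in F_1$, $\lVert\alpha_a(y_0)-y_0\rVert<\ep_1$ and $\lVert x-y_0\alpha_b(y_0^*)\rVert<\ep_1$. Set $w=y_0^*x\alpha_b(y_0)$, so that $\lVert w-1\rVert=\lVert x-y_0\alpha_b(y_0^*)\rVert<\ep_1$ and $\log w$ is defined. Since $\Delta_\tau$ is a homomorphism on $U(B)_0$ and $\tau\circ\alpha_b=\tau$ (as $\tau$ is the unique trace, whence $\Delta_\tau\circ\alpha_b=\Delta_\tau$), one computes
\[
\Delta_\tau(w)=-\Delta_\tau(y_0)+\Delta_\tau(x)+\Delta_\tau(\alpha_b(y_0))=\Delta_\tau(x)=0,
\]
so that $r_0:=(2\pi\sqrt{-1})^{-1}\tau(\log w)$ lies in $\tau(K_0(B))$; shrinking $\ep_1$ makes $\lvert r_0\rvert<1/2$.

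Next I would apply the preceding lemma with $r=r_0$ and a finite set containing $F$ and $w$, obtaining $z\in U(B)$ that approximately commutes with $F$ and with $w$, is approximately $\alpha_a$-fixed, has $\lVert z-\alpha_b(z)\rVert$ small (as $\lvert r_0\rvert$ is small), and satisfies $\tau(\log(z\alpha_b(z^*)))=2\pi\sqrt{-1}r_0$. Put $y=y_0z$. The three norm estimates for $y$ follow from those for $y_0$ and $z$, since $y\alpha_b(y^*)=y_0\,(z\alpha_b(z^*))\,\alpha_b(y_0^*)\approx y_0\alpha_b(y_0^*)\approx x$. For the trace condition, $y^*x\alpha_b(y)=z^*w\alpha_b(z)$; using that $z$ commutes with $w$ (so $z^*w\alpha_b(z)=w\,z^*\alpha_b(z)$, a product of two unitaries near $1$) and that $\tau(\log(z^*\alpha_b(z)))=-\tau(\log(z\alpha_b(z^*)))$ by the trace property, additivity of $\tau(\log(\,\cdot\,))$ gives
\[
\tau(\log(y^*x\alpha_b(y)))=\tau(\log w)+\tau(\log(z^*\alpha_b(z)))=2\pi\sqrt{-1}\,r_0-2\pi\sqrt{-1}\,r_0=0.
\]

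The delicate point is obtaining this last identity \emph{exactly} rather than up to a small error: the additivity used above requires $z$ to commute with $w$ \emph{on the nose}, while the preceding lemma only gives approximate commutation, and since $\tau(K_0(B))$ is dense in $\R$ the vanishing of $\Delta_\tau(y^*x\alpha_b(y))$ alone cannot force $\tau(\log(y^*x\alpha_b(y)))=0$. I would therefore organize the whole argument by contradiction through central sequences, exactly as in the proofs of Lemma~\ref{CVonUHF1} and Lemma~\ref{homotopy2}: a failing instance produces a central sequence $(x_n)_n$ of unitaries with $\alpha_a(x_n)-x_n\to0$, $\tau(\log(x_n\alpha_a(x_n^*)))=0$ and $\Delta_\tau(x_n)=0$, for which no admissible $y$ exists. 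I then run the construction above termwise, producing $y_{0,n}$, $w_n$ and $r_{0,n}\in\tau(K_0(B))$, and invoke a fast reindexation so that $z_n$ commutes with the finite data $w_n$ (and $x_n,y_{0,n}$) for each large $n$. This makes the additivity, and hence $\tau(\log(y_n^*x_n\alpha_b(y_n)))=0$ with $y_n=y_{0,n}z_n$, hold exactly for large $n$, contradicting the choice of $(x_n)_n$. The remaining bookkeeping of tolerances $F_1,\ep_1$ is routine.
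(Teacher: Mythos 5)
Your overall route is the paper's: first use Lemma \ref{CVonUHF1} to get $y_0$ with the three norm estimates, observe via $\Delta_\tau(x)=0$ that $r_0=(2\pi\sqrt{-1})^{-1}\tau(\log(y_0^*x\alpha_b(y_0)))$ lies in $\tau(K_0(B))$, and then correct by a near-central, nearly $\alpha_a$-fixed unitary $z$ from the preceding lemma with $\tau(\log(z\alpha_b(z^*)))=2\pi\sqrt{-1}r_0$. Up to that point everything is fine. The problem is your handling of the final identity. You insist that the additivity of $\tau\circ\log$ requires $z$ to commute with $w=y_0^*x\alpha_b(y_0)$ \emph{exactly}, and you propose to manufacture exact commutation by a contradiction argument plus fast reindexation. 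That mechanism does not deliver what you need: reindexation re-chooses representative sequences so that commutators tend to $0$ (i.e.\ gives exact commutation in the sequence algebra $B^\infty$), but it never makes two concrete unitaries of $B$ commute on the nose; and since, as you yourself note, $\tau(K_0(B))$ is dense in $\R$, an approximate version of the identity cannot be upgraded to the exact vanishing $\tau(\log(y^*x\alpha_b(y)))=0$ that the statement demands. As written, the last step of your argument therefore does not close.

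The gap is easy to repair, and the repair is what the paper does: no commutation is needed at all. Writing $y=y_0z$ one has $y^*x\alpha_b(y)=z^*w\alpha_b(z)=\Ad(z^*)\bigl(w\,\alpha_b(z)z^*\bigr)$, and since $\log(vuv^*)=v(\log u)v^*$ the trace property gives
\[
\tau\bigl(\log(z^*w\alpha_b(z))\bigr)=\tau\bigl(\log(w\,\alpha_b(z)z^*)\bigr)
=\tau(\log w)+\tau\bigl(\log(\alpha_b(z)z^*)\bigr)
=2\pi\sqrt{-1}\,r_0-2\pi\sqrt{-1}\,r_0=0,
\]
where the middle equality is the additivity of $\tau\circ\log$ for a product of two unitaries each at distance less than $1$ from $1$ (here one uses $\lVert w-1\rVert<\ep/2$ and $\lVert\alpha_b(z)z^*-1\rVert=\lVert\alpha_b(z)-z\rVert<\lvert e^{2\pi\sqrt{-1}r_0}-1\rvert+\ep/2$), and the last uses $\tau(\log(u^*))=-\tau(\log u)$. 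With this substitution your argument becomes a direct proof, no contradiction or reindexation needed, and coincides with the one in the paper.
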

\begin{proof}
Suppose that we are given $F\subset B$ and $\ep>0$. 
By applying Lemma \ref{CVonUHF1} to $F$ and $\ep/2$, 
we obtain a finite subset $G\subset B$ and $\delta>0$. 
We would like to show that $G$ and $\delta$ meet the requirement. 
For $x$ as in the statement of the lemma, 
there exists $y\in U(B)$ such that 
\[
\lVert[y,c]\rVert<\ep/2\quad\forall c\in F,\quad 
\lVert\alpha_a(y)-y\rVert<\ep/2\quad\text{and}\quad 
\lVert x-y\alpha_b(y^*)\rVert<\ep/2. 
\]
Set 
\[
r=\frac{1}{2\pi\sqrt{-1}}\tau(\log(y^*x\alpha_b(y))). 
\]
Then 
\[
\lvert e^{2\pi\sqrt{-1}r}-1\rvert\leq\lVert x-y\alpha_b(y^*)\rVert<\ep/2. 
\]
From $\Delta_\tau(x)=0$, one also obtains $r\in\tau(K_0(B))$. 
By using the lemma above, 
we can find a unitary $z\in B$ such that 
\[
\lVert[z,c]\rVert<\ep/2\quad\forall c\in F,\quad 
\lVert z-\alpha_a(z)\rVert<\ep/2,\quad 
\lVert z-\alpha_b(z)\rVert<\lvert e^{2\pi\sqrt{-1}r}-1\rvert+\ep/2
\]
and 
\[
\tau(\log(z\alpha_b(z^*)))=2\pi\sqrt{-1}r. 
\]
Then 
\[
\tau(\log(z^*y^*x\alpha_b(yz)))
=\tau(\log(y^*x\alpha_b(y)))+\tau(\log(\alpha_b(z)z^*))=0. 
\]
Thus $yz$ does the job. 
\end{proof}

The following is a cohomology vanishing type result 
for cocycle actions of the Klein bottle group $\Gamma$ 
on the Jiang-Su algebra $\mathcal{Z}$. 
We will use this proposition in the next subsection to prove 
the uniqueness of strongly outer cocycle actions of $\Gamma$ on $\mathcal{Z}$. 
In the proof, Corollary \ref{so>Z} plays a central role. 
We remark that the basic idea of the proof is the same as that of 
\cite[Lemma 7.5, 7.6]{MS} or \cite[Theorem 5.3]{S10JFA}. 

\begin{proposition}\label{CVonZ}
Let $(\alpha,u):\Gamma\curvearrowright\mathcal{Z}$ be 
a strongly outer cocycle action and 
let $\tau$ denote the unique trace on $\mathcal{Z}$. 
For any finite subset $F\subset\mathcal{Z}$ and $\ep>0$, 
there exist a finite subset $G\subset\mathcal{Z}$ and $\delta>0$ 
such that the following holds. 
If $x\in U(\mathcal{Z})$ satisfies 
\[
\lVert[x,c]\rVert<\delta\quad\forall c\in G,\quad 
\lVert\alpha_a(x)-x\rVert<\delta,\quad 
\Delta_\tau(x)=0
\]
then there exists $y\in U(\mathcal{Z})$ such that 
\[
\lVert[y,c]\rVert<\ep\quad\forall c\in F,\quad 
\lVert\alpha_a(y)-y\rVert<\ep\quad\text{and}\quad 
\lVert x-y\alpha_b(y^*)\rVert<\ep. 
\]
\end{proposition}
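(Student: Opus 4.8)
The plan is to argue by contradiction and to import the cohomology vanishing already proved on UHF algebras. Fixing $F$ and $\ep$ for which the conclusion fails, and taking an increasing sequence of finite sets $G_n$ with dense union together with $\delta_n\to0$, I would extract a central sequence $(x_n)_n$ of unitaries in $\mathcal{Z}$ with $\|\alpha_a(x_n)-x_n\|\to0$ and $\Delta_\tau(x_n)=0$, such that no $y\in U(\mathcal{Z})$ meets the three requirements for any $x_n$. The goal then becomes to manufacture a central sequence $(y_n)_n$ of unitaries with $\|\alpha_a(y_n)-y_n\|\to0$ and $\|x_n-y_n\alpha_b(y_n^*)\|\to0$; since such $y_n$ automatically commute with $F$ to within $\ep$ for large $n$, this contradicts the choice of $(x_n)_n$. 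Equivalently, one must solve the coboundary equation $x=y\alpha_b(y^*)$ for the image $x$ of $(x_n)_n$ inside $U\big((\mathcal{Z}_\omega)^{\alpha_a}\big)$, on which $\alpha_b$ acts as a genuine automorphism preserving $(\mathcal{Z}_\omega)^{\alpha_a}$ because $bab^{-1}=a^{-1}$. The hypothesis $\Delta_\tau(x)=0$ is exactly the necessary obstruction: uniqueness of $\tau$ gives $\tau\circ\alpha_b=\tau$, so any (approximate) coboundary has trivial determinant.

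The central step is to transport the problem to UHF algebras via $\mathcal{Z}$-stability, which is where Corollary \ref{so>Z} enters decisively. Since $M_{2^\infty}$ is strongly self-absorbing we have $\mathcal{Z}\otimes M_{2^\infty}\cong M_{2^\infty}$, and $(\alpha\otimes\id,u\otimes1)$ is a strongly outer cocycle action of $\Gamma$ on it; moreover $(\alpha,u)$ is strongly cocycle conjugate to $(\alpha\otimes\id,u\otimes1)$. I would feed $x_n\otimes1$ into Lemma \ref{CVonUHF2}: its hypotheses hold because $\Delta_\tau(x_n)=0$ in $\R/\Z$ descends to $\R/\Z[1/2]$, and the log-trace normalization becomes exact in the central-sequence limit where $\alpha_a(x)=x$. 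The output is a solution of the coboundary equation on $M_{2^\infty}$ carrying the \emph{exact} conclusion $\tau(\log(y^*(x_n\otimes1)\alpha_b(y)))=0$. Repeating with $M_{3^\infty}$ produces a second, independently normalized solution.

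I expect the main obstacle to be the descent back to $\mathcal{Z}$. The unitaries produced above live in $\mathcal{Z}\otimes M_{2^\infty}$ and $\mathcal{Z}\otimes M_{3^\infty}$, not in $\mathcal{Z}=\mathcal{Z}\otimes1$, and pushing them into the first tensor factor perturbs the de la Harpe--Skandalis determinant by an element of $\tau(K_0(M_{2^\infty}))=\Z[1/2]$, respectively $\Z[1/3]$. Here the vanishing-determinant conclusion of Lemma \ref{CVonUHF2} is essential: it pins the residual obstructions of the two solutions to $\Z[1/2]$ and to $\Z[1/3]$, and the arithmetic identity $\Z[1/2]\cap\Z[1/3]=\Z$ together with $\Delta_\tau(x)=0\in\R/\Z$ forces the common obstruction to vanish. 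This should allow the two partial solutions to be assembled into a single $y$ genuinely in $U(\mathcal{Z})$ solving $x=y\alpha_b(y^*)$.

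The technical heart will be carrying out this gluing with uniform control. I would use analogues of the homotopy estimate of Lemma \ref{homotopy2} and of the approximate representability supplied by Corollary \ref{UHFapprepre} (the same devices that let one push paths of unitaries back from a crossed product into the algebra), so that the reindexed $y_n$ lie in $\mathcal{Z}$ and stay approximately central and approximately $\alpha_a$-invariant, with the coboundary relation surviving the passage between the $M_{2^\infty}$- and $M_{3^\infty}$-models. Once such $(y_n)_n$ is produced, choosing $n$ large contradicts the assumption on $(x_n)_n$ and completes the argument.
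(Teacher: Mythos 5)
Your overall strategy coincides with the paper's: solve the coboundary equation over $\mathcal{Z}\otimes M_{2^\infty}$ and $\mathcal{Z}\otimes M_{3^\infty}$ separately via Lemma \ref{CVonUHF2}, exploiting its exact normalization $\tau(\log(y_j^*(x\otimes1)(\alpha_b\otimes\id)(y_j)))=0$, and then return to $\mathcal{Z}$. But the step you yourself flag as ``the technical heart'' --- assembling the two solutions into a unitary of $\mathcal{Z}$ --- is exactly where the sketch stops being a proof, and the mechanism you propose for it is not the one that works. The arithmetic $\Z[1/2]\cap\Z[1/3]=\Z$ plays no role: since Lemma \ref{CVonUHF2} normalizes both solutions \emph{exactly} (not merely modulo $\tau(K_0(B_j))$), the unitary $y=y_0^*y_1\in\mathcal{Z}\otimes B_0\otimes B_1$ already satisfies $\tau(\log(y(\alpha_b\otimes\id)(y^*)))=0$ on the nose, and the actual gluing device is Lemma \ref{homotopy2}: it yields a short, approximately central, approximately $\alpha_a$- and $\alpha_b$-invariant path $z(t)$ from $1$ to $y$. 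The structural point you are missing is that the path $t\mapsto y_0z(t)$ has endpoints $y_0\in\mathcal{Z}\otimes B_0$ and $y_1\in\mathcal{Z}\otimes B_1$, hence defines a unitary in $\mathcal{Z}\otimes Z$, where
\[
Z=\{f\in C([0,1],B_0\otimes B_1)\mid f(0)\in B_0\otimes\C,\ f(1)\in\C\otimes B_1\}
\]
is the generalized dimension drop algebra, which by R\o rdam--Winter embeds unitally into $\mathcal{Z}$. Without this identification there is no way to interpret the two partial solutions as a single unitary of $\mathcal{Z}\otimes\mathcal{Z}$; an intersection-of-determinant-ranges argument produces no element of $\mathcal{Z}$ at all. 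The coprimality of $2^\infty$ and $3^\infty$ enters only through the existence of the unital embedding $Z\hookrightarrow\mathcal{Z}$.

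The final descent also needs to be stated correctly. Corollary \ref{so>Z} is not used to ``transport the problem to UHF algebras'' (and your assertion that $(\alpha,u)$ is strongly cocycle conjugate to $(\alpha\otimes\id,u\otimes1)$ on $\mathcal{Z}\otimes M_{2^\infty}$ cannot be right, since the underlying algebras differ; Lemma \ref{CVonUHF2} is simply applied to the strongly outer cocycle action $(\alpha\otimes\id,u\otimes1)$ on $\mathcal{Z}\otimes B_j$ directly). Its actual role is at the very end: it provides a unital embedding of $\mathcal{Z}$ into $(\mathcal{Z}_\infty)^\alpha$, hence a unital homomorphism $\pi:\mathcal{Z}\otimes\mathcal{Z}\to\mathcal{Z}^\infty$ with $\pi(c\otimes1)=c$ and $\alpha_g\circ\pi=\pi\circ(\alpha_g\otimes\id)$, and applying $\pi$ to the glued unitary in $\mathcal{Z}\otimes Z\subset\mathcal{Z}\otimes\mathcal{Z}$ produces the desired $y$. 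Your remaining ingredients --- the central-sequence framing and the observation that $\Delta_\tau(x)=0$ together with $\lVert\alpha_a(x)-x\rVert<\delta$ and $\tau(K_0(\mathcal{Z}))=\Z$ force $\tau(\log(x\alpha_a(x^*)))=0$ --- are sound.
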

\begin{proof}
According to \cite[Proposition 3.3]{RW10crelle}, 
the Jiang-Su algebra $\mathcal{Z}$ contains a unital subalgebra 
isomorphic to 
\[
Z=\{f\in C([0,1],B_0\otimes B_1)\mid 
f(0)\in B_0\otimes\C, \ f(1)\in \C\otimes B_1\}, 
\]
where $B_0$ and $B_1$ are the UHF algebras of type $2^\infty$ and $3^\infty$, 
respectively. 
We identify $B_0$ and $B_1$ with $B_0\otimes\C$ and $\C\otimes B_1$. 
Set $B=B_0\otimes B_1$ and 
denote the unique trace on $\mathcal{Z}\otimes B$ by $\tau$. 
Clearly 
$(\alpha\otimes\id,u\otimes1):\Gamma\curvearrowright\mathcal{Z}\otimes B_j$ 
and $(\alpha\otimes\id,u\otimes1):\Gamma\curvearrowright\mathcal{Z}\otimes B$ 
are strongly outer. 

Suppose that we are given a finite subset $F\subset\mathcal{Z}$ and $\ep>0$. 
By applying Lemma \ref{homotopy2} to 
$(\alpha\otimes\id,u\otimes1):\Gamma\curvearrowright\mathcal{Z}\otimes B$, 
$\{c\otimes1\mid c\in F\}$ and $\ep/2$, 
we obtain a finite subset $G_1\subset\mathcal{Z}\otimes B$ and $\delta_1>0$. 
By taking a smaller $\delta_1$, 
we may assume that 
there exist finite subsets $G_1'\subset\mathcal{Z}\otimes\C$, 
$G_{1,0}\subset\C\otimes B_0$ and 
$G_{1,1}\subset\C\otimes B_1$ such that $G_1=G_1'\cup G_{1,0}\cup G_{1,1}$, 
and that $\delta_1$ is smaller than $\ep$. 
For each $j=0,1$, by applying Lemma \ref{CVonUHF2} to 
$(\alpha\otimes\id,u\otimes1):\Gamma\curvearrowright\mathcal{Z}\otimes B_j$, 
$G_1'\cup G_{1,j}\cup\{c\otimes 1\mid c\in F\}$ and $\delta_1/2$, 
we obtain a finite subset $G_{2,j}\subset\mathcal{Z}\otimes B_j$ and 
$\delta_{2,j}>0$. 
We may assume that $G_{2,j}$ is contained in 
$\{c\otimes1\mid c\in G_{3,j}\}\cup\{1\otimes d\mid d\in B_j\}$ 
for some finite subsets $G_{3,j}\subset\mathcal{Z}$. 
Let $G=G_{3,0}\cup G_{3,1}$ and $\delta=\min\{\delta_{2,0},\delta_{2,1},2\}$. 

Suppose that we are given a unitary $x\in\mathcal{Z}$ satisfying 
\[
\lVert[x,c]\rVert<\delta\quad\forall c\in G,\quad 
\lVert\alpha_a(x)-x\rVert<\delta,\quad 
\Delta_\tau(x)=0. 
\]
For each $j=0,1$, we have 
\[
\lVert[x\otimes1,c]\rVert<\delta_{2,j}\quad\forall c\in G_{2,j},\quad 
\lVert(\alpha_a\otimes\id)(x\otimes1)-x\otimes1\rVert
<\delta_{2,j}
\]
and 
\[
\Delta_\tau(x\otimes1)=0. 
\]
Since $\tau(K_0(\mathcal{Z}))$ equals $\Z$, we also have 
\[
\tau(\log((x\otimes1)((\alpha_a\otimes\id)(x\otimes1))^*))=0. 
\]
Then Lemma \ref{CVonUHF2} implies that 
there exists $y_j\in U(\mathcal{Z}\otimes B_j)$ such that 
\[
\lVert[y_j,c]\rVert<\delta_1/2
\quad\forall c\in G_1'\cup G_{1,j}\cup\{c\otimes1\mid c\in F\},\quad 
\lVert(\alpha_a\otimes\id)(y_j)-y_j\rVert<\delta_1/2, 
\]
\[
\lVert x\otimes1-y_j(\alpha_b\otimes\id)(y_j^*)\rVert<\delta_1/2
\quad\text{and}\quad \tau(\log(y_j^*(x\otimes1)(\alpha_b\otimes\id)(y_j)))=0. 
\]
Put $y=y_0^*y_1\in\mathcal{Z}\otimes B$. 
Then one can verify 
\[
\lVert[y,c]\rVert<\delta_1\quad\forall c\in G_1,\quad 
\lVert(\alpha_a\otimes\id)(y)-y\rVert<\delta_1,\quad 
\lVert(\alpha_b\otimes\id)(y)-y\rVert<\delta_1
\]
and 
\begin{align*}
&\tau(\log(y(\alpha_b\otimes\id)(y^*)))\\
&=\tau(\log(y_0^*y_1(\alpha_b\otimes\id)(y_1^*y_0)))\\
&=\tau(\log(y_1(\alpha_b\otimes\id)(y_1^*y_0)y_0^*))\\
&=\tau(\log((x\otimes1)^*y_1
(\alpha_b\otimes\id)(y_1^*y_0)y_0^*(x\otimes1)))\\
&=\tau(\log((x\otimes1)^*y_1(\alpha_b\otimes\id)(y_1^*)))
+\tau(\log((\alpha_b\otimes\id)(y_0)y_0^*(x\otimes1)))\\
&=0. 
\end{align*}
It follows from Lemma \ref{homotopy2} that 
there exists a continuous map $z:[0,1]\to U(\mathcal{Z}\otimes B)$ such that 
$z(0)=1$, $z(1)=y$, 
\[
\lVert[z(t),c\otimes1]\rVert<\ep/2,\quad 
\lVert(\alpha_a\otimes\id)(z(t))-z(t)\rVert<\ep/2
\]
and 
\[
\lVert(\alpha_b\otimes\id)(z(t))-z(t)\rVert<\ep/2
\]
for all $c\in F$ and $t\in[0,1]$. 
Let $\tilde w(t)=y_0z(t)$. 
Then one may think of $w$ as a unitary in 
$\mathcal{Z}\otimes Z\subset\mathcal{Z}\otimes\mathcal{Z}$ 
because $w(j)=y_j$ belongs to $\mathcal{Z}\otimes B_j$ for each $j=0,1$. 
Furthermore, 
\[
\lVert[w,c\otimes1]\rVert<\ep\quad\forall c\in F,\quad 
\lVert(\alpha_a\otimes\id)(w)-w\rVert<\ep
\]
and 
\[
\lVert x\otimes1-w(\alpha_b\otimes\id)(w^*)\rVert<\ep. 
\]

By Corollary \ref{so>Z}, there exists a unital embedding 
of $\mathcal{Z}$ into $(\mathcal{Z}_\infty)^\alpha$. 
Hence we can find a unital homomorphism 
$\pi:\mathcal{Z}\otimes\mathcal{Z}\to\mathcal{Z}^\infty$ such that 
\[
\pi(c\otimes1)=c\quad\forall c\in\mathcal{Z},\quad 
\alpha_g\circ\pi=\pi\circ(\alpha_g\otimes\id)\quad\forall g\in\Gamma. 
\]
Consequently, we obtain 
\[
\lVert[\pi(w),c]\rVert<\ep\quad\forall c\in F,\quad 
\lVert\alpha_a(\pi(w))-\pi(w)\rVert<\ep
\]
and 
\[
\lVert x-\pi(w)\alpha_b(\pi(w)^*)\rVert<\ep, 
\]
thereby completing the proof. 
\end{proof}

As a byproduct of Lemma \ref{CVonUHF1}, we get the following. 

\begin{proposition}\label{CVonUHF3}
Let $\alpha:\Gamma\curvearrowright B$ be a strongly outer action 
on a UHF algebra $B$ with a unique trace $\tau$. 
For an $\alpha$-cocycle $(x_g)_{g\in\Gamma}$ in $B$, 
the following conditions are equivalent. 
\begin{enumerate}
\item $\Delta_\tau(x_a)=0$. 
\item There exists a sequence $(w_n)_n$ of unitaries in $B$ such that 
$w_n\alpha_g(w_n^*)\to x_g$ as $n\to\infty$ for every $g\in\Gamma$. 
\end{enumerate}
\end{proposition}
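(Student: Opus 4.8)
The plan is to dispatch $(2)\Rightarrow(1)$ by a direct determinant computation, and to reduce $(1)\Rightarrow(2)$ to an approximate one-step coboundary statement over the two generators $a,b$ of $\Gamma$, which I then split into an $a$-part handled by $\Z$-cohomology vanishing and a $b$-part handled by the $F=G=\emptyset$ case of Lemma \ref{CVonUHF1}.

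For $(2)\Rightarrow(1)$, recall that on a UHF algebra $U(B)=U(B)_0$, that $\Delta_\tau$ is a continuous homomorphism, and that $\Delta_\tau\circ\alpha_g=\Delta_\tau$ for every $g$: indeed $\tau\circ\alpha_g=\tau$ since $\tau$ is the unique trace, and $K_0(\alpha_g)=\id$ because any automorphism of a UHF algebra acts trivially on $K_0$. Hence $\Delta_\tau(w_n\alpha_a(w_n^*))=\Delta_\tau(w_n)-\Delta_\tau(\alpha_a(w_n))=0$ for all $n$, and $w_n\alpha_a(w_n^*)\to x_a$ gives $\Delta_\tau(x_a)=0$ by continuity. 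For the converse, note first that it suffices to produce, for each $\ep>0$, a single $w\in U(B)$ with $\lVert w\alpha_a(w^*)-x_a\rVert<\ep$ and $\lVert w\alpha_b(w^*)-x_b\rVert<\ep$. From the cocycle identity $x_{gh}=x_g\alpha_g(x_h)$ and the relation $w\alpha_{gh}(w^*)=(w\alpha_g(w^*))\,\alpha_g(w\alpha_h(w^*))$ (together with the analogue for inverses), controlling the two generators controls every fixed $g$ up to an error proportional to the word length of $g$; taking $\ep=\ep_n\to0$ and the corresponding $w_n$ then yields $w_n\alpha_g(w_n^*)\to x_g$ for all $g\in\Gamma$.

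The first main step is to handle the $a$-direction. Since $\alpha_a$ generates a strongly outer $\Z$-action and $\Delta_\tau(x_a)=0$, approximate cohomology vanishing (via the Rohlin towers of Theorem \ref{so>R} and the homotopy of Lemma \ref{homotopy1}, the latter being precisely adapted to the normalization $\tau(\log(\,\cdot\,\alpha_a(\,\cdot\,)^*))=0$) produces $v\in U(B)$ with $v\alpha_a(v^*)\approx x_a$ to high precision \emph{and} with $\tau(\log(v^*x_a\alpha_a(v)))=0$ exactly. Passing to the cohomologous cocycle $\tilde x_g=v^*x_g\alpha_g(v)$ (one checks directly this is again an $\alpha$-cocycle), I may assume $\tilde x_a$ is close to $1$ with $\tau(\log\tilde x_a)=0$. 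The Klein relation $bab^{-1}=a^{-1}$ combined with the cocycle identity yields $\tilde x_b\alpha_b(\tilde x_a)=\tilde x_{a^{-1}}\alpha_{a^{-1}}(\tilde x_b)$ and, after applying $\alpha_a$, the exact identity $\alpha_a(\tilde x_b)=\tilde x_a^*\,\tilde x_b\,\alpha_{ab}(\tilde x_a^*)$. Since $\tilde x_a\approx1$ this gives $\alpha_a(\tilde x_b)\approx\tilde x_b$, and rewriting $\tilde x_b\alpha_a(\tilde x_b^*)=\tilde x_b\,\alpha_{ab}(\tilde x_a)\,\tilde x_b^*\,\tilde x_a$ and taking traces of logarithms (all factors near $1$, using additivity of $\tau(\log(\cdot))$ for unitaries close to $1$, trace-invariance under conjugation, and $\tau\circ\alpha_{ab}=\tau$) shows $\tau(\log(\tilde x_b\alpha_a(\tilde x_b^*)))=2\tau(\log\tilde x_a)=0$. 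I may now apply the $F=G=\emptyset$, $u\equiv1$ case of Lemma \ref{CVonUHF1} to the unitary $\tilde x_b$: for any prescribed $\ep$ it yields $y\in U(B)$ with $\alpha_a(y)\approx y$ and $y\alpha_b(y^*)\approx\tilde x_b$. Setting $w=vy$ then gives $w\alpha_a(w^*)=v\,y\alpha_a(y^*)\,\alpha_a(v^*)\approx v\alpha_a(v^*)\approx x_a$ and $w\alpha_b(w^*)=v\,y\alpha_b(y^*)\,\alpha_b(v^*)\approx v\,\tilde x_b\,\alpha_b(v^*)=x_b$, as required.

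The step I expect to be delicate is the exact normalization $\tau(\log(v^*x_a\alpha_a(v)))=0$ in the $a$-direction: because $\tau(K_0(B))$ is dense in $\R$ for a UHF algebra, the quantity $\tau(\log\tilde x_a)$ is \emph{a priori} only known to lie in $2\pi\sqrt{-1}\,\tau(K_0(B))$ and to be small, not to vanish, and Lemma \ref{CVonUHF1} requires it to be exactly zero. The point is that $\Delta_\tau(x_a)=0$ is precisely the obstruction to an approximate $\alpha_a$-coboundary with vanishing de la Harpe--Skandalis determinant, so the normalization can be absorbed: any residual small value $2\pi\sqrt{-1}\,s$ with $s\in\tau(K_0(B))$ can be cancelled by replacing $v$ by $vk$ for an approximately $\alpha_a$-fixed $k$ realizing $\tau(\log(k\alpha_a(k^*)))=2\pi\sqrt{-1}\,s$, which exists by a Rohlin-tower rotation for the strongly outer automorphism $\alpha_a$ (the $a$-analogue of the auxiliary lemma preceding Lemma \ref{CVonUHF2}). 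Verifying that this correction leaves $\tilde x_a$ close to $1$ and that the cancellation is exact is the part that needs the most care.
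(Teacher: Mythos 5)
Your proof of $(1)\Rightarrow(2)$ follows the paper's route in all essentials: trivialize $x_a$ approximately using the Rohlin property of $\alpha_a$, absorb the residual value $\tau(\log \tilde x_a)\in 2\pi\sqrt{-1}\,\tau(K_0(B))$ by a further conjugation with an approximately $\alpha_a$-fixed ``rotation'' unitary (the paper quotes Lemma~8.3 of \cite{MS} for exactly this), use the Klein relation to deduce that $\tilde x_b$ is approximately $\alpha_a$-invariant with $\tau(\log(\tilde x_b\alpha_a(\tilde x_b^*)))=2\tau(\log\tilde x_a)=0$, and then feed $\tilde x_b$ into the $F=G=\emptyset$ case of Lemma~\ref{CVonUHF1}. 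The step you single out as delicate is handled in the paper exactly as you propose, so this half is fine.

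The direction $(2)\Rightarrow(1)$, however, has a genuine gap. You argue that $\Delta_\tau(w_n\alpha_a(w_n^*))=0$ for each $n$ and conclude $\Delta_\tau(x_a)=0$ ``by continuity.'' But $\Delta_\tau$ takes values in $\R/\tau(K_0(B))$, and for an infinite-dimensional UHF algebra $\tau(K_0(B))$ is a \emph{dense} subgroup of $\R$, so the quotient is not Hausdorff and no continuity argument can pass the vanishing of $\Delta_\tau$ to a norm limit. Concretely, writing $r_n=(2\pi\sqrt{-1})^{-1}\tau(\log(w_n\alpha_a(w_n^*)x_a^*))$ for large $n$, one only gets $\Delta_\tau(x_a)=-r_n+\tau(K_0(B))$ with $r_n\to0$, and a class of $\R/\tau(K_0(B))$ can be represented by arbitrarily small reals without being zero. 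The claim you are implicitly using --- that a norm limit of $\alpha_a$-coboundaries has vanishing determinant --- is in fact \emph{false}: since $\alpha_a$ has the Rohlin property, \emph{every} unitary of $B$ is a norm limit of coboundaries $w_n\alpha_a(w_n^*)$ (this is precisely the one-cocycle vanishing the paper invokes at the start of its $(1)\Rightarrow(2)$ argument, with no determinant hypothesis), so your argument would prove $\Delta_\tau\equiv0$ on $U(B)$. The Klein bottle relation, which your $(2)\Rightarrow(1)$ never uses, is indispensable here. The paper's fix: pick a single $w$ with $\lVert x_g-w\alpha_g(w^*)\rVert<1$ for $g=a,b,a^{-1}$ simultaneously, pass to $x'_g=w^*x_g\alpha_g(w)$ (all within distance $1$ of the identity), and apply $\tau\circ\log$ to the exact identity $x'_a\alpha_a(x'_b)=x'_b\alpha_b(x'_{a^{-1}})$ together with $x'_{a^{-1}}=\alpha_a^{-1}(x'_a)^*$; trace invariance then yields $\tau(\log x'_a)=-\tau(\log x'_a)$, hence $\tau(\log x'_a)=0$ exactly, and $\Delta_\tau(x_a)=\Delta_\tau(x'_a)=0$.
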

\begin{proof}
We show (2)$\Rightarrow$(1). 
Suppose that there exists a unitary $w\in B$ such that 
$\lVert x_g-w\alpha_g(w^*)\rVert<1$ for each $g=a,b,a^{-1}$. 
Let $x'_g=w^*x_g\alpha_g(w)$. 
Then $(x'_g)_g$ is also an $\alpha$-cocycle, and so we have 
\[
x'_a\alpha_a(x'_b)=x'_b\alpha_b(x'_{a^{-1}}). 
\]
This, together with $x'_{a^{-1}}=\alpha_a^{-1}(x'_a)^*$, implies 
\[
\tau(\log x'_a)=\tau(\log x'_{a^{-1}})=-\tau(\log x'_a). 
\]
Thus $\tau(\log x'_a)=0$. 
Hence $\Delta_\tau(x_a)=\Delta_\tau(x'_a)=0$. 

Let us consider the other implication. 
Take $\ep>0$ arbitrarily. 
By applying Lemma \ref{CVonUHF1} to $\alpha:\Gamma\curvearrowright B$ 
with the empty set and $\ep/3$ in place of $F$ and $\ep$, 
we obtain $\delta>0$. 
Since the $\Z$-action generated by $\alpha_a$ has the Rohlin property, 
there exists a unitary $u\in B$ such that 
\[
\lVert x_a-u\alpha_a(u^*)\rVert<\min\{\delta/4,\ep/3\}. 
\]
Put $x'_g=u^*x_g\alpha_g(u)$ for all $g\in\Gamma$. 
From $x'_a\alpha_a(x'_b)=x'_b\alpha_b(x'_{a^{-1}})$, one obtains 
\[
\lVert\alpha_a(x'_b)-x'_b\rVert<\delta/2. 
\]
Besides, 
\begin{align*}
\tau(\log(x'_b\alpha_a(x'_b)^*))
&=\tau(\log(x'_a\alpha_a(x'_b)\alpha_b(x'_{a^{-1}})^*\alpha_a(x'_b)^*))\\
&=\tau(\log x'_a)+\tau(\log(\alpha_b(x'_{a^{-1}})^*))\\
&=2\tau(\log x'_a). 
\end{align*}
By $\Delta_\tau(x'_a)=\Delta_\tau(x_a)=0$, we have 
\[
r=\frac{1}{2\pi\sqrt{-1}}\tau(\log x'_a))\in\tau(K_0(B)). 
\]
It follows from \cite[Lemma 8.3]{MS} that 
there exists a unitary $v\in B$ such that 
\[
\lVert v-\alpha_a(v)\rVert<\min\{\delta/4,\ep/3\},\quad 
\tau(\log(v\alpha_a(v^*)))=2\pi\sqrt{-1}r. 
\]
Put $x''_g=v^*x'_g\alpha_g(v)$ for all $g\in\Gamma$. 
Then one has 
\[
\lVert\alpha_a(x''_b)-x''_b\rVert
=\lVert\alpha_a(v^*x'_b\alpha_b(v))-v^*x'_b\alpha_b(v)\rVert
<\delta/4+\delta/2+\delta/4=\delta
\]
and 
\begin{align*}
\tau(\log(x''_b\alpha_a(x''_b)^*))
&=\tau(\log(v^*x'_b\alpha_b(v)\alpha_a(\alpha_b(v^*))
\alpha_a(x'_b)^*\alpha_a(v)))\\
&=\tau(\log(x'_b\alpha_b(v)\alpha_a(\alpha_b(v^*))\alpha_a(x'_b)^*))
+\tau(\log(\alpha_a(v)v^*))\\
&=\tau(\log(v\alpha_{a^{-1}}(v^*)))+\tau(\log(\alpha_a(x'_b)^*x'_b))
+\tau(\log(\alpha_a(v)v^*))\\
&=\tau(\log(x'_b\alpha_a(x'_b)^*))-4\pi\sqrt{-1}r\\
&=0. 
\end{align*}
Therefore, by Lemma \ref{CVonUHF1}, we can find a unitary $w\in B$ such that 
\[
\lVert\alpha_a(w)-w\rVert<\ep/3
\quad\text{and}\quad \lVert x''_b-w\alpha_b(w^*)\rVert<\ep/3. 
\]
Consequently we get 
\[
\lVert x_a-uvw\alpha_a(w^*v^*u^*)\rVert
=\ep/3+\ep/3+\lVert x_a-u\alpha_a(u^*)\rVert<\ep
\]
and 
\[
\lVert x_b-uvw\alpha_b(w^*v^*u^*)\rVert
=\lVert x''_b-w\alpha_b(w^*)\rVert<\ep/3. 
\]
Since $\ep$ was arbitrary, (2) has been shown. 
\end{proof}

In the setting of the proposition above, 
the map $g\mapsto\Delta_\tau(x_g)$ is a homomorphism 
from $\Gamma$ to $\R/\tau(K_0(B))$. 
Therefore we always have $2\Delta_\tau(x_a)=0$ because of $bab^{-1}=a^{-1}$. 
Moreover, if $K_0(B)$ is 2-divisible 
(or equivalently, 
if $B$ absorbs the UHF algebra of type $2^\infty$ tensorially), 
then we always have $\Delta_\tau(x_a)=0$. 

\begin{theorem}\label{KleinUHF2}
Let $B$ be a UHF algebra. 
If $B$ absorbs the UHF algebra of type $2^\infty$ tensorially 
or $B$ does not contain a unital copy of $M_2$, then 
all strongly outer actions of $\Gamma$ on $B$ 
are strongly cocycle conjugate. 
\end{theorem}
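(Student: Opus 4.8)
The plan is to deduce the theorem from the cocycle conjugacy of Theorem~\ref{KleinUHF} together with the cohomology vanishing result of Proposition~\ref{CVonUHF3}. Given two strongly outer actions $\alpha,\beta\colon\Gamma\curvearrowright B$, I would first regard them as cocycle actions with trivial $2$-cocycle and invoke Theorem~\ref{KleinUHF} to produce an isomorphism $\theta\in\Aut(B)$ and a $\beta$-cocycle $(w_g)_{g\in\Gamma}$ in $B$ with $\theta\circ\alpha_g\circ\theta^{-1}=\Ad w_g\circ\beta_g$ (the cocycle condition $w_g\beta_g(w_h)=w_{gh}$ being exactly what the conjugacy relation gives when both $2$-cocycles are trivial). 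To upgrade this to a \emph{strong} cocycle conjugacy I must exhibit unitaries $(x_n)_n$ with $x_n\beta_g(x_n^*)\to w_g$. By Proposition~\ref{CVonUHF3}, applied to $\beta$ and the $\beta$-cocycle $(w_g)$ and using that the UHF algebra $B$ has a unique trace $\tau$, such a sequence exists precisely when $\Delta_\tau(w_a)=0$. Thus the whole problem reduces to arranging $\Delta_\tau(w_a)=0$ after possibly modifying the cocycle.

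The next step is to pin down this obstruction. By the remark following Proposition~\ref{CVonUHF3}, $g\mapsto\Delta_\tau(w_g)$ is a homomorphism $\Gamma\to\R/\tau(K_0(B))$, and the relation $bab^{-1}=a^{-1}$ forces $2\Delta_\tau(w_a)=0$; hence $\Delta_\tau(w_a)$ lies in the $2$-torsion subgroup of $\R/\tau(K_0(B))$, which I would identify with $\tau(K_0(B))/2\tau(K_0(B))$. When $B$ absorbs the $2^\infty$ UHF algebra, $K_0(B)$ and hence $\tau(K_0(B))$ is $2$-divisible, so this $2$-torsion subgroup vanishes and $\Delta_\tau(w_a)=0$ automatically, with no further work.

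The interesting case, and the main obstacle, is when $B$ contains no unital copy of $M_2$, i.e.\ $\zeta_B(2)=0$. Here $\tau(K_0(B))\subset\Q$ has only odd denominators, and a short computation shows $\tau(K_0(B))/2\tau(K_0(B))\cong\Z/2$, with nonzero class represented by $\tfrac12+\tau(K_0(B))$. The key trick I would use is to twist the cocycle by the scalar character $\mu\colon\Gamma\to\T$ with $\mu(a)=-1$ and $\mu(b)=1$; this is a genuine character because $\mu(a)^2=1$ is compatible with $bab^{-1}=a^{-1}$. Setting $w_g'=\mu(g)w_g$ yields another $\beta$-cocycle implementing the \emph{same} conjugacy, since the $\mu(g)$ are central and $\Ad w_g'=\Ad w_g$, while $\Delta_\tau(w_a')=\Delta_\tau(w_a)+\Delta_\tau(-1)=\Delta_\tau(w_a)+\tfrac12+\tau(K_0(B))$. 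As $\Delta_\tau(w_a)$ is either $0$ or $\tfrac12+\tau(K_0(B))$, choosing $\mu$ trivial or as above kills it, giving a $\beta$-cocycle $(w_g')$ with $\Delta_\tau(w_a')=0$; Proposition~\ref{CVonUHF3} then supplies $(x_n)_n$ with $x_n\beta_g(x_n^*)\to w_g'$, completing the proof. The crux is the observation that $a$ has order two in $\Gamma^{\mathrm{ab}}$, so the residual obstruction is a $2$-torsion class realized by the scalar $-1$ through $\Delta_\tau(-1)=\tfrac12$; this is exactly why the two extreme hypotheses on $B$ appear and the intermediate ones, where the $2$-torsion is larger than the class of $-1$, must be excluded.
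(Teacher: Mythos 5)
Your proposal is correct and follows essentially the same route as the paper: reduce to cocycle conjugacy via Theorem~\ref{KleinUHF}, observe that the obstruction $\Delta_\tau(w_a)$ to applying Proposition~\ref{CVonUHF3} is $2$-torsion in $\R/\tau(K_0(B))$, note it vanishes automatically in the $2^\infty$-absorbing case, and in the $\zeta_B(2)=0$ case kill it by twisting the cocycle with the scalar character sending $a\mapsto-1$, $b\mapsto1$. This is exactly the paper's argument.
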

\begin{proof}
By Theorem \ref{KleinUHF}, 
all strongly outer actions of $\Gamma$ on $B$ are cocycle conjugate. 
If $B$ absorbs the UHF algebra of type $2^\infty$ tensorially, 
then the conclusion follows from Proposition \ref{CVonUHF3} and 
the observation above. 
Suppose that $B$ does not contain a unital copy of $M_2$. 
Then $\{c\in\R/\tau(K_0(B))\mid 2c=0\}$ is isomorphic to $\Z/2\Z$ and 
$1/2+\tau(K_0(B))$ is the generator. 
Let $\alpha:\Gamma\curvearrowright B$ be a strongly outer action and 
let $(x_g)_g$ be an $\alpha$-cocycle in $B$ such that $\Delta_\tau(x_a)\neq0$. 
By the observation above, $\Delta_\tau(x_a)$ equals $1/2+\tau(K_0(B))$. 
For $g=a^nb^m\in\Gamma$, set $\zeta_g=(-1)^n\in\C\subset B$. 
It is easy to see that $(\zeta_gx_g)_g$ is an $\alpha$-cocycle satisfying 
$\Ad(\zeta_gx_g)=\Ad x_g$ and $\Delta_\tau(\zeta_ax_a)=0$. 
Hence the conclusion follows. 
\end{proof}

\subsection{Actions of the Klein bottle group on the Jiang-Su algebra}

In this subsection we prove that 
all strongly outer cocycle actions of $\Gamma$ on $\mathcal{Z}$ 
are mutually cocycle conjugate (Theorem \ref{KleinZ}). 
We also show that 
all strongly outer actions of $\Gamma$ on $\mathcal{Z}$ 
are strongly cocycle conjugate to each other (Theorem \ref{KleinZ2}). 
The following is 
an analogue of Theorem \ref{KleinAF} for the Jiang-Su algebra. 

\begin{proposition}\label{preKleinZ}
Let $\phi\in\Aut(\mathcal{Z})$. 
For $i=1,2$, assume that 
$\psi_i\in\Aut(\mathcal{Z}\rtimes_\phi\Z)$ satisfies the following conditions. 
\begin{enumerate}
\item $\psi_i(\mathcal{Z})=\mathcal{Z}$ and 
$\psi_i(\lambda^\phi)\lambda^\phi\in\mathcal{Z}$. 
\item $\phi^n\circ(\psi_i|\mathcal{Z})^m\in\Aut(\mathcal{Z})$ is 
not weakly inner for each $(n,m)\in\Z^2\setminus\{(0,0)\}$. 
\end{enumerate}
Assume further that $\psi_1\circ\psi_2^{-1}$ is approximately inner. 
Then there exists an approximately inner automorphism 
$\mu\in\Aut_\T(\mathcal{Z}\rtimes_\phi\Z)$ and 
a unitary $v\in U(\mathcal{Z})$ such that 
\[
\mu\circ\psi_1\circ\mu^{-1}=\Ad v\circ\psi_2. 
\]
\end{proposition}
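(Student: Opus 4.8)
The plan is to follow the strategy of Theorem~\ref{KleinAF}, running the equivariant Evans--Kishimoto intertwining argument for $\psi_1$ and $\psi_2$, but replacing every AF-specific ingredient by its Jiang--Su counterpart developed in this section. The overall shape of the argument is the same; the real content lies in supplying, for $\mathcal{Z}$, the three inputs that the intertwining requires.

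First I would check that $\psi_1\circ\psi_2^{-1}$ lies in $\Aut_\T(\mathcal{Z}\rtimes_\phi\Z)$. This is immediate from condition (1): each $\psi_i$ preserves $\mathcal{Z}$ and sends $\lambda^\phi$ to a unitary of $\mathcal{Z}$ times $\lambda^{\phi*}$, so the composition preserves $\mathcal{Z}$ and sends $\lambda^\phi$ into $\mathcal{Z}\cdot\lambda^\phi$. Taking $m=0$ in condition (2) shows that $\phi^n$ is not weakly inner for $n\neq0$, so $\phi$ generates a strongly outer $\Z$-action on $\mathcal{Z}$ and is therefore asymptotically, in particular approximately, representable by Proposition~\ref{examples}~(3). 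Combining this with the hypothesis that $\psi_1\circ\psi_2^{-1}$ is approximately inner, an approximate analogue of \cite[Theorem~4.8]{IM} then shows that $\psi_1\circ\psi_2^{-1}$ is $\T$-approximately inner, i.e. its implementing unitaries can be taken inside $\mathcal{Z}$.

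Next I would produce the Rohlin data. From the computation $\psi_i\circ\phi\circ\psi_i^{-1}=\Ad(\psi_i(\lambda^\phi)\lambda^\phi)\circ\phi^{-1}$ on $\mathcal{Z}$, the pair $(\phi,\psi_i|\mathcal{Z})$ generates a cocycle action of the Klein bottle group $\Gamma$ on $\mathcal{Z}$, with $\alpha_a=\phi$ and $\alpha_b=\psi_i|\mathcal{Z}$; since every nontrivial element of $\Gamma$ is of the form $a^nb^m$, condition (2) says precisely that this cocycle action is strongly outer. By Theorem~\ref{so=wR} it has the weak Rohlin property, which furnishes Rohlin towers of positive contractions in the fixed point algebra $(\mathcal{Z}_\infty)^\phi$. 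This is the substitute for the Rohlin projections that Theorem~\ref{so>R} supplies in the AF case.

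Finally I would run the intertwining itself. At each stage one approximates the conjugating unitary inside $\mathcal{Z}$, and the cocycle-type errors that accumulate are corrected using the cohomology vanishing result Proposition~\ref{CVonZ} together with the homotopy estimate Lemma~\ref{homotopy2}, which take the place of \cite[Lemma~4.10]{M10CMP} (or \cite[Lemma~3.3]{M11crelle}) and the $\OrderExt$ adjustment used for AF algebras. Iterating and passing to the limit yields an approximately inner $\mu\in\Aut_\T(\mathcal{Z}\rtimes_\phi\Z)$ and a unitary $v\in U(\mathcal{Z})$ with $\mu\circ\psi_1\circ\mu^{-1}=\Ad v\circ\psi_2$. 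The main obstacle is precisely this last step: because $\mathcal{Z}$ has no nontrivial projections, the weak Rohlin towers consist only of positive contractions, so the successive cocycle corrections cannot be read off from $K$-theory as in the AF case and must instead be pushed through the delicate $\mathcal{Z}$-specific cohomology vanishing of Proposition~\ref{CVonZ}; keeping the accumulated corrections controlled (for instance via the Lipschitz bounds of Lemma~\ref{homotopy2}) so that the intertwining converges is where the genuine care is needed.
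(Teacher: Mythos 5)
Your proposal follows essentially the same route as the paper's proof: one first checks $\psi_1\circ\psi_2^{-1}\in\Aut_\T(\mathcal{Z}\rtimes_\phi\Z)$ and upgrades approximate innerness to $\T$-approximate innerness using the asymptotic representability of $\phi$ (Proposition \ref{examples} (3)) and \cite[Theorem 4.8]{IM}, then observes that $\phi$ and $\psi_i|\mathcal{Z}$ generate a strongly outer cocycle action of the Klein bottle group, so that the cohomology vanishing of Proposition \ref{CVonZ} supplies the coboundary corrections needed to run the Evans--Kishimoto intertwining (the Rohlin-tower and Lipschitz machinery you describe is already packaged inside Proposition \ref{CVonZ}, so it need not be redone here). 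The one point you gloss over is that Proposition \ref{CVonZ} only applies to unitaries $x$ with $\Delta_\tau(x)=0$; the paper arranges this for the implementing unitaries by replacing each $x$ with a suitable scalar multiple, which changes neither $\Ad x$ nor the approximation.
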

\begin{proof}
We denote the unique trace on $\mathcal{Z}$ by $\tau$. 
It is easy to see that 
$\psi_1\circ\psi_2^{-1}$ is in $\Aut_\T(\mathcal{Z}\rtimes_\phi\Z)$. 
By Proposition \ref{examples} (3), 
(the $\Z$-action generated by) $\phi$ is asymptotically representable. 
Then \cite[Theorem 4.8]{IM} implies that 
$\psi_1\circ\psi_2^{-1}$ is $\T$-approximately inner, that is, 
for any finite subset $F\subset\mathcal{Z}\rtimes_\phi\Z$ and $\ep>0$, 
there exists a unitary $x\in\mathcal{Z}$ such that 
$\lVert\psi_1(c)-x\psi_2(c)x^*\rVert<\ep$ for all $c\in F$, 
and we may also assume $\Delta_\tau(x)=0$ 
by replacing $x$ with its suitable scalar multiple. 
Moreover, for each $i=1,2$, the cocycle action of $\Gamma$ on $\mathcal{Z}$ 
generated by $\phi$ and $\psi_i|\mathcal{Z}$ is strongly outer. 
Hence, by virtue of Proposition \ref{CVonZ}, 
we have the following for each $i=1,2$: 
if a sequence $(x_n)_n$ of unitaries in $\mathcal{Z}$ is 
a central sequence in $\mathcal{Z}\rtimes_\phi\Z$ and $\Delta_\tau(x_n)=0$, 
then there exists a sequence $(y_n)_n$ of unitaries in $\mathcal{Z}$ 
such that $(y_n)_n$ is a central sequence in $\mathcal{Z}\rtimes_\phi\Z$ 
and satisfies $x_n-y_n\psi_i(y_n^*)\to0$ as $n\to\infty$. 
Then the Evans-Kishimoto intertwining argument shows the statement 
(see \cite[Theorem 5.1]{K98JFA} for example). 
\end{proof}

\begin{lemma}
Let $\phi$ be an automorphism of $\mathcal{Z}$ such that 
$\phi^n$ is not weakly inner for every $n\in\N$. 
Then an automorphism $\psi\in\Aut_\T(\mathcal{Z}\rtimes_\phi\Z)$ is 
approximately inner 
if and only if $\Delta_\tau(\psi(\lambda^\phi)(\lambda^\phi)^*)=0$, 
where $\tau$ denotes the tracial state on $\mathcal{Z}$. 
\end{lemma}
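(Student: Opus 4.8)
Write $B=\mathcal{Z}\rtimes_\phi\Z$, let $\tilde\tau$ be the trace on $B$ extending $\tau$, and set $w=\psi(\lambda^\phi)(\lambda^\phi)^*\in U(\mathcal{Z})$. Since $K_1(\mathcal{Z})=0$ the unitary group of $\mathcal{Z}$ is connected, so $\Delta_\tau(w)\in\R/\tau(K_0(\mathcal{Z}))=\R/\Z$ is defined; because $\tilde\tau|\mathcal{Z}=\tau$ and $\tilde\tau(K_0(B))=\Z$, computing along a path inside $\mathcal{Z}$ shows this agrees with the de la Harpe--Skandalis determinant $\Delta_{\tilde\tau}(w)$ taken inside $B$. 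It is also worth noting that $\psi\mapsto\Delta_\tau(w)$ is a homomorphism $\Aut_\T(B)\to\R/\Z$ (using that $\psi|\mathcal{Z}$ preserves $\tau$ and acts trivially on $K_0$, so $\Delta_\tau\circ(\psi|\mathcal{Z})=\Delta_\tau$), and that the dual automorphisms $\hat\phi_t$ give a section, with $\Delta_\tau(\hat\phi_t(\lambda^\phi)(\lambda^\phi)^*)=t+\Z$.

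For the forward implication, suppose $\psi=\lim_n\Ad u_n$ for unitaries $u_n\in B$. Then $w=\lim_n u_n\beta(u_n^*)$, where $\beta=\Ad\lambda^\phi\in\Aut(B)$. Since $B$ is $\mathcal{Z}$-stable of stable rank one and $K_1(B)\cong\Z$ is generated by $[\lambda^\phi]$, I may write $[u_n]=k_n[\lambda^\phi]$ and replace $u_n$ by $u_n'=u_n(\lambda^\phi)^{-k_n}\in U(B)_0$; a direct computation gives $u_n\beta(u_n^*)=u_n'\beta(u_n'^*)$. On $U(B)_0$ the determinant $\Delta_{\tilde\tau}$ is a homomorphism invariant under conjugation (by traciality), so $\Delta_{\tilde\tau}(u_n'\beta(u_n'^*))=\Delta_{\tilde\tau}(u_n')+\Delta_{\tilde\tau}(\beta(u_n'^*))=\Delta_{\tilde\tau}(u_n')-\Delta_{\tilde\tau}(u_n')=0$. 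For large $n$ one has $\|u_n'\beta(u_n'^*)-w\|<2$, whence $\Delta_{\tilde\tau}(w)=\Delta_{\tilde\tau}(u_n'\beta(u_n'^*))+\Delta_{\tilde\tau}((u_n'\beta(u_n'^*))^*w)\to 0$, giving $\Delta_\tau(w)=0$.

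For the converse, assume $\Delta_\tau(w)=0$. Since every automorphism of $\mathcal{Z}$ is approximately inner, for a given finite $F\subset B$ and $\ep>0$ I would first choose $v_0\in U(\mathcal{Z})$ with $\|\Ad v_0(x)-\psi(x)\|<\ep$ for $x\in F\cap\mathcal{Z}$; writing $w_0=v_0\phi(v_0^*)$ one has $\Ad v_0(\lambda^\phi)=w_0\lambda^\phi$ and automatically $\Delta_\tau(w_0)=\Delta_\tau(v_0)+\Delta_\tau(\phi(v_0^*))=0$. Thus $\Ad v_0$ already approximates $\psi$ on $\mathcal{Z}$, and the only remaining discrepancy is that $w_0\lambda^\phi$ must be pushed to $w\lambda^\phi$; equivalently I must correct $v_0$ to $cv_0$ by an approximately central unitary $c\in U(\mathcal{Z})$ with $c\phi(c^*)$ close to $w_0^*w$. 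Since $\Delta_\tau(w_0^*w)=\Delta_\tau(w)-\Delta_\tau(w_0)=0$, such a $c$ is produced by the cohomology-vanishing machinery for the strongly outer $\phi$-action on $\mathcal{Z}$ (the $\mathcal{Z}$-analogue of Lemma \ref{CVonUHF1}, in the spirit of Proposition \ref{CVonZ} and \cite[Lemma 8.3]{MS}), which realises a determinant-trivial, approximately $\phi$-invariant unitary as an approximate coboundary $c\phi(c^*)$.

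The hard part will be to carry out this correction without destroying the approximation already achieved on $\mathcal{Z}$: the unitary $c$ must simultaneously match $w_0^*w$ as a coboundary, nearly commute with $v_0Fv_0^*$, and nearly commute with $w_0$. Passing these approximate-centrality requirements to the limit is exactly what the (equivariant) Evans--Kishimoto intertwining argument handles; as in the proof of Proposition \ref{preKleinZ}, the first approximation comes from $\T$-approximate innerness via \cite[Theorem 4.8]{IM}, which applies because $\phi$ is asymptotically representable by Proposition \ref{examples} (3). Iterating the correction over an increasing family of finite sets and shrinking tolerances then yields unitaries $v_n\in U(\mathcal{Z})$ with $\Ad v_n\to\psi|\mathcal{Z}$ and $v_n\phi(v_n^*)\to w$, hence $\Ad v_n\to\psi$ on all of $B$, so that $\psi$ is approximately inner.
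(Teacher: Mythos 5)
The paper disposes of both implications at once by quoting the classification theorem \cite[Theorem 7.1]{M11JFA}: it checks that $A=\mathcal{Z}\rtimes_\phi\Z$ is $\mathcal{Z}$-stable (Corollary \ref{so>Z}) and that $A\otimes B$ is a unital simple AT algebra of real rank zero for every UHF algebra $B$, computes $K_0(A)\cong K_1(A)\cong\Z$ with $KL(\psi)=KL(\id)$ via Pimsner--Voiculescu, and identifies $\Delta_\tau(\psi(\lambda^\phi)(\lambda^\phi)^*)$ with the rotation invariant $\Theta_{\psi,\id}([\lambda^\phi])$. Your route is genuinely different. Your forward implication is correct and more elementary than the paper's: reducing to $u_n'\in U(A)_0$ using $K_1(A)=\Z[\lambda^\phi]$ and $K_1$-injectivity of the $\mathcal{Z}$-stable algebra $A$, and then using additivity and $\Ad$-invariance of the de la Harpe--Skandalis determinant, works, and you correctly observe that $\Delta_\tau$ computed in $\mathcal{Z}$ agrees with $\Delta_{\tilde\tau}$ computed in $A$ because $\tau(K_0(\mathcal{Z}))=\tilde\tau(K_0(A))=\Z$.

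The converse is where all the content of \cite[Theorem 7.1]{M11JFA} lives, and there your sketch has genuine gaps. First, the one-step lemma you need --- an approximately central $u\in U(\mathcal{Z})$ with $\Delta_\tau(u)=0$ is an approximate coboundary $c\phi(c^*)$ with $c$ again approximately central --- is not available in this paper in the form you invoke: Lemma \ref{CVonUHF1} and Proposition \ref{CVonZ} concern the Klein bottle group, and Proposition \ref{CVonZ} is proved downstream of the machinery this lemma feeds into; you would have to import \cite[Theorem 5.3]{S10JFA} (or reprove it) as an external input. Second, the appeal to \cite[Theorem 4.8]{IM} is circular: that theorem upgrades approximate innerness of $\psi$ to $\T$-approximate innerness, so it cannot supply the ``first approximation'' when approximate innerness of $\psi$ is precisely what is to be established. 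Third, the quantifier bookkeeping in the correction step is not resolved: the correcting unitary $c$ must almost commute with $w_0=v_0\phi(v_0^*)$ and with $v_0Fv_0^*$, yet $v_0$ is chosen only after the finite set and tolerance governing the approximate centrality of $c$ have been fixed; breaking this circularity is exactly what a reindexation argument in $\mathcal{Z}_\omega$ (or the Evans--Kishimoto alternation) accomplishes, and ``iterating over an increasing family of finite sets'' does not by itself do so. The strategy is viable --- indeed, since approximate innerness is a statement for each finite set separately, no convergent intertwining is needed, only the one-step correction performed in the central sequence algebra --- but as written the converse is an outline rather than a proof.
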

\begin{proof}
Let $A=\mathcal{Z}\rtimes_\phi\Z$. 
By Corollary \ref{so>Z}, $A$ is $\mathcal{Z}$-stable. 
It is well-known that for any UHF algebra $B$, 
$A\otimes B\cong(\mathcal{Z}\otimes B)\rtimes_{\phi\otimes\id}\Z$ is 
a unital simple AT algebra with real rank zero 
(see \cite[Theorem 1.3]{K95crelle}). 
Therefore one can apply \cite[Theorem 7.1]{M11JFA} to automorphisms of $A$. 

Let $\psi\in\Aut_\T(A)$. 
By the Pimsner-Voiculescu exact sequence, $K_0(A)\cong K_1(A)\cong\Z$. 
Clearly 
$K_0(\psi)([1])=[1]$ and $K_1(\psi)([\lambda^\phi])=[\lambda^\phi]$. 
It follows that $K_i(\psi)$ is the identity for each $i=0,1$, 
and hence $KL(\psi)=KL(\id)$. 
Let $\Theta_{\psi,\id}:K_1(A)\to\Aff(T(A))/\Ima D_A\cong\R/\Z$ be 
the homomorphism described in \cite[Section 3]{M11JFA}. 
Then, by definition, $\Theta_{\psi,\id}([\lambda^\phi])$ is 
equal to $\Delta_\tau(\psi(\lambda^\phi)(\lambda^\phi)^*)$, 
and so the conclusion follows from \cite[Theorem 7.1]{M11JFA}. 
\end{proof}

\begin{theorem}\label{KleinZ}
Let $(\alpha,u)$ and $(\beta,v)$ be strongly outer cocycle actions 
of $\Gamma$ on the Jiang-Su algebra $\mathcal{Z}$. 
Then $(\alpha,u)$ and $(\beta,v)$ are cocycle conjugate. 
\end{theorem}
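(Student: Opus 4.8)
The plan is to reduce the cocycle-action classification to the automorphism-pair classification of Proposition~\ref{preKleinZ}, exactly mirroring the strategy of Theorem~\ref{KleinUHF} but with $\mathcal{Z}$ in place of a UHF algebra. First I would form the crossed product $\mathcal{Z}\rtimes_{\alpha_a}\Z$ and extend $\alpha_b$ to an automorphism $\tilde\alpha_b$ of it by setting $\tilde\alpha_b(\lambda^{\alpha_a})=\check u(\lambda^{\alpha_a})^*$, where $\check u=u(b,a)u(a^{-1},b)^*u(a^{-1},a)$, and similarly extend $\beta_b$ to $\tilde\beta_b$ on $\mathcal{Z}\rtimes_{\beta_a}\Z$. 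Since strongly outer $\Z$-actions on $\mathcal{Z}$ are unique up to strong cocycle conjugacy (the $\Gamma=\Z$ case, e.g.\ \cite{S10JFA}), I may assume $\beta_a=\Ad w\circ\alpha_a$ for some $w\in U(\mathcal{Z})$, giving an isomorphism $\theta:\mathcal{Z}\rtimes_{\beta_a}\Z\to\mathcal{Z}\rtimes_{\alpha_a}\Z$ with $\theta(\lambda^{\beta_a})=w\lambda^{\alpha_a}$. The composite $\theta\circ\tilde\beta_b\circ\theta^{-1}\circ\tilde\alpha_b^{-1}$ then lies in $\Aut_\T(\mathcal{Z}\rtimes_{\alpha_a}\Z)$.

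Next I would verify the hypotheses of Proposition~\ref{preKleinZ} for the pair $\psi_1=\theta\circ\tilde\beta_b\circ\theta^{-1}$ and $\psi_2=\tilde\alpha_b$. Conditions (1) and (2) hold because both $(\alpha,u)$ and $(\beta,v)$ are strongly outer, so each nontrivial word $\phi^n\circ(\psi_i|\mathcal{Z})^m=\alpha_a^n\circ\alpha_b^m$ (up to inner perturbation) is not weakly inner for $(n,m)\neq(0,0)$. The crucial point is approximate innerness of $\psi_1\circ\psi_2^{-1}=\theta\circ\tilde\beta_b\circ\theta^{-1}\circ\tilde\alpha_b^{-1}$: by the lemma immediately preceding this theorem, an element of $\Aut_\T(\mathcal{Z}\rtimes_{\alpha_a}\Z)$ is approximately inner if and only if its determinant invariant $\Delta_\tau(\psi(\lambda^{\alpha_a})(\lambda^{\alpha_a})^*)$ vanishes. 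So I would compute this determinant for $\psi_1\circ\psi_2^{-1}$; since $K_0(\mathcal{Z})=\Z$ and $\tau(K_0(\mathcal{Z}))=\Z$, and the relation $bab^{-1}=a^{-1}$ forces $2\Delta_\tau=0$ in $\R/\Z$ for the relevant cocycle (exactly the torsion-killing phenomenon exploited in Proposition~\ref{CVonUHF3} and Theorem~\ref{KleinUHF2}), the determinant automatically lands in $\{0,1/2\}+\Z$, and the $\Z$-structure of $\R/\tau(K_0(\mathcal{Z}))$ makes it zero. This is where the $\mathcal{Z}$-specific arithmetic, rather than a UHF two-divisibility argument, does the work.

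Having confirmed $\psi_1\circ\psi_2^{-1}$ is approximately inner, Proposition~\ref{preKleinZ} yields an approximately inner $\mu\in\Aut_\T(\mathcal{Z}\rtimes_{\alpha_a}\Z)$ and a unitary $v\in U(\mathcal{Z})$ with
\[
\mu\circ(\theta\circ\tilde\beta_b\circ\theta^{-1})\circ\mu^{-1}=\Ad v\circ\tilde\alpha_b.
\]
From here I would read off the cocycle conjugacy on the level of $\Gamma$-actions in precisely the bookkeeping of the final display-chains of Theorem~\ref{KleinUHF}: writing $\nu=\mu\circ\theta$ and $s=\nu(\lambda^{\beta_a})(\lambda^{\alpha_a})^*\in U(\mathcal{Z})$, evaluating both sides of the above identity on $\lambda^{\alpha_a}$ and on elements of $\mathcal{Z}$ produces unitaries $(w_g)_{g\in\Gamma}$ and an automorphism $\nu|\mathcal{Z}$ intertwining $\beta$ and $\alpha$ as cocycle actions, with the twisting cocycles matched through a relation of the form $\nu(\check v)=t\,\beta_b(\cdots)\check u(\cdots)$.

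The main obstacle I anticipate is the determinant computation in the second paragraph: one must be careful that the extension $\tilde\alpha_b(\lambda^{\alpha_a})=\check u(\lambda^{\alpha_a})^*$ (note the inverse, reflecting $bab^{-1}=a^{-1}$) gives the correct $K_1$-action, and that the antisymmetry this induces is what forces $\Delta_\tau$ to be $2$-torsion and hence to vanish in $\R/\Z$. Unlike the $\Z^2$ case, there is no nontrivial $\OrderExt$ obstruction to carry, so once this vanishing is established the intertwining goes through cleanly; the remaining steps are the routine cocycle bookkeeping already carried out verbatim in the proof of Theorem~\ref{KleinUHF}.
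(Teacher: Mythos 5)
Your overall architecture matches the paper's proof: pass to $\mathcal{Z}\rtimes_{\alpha_a}\Z$, extend $\alpha_b$ and $\beta_b$ with $\tilde\alpha_b(\lambda^{\alpha_a})=\check u(\lambda^{\alpha_a})^*$, normalize $\beta_a=\Ad w\circ\alpha_a$ via the uniqueness of strongly outer $\Z$-actions, apply Proposition \ref{preKleinZ}, and unwind the cocycle bookkeeping as in Theorem \ref{KleinUHF}. The gap is at the one step you flag as crucial: you assert that $\Delta_\tau\bigl((\theta\circ\tilde\beta_b\circ\theta^{-1}\circ\tilde\alpha_b^{-1})(\lambda^{\alpha_a})(\lambda^{\alpha_a})^*\bigr)$ vanishes ``automatically.'' Your justification does not hold up: even granting that this quantity is $2$-torsion in $\R/\tau(K_0(\mathcal{Z}))=\R/\Z$ (which you do not actually establish --- the $2$-torsion observation after Proposition \ref{CVonUHF3} concerns the homomorphism $g\mapsto\Delta_\tau(x_g)$ for a single $\alpha$-cocycle, not the determinant of a ``difference'' of two extensions $\tilde\beta_b$ and $\tilde\alpha_b$), that would only place the determinant in $\{0,\,1/2\}+\Z$, and $1/2+\Z$ is \emph{not} zero in $\R/\Z$. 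The phrase ``the $\Z$-structure of $\R/\tau(K_0(\mathcal{Z}))$ makes it zero'' is not an argument.

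The paper does not prove automatic vanishing; it \emph{corrects} the invariant. It chooses $\zeta\in\T$ with $\Delta_\tau(\zeta^{-2}1)$ equal to the determinant above and inserts the gauge automorphism $\psi$ with $\psi|\mathcal{Z}=\id$, $\psi(\lambda^{\alpha_a})=\zeta\lambda^{\alpha_a}$. Because $\tilde\beta_b$ sends $\lambda$ to a multiple of $\lambda^*$ (this is where $bab^{-1}=a^{-1}$ enters), conjugating by $\psi$ multiplies the relevant unitary by $\zeta^2$ rather than by $\zeta\zeta^{-1}=1$, so the determinant of $\psi\circ\theta\circ\tilde\beta_b\circ\theta^{-1}\circ\psi^{-1}\circ\tilde\alpha_b^{-1}$ is shifted by $\Delta_\tau(\zeta^2 1)$ and can always be made zero, since every element of $\T$ has a square root. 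This is precisely the analogue of the divisibility step $-2c=\tilde\eta_0(\cdots)$ in Theorem \ref{KleinUHF}; Proposition \ref{preKleinZ} is then applied to the corrected automorphism $\psi\circ\theta\circ\tilde\beta_b\circ\theta^{-1}\circ\psi^{-1}$, not to $\theta\circ\tilde\beta_b\circ\theta^{-1}$ itself. Without this correction your appeal to the approximate-innerness criterion of the preceding lemma is unsupported, so the proof as written does not go through.
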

\begin{proof}
Put $\check u=u(b,a)u(a^{-1},b)^*u(a^{-1},a)$ and 
$\check v=v(b,a)v(a^{-1},b)^*v(a^{-1},a)$. 
The automorphism $\alpha_b$ extends to 
$\tilde\alpha_b\in\Aut(\mathcal{Z}\rtimes_{\alpha_a}\Z)$ by letting 
$\tilde\alpha_b(\lambda^{\alpha_a})=\check u(\lambda^{\alpha_a})^*$. 
Likewise, we can extend $\beta_b$ 
to $\tilde\beta_b\in\Aut(\mathcal{Z}\rtimes_{\beta_a}\Z)$. 
By \cite[Theorem 1.3]{S10JFA}, we may assume that 
there exists $w\in U(\mathcal{Z})$ such that $\beta_a=\Ad w\circ\alpha_a$. 
Define an isomorphism 
$\theta:\mathcal{Z}\rtimes_{\beta_a}\Z\to\mathcal{Z}\rtimes_{\alpha_a}\Z$ 
by $\theta(x)=x$ for $x\in\mathcal{Z}$ and 
$\theta(\lambda^{\beta_a})=w\lambda^{\alpha_a}$. 
Then $\theta\circ\tilde\beta_b\circ\theta^{-1}\circ\tilde\alpha_b^{-1}$ is 
in $\Aut_\T(\mathcal{Z}\rtimes_{\alpha_a}\Z)$. 
Let $\zeta\in\C$ be such that $\lvert\zeta\rvert=1$ and 
\[
\Delta_\tau(\zeta^{-2}1)=\Delta_\tau(
(\theta\circ\tilde\beta_b\circ\theta^{-1}\circ\tilde\alpha_b^{-1})
(\lambda^{\alpha_a})(\lambda^{\alpha_a})^*). 
\]
Define $\psi\in\Aut_\T(\mathcal{Z}\rtimes_{\alpha_a}\Z)$ 
by $\psi(x)=x$ for $x\in\mathcal{Z}$ and 
$\psi(\lambda^{\alpha_a})=\zeta\lambda^{\alpha_a}$. 
Then 
\[
(\psi\circ\theta\circ\tilde\beta_b\circ\theta^{-1}\circ\psi^{-1}
\circ\tilde\alpha_b^{-1})(\lambda^{\alpha_a})
=\zeta^2(\theta\circ\tilde\beta_b\circ\theta^{-1}\circ\tilde\alpha_b^{-1})
(\lambda^{\alpha_a}), 
\]
and so 
\begin{align*}
&\Delta_\tau((\psi\circ\theta\circ\tilde\beta_b\circ\theta^{-1}\circ\psi^{-1}
\circ\tilde\alpha_b^{-1})(\lambda^{\alpha_a})(\lambda^{\alpha_a})^*)\\
&=\Delta_\tau(\zeta^21)+\Delta_\tau(
(\theta\circ\tilde\beta_b\circ\theta^{-1}\circ\tilde\alpha_b^{-1})
(\lambda^{\alpha_a})(\lambda^{\alpha_a})^*)\\
&=0. 
\end{align*}
It follows from the lemma above that 
$\psi\circ\theta\circ\tilde\beta_b\circ\theta^{-1}\circ\psi^{-1}
\circ\tilde\alpha_b^{-1}$ is approximately inner. 
Therefore, Proposition \ref{preKleinZ} applies to the automorphisms 
$\psi\circ\theta\circ\tilde\beta_b\circ\theta^{-1}\circ\psi^{-1}$ 
and $\alpha_b$, and 
yields an approximately inner automorphism 
$\mu\in\Aut_\T(\mathcal{Z}\rtimes_{\alpha_a}\Z)$ and 
a unitary $t\in U(\mathcal{Z})$ such that 
\[
\mu\circ\psi\circ\theta\circ\tilde\beta_b\circ\theta^{-1}\circ\psi^{-1}
\circ\mu^{-1}=\Ad t\circ\tilde\alpha_b. 
\]
In the same way as the proof of Theorem \ref{KleinAF}, 
we can conclude that $(\alpha,u)$ and $(\beta,v)$ are cocycle conjugate. 
\end{proof}

\begin{corollary}
Any strongly outer cocycle action of $\Gamma$ 
on the Jiang-Su algebra $\mathcal{Z}$ is approximately representable. 
\end{corollary}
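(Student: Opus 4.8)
The plan is to deduce this corollary immediately from the uniqueness statement Theorem \ref{KleinZ} together with the existence of a concrete model that is already known to be approximately representable. Concretely, Proposition \ref{exist} furnishes a strongly outer, approximately representable action $\alpha\colon\Gamma\curvearrowright\mathcal{Z}$. So the strategy mirrors that of Corollary \ref{UHFapprepre}: transport approximate representability from this distinguished model to an arbitrary strongly outer cocycle action by means of a cocycle conjugacy.

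First I would take an arbitrary strongly outer cocycle action $(\beta,v)\colon\Gamma\curvearrowright\mathcal{Z}$. Since $\alpha$ (viewed as a cocycle action with trivial $2$-cocycle) is itself strongly outer, Theorem \ref{KleinZ} applies to the pair $\alpha$ and $(\beta,v)$ and shows that they are cocycle conjugate. Next I would invoke the observation recorded after Definition \ref{apprepre}, namely that approximate representability is preserved under cocycle conjugacy. Because $\alpha$ is approximately representable by Proposition \ref{exist}, it follows that $(\beta,v)$ is approximately representable as well, which is exactly the claim.

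There is no genuine obstacle internal to this corollary: all the analytic work has already been carried out in establishing Theorem \ref{KleinZ} (the uniqueness up to cocycle conjugacy of strongly outer cocycle actions on $\mathcal{Z}$, which in turn rests on the $\mathcal{Z}$-stability results of the first half of the paper and on the cohomology vanishing of Proposition \ref{CVonZ}). Thus the corollary is a purely formal consequence, and the only point to verify carefully is that the family of witnessing unitaries $(v_g)_{g\in\Gamma}$ in $\mathcal{Z}^\infty$ transforms correctly under the conjugating isomorphism and cocycle, but this is precisely the routine check of invariance under cocycle conjugacy referred to above.
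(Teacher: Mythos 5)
Your argument is exactly the paper's: the corollary is deduced from Theorem \ref{KleinZ} (uniqueness up to cocycle conjugacy) together with Proposition \ref{exist} (existence of a strongly outer, approximately representable model), using the fact noted after Definition \ref{apprepre} that approximate representability is preserved under cocycle conjugacy. The proposal is correct and matches the paper's proof.
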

\begin{proof}
This immediately follows from the theorem above and Proposition \ref{exist}. 
\end{proof}

Finally, we would like to prove that 
all strongly outer actions of $\Gamma$ on $\mathcal{Z}$ are 
mutually strongly cocycle conjugate (Theorem \ref{KleinZ2}). 

\begin{proposition}
Let $\alpha:\Gamma\curvearrowright\mathcal{Z}$ be a strongly outer action. 
For two $\alpha$-cocycles $(x_g)_g$ and $(y_g)_g$ in $\mathcal{Z}$, 
the following conditions are equivalent. 
\begin{enumerate}
\item $\Delta_\tau(x_g)=\Delta_\tau(y_g)$ for all $g\in\Gamma$, 
where $\tau$ is the unique tracial state on $\mathcal{Z}$. 
\item There exists a sequence $(w_n)_n$ of unitaries in $\mathcal{Z}$ 
such that 
$w_nx_g\alpha_g(w_n^*)\to y_g$ as $n\to\infty$ for every $g\in\Gamma$. 
\end{enumerate}
\end{proposition}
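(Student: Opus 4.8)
The implication (2)$\Rightarrow$(1) is the routine one. Since $\mathcal{Z}$ has trivial $K_1$ and a unique trace $\tau$ with $\tau(K_0(\mathcal{Z}))=\Z$, the determinant $\Delta_\tau$ is defined on all of $U(\mathcal{Z})$, is a homomorphism, and satisfies $\Delta_\tau(\alpha_g(v))=\Delta_\tau(v)$ because $\tau\circ\alpha_g=\tau$ and $K_0(\alpha_g)=\id$. Hence for each $n$ and $g$ one has $\Delta_\tau(w_nx_g\alpha_g(w_n^*))=\Delta_\tau(w_n)+\Delta_\tau(x_g)-\Delta_\tau(w_n)=\Delta_\tau(x_g)$, and letting $n\to\infty$ together with continuity of $\Delta_\tau$ near $1$ gives $\Delta_\tau(y_g)=\Delta_\tau(x_g)$ for every $g$.

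For (1)$\Rightarrow$(2), the plan is to pass to the difference cocycle and trivialize it. I would set $\beta_g=\Ad x_g\circ\alpha_g$, which is a genuine action of $\Gamma$ on $\mathcal{Z}$ because $(x_g)_g$ is an $\alpha$-cocycle, and which is strongly outer (an inner perturbation of $\alpha_g$ cannot be weakly inner, as that would make $\bar\alpha_g$ inner). Put $z_g=y_gx_g^*$. A direct computation with the two cocycle identities shows that $(z_g)_g$ is a $\beta$-cocycle, and by (1) it satisfies $\Delta_\tau(z_g)=\Delta_\tau(y_g)-\Delta_\tau(x_g)=0$ for all $g$; in particular $\Delta_\tau(z_a)=\Delta_\tau(z_b)=0$. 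It suffices to produce unitaries $W_n$ with $W_nz_g\beta_g(W_n^*)\to1$ for $g\in\{a,b\}$, since multiplicativity of $u\mapsto uz_g\beta_g(u^*)$ along the cocycle relation then forces convergence for all $g\in\Gamma$; unwinding $\beta_g=\Ad x_g\circ\alpha_g$ and $z_g=y_gx_g^*$ turns this into $W_ny_g\alpha_g(W_n^*)\to x_g$, and taking $w_n=W_n^*$ yields (2).

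To trivialize $(z_g)$ I would follow the scheme of Proposition \ref{CVonUHF3}, now with the $\mathcal{Z}$-tools. Fix $\ep>0$. Since $\beta_a$ is a strongly outer automorphism of $\mathcal{Z}$, cohomology vanishing for single strongly outer $\Z$-actions on $\mathcal{Z}$ (\cite{S10JFA}) together with $\Delta_\tau(z_a)=0$ provides a unitary $u$ with $\lVert uz_a\beta_a(u^*)-1\rVert$ as small as desired. Replacing $(z_g)$ by the conjugate cocycle $z'_g=uz_g\beta_g(u^*)$ (which keeps $\Delta_\tau(z'_g)=0$) I may assume $z'_a\approx1$. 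Using the Klein-bottle relation $ab=ba^{-1}$, the cocycle identity reads $z'_a\beta_a(z'_b)=z'_b\beta_b(z'_{a^{-1}})$; since $z'_a\approx1$ forces $z'_{a^{-1}}=\beta_{a^{-1}}(z'_a)^*\approx1$ and $\beta_b(z'_a)\approx1$, this yields $\beta_a(z'_b)\approx z'_b$. Crucially, because $\tau(K_0(\mathcal{Z}))=\Z$ is discrete, $\Delta_\tau(z'_a)=0$ and $z'_a\approx1$ force $\tau(\log z'_a)=0$, so the integer bookkeeping that in the UHF case required an adjustment by a unitary of prescribed determinant (via \cite[Lemma 8.3]{MS}) is vacuous here. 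Thus $z'_b$ is approximately $\beta_a$-fixed with $\Delta_\tau(z'_b)=0$, and I would apply Proposition \ref{CVonZ} with $F=\emptyset$ to obtain a unitary $w$ with $\lVert\beta_a(w)-w\rVert<\ep$ and $\lVert z'_b-w\beta_b(w^*)\rVert<\ep$. Conjugating once more by $w^*$ makes both the $a$- and $b$-parts close to $1$; the two conjugations combine to $W=w^*u$ with $\lVert Wz_g\beta_g(W^*)-1\rVert<\ep$ for $g\in\{a,b\}$, and letting $\ep\to0$ produces the sequence $(W_n)$.

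The main obstacle is this last application: Proposition \ref{CVonZ} is stated with a prescribed almost-commutation set $G$, whereas the cocycle element $z'_b$ is not central. I expect to need the refinement that, for a genuine action and $F=\emptyset$, one may take $G=\emptyset$ — exactly the feature that Lemma \ref{CVonUHF1} records in the UHF setting and that should propagate through the tensorial argument proving Proposition \ref{CVonZ} (including the $F=\emptyset$ form of Lemma \ref{homotopy2} invoked inside it). Verifying this refinement is the delicate point; by contrast the single-automorphism vanishing on $\mathcal{Z}$ in the first step is standard from \cite{S10JFA}, and the intervening manipulations are the same routine conjugations of cocycles already carried out in Proposition \ref{CVonUHF3}.
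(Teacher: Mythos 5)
Your skeleton is the right one and matches the paper's up to the last step: pass to a single cocycle with vanishing determinant, kill the $a$-component using the one-variable cohomology vanishing of \cite{S10JFA}, observe via the relation $bab^{-1}=a^{-1}$ that the $b$-component is then approximately $\alpha_a$-invariant with vanishing determinant, and finish with Proposition \ref{CVonZ}. The implication (2)$\Rightarrow$(1), the reduction to the difference cocycle, and the reduction to the generators $a,b$ are all fine. (The paper phrases the reduction slightly differently, comparing each cocycle to the scalar cocycle $(\zeta_g)_g$ rather than forming $y_gx_g^*$, but that is cosmetic.)

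The gap is exactly the point you flag, and your proposed repair does not work. The addendum ``$F=\emptyset$ allows $G=\emptyset$'' is available for Lemma \ref{CVonUHF1} because that lemma is proved by a direct Rohlin-tower argument in $(B^\infty)^{\alpha_a}$ rather than $(B_\infty)^{\alpha_a}$, with Lemma \ref{homotopy1} (whose $F=\emptyset$ case comes from \cite[Lemma 8.1]{MS}). It does \emph{not} propagate to Proposition \ref{CVonZ}: that proposition is proved by tensoring with UHF algebras and invoking Lemma \ref{CVonUHF2} and, crucially, Lemma \ref{homotopy2}, whose proof runs through Lin's basic homotopy lemma in the crossed product $C=B\rtimes_{(\alpha,u)}\Gamma$. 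There the unitary must be an approximately \emph{central} sequence in $C$ -- one needs the homomorphism $C_0(\T\setminus\{1\})\otimes C\to C^\infty$ and the vanishing of Bott classes $\Bott(u,x_n)$ -- so approximate invariance under $\alpha_a,\alpha_b$ alone is not enough to even set up the argument, and no $G=\emptyset$ version is proved or plausible by the same method. The paper circumvents this by a device absent from your proposal: using Theorem \ref{KleinZ}, replace $\alpha$ by the model action $\alpha^\otimes=\bigotimes_\Z\alpha_g$ on $\mathcal{Z}^\otimes=\bigotimes_\Z\mathcal{Z}$, which commutes with the shift automorphism $\sigma$; since $[\sigma^n(s),t]\to0$ for all $s,t$, one translates the $b$-component $x'_b$ by $\sigma^n$ to make it almost commute with the prescribed finite set $G$ while retaining approximate $\alpha^\otimes_a$-invariance, applies Proposition \ref{CVonZ} as stated, and pulls the resulting unitary back by $\sigma^{-n}$. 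Without this (or a genuinely new proof of the $G=\emptyset$ refinement), your final step does not go through.
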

\begin{proof}
(2)$\Rightarrow$(1) is obvious. 
We prove the other implication. 

Let $\alpha^\otimes:\Gamma\curvearrowright\mathcal{Z}^\otimes$ be 
the two-sided infinite tensor product of 
$\alpha:\Gamma\curvearrowright\mathcal{Z}$, that is, 
\[
\mathcal{Z}^\otimes=\bigotimes_\Z\mathcal{Z},\quad 
\alpha^\otimes_g=\bigotimes_\Z\alpha_g\quad\forall g\in\Gamma. 
\]
First, we claim that the assertion holds for $\alpha^\otimes$. 
Let $(x_g)_g$ be an $\alpha^\otimes$-cocycle in $\mathcal{Z}^\otimes$. 
Let $\zeta_g\in\C\subset\mathcal{Z}^\otimes$ be the unitary 
satisfying $\Delta_\tau(x_g)=\Delta_\tau(\zeta_g)$. 
Then $(\zeta_g)_g$ is clearly an $\alpha^\otimes$-cocycle, and so 
it suffices to show that 
there exists a sequence $(w_n)_n$ of unitaries in $\mathcal{Z}$ 
such that 
$w_nx_g\alpha_g(w_n^*)\to\zeta_g$ as $n\to\infty$. 
Note that $g\mapsto\zeta_g$ is a homomorphism. 
In particular, $\zeta_a=1$ or $\zeta_a=-1$. 

Take $\ep>0$ arbitrarily. 
Apply Proposition \ref{CVonZ} to 
$\alpha^\otimes:\Gamma\curvearrowright\mathcal{Z}^\otimes$ 
with the empty set and $\ep/2$ in place of $F$ and $\ep>0$ 
to obtain a finite subset $G\subset\mathcal{Z}^\otimes$ and $\delta>0$. 
By \cite[Theorem 5.3]{S10JFA}, 
there exists a unitary $y\in\mathcal{Z}^\otimes$ such that 
\[
\lVert\zeta_a^{-1}x_a-y^*\alpha^\otimes_a(y)\rVert<\min\{\delta/2,\ep/2\}, 
\]
and hence 
\[
\lVert yx_a\alpha^\otimes_a(y^*)-\zeta_a\rVert<\min\{\delta/2,\ep/2\}. 
\]
From $x_a\alpha^\otimes_a(x_{a^{-1}})=1$, it is easy to see 
\[
\lVert yx_{a^{-1}}\alpha^\otimes_{a^{-1}}(y^*)-\zeta_a^{-1}\rVert
<\min\{\delta/2,\ep/2\}. 
\]
Therefore, 
defining an $\alpha^\otimes$-cocycle $(x'_g)_g$ 
by $x'_g=yx_g\alpha^\otimes_g(y^*)$ for $g\in\Gamma$, 
one gets 
\[
\lVert\alpha^\otimes_a(x'_b)-x'_b\rVert
=\lVert\zeta_a\alpha^\otimes_a(x'_b)-\zeta_a^{-1}x'_b\rVert
<\lVert x'_a\alpha^\otimes_a(x'_b)-x'_b\alpha^\otimes_b(x'_{a^{-1}})\rVert
+\delta=\delta. 
\]
Let $\sigma\in\Aut(\mathcal{Z}^\otimes)$ be the shift automorphism. 
Then we have 
$\sigma\circ\alpha^\otimes_g=\alpha^\otimes_g\circ\sigma$ for any $g\in\Gamma$ 
and $[\sigma^n(s),t]\to0$ as $n\to\infty$ for any $s,t\in\mathcal{Z}^\otimes$. 
Choose $n\in\N$ so that 
\[
\lVert[\sigma^n(x'_b),c]\rVert<\delta\quad\forall c\in G. 
\]
Evidently $\lVert\alpha^\otimes_a(\sigma^n(x'_b))-\sigma^n(x'_b)\rVert
=\lVert\sigma^n(\alpha^\otimes_a(x'_b))-\sigma^n(x'_b)\rVert<\delta$. 
It follows from Lemma \ref{CVonZ} that 
there exists $z\in U(\mathcal{Z}^\otimes)$ such that 
\[
\lVert\alpha^\otimes_a(z)-z\rVert<\ep/2\quad\text{and}\quad 
\lVert\zeta_b^{-1}\sigma^n(x'_b)-z^*\alpha^\otimes_b(z)\rVert<\ep/2. 
\]
Put $w=\sigma^{-n}(z)y$. 
Then we obtain 
\begin{align*}
\lVert wx_a\alpha^\otimes_a(w^*)-\zeta_a\rVert
&=\lVert\sigma^{-n}(z)x'_a\alpha^\otimes_a(\sigma^{-n}(z^*))-\zeta_a\rVert\\
&=\lVert\sigma^{-n}(z)x'_a\sigma^{-n}(\alpha^\otimes_a(z^*))-\zeta_a\rVert\\
&<\lVert\sigma^{-n}(z)\sigma^{-n}(\alpha^\otimes_a(z^*))-1\rVert+\ep/2\\
&<\ep
\end{align*}
and 
\begin{align*}
\lVert wx_b\alpha^\otimes_b(w^*)-\zeta_b\rVert
&=\lVert\sigma^{-n}(z)x'_b\alpha^\otimes_b(\sigma^{-n}(z^*))-\zeta_b\rVert\\
&=\lVert z\sigma^n(x'_b)\alpha^\otimes_b(z^*)-\zeta_b\rVert\\
&<\ep/2. 
\end{align*}
Since $\ep>0$ was arbitrary, the proof of the claim is completed. 

Now we consider $\alpha$-cocycles $(x_g)_g$ and $(y_g)_g$ in $\mathcal{Z}$ 
satisfying $\Delta_\tau(x_g)=\Delta_\tau(y_g)$ for all $g\in\Gamma$. 
By Theorem \ref{KleinZ}, $\alpha$ is cocycle conjugate to $\alpha^\otimes$. 
Thus, there exist an isomorphism $\theta:\mathcal{Z}\to\mathcal{Z}^\otimes$ 
and an $\alpha^\otimes$-cocycle $(u_g)_g$ in $\mathcal{Z}^\otimes$ such that 
\[
\theta\circ\alpha_g\circ\theta^{-1}=\Ad u_g\circ\alpha^\otimes_g
\quad\forall g\in \Gamma. 
\]
Then $(\theta(x_g)u_g)_g$ and $(\theta(y_g)u_g)_g$ are 
$\alpha^\otimes$-cocycles 
satisfying $\Delta_\tau(\theta(x_g)u_g)=\Delta_\tau(\theta(y_g)u_g)$. 
By the claim above, 
we can find a sequence $(w_n)_n$ of unitaries in $\mathcal{Z}^\otimes$ 
such that 
\[
w_n\theta(x_g)u_g\alpha^\otimes_g(w_n^*)\to\theta(y_g)u_g
\quad\forall g\in\Gamma, 
\]
and whence 
\begin{align*}
\theta^{-1}(w_n)x_g\alpha_g(\theta^{-1}(w_n^*))
&=\theta^{-1}(w_n)x_g\theta^{-1}(u_g\alpha^\otimes_g(w_n^*)u_g^*)\\
&=\theta^{-1}(w_n\theta(x_g)u_g\alpha^\otimes_g(w_n^*)u_g^*)\\
&\to y_g
\end{align*}
for all $g\in\Gamma$ as desired. 
\end{proof}

\begin{theorem}\label{KleinZ2}
Any two strongly outer actions $\alpha:\Gamma\curvearrowright\mathcal{Z}$ and 
$\beta:\Gamma\curvearrowright\mathcal{Z}$ are strongly cocycle conjugate. 
\end{theorem}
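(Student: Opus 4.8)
The plan is to upgrade the plain cocycle conjugacy provided by Theorem \ref{KleinZ} to a \emph{strong} one by correcting the implementing cocycle with a suitable character of $\Gamma$. First I would apply Theorem \ref{KleinZ} to $\alpha$ and $\beta$, viewed as strongly outer cocycle actions with trivial $2$-cocycles: this yields an isomorphism $\theta\in\Aut(\mathcal{Z})$ and a family of unitaries $(w_g)_{g\in\Gamma}$ in $\mathcal{Z}$ with
\[
\theta\circ\alpha_g\circ\theta^{-1}=\Ad w_g\circ\beta_g\qquad(g\in\Gamma).
\]
Because both $2$-cocycles vanish, the second cocycle-conjugacy identity of Definition \ref{Defofca} reduces to $w_g\beta_g(w_h)=w_{gh}$, so $(w_g)_g$ is a genuine $\beta$-cocycle. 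To obtain strong cocycle conjugacy it then suffices to replace $(w_g)_g$ by a $\beta$-cocycle implementing the same conjugacy (i.e.\ with the same $\Ad w_g$) whose de la Harpe--Skandalis determinant vanishes: for once $\Delta_\tau(w_g)=0$ for all $g$, the preceding proposition, applied to $\beta$ with the trivial cocycle $(1)_g$ as $(x_g)$ and the corrected cocycle as $(y_g)$, produces unitaries $x_n\in\mathcal{Z}$ with $x_n\beta_g(x_n^*)\to w_g$, which is exactly the additional requirement in the definition of strong cocycle conjugacy.

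Next I would carry out the determinant correction. Since $\tau$ is the unique trace on $\mathcal{Z}$, it is $\beta_g$-invariant, so $w_{gh}=w_g\beta_g(w_h)$ gives $\Delta_\tau(w_{gh})=\Delta_\tau(w_g)+\Delta_\tau(\beta_g(w_h))=\Delta_\tau(w_g)+\Delta_\tau(w_h)$; thus $g\mapsto\Delta_\tau(w_g)$ is a homomorphism $\Gamma\to\R/\tau(K_0(\mathcal{Z}))=\R/\Z$, which factors through the abelianization $\Gamma^{\mathrm{ab}}\cong(\Z/2\Z)\oplus\Z$, the $\Z/2\Z$-factor being generated by the class of $a$. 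In particular $2\Delta_\tau(w_a)=0$ (forced by $bab^{-1}=a^{-1}$), so $\Delta_\tau(w_a)\in\{0,1/2\}+\Z$, while $\Delta_\tau(w_b)$ is unconstrained. I would then choose a character $\zeta\colon\Gamma\to\T$, determined by sending $a\mapsto\zeta_a\in\{\pm1\}$ and $b\mapsto\zeta_b\in\T$: take $\zeta_a$ to cancel the $2$-torsion value $\Delta_\tau(w_a)$, and $\zeta_b=\exp(-2\pi\sqrt{-1}s_b)$ with $s_b$ a real lift of $\Delta_\tau(w_b)$ to cancel the $b$-value. Setting $w_g'=\zeta_g w_g$ gives a new $\beta$-cocycle (since $\zeta$ is a homomorphism) with $\Ad w_g'=\Ad w_g$ and $\Delta_\tau(w_g')=0$ for $g=a,b$, hence for all $g$ by the homomorphism property. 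So $(\theta,(w_g')_g)$ is again a cocycle conjugacy from $\alpha$ to $\beta$.

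Finally, applying the preceding proposition to the two $\beta$-cocycles $(1)_g$ and $(w_g')_g$, which now share determinant invariant $0$, yields unitaries $(x_n)_n$ in $\mathcal{Z}$ with $x_n\beta_g(x_n^*)\to w_g'$ for every $g$, completing the proof. The main obstacle is precisely this correction step: one must trivialize the invariant $g\mapsto\Delta_\tau(w_g)$ \emph{within the class of $\beta$-cocycles} and without altering any $\Ad w_g$. Scalar characters are exactly such corrections, and what makes them sufficient is the group-theoretic input that the obstruction in the $a$-direction is forced to be $2$-torsion, so a sign $\zeta_a=\pm1$ absorbs it, whereas the (arbitrary) $b$-direction value is absorbed by a free phase $\zeta_b\in\T$; after this, the cohomology-vanishing content of the preceding proposition does the rest.
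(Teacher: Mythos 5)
Your proposal is correct and follows essentially the same route as the paper: apply Theorem \ref{KleinZ}, twist the resulting $\beta$-cocycle by a scalar character of $\Gamma$ to kill the de la Harpe--Skandalis determinant (the paper simply takes $\zeta_g\in\T$ with $\Delta_\tau(\zeta_g)=\Delta_\tau(w_g)$, which is automatically a character since $g\mapsto\Delta_\tau(w_g)$ is a homomorphism), and then invoke the preceding proposition with the trivial cocycle to produce the approximating unitaries. Your extra discussion of $\Gamma^{\mathrm{ab}}\cong(\Z/2\Z)\oplus\Z$ and the $2$-torsion constraint on $\Delta_\tau(w_a)$ is a correct, slightly more explicit justification of the same correction step.
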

\begin{proof}
By Theorem \ref{KleinZ}, $\alpha$ and $\beta$ are cocycle conjugate, 
that is, there exist an isomorphism $\theta:\mathcal{Z}\to\mathcal{Z}$ 
and a $\beta$-cocycle $(x_g)_g$ in $\mathcal{Z}$ such that 
\[
\theta\circ\alpha_g\circ\theta^{-1}=\Ad x_g\circ\beta_g
\quad\forall g\in \Gamma. 
\]
As in the proof of the proposition above, 
let $\zeta_g\in\C\subset\mathcal{Z}$ be the unitary 
satisfying $\Delta_\tau(x_g)=\Delta_\tau(\zeta_g)$. 
Then $(\zeta_g^{-1}x_g)_g$ is also a $\beta$-cocycle. 
Hence, by replacing $x_g$ with $\zeta_g^{-1}x_g$, 
we may assume $\Delta_\tau(x_g)=0$. 
It follows from the proposition above that 
there exists $(w_n)_n$ in $U(\mathcal{Z})$ such that 
$w_n\beta_g(w_n^*)\to x_g$ as $n\to\infty$ for all $g\in\Gamma$, 
which implies that $\alpha$ is strongly cocycle conjugate to $\beta$. 
\end{proof}

\noindent
\textbf{Acknowledgement. }
The authors would like to thank 
Toshihiko Masuda, Narutaka Ozawa and Reiji Tomatsu
for several helpful conversations and valuable comments.

\end{document}